\newtheorem{defn}[equation]{Definition}
\newtheorem{theo}[equation]{Theorem}
\newtheorem{prop}[equation]{Proposition}
\newtheorem{lem}[equation]{Lemma}
\newtheorem{cor}[equation]{Corollary}
\newtheorem{ex}[equation]{Example}
\newtheorem{qu}[equation]{Question}
\newtheorem{rem}[equation]{Remark}
\numberwithin{equation}{section}
\newcommand{\ddbar}{\partial\bar{\partial}}
\newcommand \Z{\mathbb Z}
\newcommand \N{\mathbb N}
\newcommand \C{\mathbb C}
\newcommand \R{\mathbb R}
\newcommand \Q{\mathbb Q}
\newcommand \gO {\mathcal O}
\newcommand \M {\mathscr {M}}
\newcommand \Ld {\mathscr {L}}
\newcommand \D {\mathbb{D}}
\newcommand \de {\Delta}
\newcommand \U {\mathscr U}
\newcommand \CP {\mathbb P}
\newcommand \A {\mathbb A}
\newcommand \X {\mathscr{X}}
\newcommand \bS {\mathbb S}
\newcommand \fl {\longrightarrow}
\newcommand \eps {\varepsilon}
\DeclareMathOperator{\lcm}{lcm}
\DeclareMathOperator{\KE}{KE}
\DeclareMathOperator{\Sk}{Sk}
\DeclareMathOperator{\Proj}{Proj}
\DeclareMathOperator{\ord}{ord}
\DeclareMathOperator{\Ker}{Ker}
\DeclareMathOperator \GL {GL}
\DeclareMathOperator \Div {Div}
\DeclareMathOperator{\PSH}{PSH}
\DeclareMathOperator{\CPSH}{CPSH}
\DeclareMathOperator{\SH}{SH}
\DeclareMathOperator{\Ric}{Ric}
\DeclareMathOperator{\Sp}{Spec}
\DeclareMathOperator{\Sing}{Sing}
\DeclareMathOperator{\MA}{MA}
\DeclareMathOperator{\triv}{triv}
\DeclareMathOperator{\an}{an}
\DeclareMathOperator{\hyb}{hyb}
\DeclareMathOperator{\Conv}{Conv}
\DeclareMathOperator{\ev}{ev}
\DeclareMathOperator{\pr}{pr}
\DeclareMathOperator{\FS}{FS}
\DeclareMathOperator{\DFS}{DFS}
\DeclareMathOperator{\hol}{hol}
\DeclareMathOperator{\rf}{ref}
\DeclareMathOperator{\NA}{NA}
\DeclareMathOperator{\Frac}{Frac}
\DeclareMathOperator{\sch}{sch}
\DeclareMathOperator{\xdiv}{div}
\newtheorem*{thmA}{Theorem A}
\newtheorem*{thmB}{Theorem B}
\title{Global pluripotential theory on hybrid spaces}
\author{Léonard Pille-Schneider}
\begin{document}
\date{}
\nocite{*}
\maketitle
\begin{abstract}
Let $A$ be a Banach ring, and $X/A$ be a projective scheme of finite type, endowed with a semi-ample line bundle $L$. We define a class $\PSH(X,L)$ of plurisubharmonic metrics on $L$ on the Berkovich analytification $X^{\an}$ and prove various basic properties thereof.
\\We focus in particular on the case where $A$ is a hybrid ring of complex power series and $X/A$ is a smooth variety, so that $X^{\an}$ is the hybrid space associated to a degeneration $X$ of complex varieties over the punctured disk. We then prove that when $L$ is ample, any plurisubharmonic metric on $L$ with logarithmic growth at zero admits a canonical plurisubharmonic extension to the hybrid space $X^{\hyb}$. We also discuss the continuity of the family of Monge-Ampère measures associated to a continuous plurisubharmonic hybrid metric.
\\In the case where $X$ is a degeneration of canonically polarized manifolds, we prove that the canonical psh extension is continuous on $X^{\hyb}$ and describe it explicitly in terms of the canonical model (in the sense of MMP) of the degeneration.
\end{abstract}
\section*{Introduction}
The study of plurisubharmonic functions and positive currents on complex manifolds - referred to as \emph{pluripotential theory} - has proven itself over the past decades to be a central tool in complex Kähler and algebraic geometry. The heuristic idea that one should be able to develop a pluripotential theory on Berkovich analytic spaces over a non-archimedean field $K$, similar to the classical one over the complex numbers, is by now well-established, see for instance \cite{Thu2}, \cite{CLD} \cite{BFJ2}, \cite{BE}, \cite{BJv4} in chronological order. 
\\To be more precise, let $K$ be a complete non-archimedean field, and $X/K$ a proper algebraic variety. In this setting, Berkovich's theory of $K$-analytic spaces \cite{Ber} associates to the variety $X/K$ its Berkovich analytification $X^{\an}$, in a similar spirit to how one associates to a variety over $\C$ its complex analytification. One may then define in this setting a class $\PSH(X,L)$ of plurisubharmonic - or semi-positive - metrics on $L^{\an}$, whose properties mimic those of plurisubharmonic metrics in the complex case.
\\While the work of Chambert-Loir - Ducros \cite{CLD} provides a local definition of semi-positivity, we will adopt a global point of view throughout this paper, following the approach initiated by Boucksom-Favre-Jonsson in \cite{BFJ2} and further developped in \cite{BJtriv}, \cite{BJv4}. The basic idea is as follows: for $m \ge 1$, given a non-zero global section $s \in H^0(X, mL)$, the singular metric (using additive notation for metrics) $\phi = m^{-1} \log \lvert s \rvert$ should be plurisubharmonic, as follows in the complex world from the Lelong-Poincaré formula. It moreover follows from Demailly's seminal work \cite{Dem} on regularization of plurisubharmonic functions that on a smooth polarized complex variety $(X, L)$, the class $\PSH(X,L)$ is the smallest class of singular metrics containing the metrics of the form $\phi = m^{-1} \log \lvert s \rvert$ as above, and that is furthermore stable by addition of constants, finite maxima and decreasing limits (see thm. \ref{theo psh2}). It is thus natural to take this characterization as the definition of $\PSH(X, L)$ in the non-archimedean setting, which turns out to be consistent with the more local, Chambert-Loir - Ducros approach, by the results from \cite{BE}.
\\One of the upsides of Berkovich's construction of analytic spaces is that it works over more general bases than non-archimedean fields: given a Banach ring $(A, \lvert \cdot \rvert)$, one can define its Berkovich spectrum $\M(A)$, and for any scheme $X/A$ of finite type, a Berkovich analytic space $X^{\an} \xrightarrow{\pi} \M(A)$ equipped with a continuous structure map to $\M(A)$. Each point $x \in \M(A)$ has a residue field $\mathscr{H}(x)$ in a natural sense, which is a complete valued field, and the fiber $\pi^{-1}(x)$ of the structure map is naturally homeomorphic to Berkovich analytification of the base change $X_{\mathscr{H}(x)}$. One may thus view the $A$-analytic space $X^{\an}$ as the family of analytic spaces $( X^{\an}_{\mathscr{H}(x)} )_{x \in \M(A)}$ over different base fields, in a similar manner to which one views a scheme over a ring $A$ as a family of varieties over different base fields, parametrized by $\Sp A$. More general Berkovich analytic spaces over Banach rings were studied more extensively recently in \cite{LP}, where it is for instance proved that $A$-analytic spaces form a category in a natural way, under certain assumptions on $A$.
\\At this point, pluripotential theory on Berkovich spaces over a Banach ring remains a vastly unexplored territory. In this paper, assuming that the base ring $A$ is integral, we define a class $\PSH(X, L)$ of plurisubharmonic metrics on the analytification of a scheme $X/A$ of finite type, endowed with a semi-ample line bundle $L$. Roughly speaking, a singular plurisubharmonic metric $\phi \in \PSH(X, L)$ can be seen as a family of psh metrics $\phi_x \in \PSH(X_{\mathscr{H}(x)}, L_{\mathscr{H}(x)})$ on the fibers of the structure map, varying in a plurisubharmonic way with respect to $x \in \M(A)$ as well. Note however that it can happen that $\phi_x \equiv -\infty$ for some $x \in \M(A)$, as one would expect in the complex world. The class $\PSH(X,L)$ is defined following the global approach of \cite{BFJ2}, \cite{BE} : it is the smallest class of singular metrics on $L$ that contains metrics of the form $m^{-1} \log \lvert s \rvert$ whenever $s \in H^0(X, mL)$ is a non-zero global section, and is stable under addition of constants, finite maxima and decreasing limits.
\\Our main concern is the case where $A$ is a \emph{hybrid ring} (see section \ref{sec hyb} for the definitions) and $X \fl \D^*$ is a projective degeneration of complex manifolds - we will always assume the degeneration to be meromorphic at $t=0$, which means that we may view $X$ as a projective scheme over the field $K = \C((t))$ of complex Laurent series - so that $X^{\an} = X^{\hyb}$ is the associated hybrid space, as studied for instance in \cite{BJ}, \cite{Fav}. The hybrid space $X^{\hyb} \xrightarrow{\pi} \bar{\D}_r$ comes with a continuous structure map to the closed disk of radius $r \in (0,1)$, such that $\pi^{-1}(0) = X^{\an}_K$ is the Berkovich analytification of $X$ with respect to the non-archimedean $t$-adic absolute value on $K$, while $\pi^{-1}(\bar{\D}_r^*)$ can be naturally identified with the restriction of the degeneration $X$ to the closed punctured disk $\bar{\D}_r^*$ - up to rescaling the absolute value on the fiber $X_t$ by a factor $\frac{ \log r }{\log \lvert t \rvert}$. As a result, this provides a natural way to see the (suitably rescaled) complex manifolds $(X_t)_{t \in \D^*}$ degenerate to a non-archimedean analytic space $X^{\an}_K$ as $t\rightarrow0$. In this setting, if $L$ is a semi-ample line bundle on $X$, then a psh metric on $(X^{\hyb}, L^{\hyb})$ corresponds to the data of a family of psh metrics $\phi_t \in \PSH(X_t, L_t)$ varying in a subharmonic way with respect to $t$, together with a non-archimedean metric $\phi_0 \in \PSH(X^{\an}, L^{\an})$.
\\In the case where the line bundle $L$ is ample on $X$, given a plurisubharmonic metric $\phi$ on $L$ which has logarithmic growth at $t=0$ (see definition \ref{def log g}), one can associate to it a psh metric $\phi^{\NA} \in \PSH(X^{\an}, L^{\an})$ on the non-archimedean analytic space $X^{\an}$, encoding the generic Lelong numbers of $\phi$ along the centrals fibers of models of $(X, L)$ over the disk. In the case where $X = Y \times \D^*$ is a product and $\phi$ is an $\bS^1$-invariant metric on $p_1^*L$ for an ample line bundle $L$ on $Y$, one can view $\phi : \R_{\ge 0} \fl \PSH(Y, L)$ as a psh ray on $Y$, and the non-archimedean limit was defined by Berman-Boucksom-Jonsson \cite{BBJ} in the context of their proof of the Yau-Tian-Donaldson conjecture; while the general case was treated in \cite{Reb}. 
\\Our main result states that given a psh metric $\phi \in \PSH(X, L)$ with logarithmic growth at zero, then the associated non-archimedean metric $\phi^{\NA}$ induces a canonical semi-positive extension of $\phi$ to the hybrid space:
\begin{thmA} Let $(X, L) \fl \D^*$ be a polarized degeneration of complex manifolds, and $\phi \in \PSH(X, L)$ a psh metric with logarithmic growth at $t=0$. Then the singular metric $\phi^{\hyb}$ on $(X^{\hyb}, L^{\hyb})$ such that:
$$\phi^{\hyb}_0 = \phi^{\NA},$$
$$\phi^{\hyb}_{| X_t} = \phi_t$$
is plurisubharmonic.
\end{thmA}
Note that not all psh metrics on the hybrid space arise in this way: the point $0 \in \bar{\D}_r$ is pluripolar - non-negligible in the sense of hybrid pluripotential theory - so that psh hybrid metrics are not uniquely recovered by their restriction to the punctured disk. This subtlety however disappears when restricting our attention to continuous psh metrics, as $0 \in \bar{\D}_r$ has empty interior, so that every \emph{continuous} psh metric on $L^{\hyb}$ is of the form described in Theorem A.
\\We now move on to discuss Monge-Ampère measures in this context. We start with the case where $X/K$ is an $n$-dimensional projective variety over a complete valued field $K$, endowed with $n$ ample line bundles $(L_1,...,L_n)$. When $K = \C$, it follows from the work of Bedford-Taylor \cite{BT} that the complex Monge-Ampère operator:
$$ (\phi_1,..., \phi_n) \mapsto dd^c \phi_1 \wedge ... \wedge dd^c \phi_n,$$
defined \emph{a priori} on tuples of smooth, positive-definite metrics $\phi_i$ on $L_i$, extends to \emph{continuous} psh metrics. When the field $K$ is non-archimedean, it is possible to define in a similar way a measure-valued Monge-Ampère operator, denoted by $\MA(\phi_1,...,\phi_n)$, where $(\phi_1,...,\phi_n)$ is a tuple of continuous psh metrics on the $L_i$, this was done in \cite{CL}, \cite{BFJ1} when $K$ is discretely-valued of equicharacteristic zero, and extended to the general case in \cite{BE}. As a result, if $\phi$ is a continuous psh metric on a relatively polarized scheme $(X,L)/A$ over a Banach ring $A$, it induces a family of Monge-Ampère measures $(\MA(\phi_x))_{x \in \M(A)}$ on $X^{\an} \xrightarrow{\pi} \M(A)$, where $\phi_x$ is the restriction of $\phi$ to $X^{\an}_{\mathscr{H}(x)} \simeq \pi^{-1}(x)$, and $\MA(\phi_x) := \MA(\phi_x,..., \phi_x)$. Assuming that $X$ is flat over $A$, it is a natural question to wonder whether or not the family of Monge-Ampère measures $(\MA(\phi_x))_{x \in \M(A)}$ is weakly continuous with respect to $x \in \M(A)$.
\\In the case where $A$ is a hybrid ring, we compare our formalism with the set-up of Favre \cite{Fav} for continuous psh metrics on the hybrid space, which yields a continuity result for the family of Monge-Ampère measures on the hybrid space associated to a continuous psh hybrid metric (theorem \ref{theo cont MA}):
\begin{theo}{\cite[thm. 4.2]{Fav}} \label{theo Favre intro} Let $X \fl \D^*$ be a projective degeneration of complex manifolds, polarized by an ample line bundle $L$. If $\phi$ is a continuous plurisubharmonic metric on $(X^{\hyb}, L^{\hyb})$, then the family of Monge-Ampère measures $(\mu_t)_{t \in \bar{\D}_r}$ on $X^{\hyb}$, defined by:
$$\mu_t = \MA(\phi_t),$$
where $\phi_t = \phi_{| X_t}$ for $t \in \bar{\D}^*_r$ and $\phi_0 = \phi_{| X^{\an}_K}$, is weakly continuous.
\end{theo}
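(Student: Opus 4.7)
The plan is to split the continuity of $t \mapsto \mu_t$ into two regimes: the easy case of $t$ varying over the punctured disk $\bar{\D}_r^*$, and the subtler case of continuity at the central point $t=0$.

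Over the punctured disk, I would observe that continuity of $\phi$ as a hybrid metric on the compact space $X^{\hyb}$ implies that for any compact $K \Subset \bar{\D}_r^*$ the family $(\phi_t)_{t \in K}$ depends uniformly continuously on $t$, once one accounts for the rescaling factor $\log r / \log |t|$ identifying the hybrid and standard absolute values on $X_t$. The weak continuity of $t \mapsto \MA(\phi_t)$ then follows from Bedford-Taylor continuity of the complex Monge-Ampère operator under uniform convergence of continuous psh metrics.

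The heart of the argument is the continuity at $t=0$, which I would tackle via a density-plus-stability strategy. First, approximate $\phi$ uniformly on $X^{\hyb}$ by a sequence of Fubini-Study metrics $\phi^{(m)} = \frac{1}{k_m}\log \max_i |s_i^{(m)}|$ coming from finite families of non-zero global sections $s_i^{(m)} \in H^0(X, k_m L)$ defined over the hybrid ring; such a density result for continuous psh hybrid metrics should be available from the approximation machinery developed earlier in the paper. Second, for these model-type metrics verify the weak continuity of $t \mapsto \MA(\phi^{(m)}_t)$ at $t=0$ directly: $\MA(\phi^{(m)}_t)$ is essentially determined by the loci where the various $|s_i^{(m)}|$ attain the maximum, and the variation of these quantities in the hybrid topology is built into the very construction of $X^{\hyb}$, so that this reduces to an explicit check matching up the archimedean and non-archimedean sides of the limit.

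To pass from the Fubini-Study case to $\phi$, I would invoke a quantitative stability estimate on each fiber: if $u, v$ are continuous psh metrics with $\|u - v\|_\infty \le \epsilon$, then $\bigl|\int f(\MA(u) - \MA(v))\bigr| \le \omega(\epsilon)\|f\|_\infty$ for some $\omega(\epsilon) \to 0$, by the archimedean results of Bedford-Taylor \cite{BT} and their non-archimedean analogues from \cite{BFJ1}, \cite{BE}. Decomposing
$$ \int f\, \mu_t - \int f\, \mu_0 = \left[\int f\, \mu_t - \int f\, \MA(\phi^{(m)}_t)\right] + \left[\int f\, \MA(\phi^{(m)}_t) - \int f\, \MA(\phi^{(m)}_0)\right] + \left[\int f\, \MA(\phi^{(m)}_0) - \int f\, \mu_0\right], $$
the outer terms are small uniformly in $t$ once $m$ is large, while the middle term tends to zero by the previous step. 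The main obstacle I anticipate is obtaining the stability modulus $\omega$ in a form uniform in $t \in \bar{\D}_r$, so that the archimedean and non-archimedean estimates can be glued across the degeneration. The two decisive inputs making this feasible are that the total masses $\MA(\phi_t)(X_t) = (L^n)$ are independent of $t$, and that the uniform continuity of $\phi$ on the compact hybrid space provides a $t$-independent control on $\|\phi - \phi^{(m)}\|_{L^\infty(X^{\hyb})}$.
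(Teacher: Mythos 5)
Your overall skeleton (reduce by density of Fubini--Study metrics in $\CPSH(L^{\hyb})$, handle the Fubini--Study case, then pass to the limit by a stability estimate, using the constancy of the total mass $(L^n)$) is the same reduction the paper performs. But two steps do not survive scrutiny as written.

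First, the stability estimate $\bigl\lvert \int f\,(\MA(u)-\MA(v)) \bigr\rvert \le \omega(\epsilon)\lVert f \rVert_\infty$ is false: it would assert that the Monge--Ampère operator is continuous in \emph{total variation} under uniform convergence of potentials, whereas it is only weakly continuous (the measures $\MA(u)$ and $\MA(v)$ can remain mutually singular with total variation distance $2(L^n)$ while $\lVert u - v\rVert_\infty \to 0$). The correct quantitative input is the Chern--Levine--Nirenberg inequality of lemma \ref{lem CLN}, which requires the test function itself to be a DFS function and produces a constant depending on its DFS decomposition, not on $\lVert f\rVert_\infty$. One must therefore \emph{also} approximate $f$ by elements of $\DFS(X^{\hyb})$ (which is dense in $\mathcal{C}^0(X^{\hyb})$) and use the $t$-independent total mass to control the resulting error on a general continuous $f$; you mention the mass normalization as a "decisive input" but your three-term decomposition never actually uses it in this way.

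Second, and more seriously, the continuity at $t=0$ for a Fubini--Study metric $\phi = d^{-1}\max_i \log\lvert s_i\rvert$ is not "an explicit check built into the construction of $X^{\hyb}$" --- it is exactly the content of \cite[thm. 4.2]{Fav}, i.e. of the statement being proved. One has to pass to a log-resolution $\X'$ of the vertical ideal $\langle s_1,\dots,s_l\rangle$ on a model, identify $\MA(\phi_0)$ with the atomic measure $\sum_E b_E(\Ld'^n\cdot E)\,\delta_{v_E}$ supported at divisorial points (example \ref{ex MA model}), and then prove that the complex Monge--Ampère measures $(dd^c\phi_t)^n$ equidistribute towards these points as $t\to 0$; this asymptotic analysis is the hard core of Favre's paper and is nowhere sketched in your proposal. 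The paper itself does not reprove it: its "proof" of theorem \ref{theo cont MA} consists of the density/CLN reduction above together with a dictionary identifying pure Fubini--Study metrics on $L^{\hyb}$ with Favre's regular admissible data and continuous psh hybrid metrics with his uniform functions, after which \cite[thm. 4.2]{Fav} is invoked verbatim. You must either supply that equidistribution argument or cite it; as it stands the central step of the theorem is asserted rather than proved.
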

Finally, we apply our setup to refine our previous work \cite{PS} bearing on degenerations of canonically polarized manifolds $X \fl \D^*$. Writing $L = K_{X/\D^*}$, it follows from the classical Aubin-Yau theorem that each fiber $X_t$ admits a unique Kähler-Einstein metric $\phi_t$ on $L_t$, whose curvature form $\omega_t = dd^c \phi_t$ has negative constant Ricci curvature:
$$\Ric(\omega_t) = -\omega_t.$$
Moreover, it follows from the results of Schumacher \cite{Schu} that the family of Kähler-Einstein metrics $\phi =(\phi_t)_{t \in \D^*}$ also has positive curvature in the direction of the base and has logarithmic growth at $t=0$, so that $\phi \in \PSH(X, L)$.
\\The machinery of the Minimal Model Program (see section \ref{sec CV alg neg}) furthermore implies that after a finite base change, the family admits a unique canonical model $\X_c / \D$, which has ample relative canonical bundle $K_{\X_c/\D}$. Moreover, by the results of Song-Sturm-Wang \cite{Song}, \cite{SSW}, the Kähler-Einstein metrics converge in a natural sense to a unique Kähler-Einstein current $\omega_{KE, 0}$ on the special fiber $\X_{c, 0}$, and even though this current does not have bounded potentials, its singularities are milder than any log poles. We are thus able to show that the non-archimedean limit of the Kähler-Einstein metrics is the model metric $\phi_{K_{\X_c/\D}}$ (see example \ref{ex model}) associated to the canonical model $(\X_c, K_{\X_c/\D})$:
\begin{thmB}
Let $X \xrightarrow{\pi} \D^*$ be a degeneration of canonically polarized manifolds, $L = K_{X/\D^*}$, and let $\phi_{\KE} \in \PSH(X, L)$ be the family of Kähler-Einstein metrics. We assume that the family $X$ has semi-stable reduction over $\D$ (which can always be achieved after finite base change). Then the metric on $L^{\hyb}$ defined by:
$$\phi_{| X} = \phi_{\KE},$$
$$\phi_0 = \phi_{K_{\X_c/R}}$$
is continuous and plurisubharmonic.
\end{thmB}
In particular, using Favre's theorem mentioned above, this recovers the convergence of associated Monge-Ampère measures, which was previously obtained in \cite[thm. A]{PS}.
\\In the paper \cite{PS22}, we furthermore study in detail the case of toric metrics on the hybrid space associated to a complex toric variety $Z$, and provide a combinatorial description thereof, in the spirit of \cite{BPS}. This allows us to describe the solution to the non-archimedean Monge-Ampère equation on the Fermat family of Calabi-Yau hypersurfaces, as studied in \cite{Li1} in relation with the SYZ conjecture.
\\ \textbf{Notation and conventions.}
All rings are assumed to be unitary and commutative.
\\We will use additive notation for line bundles: if $L$, $M$ are two line bundles on a variety $X$, we write $L+M:= L\otimes M$, and $kL:= L^{\otimes k}$ for $k \in \Z$.
\\If $X$ is a complex manifold and $\phi$ a smooth function on $X$, we set $dd^c \phi = \frac{i}{\pi} \ddbar \phi$. We extend the notation to Hermitian metrics on line bundles, so that if $L$ is a holomorphic line bundle on $X$ and $\phi$ a smooth metric on $L$, the curvature form $dd^c \phi \in c_1(L)$ - and similarly for singular psh metrics.
\\Throughout this text, whenever we say that $X \xrightarrow{\pi} \D^*$ is a degeneration of complex manifolds, we mean that $X$ is a smooth complex manifold and $\pi$ a holomorphic submersion (which will often be omitted from notation). We will furthermore always assume that the degeneration is meromorphic at $0$, i.e. that there exists a normal complex analytic space $\X \xrightarrow{\pi}  \D$ such that $\X_{| \D^*} = X$.
\\ \textbf{Organization of the paper.} In section \ref{sec Berko}, we recall some general facts about Berkovich analytic spaces over Banach rings, and in section \ref{sec NA ppt} we define and prove basic properties of the class $\PSH(X, L)$ of plurisubharmonic singular metrics on a polarized scheme $(X, L)/A$ over an integral Banach ring; we also give some explicit examples along the way. We then move on to the case of hybrid spaces: section \ref{sec psh hyb} is devoted to the statement and the proof of theorem A, and section \ref{sec MA} is a discussion on families of Monge-Ampère measures, where we explain the proof of theorem \ref{theo Favre intro}. Finally, in section \ref{sec KE neg}, we restrict our attention to degenerations of canonically polarized manifolds and prove theorem B.
\\ \textbf{Acknowledgments.} I am grateful to my advisor S. Boucksom for his constant encouragement and for many helpful conversations, and to A. Ducros for helpful comments on a preliminary version of this paper. I would also like to thank Y. Odaka, R. Reboulet and the anonymous referee for various comments on a preliminary version of this paper.
\section{Berkovich analytic spaces} \label{sec Berko}
\subsection{Definitions} \label{sec def Berko}
\begin{defn}
Let $A \neq 0$ be a ring. A (submultiplicative) semi-norm $\lVert \cdot \rVert$ on $A$ is a map $\lVert \cdot \rVert : A \fl \R_{\ge 0}$ such that :
\begin{itemize}
\item $\lVert 1 \rVert=1$ and $\lVert 0 \rVert=0$,
\item $\forall a, b \in A$, $\lVert a+b \rVert \le \lVert a\rVert+ \lVert b\rVert$,
\item $\forall a, b \in A$, $\lVert ab \rVert \le \lVert a \rVert \lVert b\rVert$.
\end{itemize}
Its kernel $\Ker \lVert \cdot \rVert =\{ a \in A / \lVert a\rVert=0 \}$ is an ideal of $A$, which is prime when $\lVert \cdot \rVert$ is multiplicative. A submultiplicative semi-norm on $A$ whose kernel is reduced to zero is called a norm on $A$.
\\Finally, a Banach ring $A$ is a non-zero ring equipped with a submultiplicative norm $\lVert \cdot \rVert$ such that $A$ is complete with respect to $\lVert \cdot \rVert$.
\end{defn}
For example, any non-zero ring $A$ endowed with the trivial norm $\lVert \cdot \rVert_0$ (such that $\lVert a \rVert_0 =1$ for any non-zero $a \in A$) is a Banach ring.
\begin{defn} Let $A$ be a Banach ring.
\\The Berkovich spectrum $\M(A)$ is the set whose points $x \in \M(A)$ are multiplicative semi-norms $\lvert \cdot \rvert_x : A \fl \R_{\ge 0}$ satisfying $\lvert \cdot \rvert_x \le \lVert \cdot \rVert$. 
\\It is equipped with the topology of pointwise convergence on $A$, which makes it into a non-empty Hausdorff compact topological space by \cite{Ber}, and with a map $q: \lvert \cdot \rvert_x \mapsto \mathfrak{p}_x = \Ker (\lvert \cdot \rvert_x)$ to $\Sp(A)$ which is continuous.
\end{defn}
For instance, if $A=k$ is a complete valued field, then $\M(k)$ is reduced to the point $\lVert \cdot \rVert$.
\begin{ex}
Let $A = \Z$ be the ring of integers, endowed with the usual archimedean absolute value $\lvert \cdot \rvert_{\infty}$. Let $x_0 = \lvert \cdot \rvert_0 \in \mathscr{M}(\Z)$ be the trivial absolute value on $\Z$, and let $x_p = \lvert \cdot \rvert_p$ be the p-adic absolute value, normalized setting $\lvert p \rvert_p = p^{-1}$. 
\\It follows from Ostrowski's theorem that any point $x \in \M(\Z)$ is of the following form: either there exists $\eps \in [0,+\infty]$ and $p$ a prime number such that $x = \lvert \cdot \rvert_p^{\eps}$, or there exists $\eps \in [0,1]$ such that $x = \lvert \cdot \rvert_{\infty}^{\eps}$; where we denote $\lvert \cdot \rvert_x^0 = \lvert \cdot \rvert_0$ the trivial absolute value for any $x \in \M(\Z)$, while $\lvert \cdot \rvert_p^{\infty}$ is the absolute value such that $\lvert n \rvert_p^{\infty} =0$ if $p$ divides $n$, and $\lvert n \rvert_p^{\infty} =1$ otherwise.
\\Topologically, the Berkovich spectrum $\M(\Z)$ is thus an infinite wedge of segments \\parametrized by the prime numbers and $\infty$, glued together at the trivial absolute value $x_0$. Note that for each neighbourhood $V$ of $x_0$ in $\M(\Z)$, the set of branches not contained in $V$ is finite. 
\\The map $\M(\Z) \xrightarrow{q} \Sp(\Z)$ maps the outer end  $\lvert \cdot \rvert_p^{\infty}$ of the p-adic branch to the prime ideal $(p)$, and any other point to the generic point of $\Sp(\Z)$.
\end{ex}
The analytification of an $A$-scheme of finite type is now defined as follows:
\begin{defn} \label{def an} Let $B$ be a finitely generated $A$-algebra. The analytification $Y^{\an}$ of $Y =\Sp(B)$ is  the set of multiplicative semi-norms $\lvert \cdot \rvert_y$ on $B$ whose restriction to $A$ belong to $\M(A)$. \\It is endowed with the coarsest topology making the maps $y \mapsto \lvert f \rvert_y$ continuous, for $f \in B$, and comes with a continuous structure map $Y^{\an} \fl \M(A)$, sending a semi-norm to its restriction to $A$.
\\If $X$ is a scheme of finite type over $A$, one can then glue the analytifications of affine charts of $X$ in order to define the analytic space $X^{\an}$, together with the structure map $\pi : X^{\an} \fl \M(A)$.
\end{defn}
The space $X^{\an}$ satisfies nice topological properties: if $X/A$ is separated, then $X^{\an}$ is Hausdorff; and $X^{\an}$ is compact whenever $X/A$ is projective.
\\Any $A$-analytic space comes with a sheaf of analytic functions, as defined in \cite[def. 1.5.3]{Ber}. One can then show that the above definition induces an \emph{analytification functor} $X \mapsto X^{\text{an}}$ from the category of $A$-schemes of finite type to the category of $A$-analytic spaces, that was defined in \cite{LP}. 
\begin{ex}
\label{ex GM}
Let $A = \C$ endowed with the Euclidean absolute value, and $B$ be a complex Banach algebra of finite type. Then the classical Gelfan'd-Mazur theorem implies that $Y^{\an} = (\Sp B)^{\an}$ is the set of maximal ideals of $B$. As a result, $Y^{\an}$ is the set $Y(\C)$ of closed points of the affine complex algebraic variety $Y$, and the induced topology on $Y$ is the Euclidean one. More generally, if $Y$ is a reduced scheme of finite type over $\C$, then the analytification $Y^{\an}$ of $Y$ with respect to the Euclidean absolute value on $\C$ is homeomorphic to the complex variety $Y$ endowed with the Euclidean topology.
\end{ex}

\begin{ex}
Let $A=K$ be a complete non-archimedean field, and let $X/K$ be an integral, separated scheme of finite type. Then the Berkovich space $X^{\an}$ can be described more explicitly as the set of couples $x=(\xi, v_x)$, where $\xi = \xi_x$ is a scheme-theoretic point of $X$ and $v_x$ is a real valuation on the function field of $\overline{\xi}_x$, extending the valuation on $K$. 
\\Moreover, the map $x \mapsto \xi_x$ from $X^{\an}$ to $X^{\sch}$ is continuous, surjective, and induces a bijection between the respective sets of connected components; the closed subscheme $Y := \overline{\xi}_x$ will be called the support of $x \in X^{\an}$.
\end{ex}
\begin{defn}
Let $A$ be a Banach ring, and $x \in \M(A)$. Write $\mathfrak{p}_x =q(x)= \Ker (\lvert \cdot \rvert_x)$, $\kappa(x) = \Frac(A/\mathfrak{p}_x)$ the schematic residue field of $A$ at $\mathfrak{p}_x$. The semi-norm $\lvert \cdot \rvert_x$ descends to an absolute value on $\kappa(x)$, we write $\mathscr{H}(x)$ for the associated completion of $\kappa(x)$ and call it the residue field of $\M(A)$ at $x$.
\end{defn}
Assume given two Banach rings $A$ and $B$, together with a bounded ring homomorphism $A\fl B$. This induces a continuous map:
$$\rho : \M(B) \fl \M(A),$$
which simply sends a semi-norm on $B$ to its pull-back to $A$.
\\If $X/A$ is a scheme of finite type and $X_B := X \times_{A} B$, we additionally have a continuous map $F: X^{\an}_B \fl X^{\an}$, defined explicitly by restriction of semi-norms on an open cover of $X$ by affine schemes (see def. \ref{def an});  and which fits into the commutative diagram:
\begin{center}
    \begin{tikzcd}
    X^{\an}_B \arrow[r, "F"] \arrow[d, "\pi_B"]  & X^{\an} \arrow[d, "\pi_A"] \\
    \M(B)  \arrow[r, "\rho"] & \M(A).
   \end{tikzcd}
\end{center}
The following proposition allows us to view - at least topologically - a Berkovich space over a Banach ring $A$ as a family of Berkovich spaces over complete valued fields, parametrized by $\M(A)$:
\begin{prop} \label{prop gbr}
Let $A$ be a Banach ring, $X$ a scheme of finite type over $A$, and $\pi : X^{\an} \fl \M(A)$ the associated analytic space. If $x \in \M(A)$, then $\pi^{-1}(x)$ is canonically homeomorphic to the analytification of the base change $X_{\mathscr{H}(x)} = X \times_A \mathscr{H}(x)$ with respect to the absolute value $\lvert \cdot \rvert_x$ on $\mathscr{H}(x)$. \\Moreover, the base change map $F_x : X^{\an}_{\mathscr{H}(x)} \fl X^{\an}$ is the inclusion $\pi^{-1}(x) \subset X^{\an}$ under this homeomorphism.
\end{prop}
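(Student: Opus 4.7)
The plan is to establish the result in three steps: reduction to the affine case, construction of an explicit bijection, and verification of continuity. The main obstacle is the last step, as the topology on $X^{\an}_{\mathscr{H}(x)}$ is controlled by pointwise convergence on the enlarged algebra $B \otimes_A \mathscr{H}(x)$, whose elements need not come from $B$.

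Since the analytification is defined by gluing affine charts (see Definition \ref{def an}) and the base change map $F_x$ is compatible with this gluing, I first reduce to the case $X = \Sp(B)$ affine, with $B$ a finitely generated $A$-algebra. Write $B_{\mathscr{H}(x)} := B \otimes_A \mathscr{H}(x)$. By definition, $\pi^{-1}(x)$ is the set of multiplicative semi-norms on $B$ restricting to $\lvert \cdot \rvert_x$ on $A$, while $X^{\an}_{\mathscr{H}(x)}$ is the set of multiplicative semi-norms on $B_{\mathscr{H}(x)}$ restricting to $\lvert \cdot \rvert_x$ on $\mathscr{H}(x)$; in both cases the topology is that of pointwise convergence. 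The map $F_x$ is then literally restriction of semi-norms along $B \hookrightarrow B_{\mathscr{H}(x)}$, which is continuous and takes values in $\pi^{-1}(x)$.

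To construct the inverse, let $\lvert \cdot \rvert_y$ be a point of $\pi^{-1}(x)$. Its kernel $\mathfrak{q}_y \subset B$ is a prime ideal lying above $\mathfrak{p}_x$, so $\lvert \cdot \rvert_y$ descends to an absolute value on the fraction field $\Frac(B/\mathfrak{q}_y)$, which naturally contains $\kappa(x)$ as a valued subfield. Completing yields an isometric embedding $\mathscr{H}(x) \hookrightarrow \mathscr{H}(y)$ into a complete valued field together with a compatible ring morphism $B \fl \mathscr{H}(y)$; extending $\mathscr{H}(x)$-linearly, one obtains $B_{\mathscr{H}(x)} \fl \mathscr{H}(y)$ whose composition with $\lvert \cdot \rvert_{\mathscr{H}(y)}$ defines a multiplicative semi-norm $\lvert \cdot \rvert_{\tilde y}$ on $B_{\mathscr{H}(x)}$ extending $\lvert \cdot \rvert_x$. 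Restriction back to $B$ recovers $\lvert \cdot \rvert_y$, hence $y \mapsto \tilde y$ is a set-theoretic inverse to $F_x$; conversely, any multiplicative semi-norm on $B_{\mathscr{H}(x)}$ extending $\lvert \cdot \rvert_x$ is determined by its restriction to $B$, as this restriction determines both its kernel and the induced absolute value on the fraction field of the quotient.

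The key difficulty is the continuity of $y \mapsto \tilde y$. For a convergent sequence $y_n \fl y$ in $\pi^{-1}(x)$ and $g \in B_{\mathscr{H}(x)}$, I would write $g = \sum_i b_i \otimes h_i$ as a finite sum and approximate each $h_i$ by $h'_i \in \kappa(x) = \Frac(A/\mathfrak{p}_x)$, which is possible since $\mathscr{H}(x)$ is by definition the completion of $\kappa(x)$. The reverse triangle inequality and multiplicativity of $\lvert \cdot \rvert_{\mathscr{H}(y_n)}$ then yield
\[ \bigl\lvert \lvert g \rvert_{\tilde y_n} - \lvert g' \rvert_{\tilde y_n} \bigr\rvert \le \lvert g - g' \rvert_{\tilde y_n} \le \sum_i \lvert b_i \rvert_{y_n} \lvert h_i - h'_i \rvert_x, \]
a bound that is uniform in $n$ since the $\lvert b_i \rvert_{y_n}$ are bounded along the convergent sequence. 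After clearing a common denominator in $\kappa(x)$, the approximant $g' = \sum_i b_i \otimes h'_i$ differs from an element of $B \subset B_{\mathscr{H}(x)}$ by multiplication by a nonzero element of $A/\mathfrak{p}_x$, so continuity for $g'$ follows from the definition of the topology on $\pi^{-1}(x)$. The last assertion, identifying $F_x$ with the inclusion $\pi^{-1}(x) \hookrightarrow X^{\an}$ under this homeomorphism, is then tautological from the construction.
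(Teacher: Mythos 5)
Your argument is correct and follows the same route as the paper: reduce to the affine case, identify points of the fiber with characters to complete valued fields over $\mathscr{H}(x)$, and use the universal property of $B \otimes_A \mathscr{H}(x)$ to obtain the bijection. The paper dismisses the compatibility of topologies as straightforward; your density argument (approximating elements of $B \otimes_A \mathscr{H}(x)$ by elements of $B \otimes_A \kappa(x)$ and clearing denominators, with the bound uniform along the convergent net) correctly supplies the one direction that genuinely requires an argument.
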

\begin{proof} We treat the case where $X = \Sp B$, with $B$ an $A$-algebra of finite type, as the general case will follow from gluing. By definition of $X^{\an}$ and the structure map $\pi$, the fiber $\pi^{-1}(x)$ is the set of multiplicative semi-norms on $B$ that restrict to $\lvert \cdot \rvert_x$ on $A$ - or equivalently the set of (equivalence classes of) morphisms $B \fl K$ to a complete valued field extension $K/\mathscr{H}(x)$, restricting to the morphism $A \fl \mathscr{H}(x)$ on $A$.
\\By the universal property of the tensor product of algebras, this is the same as a morphism $B \otimes_A \mathscr{H}(x) \fl K$  inducing on $\mathscr{H}(x)$ the given embedding of $\mathscr{H}(x)$ into $K$. Such a morphism then produces a semi-norm on $B \otimes_A \mathscr{H}(x)$ restricting to $\lvert \cdot \rvert_x$ on $\mathscr{H}(x)$, hence a point in $X_{\mathscr{H}(x)}^{\an}$. It is then straightforward to see that this bijection is compatible with the weak topologies and thus induces a homeomorphism between $\pi^{-1}(x)$ and $X^{\an}_{\mathscr{H}(x)}$, compatible with the inclusion and base change map respectively.
\end{proof}
\begin{rem} When $A$, $B$ are geometric base rings in the sense of Lemanissier-Poineau \cite[def. 3.3.8]{LP} (which is the case of all Banach rings considered in this paper), the base change $X^{\an}_B$ is indeed the fiber product $X^{\an} \times_{\M(A)} \M(B)$ in the category of $A$-analytic spaces, see \cite[§4.2, 4.3]{LP}.
\end{rem}
\begin{defn} \label{def Mbir}
Let $A$ be an integral Banach ring, and write $\eta_A$ for the generic point of $\Sp(A)$. We say a point $x \in \M(A)$ is Zariski-dense if and only the kernel of $\lvert \cdot \rvert_x$ is reduced to zero.
\\We write $\M(A)^{\eta} = q^{-1}(\eta_A) \subset \M(A)$ for the subset of Zariski-dense points.
\end{defn}
As the name suggests, those are indeed the points of $\M(A)$ which are dense for the Zariski topology, where the Zariski topology on $\M(A)$ is the coarsest making the map $q : \M(A) \fl \Sp (A)$ continuous.
\subsection{Discretely-valued fields}\label{section DVF}
Let $K$ be a complete, discretely-valued field, with valuation $v$. We denote by $R= \{ v \ge 0 \}$ its valuation ring, $\mathfrak{m} = \{ v>0 \}$ its maximal ideal and $k= R/ \mathfrak{m}$ the residue field. We will furthermore assume that $K$ has equicharacteristic zero, i.e. $k$ and $K$ both have characteristic zero. In this case, it follows from Cohen's structure theorem that $K$ is isomorphic to the field $K =k((t))$ of Laurent series over its residue field, endowed with the valuation $v= \ord_0$.
\\We let $X$ be an integral, separated $K$-scheme of finite type, and write $n = \dim(X)$. The purpose of this section is to explain how one can understand the topological space $X^{\an}$ more concretely using piecewise-affine geometry. The basic idea is that for a large enough class of integral $R$-models $\X$ of $X$, there exists a finite-dimensional cell complex $\Sk(\X) \subset X^{\an}$, which we may view as a tropicalization of the model $\X$. As a matter of fact, by \cite[thm. 10]{KontsevichSoibelman}, the space $X^{\an}$ can be realized as the inverse limit of all such $\Sk(\X)$, so that $X^{\an}$ is homeomorphic to a tower of simplicial complexes.
\\We start with a definition:
\begin{defn} A model of $X$ is a flat, separated $R$-scheme $\X$, together with an isomorphism of $K$-schemes $\X \times_{R} K \simeq X$.
\\We will denote $\X_0 := \X \times_{R} k$ the special fiber of $\X$, and by $\Div_0(\X)$ the group of Weil divisors on $\X$ supported on the special fiber.
\end{defn}
If $\X$, $\X'$ are two models of $X$, a morphism of models $f: \X' \fl \X$ is an $R$-morphism whose base change to $K$ induces the identity on $X$. We will say that $\X'$ dominates $\X$ if there exists such a morphism, in which case it is unique.
\\Assume that $X/K$ is proper, and let $\X/R$ be a proper model of $X$. By the valuative criterion of properness, for any $x =(\xi_x,v_x) \in X^{\an}$, the $K$-morphism $\Sp \mathscr{H}(x) \fl X$ - whose image is the point $\xi_x$ - lifts in a unique way to an $R$-morphism from the valuation ring $\mathscr{H}(x)^{\circ}$ to $\X$:
\begin{center}
    \begin{tikzcd}
    \Sp \mathscr{H}(x) \arrow[r, "\xi_x"] \arrow[d]  & \X \arrow[d] \\
    \Sp \mathscr{H}(x)^{\circ} \arrow[ru, dashed] \arrow[r] & \Sp R.
   \end{tikzcd}
\end{center}
The image of the closed point of $\Sp \mathscr{H}(x)^{\circ}$ under the extended morphism is called the center of $x$ and denoted by $c_\X(x)$. The map $c_{\X} : X^{\an} \fl \X_0$ turns out to be surjective and anticontinuous, i.e. the preimage of a closed subset of $\X_0$ by $c_{\X}$ is open in $X^{\an}$.
\\In the case where $X/K$ is smooth, we say a model $\X/R$ has simple normal crossing singularities if $\X$ is regular, and the special fiber $\X_0$ is a divisor with simple normal crossing support inside $\X$. Such models always exists when $K$ has equicharacteristic zero and $X/K$ is projective, by Hironaka's theorem on resolution of singularities. More precisely, any model $\X/R$ can be dominated by an snc model.
\\To every snc model $\X$ of $X$, with special fiber $\X_0 = \sum_{i \in I} a_i D_i$, we can associate a cell complex encoding the combinatorics of the intersections of the irreducible components of $\X_0$. We say $Y \subset \X_0$ is a \emph{stratum} if there exists a non-empty $J \subset I$ such that $Y$ is a connected component of $D_J := \cap_{j \in J} D_j$. The dual complex of $\X$ is now defined as follows:
\begin{defn} \label{def dual cpx}
Let $\X$ be an snc model of $X$. 
To each stratum $Y$ of $\X_0$ which is a connected component of $D_J$, we associate a simplex:
$$ \tau_Y = \{ w \in \R_{\geqslant 0}^{|J|} \, | \sum_{j \in J} a_j w_j =1 \}.$$
We define the cell complex $\mathcal{D}(\X_0)$ by the following incidence relations: $\tau_Y$ is a face of $\tau_{Y'}$ if and only if $Y' \subset Y$.
\end{defn}
Given any snc model $\X$ of $X$ over $R$, there exists a natural embedding $i_{\X}$ of the dual complex $\mathcal{D}(\X_0)$ into $X^{\an}$, given as follows. 
The vertices $v_i$ of $\mathcal{D}(\X_0)$ are in one-to-one correspondence with irreducible components $D_i$ of the special fiber $\X_0 = \sum_{i \in I} a_i D_i$, so that we set $$i_{\X}(v_{i}) = v_{D_i} := a^{-1}_i \ord_{D_i},$$ where the valuation $\ord_{D_i}$ associates to a meromorphic function $f \in K(X) \simeq K(\X)$ its vanishing order along $D_i$ - the normalisation by $a^{-1}_i$ ensuring that $v_{D_i}(t) = 1$. 
\begin{defn} A valuation given in this way, for some snc model $\X$ of $X$, is called \emph{divisorial}. We write $X^{\xdiv} \subset X^{\an}$ for the set of divisorial valuations, it is a dense subset of $X^{\an}$.
\end{defn}
One can now interpolate between those divisorial valuations using \emph{quasi-monomial} valuations, in order to embed $\mathcal{D}(\X_0)$ into $X^{\text{an}}$:

\begin{prop}[{\cite[Proposition 2.4.4]{MN}}] \label{prop casi mono}
Let $\X$ be an snc model of $\X$, with special fiber $\X_0 = \sum_{i \in I} a_i D_i$.
Let $J \subset I$ such that $D_J = \cap_{j \in J} D_j$ is non-empty, and $Y$ a connected component of $D_J$, with generic point $\eta$. 
We furthermore fix a local equation $z_j \in \gO_{\X,\eta}$ for $D_j$, for any $j \in J$.
\\Then, for any $w \in \tau_Y = \{ w \in \R^{|J|}_{\geqslant 0} \,| \sum_{j \in J} a_j w_j =1 \}$, there exists a unique valuation
$$v_w : \gO_{\X,\eta} \fl \R_{\geqslant 0} \cup \{ + \infty\}$$
 such that for every $f \in \gO_{\X,\eta}$, with expansion $f = \sum_{\beta \in \N^{|J|}} c_{\beta} z^{\beta}$ (with $c_{\beta}$ either zero or unit), we have:
$$v_w(f) = \min \{ \langle w , \beta \rangle \,| \beta \in \N^{|J|},  c_{\beta} \neq 0 \},$$
where $\langle \; , \; \rangle$ is the usual scalar product on $\R^{|J|}$.
\end{prop}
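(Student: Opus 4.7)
The plan is to construct $v_w$ as a monomial (Gauss) valuation, by first passing to the completion of $\gO_{\X,\eta}$ along its maximal ideal. Since $\X$ is snc at $\eta$ and $Y$ is a connected component of $D_J$, the local ring $\gO := \gO_{\X,\eta}$ is regular of dimension $|J|$ and the $(z_j)_{j \in J}$ form a regular system of parameters. Cohen's structure theorem in equicharacteristic zero then supplies a canonical isomorphism of $\kappa(\eta)$-algebras
$$\widehat{\gO} \simeq \kappa(\eta)[[z_j : j \in J]],$$
in which every element admits a unique formal expansion $f = \sum_{\beta \in \N^{|J|}} \tilde c_\beta\, z^\beta$ with $\tilde c_\beta \in \kappa(\eta)$.

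I would then define on $\widehat{\gO}$ the monomial valuation
$$\tilde v_w(f) = \min\{ \langle w, \beta\rangle : \tilde c_\beta \neq 0 \}.$$
The minimum is well-defined: the constraints $w_j \geqslant 0$ and $\sum_j a_j w_j = 1$ force the set of values $\{\langle w, \beta\rangle : \beta \in \N^{|J|}\}$ to contain only finitely many elements below any fixed bound. The ultrametric inequality is immediate, and multiplicativity reduces to the observation that the associated graded ring for the $\tilde v_w$-filtration embeds into the monomially graded polynomial ring $\kappa(\eta)[z_j : j \in J]$, which is an integral domain; this rules out any cancellation among leading terms.

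The desired valuation $v_w$ is then the restriction of $\tilde v_w$ to $\gO \subset \widehat{\gO}$. To recover the stated formula for an arbitrary unit expansion $f = \sum c_\beta z^\beta$ with $c_\beta \in \gO^\times \cup \{0\}$ (rather than a formal expansion with coefficients in $\kappa(\eta)$), I observe that any nonzero $c_\beta$ has nonzero image $\bar c_\beta \in \kappa(\eta)^\times$, so $\tilde v_w(c_\beta) = 0$ and hence $\tilde v_w(c_\beta z^\beta) = \langle w, \beta\rangle$. Setting $m := \min\{\langle w, \beta\rangle : c_\beta \neq 0\}$, the weight-$m$ homogeneous component of the image of $f$ in $\widehat{\gO}$ reads $\sum_{\langle w, \beta\rangle = m} \bar c_\beta\, z^\beta$, which is nonzero by $\kappa(\eta)$-linear independence of distinct monomials, so $v_w(f) = m$. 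Uniqueness is then immediate, as the formula pins down the value of $v_w$ on every element of $\gO$.

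The main conceptual obstacle, rather than constructing $\tilde v_w$ on $\widehat{\gO}$ (a textbook Gauss-valuation computation), is checking that the proposed formula for $v_w$ is independent of the chosen unit expansion, which is far from unique in $\gO$ itself. Passing to the completion and invoking $\kappa(\eta)$-linear independence of distinct monomials $z^\beta$ is precisely what turns this potential coherence issue into a triviality, and is the conceptual key of the argument.
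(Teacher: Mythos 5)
The paper does not actually prove this statement: it is quoted directly from [MN, Prop.\ 2.4.4], and the proof there (via Jonsson--Musta\c{t}\u{a}) follows exactly your route --- complete at $\eta$, invoke Cohen's structure theorem to identify $\widehat{\gO}$ with $\kappa(\eta)[[z_j : j \in J]]$, define the monomial valuation on the completion, and check that its value on $f$ is computed by any admissible expansion. So your strategy is the standard and correct one, and you have correctly identified the real content (well-definedness across non-unique expansions).

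There is, however, a gap in the well-definedness step precisely when $w$ lies on the boundary of $\tau_Y$, i.e.\ when $w_j = 0$ for some $j \in J$, a case the proposition allows. You assert that the weight-$m$ homogeneous component of the image of $f$ in $\widehat{\gO}$ is $\sum_{\langle w,\beta\rangle = m} \bar c_\beta z^\beta$. This holds only when all $w_j > 0$: in general a unit $c_\beta$ expands in $\widehat{\gO}$ as $\bar c_\beta + \sum_{\gamma \neq 0} d_{\beta,\gamma} z^\gamma$, and any $\gamma \neq 0$ supported on $S := \{j : w_j = 0\}$ satisfies $\langle w,\gamma\rangle = 0$, so the terms $d_{\beta,\gamma} z^{\beta+\gamma}$ also land in weight $m$; the weight-$m$ component is then a sum of such contributions over all decompositions $\delta = \beta + \gamma$, and linear independence of distinct monomials no longer immediately gives nonvanishing. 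The fix is short: among the $\beta$ with $c_\beta \neq 0$ and $\langle w,\beta\rangle = m$, choose $\beta_0$ minimal for the partial order ``$\beta \le \beta_0$ componentwise with $\beta_0 - \beta$ supported on $S$'' (it exists by Dickson's lemma); then the coefficient of $z^{\beta_0}$ in the formal expansion of $f$ is exactly $\bar c_{\beta_0} \neq 0$, whence $\tilde v_w(f) \le m$, the reverse inequality being the easy ultrametric bound you already gave. The same boundary case mildly affects your multiplicativity argument: for $S \neq \emptyset$ the associated graded of $\widehat{\gO}$ is $\kappa(\eta)[[z_j : j \in S]][z_j : j \notin S]$ rather than a polynomial ring, but it is still an integral domain, which is all the argument needs.
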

The above valuation is called the quasi-monomial valuation associated with the data $(Y, w)$. Then:
\begin{align*}
    i_\X: & \quad \mathcal{D}(\X_0) \rightarrow X^{\text{an}} \\
    & \quad \tau_Y \ni w \mapsto v_w
\end{align*}
gives a well-defined continuous injective map from $\mathcal{D}(\X_0)$ to $X^{\text{an}}$. 
\begin{defn}
We call the image of $\mathcal{D}(\X_0)$ by $i_{\X}$ the \emph{skeleton} of $\X$, written as $\Sk(\X) \subset X^{\emph{an}}$. It is a cell complex of dimension at most $\dim X$.
\end{defn}
\noindent By compactness of $\mathcal{D}(\X_0)$, $i_{\X}$ induces a homeomorphism between $\mathcal{D}(\X_0)$ and $\Sk(\X)$, so that we will sometimes abusively identify $\mathcal{D}(\X_0)$ with $\Sk(\X)$.
\\We can now define a retraction for the inclusion $\Sk(\X) \subset X^{\an}$ as follows: for any  $v \in X^{\an}$, there exists a minimal stratum $Y \subseteq  \cap_{j \in J} D_j$ of $\X_0$ such that the center $c_{\X}(v)$ of $v$ is contained in $Y$. We now associate to $v$ the quasi-monomial valuation $\rho_{\X}(v)$ corresponding to the data $(Y, w)$ with $w_j = v(z_j)$, where $z_j$ is a local equation of $D_j$ at the generic point of $c_{\X}(v)$. This should be seen as a monomial approximation of the valuation $v$ at the generic point of $Y$, with respect to the model $\X$.
\begin{defn}
The above map $\rho_{\X} : X^{\an} \longrightarrow \Sk(\X)$ is the Berkovich retraction associated with the model $\X/R$.
\end{defn}
The Berkovich retraction is continuous, restricts to the identity on $\Sk(\X)$, and by \cite{Th} \cite{Ber2}, $\rho_{\X}$ is a strong deformation retraction, i.e. there exists a homotopy between $\rho_\X$ and the identity on $X^{\an}$ that fixes the points of $\Sk(\X)$. It follows that $X^{\an}$ and $\Sk(\X)$ are homotopy equivalent. 
\\Let $\X/R$ be a model of $X$, and let $\mathscr{I}$ be a coherent ideal sheaf on $\X$. It induces a function:
$$\phi_{\mathscr{I}} : X^{\an} \fl \R,$$
sometimes denoted by $\phi_{\mathscr{I}} = \log \lvert \mathscr{I} \rvert$, as follows. For $x \in X^{\an}$, we set:
$$\phi_{\mathscr{I}} = \sup_{f} \log \lvert f(x) \rvert,$$
where the supremum runs over the $f \in \mathscr{I}_{c_{\X}(x)}$ - or equivalently, over the finitely many generators of $\mathscr{I}$ at $c_{\X}(x)$.
\\If $\mathscr{I}_1$, $\mathscr{I}_2$ are vertical ideal sheaves on $\X$, $\X'$ respectively (i.e., cosupported on the special fiber), then the equality $\phi_{\mathscr{I}_1} = \phi_{\mathscr{I}_2}$ holds if and only there exists a model $\X''$ dominating both $\X$ and $\X'$ such that the pullbacks:
$$\gO_{\X''} \cdot \mathscr{I}_1 = \gO_{\X''}\cdot \mathscr{I}_2$$
agree on $\X''$. In particular, when working with a function of the form $\phi_{\mathscr{I}}$ for a vertical ideal sheaf, we may choose a log-resolution of the ideal $\mathscr{I}$, so that we may assume that there exists a model $\X$ such that $\mathscr{I} = \gO_{\X}(D)$ for a vertical divisor $D \in \Div_0(\X)$. This implies for instance the following:
\begin{lem} Let $\mathscr{I}$ be a vertical ideal sheaf on a model $\X$ of $X$, and $\phi_{\mathscr{I}} : X^{\an} \fl \R$ the associated function. Then for any snc model $\X'$ of $X$, the restriction of $\phi_{\mathscr{I}}$ to the skeleton $\Sk(\X')$ is integral piecewise-affine.
\end{lem}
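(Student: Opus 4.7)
The plan is to first reduce to a case where the ideal sheaf becomes an invertible monomial ideal on an snc model that simultaneously dominates $\X'$, and then read off the piecewise-affine structure from the explicit quasi-monomial formula of Proposition \ref{prop casi mono}. Concretely, using the invariance of $\phi_{\mathscr{I}}$ under passage to a model on which the pullback agrees (as discussed just before the statement), I will replace $\X$ by an snc model $\X''$ built in two stages: first pick any model dominating both $\X$ and $\X'$, then apply Hironaka's log resolution of $\mathscr{I}$ together with the special fiber. After this, $\mathscr{I} \cdot \gO_{\X''} = \gO_{\X''}(-D)$ for an effective vertical Cartier divisor $D = \sum_{i \in I} b_i E_i$ with snc support, $\X''$ still dominates $\X'$, and $\phi_{\mathscr{I}} = \phi_{\mathscr{I} \cdot \gO_{\X''}}$.

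Once this reduction is made, the reduced ideal is locally principal, so writing $f$ for a local generator with vanishing divisor $D$, one has $\phi_{\mathscr{I}}(x) = -v_x(f)$. On any simplex $\tau_Y \subset \Sk(\X'')$ attached to a stratum $Y \subset \bigcap_{j \in J} E_j$, the generator reads $f = u \prod_{j \in J} z_j^{b_j}$ near the generic point of $Y$, with $u$ a unit and $z_j$ a local equation of $E_j$. Proposition \ref{prop casi mono} then immediately yields
$$\phi_{\mathscr{I}}(v_w) \;=\; -\sum_{j \in J} b_j w_j \qquad \text{for all } w \in \tau_Y,$$
which is linear with integer coefficients in $w$. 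Gluing over the simplices of $\Sk(\X'')$ shows that $\phi_{\mathscr{I}|\Sk(\X'')}$ is integral piecewise-affine.

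To pass from $\Sk(\X'')$ to $\Sk(\X')$, I will invoke the standard compatibility of skeletons under domination of snc models: since $\X''$ dominates $\X'$, one has $\Sk(\X') \subset \Sk(\X'')$, the strict transforms on $\X''$ of the components of $\X'_0$ realizing the divisorial valuations attached to the vertices of $\Sk(\X')$, and the quasi-monomial structure propagating to the interior of each simplex. The inclusion $\Sk(\X') \hookrightarrow \Sk(\X'')$ is itself integral piecewise-affine, so pulling back the function from the previous step gives the claimed integral piecewise-affine restriction of $\phi_{\mathscr{I}}$ to $\Sk(\X')$.

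The only non-formal ingredient in the argument is the last step: one must justify that the inclusion $\Sk(\X') \hookrightarrow \Sk(\X'')$ is integral piecewise-affine when $\X''$ dominates $\X'$. This requires carefully identifying the strict transforms of the components of $\X'_0$ in $\X''$ and tracking their multiplicities in the special fiber, but it is a well-established feature of snc skeletons. Once this input is granted, everything else reduces to the explicit monomial computation on $\Sk(\X'')$.
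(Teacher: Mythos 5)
Your argument is correct and is exactly the route the paper intends: the lemma is stated there as an immediate consequence of the preceding reduction, via log-resolution on a common snc model $\X''$ dominating $\X$ and $\X'$, to the case $\mathscr{I}\cdot\gO_{\X''}=\gO_{\X''}(D)$ for a vertical divisor with snc support, combined with the explicit quasi-monomial formula of Proposition \ref{prop casi mono}, which gives linearity with integer coefficients on each face. The one input you flag, namely that $\Sk(\X')\subseteq\Sk(\X'')$ with compatible integral affine structures when $\X''$ dominates $\X'$, is indeed a standard fact about skeletons of snc models (see e.g. \cite{MN}) and does not constitute a gap.
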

Note that the converse also holds. This motivates the following terminology:
\begin{defn} \label{def model f}
A function $\phi : X^{\an} \fl \R$ is integral piecewise-affine ($\Z$-PA for short) if there exists an snc model $\X$ and a vertical ideal sheaf $\mathscr{I}$ on $\X$ such that $\phi = \phi_{\mathscr{I}}$.
\end{defn}
\subsection{Hybrid spaces} \label{sec hyb}
Let $(k, \lvert \cdot \rvert)$ be a non-trivially valued field, either archimedean or non-archimedean. The following Banach ring was introduced by Berkovich \cite{Ber3}, and further studied for instance by Boucksom-Jonsson \cite[app. A]{BJ}:
\begin{defn}
Let $k^{\hyb}$ be the Banach ring obtained by equipping the field $k$ with the norm $\lVert \cdot \rVert_{\hyb}$, defined for non-zero $z \in k$ by:
$$ \lVert z \rVert_{\hyb} = \max \{ 1 , \lvert z \rvert \}.$$
\end{defn}
One can show \cite[ex. 1.1.15]{LP} that the elements of $\M(k^{\text{hyb}})$ are of the form $\lvert \cdot \rvert^{\lambda}$, for $\lambda \in [0,1]$, where $\lvert \cdot \rvert^0 = \lvert \cdot \rvert_0$ denotes the trivial absolute value on $k$. This yields a homeomorphism $\lambda : \M(k^{\text{hyb}}) \xrightarrow{\sim} [0,1]$.
\\Thus, if $Z$ is a scheme of finite type over $k$, its analytification with respect to $\lvert \cdot \rvert_{\text{hyb}}$, which we denote by $Z^{\text{hyb}}$, comes with a structure morphism $\pi : Z^{\text{hyb}} \fl [0,1]$. If $Z = \Sp(A)$ is affine, the fiber over $\lambda \neq 0$ is by definition of $\pi$ the set of semi-norms extending the absolute value $\lvert \cdot \rvert^{\lambda}$ on $k$, so that by rescaling, this is easily seen to be homeomorphic to the analytification $Z^{\an}$ of $Z$ with respect to the absolute value $\lvert \cdot \rvert$. One can in fact show that for any $Z$ of finite type, we have a homeomorphism: 
$$p: \pi^{-1}((0,1]) \xrightarrow{\sim} (0,1] \times Z^{\an},$$
compatible with the projections to $(0, 1]$.
\\On the other hand, the fiber $\pi^{-1}(0)$ consists of the semi-norms extending the trivial absolute value on $k$, so that this is homeomorphic to the analytification $Z^{\text{an}}_0$ of $Z$ with respect to the trivial absolute value on $k$.
\\Hence, the space $Z^{\text{hyb}}$ allows us to see the analytic space $Z^{\an}$ degenerate to its trivially-valued counterpart.
\\In the case where $k = \C$ with the Euclidean absolute value, the analytification $Z^{\an}$ is homeomorphic to the usual complex analytification $Z^{\hol}$ of $Z$, by example \ref{ex GM}. Thus, the space $Z^{\text{hyb}} $ provides a natural way to degenerate the complex manifold $Z^{\text{hol}}$ to the non-archimedean analytic space $Z^{\text{an}}_0$.
\\We now want to perform a similar construction for degenerations of complex varieties. Let $X \xrightarrow{\pi} \D^*$ be a holomorphic family of $n$-dimensional complex manifolds, where $\D^* =\{ \lvert t \rvert <1 \}$ is the punctured unit disk in $\C$. We will furthermore assume that the family is quasi-projective and meromorphic at zero, i.e. that there exists a relatively algebraic embedding $\iota : X \hookrightarrow \CP^{N} \times \D^*$ such that $\pi =\pr_2 \circ \iota$, and the equations of $X$ have meromorphic singularity at $t=0$. This allows us to view (the base change of) $X$ as a quasi-projective scheme over the non-archimedean field $K = \C((t))$ of Laurent series, we will write $X^{\an}$ for the Berkovich analytification of $X$ with respect to the $t$-adic absolute value on $K$, normalized such that $\lvert t \rvert =r$.
\\We fix a radius $r \in (0,1)$, and consider the following Banach ring, which we call the \emph{hybrid ring}:
$$A_r = \{ f = \sum_{n \in \Z} a_n t^n \in K  \; / \; \lVert f \rVert_{\text{hyb}} := \sum_{n} \lVert a_n \rVert_{\text{hyb}} \; r^n < \infty \}.$$
The purpose of the above Banach ring is to provide a presentation of the closed complex disk as an affine non-archimedean analytic space: we denote by $C^{\hyb}(r) := \M(A_r)$ the Berkovich spectrum of $A_r$ and call it the \emph{hybrid circle}, the terminology stems from the fact that $C^{\hyb}(r)$ is homeomorphic to the circle $\{ \lvert T \rvert =r \}$ inside the Berkovich affine line over $\C^{\hyb}$ \cite{PoiZ}. We now have the following more explicit description of the hybrid circle:
\begin{lem}{(\cite[prop. A.4]{BJ})} \label{cercle hyb}
\\The map $\tau: t \mapsto \lvert \cdot \rvert_t$ defined by:
\[ \lvert f \rvert_t = \left\{ \begin{array}{ll} r^{\ord_0(f)} & \mbox{if $t =0$},\\
 r^{\log \lvert f(t) \rvert/ \log \rvert t \rvert} & \mbox{if $t \neq 0$}
\end{array} \right. \]
\\for $f \in A_r$ induces a homeomorphism from $\bar{\D}_r$ to $C^{\hyb}(r) =\M(A_r)$.
\end{lem}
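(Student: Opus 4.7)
The plan is to show that $\tau$ is a continuous bijection from the compact space $\bar{\D}_r$ onto the Hausdorff space $\M(A_r)$, whence a homeomorphism.

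\textbf{Easy directions.} For each $t \in \bar{\D}_r$, I would first check that $\lvert \cdot \rvert_t$ is a multiplicative semi-norm on $A_r$ bounded by $\lVert \cdot \rVert_{\hyb}$. Multiplicativity at $t = 0$ is clear since $r^{\ord_0}$ is a valuation on $K = \C((t))$; at $t \in \bar{\D}_r^*$, the formula factors as $\lvert f \rvert_t = \lvert f(t) \rvert^{1/s}$ with $s = \log \lvert t \rvert / \log r \in [1, +\infty)$, which is multiplicative as a composition. The bound is trivial at $t = 0$, and at $t \in \bar{\D}_r^*$ follows from applying subadditivity of $x \mapsto x^{1/s}$ (valid since $1/s \leq 1$) to $\lvert f(t) \rvert \leq \sum_n \lvert a_n \rvert \lvert t \rvert^n$, yielding
\[ \lvert f \rvert_t \leq \sum_n \lvert a_n \rvert^{1/s} \lvert t \rvert^{n/s} = \sum_n \lvert a_n \rvert^{1/s} r^n \leq \lVert f \rVert_{\hyb}, \]
using the identity $\lvert t \rvert^{1/s} = r$ together with the elementary inequality $\lvert a_n \rvert^{1/s} \leq \max(1, \lvert a_n \rvert)$. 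Continuity of $\tau$ at $t \in \bar{\D}_r^*$ is clear from that of $f(t)$ and $s$; at $t = 0$, writing $f = t^{\ord_0 f} h(t)$ in a punctured neighborhood of $0$ with $h$ holomorphic and non-vanishing at $0$, one computes $\lvert f(t_n) \rvert^{1/s_n} = r^{\ord_0 f} \lvert h(t_n) \rvert^{1/s_n} \to r^{\ord_0 f}$ as $t_n \to 0$. Injectivity is handled by test elements: the constant $2 \in A_r$ satisfies $\lvert 2 \rvert_t = 1$ precisely when $t = 0$ (while $\lvert 2 \rvert_t = 2^{1/s} > 1$ otherwise), distinguishing $\tau(0)$ from $\tau(\bar{\D}_r^*)$; for distinct $t, t' \in \bar{\D}_r^*$, the Laurent polynomial $t - t' \in A_r$ vanishes under $\lvert \cdot \rvert_{t'}$ but not under $\lvert \cdot \rvert_t$.

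\textbf{Surjectivity (the main obstacle).} Let $x \in \M(A_r)$. Since $t$ is a unit in $A_r$ with $\lVert t \rVert_{\hyb} = r$ and $\lVert t^{-1} \rVert_{\hyb} = r^{-1}$, the identity $\lvert t \rvert_x \cdot \lvert t^{-1} \rvert_x = 1$ forces $\lvert t \rvert_x = r$. The restriction of $\lvert \cdot \rvert_x$ to the subring $\C \subset A_r$ is a multiplicative semi-norm bounded by $\max(1, \lvert \cdot \rvert)$, hence of the form $\lvert \cdot \rvert^{1/s}$ for some $s \in [1, +\infty]$ by the classification of absolute values on $\C$. If $s = +\infty$, the restriction to $\C$ is trivial, so $\lvert \cdot \rvert_x$ is non-archimedean on all of $A_r$; density of Laurent polynomials together with the ultrametric identity $\lvert \sum a_n t^n \rvert_x = \max_n \lvert a_n \rvert_x r^n = r^{\ord_0 f}$ for Laurent polynomials yields $x = \tau(0)$. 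If $s \in [1, +\infty)$, the restriction to $\C$ is archimedean, so the completed residue field $\mathscr{H}(x)$ is a complete archimedean-valued extension of $(\C, \lvert \cdot \rvert^{1/s})$; by the Gelfand--Mazur theorem, $\mathscr{H}(x) = \C$, and $\lvert \cdot \rvert_x$ factors through a character $\chi : A_r \to \C$. Setting $t_0 := \chi(t)$, one has $\lvert t_0 \rvert^{1/s} = \lvert t \rvert_x = r$, so $\lvert t_0 \rvert = r^s \leq r$ and $t_0 \in \bar{\D}_r$. Continuity of $\chi$ (from the bound $\lvert \chi(f) \rvert \leq \lVert f \rVert_{\hyb}^{\,s}$) together with density of Laurent polynomials then gives $\chi(f) = f(t_0)$ for all $f \in A_r$, hence $\lvert f \rvert_x = \lvert f(t_0) \rvert^{1/s} = \lvert f \rvert_{t_0}$. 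A possible complex conjugation of $\chi|_{\C}$ is harmless: it simply replaces $t_0$ by $\overline{t_0}$.

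Finally, $\tau$ is a continuous bijection from the compact space $\bar{\D}_r$ to the Hausdorff space $\M(A_r)$, hence a homeomorphism. The main technical input is Gelfand--Mazur in the archimedean case of surjectivity, together with a careful separate treatment of the endpoints $s = 1$ and $s = +\infty$.
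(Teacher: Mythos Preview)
The paper does not supply its own proof of this lemma; it simply cites \cite[prop.~A.4]{BJ}. So there is nothing in the present paper to compare against directly.

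Your argument is correct and is essentially the standard one (and, as far as I recall, close to how the proof runs in \cite{BJ}): verify that each $\lvert\cdot\rvert_t$ is a bounded multiplicative seminorm, check continuity and injectivity by hand, and for surjectivity split according to whether the restriction of $x\in\M(A_r)$ to $\C$ is archimedean or not, invoking Gelfand--Mazur in the archimedean case to produce a character $\chi:A_r\to\C$ and hence a point $t_0\in\bar\D_r$.

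Two small points worth tightening. In the non-archimedean case $s=+\infty$, the equality $\lvert\sum a_n t^n\rvert_x=r^{\ord_0 f}$ for a Laurent polynomial is not just the ultrametric inequality; one line is needed: write $f=a_{n_0}t^{n_0}(1+g)$ with $\lvert g\rvert_x\le r<1$, whence $\lvert 1+g\rvert_x=1$. Second, your handling of the possible conjugation in $\chi|_\C$ is fine, but you could alternatively bypass it by applying Gelfand--Mazur in its $\C$-Banach-algebra form (so that the $\C$-algebra isomorphism $\mathscr{H}(x)\simeq\C$ is $\C$-linear and $\chi|_\C=\mathrm{id}$ automatically). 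Either route works.
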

The upshot of this construction is that is $f$ in $A_r$, then up to a constant $\log \lvert f(\tau(t)) \rvert = \frac{\log \lvert f(t) \rvert}{\log \lvert t \rvert}$ for $t \neq 0$ - we are viewing the point $t$ as a rescaling of the Euclidean absolute value composed with the evaluation map at $t$. Additionally, as $t \rightarrow 0$ these rescaled absolute values converge to the non-archimedean $t$-adic absolute value $r^{\ord_0}$ on $A_r \subset \C((t))$. This motivates the following definition:
\begin{defn}
Let $X \xrightarrow{\pi} \D^*$ be a quasi-projective degeneration of complex manifolds as above, and view it as a scheme of finite type over the ring of convergent power series. We write $X_{A_r}$ its base change to the ring $A_r$. We define the hybrid space $X^{\hyb}_r$ associated to $X$ as the analytification of $X_{A_r}$ over $A_r$, which comes with a structure map:
$$\pi_{\hyb} : X_r^{\hyb} \fl C^{\hyb}(r).$$
\end{defn}
The hybrid space allows us to see the complex space $X$ degenerate to its non-archimedean analytification, as a consequence of the following:
\begin{prop}{\cite[thm. 1.2]{Fav}} \label{topo hyb}
Let $X$ be a degeneration of complex manifolds, $X_{A_r}$ the associated $A_r$-scheme, and denote the associated hybrid space by $\pi_{\hyb} : X_r^{\hyb} \fl C^{\hyb}(r)$. Then:
\begin{itemize}
\item $\pi_{\hyb}^{-1}(0)$ can be canonically identified with $X^{\an}$,
\item there exists a homeomorphism $\beta : X_{ | \D^*_r} \xrightarrow{\sim} \pi_{\hyb}^{-1}(\tau(\D^*_r))$, satisfying $\pi_{\hyb} \circ \beta = \tau \circ \pi$,
\item if $\phi$ is a rational function on $X_{A_r}$, then $\lvert \phi (\beta(z)) \rvert = \lvert \phi (z) \rvert^{\frac{\log r}{\log \lvert t \rvert}}$ for $z$ not in the indeterminacy locus of $\phi$.
\end{itemize}
\end{prop}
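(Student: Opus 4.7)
The strategy is to apply Proposition \ref{prop gbr} fiberwise over the hybrid circle $C^{\hyb}(r)$, identifying the residue field $\mathscr{H}(x)$ at each point $x \in C^{\hyb}(r)$ via the explicit description of Lemma \ref{cercle hyb}, and then assembling the fiberwise identifications into the global homeomorphisms claimed in the proposition.

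First, I would analyze the central fiber. The point $0 \in \bar{\D}_r$ corresponds under $\tau$ to the semi-norm $|f|_0 = r^{\ord_0(f)}$ on $A_r$. Since $A_r$ is an integral domain and $\ord_0(f)$ is finite for every nonzero $f \in A_r$, this semi-norm has trivial kernel, so $\kappa(0) = \Frac(A_r) = K = \C((t))$. The field $K$ is already complete with respect to $r^{\ord_0}$, hence $\mathscr{H}(0) = K$ equipped with the $t$-adic absolute value normalized so that $|t| = r$. Proposition \ref{prop gbr} then gives a canonical homeomorphism $\pi_{\hyb}^{-1}(0) \simeq X^{\an}_K = X^{\an}$, which is the first item. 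For a point $t \in \D^*_r$, the semi-norm $|f|_{\tau(t)} = |f(t)|^{\log r/\log |t|}$ on $A_r$ has kernel equal to the maximal ideal $\ker(\mathrm{ev}_t : A_r \fl \C)$, so $\kappa(\tau(t)) = \C$ endowed with the Archimedean absolute value $|\cdot|^{\log r/\log|t|}$. This is complete, so $\mathscr{H}(\tau(t))$ is exactly this rescaled complex field. Combining Proposition \ref{prop gbr} with the Gelfan'd–Mazur identification of Example \ref{ex GM}, the fiber $\pi_{\hyb}^{-1}(\tau(t))$ is canonically homeomorphic to the complex variety $(X_{A_r} \times_{A_r} \C)$, where $\C$ sits over $A_r$ via $\mathrm{ev}_t$; flatness of $X_{A_r}/A_r$ (generic points aside) makes this precisely the fiber $X_t$ of the original degeneration, with its Euclidean topology.

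Gluing these identifications over $t \in \D^*_r$ yields the bijection $\beta$. The key remaining task is upgrading this to a global homeomorphism $X_{|\D^*_r} \xrightarrow{\sim} \pi_{\hyb}^{-1}(\tau(\D^*_r))$. For this, I would observe that both source and target topologies admit a description by pointwise convergence of evaluations of elements of $A_r$: on the right, by definition of the weak topology on $X^{\hyb}_r$, and on the left, from the fact that every $f \in A_r$ defines a holomorphic function on $\bar{\D}_r$, whose modulus raised to the continuous exponent $\log r/\log|t|$ matches $|f|_{\tau(t)}$ by construction. This identification of topologies is really a parametrized version of Example \ref{ex GM}, and yields the homeomorphism together with the commutation $\pi_{\hyb} \circ \beta = \tau \circ \pi$, which holds by construction since the restriction of $|\cdot|_{\beta(z)}$ to $A_r$ is $|\cdot|_{\tau(\pi(z))}$. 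The third item is then tautological: for $\phi$ rational on $X_{A_r}$ with $z$ outside its indeterminacy locus, the absolute value on $\mathscr{H}(\tau(t)) = \C$ is $|\cdot|^{\log r/\log|t|}$, hence $|\phi(\beta(z))| = |\phi(z)|^{\log r/\log|t|}$.

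The main obstacle I foresee is not the fiberwise analysis, which is straightforward from Proposition \ref{prop gbr} and Gelfan'd–Mazur, but rather verifying global continuity of $\beta$ uniformly in $t$ as $|t| \to r^-$ and, more importantly, controlling the compatibility of the Euclidean topologies on the fibers $X_t$ with the hybrid topology near the boundary circle $\{|t| = r\}$. The rescaling exponent $\log r/\log|t|$ blows up there, but elements of $A_r$ are genuinely holomorphic and bounded on $\bar{\D}_r$, which keeps the evaluation maps continuous up to the boundary and forces the two topologies to agree.
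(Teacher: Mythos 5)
The paper does not actually prove this proposition: it is quoted verbatim from Favre (\cite[thm.\ 1.2]{Fav}) and used as a black box. Your reconstruction — fiberwise identification via Proposition \ref{prop gbr}, computation of the residue fields $\mathscr{H}(x)$ using Lemma \ref{cercle hyb}, and Gelfan'd--Mazur (Example \ref{ex GM}) on the Archimedean fibers — is the natural argument and is essentially the one in Favre's paper, so the approach is sound. Two points should be tightened. First, $\Frac(A_r)$ is \emph{not} $\C((t))$: elements of $A_r$ are convergent Laurent series, so $\Frac(A_r)$ is the proper subfield of quotients of meromorphic functions on $\D_r$. What is true, and what you need, is that the completion $\mathscr{H}(0)$ of $\Frac(A_r)$ for $r^{\ord_0}$ equals $\C((t))$, e.g.\ because $\C[t,t^{-1}]\subset A_r$ is already $t$-adically dense in $\C((t))$. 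The conclusion of your first paragraph is therefore correct, but the intermediate identification is not.

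Second, the gluing step is where the real content lies, and your justification ("both topologies admit a description by pointwise convergence of evaluations") needs one more ingredient. Continuity of $\beta$ is indeed immediate: each $z\mapsto \lvert g(z)\rvert^{\log r/\log\lvert t(z)\rvert}$ is continuous on an affine chart because $g$ is holomorphic and the exponent is continuous on $\D_r^*$. The nontrivial direction is continuity of $\beta^{-1}$, i.e.\ that the initial topology induced by the rescaled evaluations is as fine as the Euclidean one. Note that the rescaling cannot be undone naively: $\lvert t\rvert_{\tau(t)}\equiv r$ is constant, so recovering the exponent $\log r/\log\lvert t\rvert$ from the semi-norm data is exactly the content of Lemma \ref{cercle hyb} on the base; once $\lvert t(z)\rvert$ is a continuous function of $\beta(z)$, so is each unrescaled $\lvert g(z)\rvert$, and the fiberwise Gelfan'd--Mazur identification upgrades to the family. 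Alternatively, for projective $X$ one can argue that $\beta$ is a continuous bijection between locally compact Hausdorff spaces which is proper over compact annuli $\{\rho'\le\lvert t\rvert\le\rho\}\subset\D_r^*$, hence a homeomorphism. Either route closes the gap; as written, your sketch asserts the agreement of topologies rather than deriving it. (Your concern about the boundary circle $\lvert t\rvert=r$ is moot here, since $\D_r^*$ is open and the exponent stays bounded on compact subsets.) The third item of the proposition is, as you say, tautological once the second is in place.
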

Thus, heuristically, the hybrid space allows us to see the scalings of the usual modulus on $\C$ given by $\lvert \cdot \rvert^{\frac{\log r}{\log \lvert t \rvert}}$ degenerate to the non-archimedean absolute value on $K$, and hence to see the complex manifolds $\{X_t\}_{t \in \D^*}$ degenerate to the non-archimedean analytification $X^{\an}$.
\\Assume for instance that $(\mu_t)_{t \in \D^*}$ is a continuous family of probability measures on $X$, such that $\mu_t$ is supported on $X_t$ for each $t \in \D^*$. Since the hybrid space provides a canonical compactification of $X$ over the puncture, it is a natural question to ask whether or not the family of measures converges on $X^{\hyb}$, at least in a weak sense - more concrete examples of such situations will be given in sections \ref{sec MA family} and \ref{sec setup neg}; see also \cite{Shiv1}, \cite{Shiv2}.
\subsection{The isotrivial hybrid space} \label{sec iso}
We set $K = \C((t))$. Let $X$ be a projective complex variety, and write $\tilde{X} := X \times \D^*$ the associated trivial degeneration of complex varieties, as well as $X_K = X \times_{\C} K$. We thus have two hybrid spaces associated to $X$: the hybrid space $X^{\hyb}_0$ obtained by viewing $X$ as a scheme over $\C^{\hyb}$, and $X^{\hyb}_K$ the hybrid space associated to the degeneration $\tilde{X}$. The goal of this section is to compare both hybrid spaces, so that results established for hybrid spaces associated to degenerations will naturally yield similar statements for spaces over $\C^{\hyb}$, simply by specializing to a trivial degeneration.
\\We start by comparing the non-archimedean fibers. We write $X^{\an}_K$ for the analytification of $X_K$, and $X^{\an}_0$ for the analytification of $X$ with respect to the trivial absolute value on $\C$. The $t$-adic absolute value on $\C((t))$ restricts to the trivial absolute value on $\C$, so that there exists a base change morphism $f : X^{\an}_K \fl X^{\an}_0$, that can be described as follows. If $\mathcal{K}(X)$ is the function field of $X$, then the function field of $X_K$ is simply $\mathcal{K}(X_K) = \mathcal{K}(X)((t)) = \mathcal{K}(X) \otimes_{\C} K$. Hence, any valuation $v$ on $\mathcal{K}(X_K)$ induces by restriction a valuation $f(v)$ on $\mathcal{K}(X)$, and similarly for semi-valuations.
\\We now compare the base rings: for $r \in (0,1)$, the inclusion $\C^{\hyb} \hookrightarrow A_r$ is compatible with the hybrid norms, so that it induces a continuous map $\lambda: \M(A_r) \fl \M(\C^{\hyb})$, obtained by restricting semi-norms from $A_r$ to $\C^{\hyb}$. It is straightforward to check that under the homeomorphisms $\M(A_r) \simeq \bar{\D}_r$ and $\M(\C^{\hyb}) \simeq [0,1]$, we have $\lambda(t) = \frac{\log r}{\log \lvert t \rvert}$ for $t \in \bar{\D}_r$. We furthermore have the following description of the base change of $X$ from $\C^{\hyb}$ to $A_r$, which is a straightforward consequence of transitivity of base change:
\begin{prop} \label{prop bc hyb} Let $r \in (0,1)$, and let $\lambda : \bar{\D}_r \fl [0,1]$ be the map defined by $\lambda(t) = \frac{\log r}{\log \lvert t \rvert}$. We have a commutative diagram:
\begin{center}
    \begin{tikzcd}
    X^{\hyb}_K \arrow[r, "F"] \arrow[d, "\pi_K"]  & X^{\hyb}_0 \arrow[d, "\pi_0"] \\
    \bar{\D}_r  \arrow[r, "\lambda"] & \big[0,1\big].
   \end{tikzcd}
\end{center}
where $F$ is the base change of $X$ from  $\C^{\hyb}$ to $A_r$, and such that $F_{| \pi_K^{-1}(0)} =f$. Moreover, for any $t \in \bar{\D}_r^*$, $F_{| \pi_K^{-1}(t)}$ induces the identity on $X^{\hol}$ under the homeomorphisms from section \ref{sec hyb}.
\end{prop}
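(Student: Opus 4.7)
The plan is to extract the statement as a formal consequence of the functoriality of Berkovich analytification together with the explicit identifications already in hand (the parametrization of $C^{\hyb}(r)$ via $\tau$, that of $\M(\C^{\hyb})$ via $|\cdot|^\lambda$, and the description of the fibers provided by Proposition \ref{prop gbr}). Since the inclusion $\C^{\hyb} \hookrightarrow A_r$ is a bounded (in fact isometric on constants) morphism of Banach rings and $X_{A_r} = X \times_{\C} A_r$ is literally the base change of the $\C$-scheme $X$ along this inclusion, the machinery of section \ref{sec def Berko} automatically produces the continuous maps $\lambda : \M(A_r) \to \M(\C^{\hyb})$ and $F : X_K^{\hyb} \to X_0^{\hyb}$ together with the commutative square; the commutativity is simply the statement that restriction of semi-norms commutes with the structure maps on both sides.

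The next step is the explicit computation of $\lambda$. By Lemma \ref{cercle hyb} a point $t \in \bar{\D}_r$ corresponds to the semi-norm $|\cdot|_t$ on $A_r$; evaluating $|\cdot|_t$ on a constant $z \in \C \subset A_r$ gives $1$ if $t = 0$ and $r^{\log|z|/\log|t|} = |z|^{\log r/\log|t|}$ if $t \neq 0$. Under the identification $\M(\C^{\hyb}) \simeq [0,1]$ via $|\cdot|^{\lambda} \mapsto \lambda$, this is precisely $\lambda(t) = \log r / \log |t|$ (with the convention $\lambda(0)=0$), as required.

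To identify $F$ on fibers, I invoke Proposition \ref{prop gbr}, which gives a canonical homeomorphism $\pi_K^{-1}(t) \simeq (X_{A_r} \times_{A_r} \mathscr{H}(t))^{\an}$, with $F_{|\pi_K^{-1}(t)}$ equal to the base change map induced by the composition $\C^{\hyb} \hookrightarrow A_r \to \mathscr{H}(t)$. For $t = 0$ the kernel of $|\cdot|_0 = r^{\ord_0}$ on $A_r$ is trivial and its completion is exactly $K = \C((t))$ with the $t$-adic absolute value $|t| = r$, so transitivity of base change yields $X_{A_r}\times_{A_r} K = X_K$ and the fiber is $X_K^{\an}$; the restriction map $F_{|\pi_K^{-1}(0)} : X_K^{\an} \to X_0^{\an}$ then sends a semi-norm on $\mathcal{K}(X_K)$ to its restriction to $\mathcal{K}(X)$, which is the very definition of $f$. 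For $t \neq 0$ the evaluation map $A_r \to \C$, $g \mapsto g(t)$ identifies $\mathscr{H}(t)$ with $\C$ equipped with $|\cdot|^{\log r/\log|t|}$, hence $X_{A_r}\times_{A_r}\mathscr{H}(t) = X$ over $\C$ with that rescaled absolute value; by the Gelfand--Mazur identification of example \ref{ex GM} (which is insensitive to rescaling the absolute value) this fiber is canonically $X^{\hol}$, and the base change map $F_{|\pi_K^{-1}(t)}$ becomes the identity on closed points.

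The whole proof is therefore essentially formal: there is no real obstacle, merely a bookkeeping exercise comparing the two ways of producing the $A_r$-point out of a $\C^{\hyb}$-point of $X$. The mildest subtlety is the identification of $\mathscr{H}(0)$ with $K$, which requires noting that $A_r$ is $t$-adically dense in $K$ and that $K$ is complete; once that is in place, transitivity of the fiber product yields every claim at once.
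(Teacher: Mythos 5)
Your argument is correct and is precisely the fleshed-out version of what the paper does: the paper states this proposition with no proof beyond the remark that it is "a straightforward consequence of transitivity of base change," and your bookkeeping (functoriality of analytification under the bounded inclusion $\C^{\hyb}\hookrightarrow A_r$, the computation of $\lambda$ on constants, and the fiberwise identifications via Proposition \ref{prop gbr}, including $\mathscr{H}(0)=K$ and $\mathscr{H}(t)=(\C,\lvert\cdot\rvert^{\log r/\log\lvert t\rvert})$) is exactly the intended verification.
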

The map $f : X^{\an}_K \fl X^{\an}_0$ furthermore admits a continuous section, called the Gauss section, defined in the following way. 
In the terminology of \cite[def. 3.2]{Poi}, every point of $X^{\an}_0$ is universal (peaked point, in the terminology of Berkovich) as $\C$ is algebraically closed, so that any $x \in X^{\an}_0$ admits a canonical lift to $X^{\an}_K$, denoted by $\gamma(x)$; we call the map $\gamma : X^{\an}_0 \fl X^{\an}_K$ the \emph{Gauss section}. More concretely, if $x= v_x \in X^{\an}_0$ is a valuation on $\mathcal{K}(X)$, it is extended as a valuation:
$$\gamma(v) : \mathcal{K}(X)((t)) \fl \R \cup \{ - \infty \},$$
such that $\gamma(v)(t) =1$. For instance, if $S= \sum_{n \ge 0} s_n t^n$ an element of $\mathcal{K}(X)[t]$, the canonical extension is defined by the formula:
$$\gamma(v)(S) = \min_{n} (v(s_n) +n).$$
\section{Global pluripotential theory} \label{sec NA ppt}
Let $A$ be an integral Banach ring, and let $X$ be a projective $A$-scheme of finite type. Following common practice, we will call line bundle on $X$ any locally free $\gO_X$-module of rank $1$. We will use additive notation for the group law on the set of isomorphism classes of line bundles. 
\\Let $L$ be a semi-ample line bundle on $X$, and write $X^{\an} \rightarrow \M(A)$ the Berkovich analytification of $X$. The purpose of this section is to define a class of plurisubharmonic metrics $\PSH(X, L)$ on $L$, in a similar way to the case where $A=K$ is a complete valued field. For instance when $K = \C$, and $(X, L)$ is a smooth polarized variety, then the class $\PSH(X, L)$ we define is nothing but the usual class of plurisubharmonic metrics on $L$, which is one of the central objects of pluripotential theory, and has been extensively studied at this point. When $K$ is a non-archimedean field, a similar class of plurisubharmonic metrics has been defined in increasing order of generality in \cite{Zha}, \cite{Gub}, \cite{BFJ2}, \cite{BE}, and our definition is built so that it generalizes the latter.
\\In our setting, the basic idea is that a metric $\phi \in \PSH(X, L)$ can be viewed as a family of semi-positive metrics $(\phi_x)_{x \in \M(A)}$, where $\phi_x \in \PSH(X_{\mathscr{H}(x)}, L_{\mathscr{H}(x)})$, that also varies in a plurisubharmonic way with respect to $x \in \M(A)$.
\\The main case of interest for us will be when $A = A_r$ is a hybrid ring, so that $X^{\an} = X^{\hyb}$ is the hybrid space associated to a degeneration $X$ of complex varieties. In this setting, our main result, theorem \ref{theo psh hyb}, states that any psh metric $\phi \in \PSH(X,L)$ on a polarized degeneration $X$ of complex manifolds satisfying a certain growth condition, induces naturally a psh metric on $(X^{\hyb}, L^{\hyb})$, whose restriction to the non-archimedean fiber $X^{\an}$ is the non-archimedean metric $\phi^{\NA}$ constructed in \cite{BBJ}, \cite{Reb}, and encodes the logarithmic singularities of $\phi$ along the special fibers of models of $X$.
\\We also compare our definition with the setting of \cite{Fav}, and obtain the continuity on $X^{\hyb}$ of the family of Monge-Ampère measures associated to a continuous, semi-positive metric on $L^{\hyb}$.
\subsection{Metrics on Berkovich spaces}
We start with some very general definition of metrics on line bundles on Berkovich analytic spaces.
\begin{defn} Let $L$ be a line bundle on $X$. A continuous metric $\phi$ on $L^{\an}$ consists of the following data: for any Zariski open subset $U \subset X$ and $s \in H^0(U, L_{| U})$ a trivializing section, a continuous function:
$$\lVert s \rVert_{\phi} : U^{\an} \fl \R_{>0}$$
such that $\lVert f s \rVert_{\phi} = \lvert f \rvert \lVert s \rVert_{\phi}$ for any regular function $f \in H^0(U, \gO_U)$, and compatible with restriction of sections.
\end{defn}
This allows us to define, for any open subset $V \subset X^{\an}$ and any analytic section $s$ of $L^{\an}$ on $V$, a continuous function:
$$\lVert s \rVert_{\phi} : V \fl \R_{\ge 0},$$
as follows: cover $X$ by Zariski open subsets $U_i, i \in I$ such that $L_{| U_i} = s_i \cdot \gO_{U_i}$, and write, on $V \cap U_i$:
$$ s= f_i \cdot s_i,$$
with $f_i$ an analytic function on $V$. Then we set:
$$\lVert s \rVert_{\phi} := \lvert f_i \rvert \lVert s \rVert_{\phi}$$
on $V \cap U^{\an}_i$. It is straightforward to check that this is independent on the choice of trivializations, compatible with restrictions and that the equality:
$$\lVert f s \rVert_{\phi} = \lvert f \rvert \lVert s \rVert_{\phi}$$
holds for any section $s$ and any analytic function $f$ on $V$.
\\From now on, we will use additive notation for metrics, i.e. identify the metric $\lVert \cdot \rVert_{\phi}$ with $\phi = -\log \lVert \cdot \rVert_{\phi}$ (more precisely, the collection of local functions $\phi_i = - \log \lVert s_i \rVert_{\phi}$ associated to local trivializations of $L$). In particular, if $L_1$, $L_2$ are two line bundles on $X$ and $\phi_i$ is a continuous metric on $L_i$ for $i=1,2$, then $\phi_1+\phi_2$ is a continuous metric on $L_1+L_2$. Moreover, if $\psi: X^{\an} \fl \R$ is a continuous function and $\phi$ a continuous metric on $L$, then $\phi+ \psi$ is also a continuous metric on $L$.
\begin{ex} Let $A=\C$ with the Euclidian absolute value, and $X/\C$ a smooth variety endowed with a line bundle. Then our definition matches the standard definition of a continuous Hermitian metric on $L$.
\end{ex}
\begin{ex} Let $A=k$ be a trivially-valued field, $X/k$ a proper variety and $L$ a line bundle on $X$. Then the trivial metric $\phi^{\triv}$ on $L$ is the unique metric on $L^{\an}$ such that for any pair $(U, s)$, with $U \subset X$ a Zariski open and $s \in H^0(U,L)$ a nowhere-vanishing section of $L$, the equality:
$$\lvert s(x) \rvert_{\triv} = 1$$
holds whenever the center $c(v_x)$ is contained in $U$.
\end{ex}
\begin{ex} Let $X = \CP^N_A$, and $L = \gO(1)$. Then the Fubini-Study metric $\phi_{\FS}$ on $L$ is defined by the formula:
$$\lVert s(x) \rVert_{\phi_{\FS}} = \frac{\lvert s(x) \rvert}{\max (\lvert x_0 \rvert,..., \lvert x_N \rvert)},$$
where the $x_i$'s are standard coordinates on $\CP^N_A$. We will write:
$$\phi_{\FS} = \max_{i\le N} \log \lvert x_i \rvert.$$
Note that while this definition is well-suited for the case when $A$ is a non-archimedean field, it does not recover the usual Fubini-Study metric on $\C \CP^N$ when $A=\C$, so that we will sometimes call the metric above the tropical Fubini-Study metric.
\end{ex}
In order to define a large enough class of semi-positive metrics, we need to allow metrics with some singularities.
\begin{defn} Let $L$ be a line bundle on $X$. A singular metric $\phi$ on $L^{\an}$ (or simply on $L$ when the norm is clear from the context) consists of the following data: for any Zariski open subset $U \subset X$ and $s \in H^0(U, L_{| U})$ a trivializing section, an upper semi-continuous (usc) function:
$$\phi_{s} = - \log \lVert s \rVert_{\phi} : U^{\an} \fl \R \cup \{- \infty \}$$
not identically $-\infty$, such that $\lVert f s \rVert_{\phi} = \lvert f \rvert \times \lVert s \rVert_{\phi}$ for any regular function $f \in H^0(U, \gO_U)$, and compatible with restriction of sections.
\end{defn}
\begin{ex} The following example will be particularly relevant for our purposes. Let $m \ge 1$, and $s_0 \in H^0(X, L^m)$ be a global (algebraic) section of some positive power of $L$; we associate to it the metric on $L$ given by:
$$\phi =m^{-1} \log \lvert s_0 \rvert,$$
i.e. for $s$ a local section of $L$, we have:
$$\lVert s \rVert_{\phi}(x) =  (\bigl\lvert \frac{s^m(x)}{s_0(x)} \bigl\rvert)^{1/m}.$$
This metric is singular precisely along the zero locus of $s_0$. 
\end{ex}
\begin{defn}
Let $X$, $Y$ be two $A$-schemes of finite type, and $f: Y \fl X$ be an $A$-morphism. If $L$ is a line bundle on $X$ endowed with a (singular) metric $\phi$, we define the pull-back metric $f^*\phi$ on $f^*L$ as follows: cover $X = \cup_{i \in I} U_i$ by Zariski open subsets, and choose a trivialization $s_i$ of $L$ on each $U_i$.
\\This induces an open cover $Y = \cup_{i \in I} V_i$ with $V_i = f^{-1}(U_i)$, and local trivializations of $f^*L$ by the sections $f^*s_i$ on $V_i$. We now set:
$$\lVert f^*s_i \rVert_{f^* \phi} := \lVert s_i \rVert_{\phi} \circ f.$$
\end{defn}
It is straightforward to check that this is independent on the choice of open cover and trivializations, and thus defines a metric $f^*\phi$ on $f^*L$.
\\We conclude this section with a discussion on the behaviour on metrics under base change. We assume that $A$ and $B$ are two Banach rings, together with a bounded ring homomorphism $A\fl B$, so that we have a continuous map $F: X^{\an}_B \fl X^{\an}$ for any scheme $X/A$ of finite type, see section \ref{sec def Berko}.
\\Let $L$ be a line bundle on $X$, and $L_B = L \otimes_{\gO_X} \gO_{X_B}$ the induced line bundle on $X_B$. Given a continuous metric $\phi$ on $L^{\an}$, we want to define a continuous metric $\phi_B$ on $L_B$ by a base change operation. To that purpose, cover $X = \cup_{i \in I} U_i$ by Zariski open subsets trivializing $L$, and set $U_{B, i} = U_i \times_A B$, which yields an open cover of $X_B$. If $s_i$ is a generator of the free $\gO_X(U_i)$-module $H^0(U_i, L)$, then $s_i \otimes 1$ is a generator of $H^0(U_{B, i}, L_B)$ over $\gO_{X_B}(U_{B, i})$, so that we naturally set, for $x \in U_{B, i}^{\an}$:
$$\lVert (s \otimes 1)(x) \rVert_{\phi_B} := \lVert s(F(x)) \rVert_{\phi}.$$
It is now a straightforward verification that $\phi \mapsto \phi_B$ defines a base change map from the set of continuous metrics on $(X^{\an}, L^{\an})$ to the set of continuous metrics on $(X^{\an}_B, L_B^{\an})$, which commutes with the usual operations of addition and scaling of metrics, as well as finite maxima.
\begin{ex} Let $A$ be a Banach ring, and let $x \in \M(A)$. Then we have a canonical morphism of Banach rings $A \fl \mathscr{H}(x)$, so that any continuous metric $\phi$ on $L$ induces by base change a continuous metric $\phi_x$ on $(X^{\an}_{\mathscr{H}(x)}, L^{\an}_{\mathscr{H}(x)})$. The metric $\phi_x$ can also be seen as the restriction of $\phi$ to the fiber $\pi^{-1}(x)$ of the structure map $\pi : X^{\an} \fl \M(A)$, by prop. \ref{prop gbr}.
\end{ex}
\subsection{Pluripotential theory over a field}
Let $(X,L)$ be a smooth polarized variety over $\C$. The class $\PSH(X, L)$ of semi-positive metrics on $L$ lies at the heart of (global) complex pluripotential theory, it is the class of singular metrics $\phi$ on $L$ whose curvature form $dd^c \phi$ is semi-positive in the sense of currents. We refer the reader for instance to \cite{DemB}, \cite{GZB} for a more thorough introduction.
\\Let $m \ge 1$ such that $mL$ is globally generated. Given a family $(s_0,...,s_N)$ of global sections of $mL$ without commons zeroes, one can associate to them the continuous semi-positive metric:
$$\phi = \frac{1}{2m} \log( \lvert s_0 \rvert^2+...+\lvert s_N \rvert^2),$$
which is none other than the pull-back of the standard Fubini-Study metric on $(\C \CP^{N}, \gO(1))$ via the holomorphic map:
$$x \mapsto [s_0(x):...:s_N(x)].$$
We call such a metric on $L$ a \emph{Fubini-Study} metric.
The following theorem, due to Demailly \cite{Dem} when $X$ is smooth and $L$ ample, highlights the importance of such metrics as basic building blocks of complex pluripotential theory:
\begin{theo}{\cite[thm. 7.1]{BE}} \label{theo berg exp} 
\\Let $X$ be a complex projective variety, $L$ a semi-ample line bundle on $X$, and $\phi \in \PSH(X, L)$ a semi-positive singular metric on $L$.
\\Then there exists a decreasing sequence $(\phi_j)_{j \in \N}$ of Fubini-Study metrics on $L$, converging pointwise to $\phi$.
\end{theo}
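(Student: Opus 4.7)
The plan is to adapt Demailly's analytic regularization scheme via Bergman kernels, and then post-process the resulting (non-monotone) sequence into a decreasing one.

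First I would reduce to the case where $L$ is ample. Since $L$ is semi-ample, a sufficiently divisible power $m_0 L$ is base-point-free and defines a surjective morphism $f : X \fl Y$ onto a projective variety $Y$ together with an ample line bundle $H$ on $Y$ satisfying $f^* H = m_0 L$. Since $H^0(X, k m_0 L) = H^0(Y, kH)$ via $f^*$, pulling back Fubini-Study metrics on $H$ produces Fubini-Study metrics on $L$; moreover, every generator $m^{-1} \log |s|$ of the class $\PSH(X,L)$ descends through $f$, and all the stability operations (addition of constants, finite maxima, decreasing limits) commute with $f^*$, so any $\phi \in \PSH(X, L)$ descends to an element of $\PSH(Y, H)$. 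It thus suffices to establish the statement for the ample pair $(Y, H)$.

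Assuming now $L$ ample, I would fix a smooth positively-curved reference metric $\phi_0$ on $L$ and a smooth volume form $dV$ on $X$. For each $k \ge 1$, equip the finite-dimensional space $H^0(X, kL)$ with the Hilbert norm
$$\| s \|^2_{k\phi} := \int_X |s|^2 \, e^{-2 k \phi} \, dV,$$
which is well-defined because $\phi - \phi_0$ is quasi-psh, hence locally integrable. Picking an orthonormal basis $(s^{(k)}_j)_j$, one sets
$$\psi_k := \frac{1}{2k} \log \sum_j \bigl| s^{(k)}_j \bigr|^2,$$
which, after a mild perturbation by sections of a base-point-free linear system to remove common zeros, is a Fubini-Study metric on $L$. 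The classical two-sided bounds
$$\phi - C \, \frac{\log k}{k} \;\le\; \psi_k \;\le\; \phi + C \, \frac{\log k}{k}$$
follow from the sub-mean value inequality for psh functions (upper bound) and from the Ohsawa-Takegoshi $L^2$-extension theorem applied to extend the value of a section at each point to a global section of $kL$ with controlled norm (lower bound, the deep ingredient). In particular $\psi_k \to \phi$ pointwise.

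The main obstacle is upgrading this to a \emph{monotone decreasing} approximation, as the Bergman metrics $\psi_k$ are not themselves monotonic. I would handle this by extracting a rapidly converging subsequence $\psi_{k_j}$ with error controlled by $2^{-j}$, translating by constants $c_j \to 0$ to obtain $\tilde\psi_j := \psi_{k_j} + c_j$ satisfying $\phi \le \tilde\psi_j \le \phi + 2^{-j+1}$, and then combining the $\tilde\psi_j$ using the following key observation: the maximum of two Fubini-Study metrics is itself a Fubini-Study metric up to an error of order $1/k$. Indeed, rewriting them as $\frac{1}{2k}\log \sum |S_i|^2$ and $\frac{1}{2k} \log \sum |T_i|^2$ for a common $k$ (by replacing the defining sections with appropriate tensor powers), the elementary inequalities $\max(A,B) \le A+B \le 2 \max(A, B)$ yield
$$\max\Bigl( \tfrac{1}{2k}\log \textstyle\sum |S_i|^2, \; \tfrac{1}{2k} \log \sum |T_i|^2 \Bigr) \;\le\; \tfrac{1}{2k} \log \Bigl( \textstyle\sum |S_i|^2 + \sum |T_i|^2 \Bigr) \;\le\; \max + \tfrac{\log 2}{2k}.$$
With this tool, a careful diagonal argument—taking a finite max over windows $[j, j + N_j]$ whose length grows together with $k_j$, and absorbing the $O(1/k_j)$ max-vs-sum discrepancy into the geometrically decreasing target error $2^{-j}$—yields a genuine decreasing sequence $\phi_j$ of Fubini-Study metrics on $L$ converging pointwise to $\phi$. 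The delicate point is the bookkeeping required to ensure simultaneously monotonicity at every point and convergence everywhere to $\phi$; this is where the rapid geometric decay of the bounds on $\tilde\psi_j - \phi$ is essential.
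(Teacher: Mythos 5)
The paper does not actually prove this statement: it is quoted from \cite[thm.~7.1]{BE}, and reduces to Demailly's regularization theorem in the smooth, ample case. Your proposal, however, has a genuine gap at its central step. The two-sided bound $\phi - C\log k/k \le \psi_k \le \phi + C\log k/k$ is false for a general singular $\phi$: the sub-mean value inequality only yields
$$\psi_k(x) \;\le\; \sup_{B(x,r)}\phi \;+\; \frac{C_r}{k},$$
and $\sup_{B(x,r)}\phi \to \phi(x)$ as $r\to 0$ only by upper semi-continuity, not uniformly (think of $\phi$ with a logarithmic pole at $x$, where $\phi(x)=-\infty$). In fact no sequence of \emph{continuous} metrics can satisfy $\phi \le \tilde\psi_j \le \phi + 2^{-j+1}$ unless $\phi$ is itself continuous, since a uniform limit of continuous metrics is continuous. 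Your entire monotonization scheme — rapidly convergent subsequence, finite maxima over growing windows, geometric error budget — rests on this uniform control from above, and taking maxima only pushes the metrics up, so without an upper envelope decreasing to $\phi$ nothing forces the sequence down to $\phi$. The correct mechanism, which is Demailly's and is exactly what the appendix of this paper implements in a relative setting (proof of thm.~\ref{theo reg psh}), is the \emph{almost-subadditivity}
$$(k_1+k_2)\,\psi_{k_1+k_2} \;\le\; k_1\psi_{k_1} + k_2\psi_{k_2} + C,$$
obtained by applying Ohsawa–Takegoshi to extend sections from the diagonal of $X\times X$; this makes $\psi_{2^j}+C2^{-j}$ genuinely decreasing, and the two \emph{one-sided} bounds (Ohsawa–Takegoshi from below, sub-mean value plus usc from above) then identify the limit as $\phi$.

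A second, lesser issue: the statement allows $X$ singular, and your reduction to the ample model $Y$ does not remove this — $Y$ can be singular even when $X$ is smooth. Ohsawa–Takegoshi and the sub-mean value argument live on manifolds, so one must pull back to a resolution $\mu : X' \to X$; but $\mu^*L$ is then again only semi-ample, so strict positivity is lost and one must either perturb by a small ample class or argue as in \cite{BE}. This is genuinely part of the content of \cite[thm.~7.1]{BE} and cannot be dispatched in one sentence. Your descent of $\PSH(X,L)$ along the semi-ample fibration (psh metrics on $f^*H$ are constant on the connected fibers of $f$) is fine as far as it goes, but it buys ampleness, not smoothness.
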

In particular, the class $\PSH(X, L)$ is the smallest class of singular metrics containing all Fubini-Study metrics, and that is stable under addition of constants, finite maxima and decreasing limits.
\\We now move to the case of a non-archimedean field $(K, \lvert \cdot \rvert)$, and assume as above that $X$ is a variety over $K$ endowed with a semi-ample line bundle $L$, with $mL$ globally generated. Following the general heuristic of replacing sums of squares with maxima in the non-archimedean world, a (tropical) Fubini-Study metric on $L$ is a continuous metric of the form:$$\phi =m^{-1} \max_{0 \le i \le N} \big( \log \lvert s_i \rvert + \lambda_i \big),$$
where $(s_0,...,s_N)$ is a family of global sections of $mL$ without common zeroes and the $\lambda_i$'s are real constants - unlike in the Archimedean case, the valuation $v_K : K^{\times} \fl \R$ need not be surjective, so we are allowing these constants to ensure the class of Fubini-Study metrics is stable by addition of constants. The constants are in fact not necessary when $K$ is non-trivially valued, but will be convenient for us to treat the case of trivial and non-trivial valuation in an uniform way.
\\Continuous, plurisubharmonic metrics $\phi$ on $L$ can now be defined, as in the complex case, by the positivity of their curvature current $dd^c \phi$, this is the approach taken in \cite{CLD}, where Chambert-Loir and Ducros develop a theory of real differential forms and currents on Berkovich spaces, paralleling the complex case. We will not use this approach in this paper, and rather plurisubharmonic metrics on $L$ in a way such that Demailly's regularization theorem still holds:
\begin{defn} Let $X$ be a variety over a non-archimedean field $K$, and $L$ a semi-ample line bundle on $X$. A singular metric $\phi$ on $L$ is semi-positive if and only if it can be written as the pointwise decreasing limit of a net $(\phi_j)_{j}$ of tropical Fubini-Study metrics.
\end{defn}
This is consistent with the approach of Chambert-Loir -  Ducros by \cite[thm. 7.14]{BE}.
\\In the sequel, we will define a class of semi-positive metrics on analytifications of schemes over a Banach ring $A$, and the Berkovich spectrum $\M(A)$ will have both an (open) Archimedean part and a non-archimedean part - that is , for $x \in \M(A)$, the complete residue field $\mathscr{H}(x)$ may be Archimedean or not. As a result, it is desirable to have a more uniform definition of Fubini-Study metrics, independent of the nature of the residue field. To that extent, if $X$ is a projective variety over either $\R$ or $\C$ and $L$ a semi-ample line bundle on $X$, we say a continuous metric $\phi$ on $L$ is a tropical Fubini-Study metric if it can written as:
$$\phi =m^{-1} \max_{0 \le i \le N} \big( \log \lvert s_i \rvert + \lambda_i \big),$$
where $(s_0,...,s_N)$ is a family of global sections of $mL$ without common zeroes and $(\lambda_0,...,\lambda_N)$ are real constants - which can always be absorbed in the $s_i$, so that they are allowed only for convenience. Note that over the complex numbers, any tropical Fubini-Study metric is psh in the usual sense. In fact, Demailly's regularization theorem still holds after replacing Fubini-Study metrics by tropical ones: 
\begin{theo} \label{theo psh1} Let $X$ be a projective complex variety, and $L$ a semi-ample line bundle on $X$. Then any semi-positive metric $\phi \in \PSH(X, L)$ can be written as the decreasing limit of a net of tropical Fubini-Study metrics.
\end{theo}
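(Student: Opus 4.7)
The plan is to deduce the statement from its classical analogue (Theorem \ref{theo berg exp}), which provides the decreasing approximation by \emph{classical} Fubini-Study metrics, by uniformly approximating from above every classical Fubini-Study metric by tropical ones through a symmetric tensor-power construction.

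First, by Theorem \ref{theo berg exp}, there exists a decreasing sequence $(\phi_j)_{j \in \N}$ of classical Fubini-Study metrics with $\phi_j \searrow \phi$ pointwise; write $\phi_j = (2m_j)^{-1}\log\sum_{i=0}^{N_j}\lvert s_i^{(j)}\rvert^2$ with the $s_i^{(j)} \in H^0(X, m_j L)$ generating $m_j L$. Fix $j$ and an integer $k \ge 1$ and consider the sections $t_I := \sqrt{\binom{k}{I}}\,(s^{(j)})^{I} \in H^0(X, km_j L)$, indexed by multi-indices $I$ with $\lvert I\rvert = k$. The multinomial identity $\sum_I \lvert t_I\rvert^2 = \bigl(\sum_i \lvert s_i^{(j)}\rvert^2\bigr)^k$ together with the elementary bound $\max_I \lvert t_I\rvert^2 \le \sum_I \lvert t_I\rvert^2 \le \binom{N_j+k}{k}\max_I \lvert t_I\rvert^2$ yields, after taking logarithms and dividing by $2km_j$,
\[
\tilde\phi_{j,k} \ \le\ \phi_j \ \le\ \tilde\phi_{j,k} + \varepsilon_{j,k},
\]
where $\tilde\phi_{j,k} := (km_j)^{-1}\max_I\log\lvert t_I\rvert$ is a tropical Fubini-Study metric at level $km_j$ and $\varepsilon_{j,k} := (2km_j)^{-1}\log\binom{N_j+k}{k} \to 0$ as $k \to \infty$. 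The shifted tropical Fubini-Study metric $\psi_{j,k} := \tilde\phi_{j,k} + \varepsilon_{j,k}$ then satisfies $\phi_j \le \psi_{j,k} \le \phi_j + \varepsilon_{j,k}$, giving uniform approximation of each $\phi_j$ from above by tropical Fubini-Study metrics.

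To assemble the net, let $\mathscr{T}_\phi$ denote the collection of all tropical Fubini-Study metrics $\psi \ge \phi$, ordered by reverse pointwise domination. Pointwise one has $\inf_{\psi \in \mathscr{T}_\phi} \psi = \phi$: given $x$ and $\eta > 0$, picking $j$ with $\phi_j(x) \le \phi(x) + \eta/2$ and then $k$ with $\varepsilon_{j,k} \le \eta/2$ yields $\psi_{j,k} \in \mathscr{T}_\phi$ with $\psi_{j,k}(x) \le \phi(x)+\eta$. The main obstacle is establishing directedness of $\mathscr{T}_\phi$: since tropical Fubini-Study metrics are stable under finite maxima but not under minima, given $\psi_1,\psi_2 \in \mathscr{T}_\phi$ one has to construct $\psi_3 \in \mathscr{T}_\phi$ with $\psi_3 \le \psi_1$ and $\psi_3 \le \psi_2$ pointwise. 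This is handled by going back to the approximation above and choosing $j$ and $k$ large enough so that $\psi_{j,k}$ slips below $\min(\psi_1,\psi_2)$ wherever $\min(\psi_1,\psi_2) - \phi$ is bounded below, combined with a careful treatment of the pluripolar locus where $\min(\psi_1,\psi_2)$ may approach $\phi$. Once directedness is secured, $(\psi)_{\psi \in \mathscr{T}_\phi}$ is a decreasing net of tropical Fubini-Study metrics converging pointwise to $\phi$.
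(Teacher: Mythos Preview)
Your tensor-power approximation is correct and yields, for each $j$, tropical Fubini--Study metrics $\psi_{j,k}$ with $\phi_j \le \psi_{j,k} \le \phi_j + \varepsilon_{j,k}$ and $\varepsilon_{j,k}\to 0$ as $k\to\infty$. This is essentially the same mechanism as the paper's use of Lemma~\ref{prop homo}, which approximates the convex function $\chi(x)=\tfrac{1}{2m}\log\sum_\alpha e^{2mx_\alpha}$ from above by rational piecewise-affine convex functions; the multinomial identity is a concrete instance of that approximation.

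The gap is in the directedness of $\mathscr{T}_\phi$. Given $\psi_1,\psi_2\in\mathscr{T}_\phi$ you must exhibit a tropical Fubini--Study metric $\psi_3$ with $\phi\le\psi_3\le\min(\psi_1,\psi_2)$, and your only candidates are the $\psi_{j,k}$. But you know merely $\psi_{j,k}\le\phi_j+\varepsilon_{j,k}$, not $\psi_{j,k}\le\psi_i$. Suppose $\psi_1$ touches $\phi$ at a point $x_0$ with $\phi(x_0)$ finite --- this can certainly occur, for instance $\psi_1=\psi_{j_0,k_0}$ at any point where $\phi_{j_0}(x_0)=\phi(x_0)$. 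Then for every $j,k$ one has $\psi_{j,k}(x_0)\ge\phi_j(x_0)\ge\phi(x_0)=\psi_1(x_0)$, so the inequality $\psi_{j,k}\le\psi_1$ forces $\phi_j(x_0)=\phi(x_0)$ together with a global comparison you have no handle on. The phrase ``careful treatment of the pluripolar locus'' does not address this: the obstruction lives precisely where $\phi$ is finite and $\min(\psi_1,\psi_2)-\phi$ vanishes, not where $\phi=-\infty$.

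There is a much simpler fix that avoids directedness altogether: build a decreasing \emph{sequence}. Choose $k(n)$ with $\varepsilon_{n,k(n)}<2^{-n}$ and set $\eta_n:=\psi_{n,k(n)}+2^{1-n}$. Each $\eta_n$ is tropical Fubini--Study, one has $\phi\le\phi_n\le\eta_n\le\phi_n+3\cdot 2^{-n}$ so that $\eta_n\to\phi$ pointwise, and
\[
\eta_{n+1}-\eta_n \;\le\; (\phi_{n+1}+2^{-n-1}) - \phi_n - 2^{-n} \;\le\; -2^{-n-1} \;<\; 0,
\]
so $(\eta_n)_n$ is decreasing. This elementary diagonal argument is exactly what underlies the closure-under-decreasing-limits lemma the paper invokes from \cite{BJtriv}; once you have it, the detour through $\mathscr{T}_\phi$ is unnecessary.
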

The converse is a straightforward consequence of the usual properties of $\PSH(X, L)$: any decreasing limit of tropical Fubini-Study metrics is psh. As a result, given any complete valued field $K$, we have the following uniform characterization of the class psh metrics on $L$: it is the smallest class of metrics that contains tropical Fubini-Study metrics, and that is stable under addition of constants, finite maxima and decreasing limits.
\begin{proof}
We set $\PSH^{\tau}(X, L)$ for the class of singular metrics that can be written as the decreasing limit of a net of tropical Fubini-Study metrics. The class $\PSH^{\tau}(X, L)$ is closed under decreasing limits by the proof of \cite[prop. 5.6]{BJtriv}.
\\Let $(s_{\alpha})_{\alpha \in A}$ be a finite family of sections of $mL$ without common zeroes, and $\phi$ be the associated $L^2$-Fubini-Study metric:
$$\phi = \frac{1}{2m} \log( \sum_{\alpha \in A} \lvert s_{\alpha} \rvert^2),$$
and set $\phi_{\alpha} = m^{-1} \log \lvert s_{\alpha} \rvert \in \PSH(X, L)$. Then we have $\phi = \chi((\phi_{\alpha})_{\alpha \in A})$, with $\chi(x) = \frac{1}{2m} \log(\sum_{\alpha \in A} e^{2mx_{\alpha}})$. It now follows from the proof of lemma \ref{prop homo} that $\phi$ is a decreasing limit of a sequence of tropical Fubini-Study metrics, hence $\phi \in \PSH^{\tau}(X,L)$. By Demailly's regularization theorem when $X$ is smooth and $L$ ample, and \cite[thm. 7.1]{BE} in the general case, any metric $\phi$ in $\PSH(X,L)$ can be written as the decreasing limit of metrics in $\PSH^{\tau}(X, L)$. Since the latter is closed under decreasing limits, we infer that $\phi \in \PSH^{\tau}(X, L)$, which concludes.
\end{proof}
The following example provides an alternative description of tropical Fubini-Study metrics over a discretely-valued field $K$ of equicharacteristic zero:
\begin{ex} \label{ex model}
Assume that $A =K$ is a non-trivially valued non-archimedean field, with valuation ring $R$ and residue field $k$. If $(X,L)$ is a polarized variety over $K$, one can define the class of \emph{model metrics} on $L$ as follows: for any normal, projective $R$-model $\X/R$ of $X$ and $\Ld$ a model of $mL$ on $\X$ for $m \ge 1$, define:
$$ \phi_{\Ld}(x) = m^{-1} \log \lvert s_{\Ld}(x) \rvert,$$
where $s_{\Ld}$ is a trivialization of $\Ld$ at the center $c_{\X}(v_x)$ of $x$. One directly checks that this defines a continuous metric on $L$, such that the lattice $H^0(\X, m\Ld) \subset H^0(X, mL)$ is the unit ball for the induced supnorm:
$$\lVert s \rVert_{L^\infty(\phi_{\Ld})} := \sup_{x \in X^{\an}} \; \lVert s (x) \rVert_{\phi_{\Ld}}$$whenever $\X_0$ is reduced.
\\It then follows from \cite[thm. 5.14]{BE} that model metrics associated to semi-ample models are the same as pure Fubini-Study metrics on $L$, i.e. Fubini-Study metrics where the constants are taken to be zero in the definition. As an easy consequence, model metrics are the same as differences of pure Fubini-Study metrics.
\end{ex}
\subsection{Tropical Fubini-Study metrics}
Throughout this section, $X$ is a (not necessarily proper) $A$-scheme of finite type over a Banach ring $A$, and $L$ is a semi-ample line bundle on $X$.
\\The discussion from the previous section motivates the definition of the following class of metrics, that will be the building blocks for our class of semi-positive metrics:
\begin{defn}
Let $L$ be a line bundle on $X$, and let $m \ge 1$ be an integer. A tropical Fubini-Study metric on $L$ is a (continuous) metric of the form:
$$\phi = m^{-1} \max_{j \in J} (\log \lvert s_j \rvert + a_j),$$
where $(s_j)_{j \in J}$ is a finite family of sections of $mL$ without common zeroes and $a_j \in \R$.
\\We write $\FS^{\tau}(L)$ for the set of tropical Fubini-Study metrics on $L$.
\\If $L= \gO_X$, we will simply say that $\phi$ is a Fubini-Study function on $X$, and write $\FS^{\tau}(X) = \FS^{\tau}(\gO_X)$.
\\Finally, if the constants $a_j$ are all zero in the above definition, we will say that $\phi$ is a \emph{pure} Fubini-Study metric.
\end{defn}
It follows from the definition that $\FS^{\tau}(L)$ is non-empty if and only $L$ is semi-ample.
\\The following properties of $\FS^{\tau}(L)$ are straightforward consequences of the definition:
\begin{prop}
\label{prop.FS} 
Let $X$ be an $A$-scheme of finite type and $L$ a line bundle on $X$. Then:
\begin{enumerate} 
\item if $\phi \in \FS^{\tau}(L)$ and $c \in \R$, then $\phi +c \in \FS^{\tau}(L)$;
\item if $\phi_1, \phi_2 \in \FS^{\tau}(L)$, then $\max \{ \phi_1, \phi_2 \} \in \FS^{\tau}(L)$;
\item if $\phi_i \in \FS^{\tau}(L_i)$ for $i=1,2$ then $\phi_1 + \phi_2 \in \FS^{\tau}(L_1 +L_2)$;
\item if $\phi$ is a metric on $L$ such that $m\phi \in \FS^{\tau}(mL)$ for $m \ge 1$, then $\phi \in \FS^{\tau}(L)$;
\item if $\phi_1, \phi_2 \in \FS^{\tau}(L)$ and $c_1, c_2 \in \Q_{\ge 0}$ with $c_1+c_2=1$, then $c_1 \phi_1 + c_2 \phi_2 \in \FS^{\tau}(L)$;
\item if $f: Y \fl X$ is a morphism of $A$-schemes of finite type and $\phi \in \FS^{\tau}(L)$, then $f^*\phi \in \FS^{\tau}(f^*L)$.
\item if $B$ is a Banach ring together with a bounded homomorphism $A \fl B$ and $\phi \in \FS^{\tau}(X, L)$, then the base change metric $\phi_B \in \FS^{\tau}(X_B, L_B)$.
\end{enumerate}
\end{prop}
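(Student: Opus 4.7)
The plan is to verify each of the seven items by direct manipulation of the defining expression
$$\phi = m^{-1} \max_{j \in J} (\log \lvert s_j \rvert + a_j),$$
with $(s_j)_{j \in J}$ a finite family of global sections of $mL$ without common zeroes. The key observation, used throughout, is the homogenization identity
$$m^{-1} \max_j (\log \lvert s_j \rvert + a_j) = (m\ell)^{-1} \max_j (\log \lvert s_j^{\ell} \rvert + \ell a_j)$$
for any $\ell \ge 1$, which allows any $\phi \in \FS^{\tau}(L)$ to be written with denominator a prescribed multiple of $m$. Note that $(s_j^\ell)_j$ still has no common zeroes because $(s_j)_j$ does not.

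Items (1) and (4) are pure bookkeeping: for (1), absorb $c$ into the constants via $\phi + c = m^{-1} \max_j (\log \lvert s_j \rvert + a_j + mc)$; for (4), if $m\phi = k^{-1} \max_j (\log \lvert s_j \rvert + a_j)$ with $s_j \in H^0(X, kmL)$, then $\phi = (km)^{-1} \max_j (\log \lvert s_j \rvert + a_j)$ is itself of the required form on $L$. For (2), (3), (5), I first put the two representations on a common denominator using the homogenization identity above. Then (2) follows by concatenating the two finite index sets inside the max; the combined family still has no common zero, since each of the original families did. For (3), where $\phi_i \in \FS^{\tau}(L_i)$, I combine the identity $\max_i A_i + \max_j B_j = \max_{i,j}(A_i + B_j)$ with $\log \lvert s \rvert + \log \lvert t \rvert = \log \lvert s \otimes t \rvert$, so that on common denominator $m_1 m_2$ one has
$$\phi_1 + \phi_2 = (m_1 m_2)^{-1} \max_{i,j} \bigl( \log \lvert s_i^{m_2} \otimes t_j^{m_1} \rvert + m_2 a_i + m_1 b_j \bigr),$$
and the tensor family $(s_i^{m_2} \otimes t_j^{m_1})_{i,j}$ of sections of $m_1 m_2 (L_1 + L_2)$ has no common zero (for every $x$, pick $i_0, j_0$ with $s_{i_0}(x), t_{j_0}(x)$ both nonzero). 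For (5), write $c_i = p_i/q$ with $p_1 + p_2 = q$; iterating (3) shows $p_i \phi_i \in \FS^{\tau}(p_i L)$ (trivially when $p_i = 0$), then (3) again gives $p_1 \phi_1 + p_2 \phi_2 \in \FS^{\tau}(qL)$, and (4) yields $c_1 \phi_1 + c_2 \phi_2 \in \FS^{\tau}(L)$.

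Items (6) and (7) are immediate from the fact that the defining formula commutes with pullback along $f$ and with base change along $A \fl B$, as built into the respective definitions of pullback and base change of metrics in the previous subsection. The only point to check is that $(f^* s_j)_j \subset H^0(Y, f^*(mL))$ and $(s_j \otimes 1)_j \subset H^0(X_B, (mL)_B)$ still have no common zero; but the simultaneous vanishing locus of $(f^* s_j)_j$ is $f^{-1} \bigl( \bigcap_j \{s_j = 0 \} \bigr) = f^{-1}(\emptyset) = \emptyset$, and similarly after base change.

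There is no genuine obstacle in any item; the only recurring subtlety is verifying that raising sections to powers, taking tensor products, pulling back, or base-changing preserves the no-common-zero condition, which is automatic from the hypothesis on the original families. The proof is therefore a systematic but short unwinding of the definition.
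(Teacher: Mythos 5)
Your proof is correct and matches the paper's intent exactly: the paper simply asserts these properties as ``straightforward consequences of the definition,'' and your systematic unwinding (homogenization to a common denominator, concatenation or tensoring of section families, and the check that the no-common-zero condition is preserved under powers, tensor products, pullback, and base change) is precisely the routine verification being left to the reader.
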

We now introduce the following class of metrics, which usually play the role of smooth metrics in the non-archimedean case:
\begin{defn} Let $L$ be a line bundle on $X$.
\\A $\DFS$ (difference of Fubini-Study) metric on $L$ is a metric of the form $\phi = \phi_1 - \phi_2$, where $\phi_i \in \FS(L_i)$ for $i=1,2$, with $L=L_1-L_2$.
\\We write $\DFS(L)$ for the set of DFS metrics on $L$, and $\DFS(X) \subset \mathcal{C}^0(X^{\an})$ for the set of DFS functions on $\gO_X$, i.e. DFS metrics on $\gO_X$. 
\end{defn}
\begin{theo}
Assume that $X/A$ is projective. Then the $\Q$-vector space $\DFS(X)$ is dense in $\mathcal{C}^0(X^{\an})$.
\end{theo}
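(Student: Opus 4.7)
The plan is to apply the lattice form of the Stone-Weierstrass theorem (Kakutani--Krein) to the subspace $\DFS(X) \subset \mathcal{C}^0(X^{\an})$. Since $X/A$ is projective, $X^{\an}$ is a compact Hausdorff space, so $\mathcal{C}^0(X^{\an})$ endowed with the uniform norm is a real Banach space. Noting that the closure of any $\Q$-vector subspace in a topological real vector space is automatically an $\R$-vector subspace, and that uniform limits preserve finite lattice operations via $\max(f,g) = \tfrac{1}{2}(f+g+|f-g|)$, it suffices to check that $\DFS(X)$ contains the real constants, is stable under finite maxima, and separates points of $X^{\an}$.

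The first two properties follow essentially formally from Proposition~\ref{prop.FS}. For any $c \in \R$, one has $c = \log|1|+c \in \FS^{\tau}(\gO_X)$ since $1 \in H^0(X,\gO_X)$ is nowhere vanishing, so real constants lie in $\DFS(X)$. For the lattice property, given two DFS functions $\phi = \phi_1 - \phi_2$ and $\psi = \psi_1 - \psi_2$ with $\phi_i \in \FS^{\tau}(L_i)$, $\psi_i \in \FS^{\tau}(M_i)$ and $L_1 = L_2$, $M_1 = M_2$ (since $\phi,\psi$ are metrics on $\gO_X$), the identity
\[
\max(\phi,\psi) \;=\; \max\bigl(\phi_1+\psi_2,\ \psi_1+\phi_2\bigr)\;-\;(\phi_2+\psi_2)
\]
exhibits $\max(\phi,\psi)$ as a DFS function: since $L_1+M_2 = L_2+M_1$, Proposition~\ref{prop.FS}(3) places both inner terms in $\FS^{\tau}(L_1+M_2)$, their max is then in $\FS^{\tau}(L_1+M_2)$ by (2), and $\phi_2+\psi_2 \in \FS^{\tau}(L_2+M_2) = \FS^{\tau}(L_1+M_2)$. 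Minima follow from $\min(\phi,\psi) = -\max(-\phi,-\psi)$, using stability of $\DFS(X)$ under negation.

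The main obstacle is the separation of points. Fix distinct $x,y \in X^{\an}$. If $\pi(x) \neq \pi(y)$ in $\M(A)$, there exists $a \in A$ with $|a(x)| \neq |a(y)|$; viewing $a$ as an element of $H^0(X,\gO_X)$ via the structural morphism $\pi : X \to \Sp A$, and using the base-point-free pair $(a,1)$, the function $\phi_\Lambda := \max(\log|a|,\,-\Lambda) \in \FS^{\tau}(\gO_X)$ satisfies $\phi_\Lambda(x) \neq \phi_\Lambda(y)$ for $\Lambda$ sufficiently large. If instead $\pi(x) = \pi(y) = z \in \M(A)$, so that $x \neq y$ as points of the fiber $X^{\an}_{\mathscr{H}(z)}$, fix an ample line bundle $L_0$ on $X$ and $m \gg 0$ so that $mL_0$ is very ample. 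Since local regular functions on affine charts of $X$ can be expressed as ratios of global sections of $mL_0$ by projectivity, distinctness of $x$ and $y$ yields indices $i,j$ and a base-point-free finite family $(s_0,\dots,s_N)$ of sections of $mL_0$ containing $s_i,s_j$ such that $|s_i(x)|/|s_j(x)| \neq |s_i(y)|/|s_j(y)|$. Setting
\[
\phi_1 := \max_k(\log|s_k|+a_k), \qquad \phi_2 := \max_k(\log|s_k|+b_k)
\]
with $a_i = b_j = 0$ and all other coefficients equal to $-\Lambda$ for $\Lambda \gg 0$, both metrics lie in $\FS^{\tau}(mL_0)$, and $\phi_1-\phi_2 \in \DFS(X)$ evaluates to $\log(|s_i|/|s_j|)$ at both $x$ and $y$, thereby distinguishing them.

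Combining these three verifications with Kakutani--Krein yields $\overline{\DFS(X)} = \mathcal{C}^0(X^{\an})$, which is the desired density. I expect the hardest step to be the separation of points in the same fiber, as it is the only place where one genuinely needs to extract globally-defined DFS functions from local absolute-value data, and thus where the projectivity hypothesis on $X/A$ intervenes crucially.
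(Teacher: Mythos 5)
Your proof follows essentially the same route as the paper: Stone--Weierstrass applied to the $\Q$-vector space $\DFS(X)$, which is stable under $\max$ and contains constants by Proposition~\ref{prop.FS}, so that everything reduces to separation of points, which is then achieved by ratios of sections of an ample line bundle truncated by large constants. Two remarks. First, your case division according to whether $\pi(x)=\pi(y)$ is unnecessary: the paper handles both cases uniformly by pulling back from $\CP^n_A$ (using stability of $\DFS$ under pullback), and your second case already subsumes the first. Second, and more substantively, the step where you claim that distinctness of $x$ and $y$ in the same fiber "yields indices $i,j$ \dots such that $\lvert s_i(x)\rvert/\lvert s_j(x)\rvert \neq \lvert s_i(y)\rvert/\lvert s_j(y)\rvert$" with $s_i,s_j$ sections of a \emph{fixed} power $mL_0$ is not quite right as stated: two distinct multiplicative semi-norms on the coordinate ring of the chart $\{s_j\neq 0\}$ may agree on every such ratio and differ only on a polynomial in these ratios (already on $\A^2_\C$, the points $(1,1)$ and $(1,-1)$ agree on all monomials but differ on $z_1+z_2$). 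The fix is exactly the paper's move of clearing denominators: the distinguishing regular function $f$ on the chart is of the form $s/s_j^{d}$ with $s\in H^0(X,mdL_0)$ for some $d\ge 1$, and one then runs your truncation argument with $s$ and $s_j^{d}$ in place of $s_i$ and $s_j$. With that correction (and noting, as you implicitly do via the $-\Lambda$ truncation, that $\phi_1-\phi_2$ only equals $\log(\lvert s\rvert/\lvert s_j^d\rvert)$ where $s$ does not vanish, which suffices to separate for $\Lambda\gg 0$), the argument is complete and coincides with the paper's.
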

\begin{proof}
This is essentially the same proof as in \cite[thm. 2.7]{BJtriv}.
\\It follows easily from prop. \ref{prop.FS} that $\DFS(X)$ is a $\Q$-subvector space of $\mathcal{C}^0(X)$, stable under $\max$ and containing constant functions. Hence, since $X^{\an}$ is compact, by the Stone-Weierstrass theorem, it is enough to prove that $\DFS(X)$ separates points. 
\\Since $\DFS$ is stable by pullback, we may assume that $X = \CP_A^n$. Let $x \neq y \in X^{\an}$, then by considering a hyperplane not containing either $x$ or $y$, we may assume $x, y \in \A^{n, \an}_A$, which is by definition the set of semi-norms on $A[t_1,...,t_n]$ whose restriction $A$ belongs to $\M(A)$. As a result, $\lvert \cdot \rvert_x \neq \lvert \cdot \rvert_y$ implies that there exists a polynomial $f \in A[t_1,...,t_n]$ such that $\lvert f(x) \rvert \neq \lvert f(y) \rvert$; we will assume $\lvert f(x) \rvert < \lvert f(y) \rvert$.
\\Take homogeneous coordinates $z_0,...,z_n \in H^0( \CP^n_A, \gO(1))$ on $\CP^n_A$, such that $t_i = z_i/z_0$ on $\A^n$. We may write $f(t_1,...,t_n) = z_0^{-d} s$, with $s \in H^0( \CP^n_A, \gO(d))$.
\\Let $N \in \Z$, and $\lambda_0,..., \lambda_n \in \Z$, and set:
$$\psi = d \max_{0 \le j \le n} (\log \lvert z_j \rvert - \lambda_j),$$
which is an FS metric on $\gO(d)$, so that:
$$u = \max \{ \log \lvert s \rvert, \psi -N \} - \psi=  \max \{ \log \lvert s \rvert - \psi, -N \}$$
is a DFS function on $\CP^{n}_A$.
\\Then for $\lambda_0 =0$ and $\lambda_j$ large enough, we have $\psi(x) = d \log \lvert z_0(x) \rvert$ and $\psi(y) =d \log \lvert z_0(y) \rvert$.
\\Thus, for $N> - \log \lvert f (y) \rvert$, we have:
$$u(x) = \max \{ \log \lvert s(x) \rvert - d \log \lvert t_0(x) \vert, -N \} = \max \{ \log \lvert f(x) \rvert, -N \}$$
$$< \max \{ \log \lvert f(y) \rvert, -N \} = u(y),$$
whence the result.
\end{proof}
\begin{ex}
Let $Y = \Sp A$, so that $Y^{\an} =\mathscr{M}(A)$. Then a Fubini-Study function on $Y^{\an}$ is a continuous function of the following form:
$$\eta =m^{-1} \max_{\alpha \in B} (\log \lvert f_{\alpha} \rvert + \lambda_{\alpha}),$$
where $B$ is a finite set, the $f_{\alpha} \in A$ have no common zeroes and $\lambda_{\alpha} \in \R$. 
\\Now, if $X/A$ is a scheme of finite type and $X^{\an} \xrightarrow{\pi} \mathscr{M}(A)$ is its analytification, the function $\eta \circ \pi$ (that we will still write as $\eta: X^{\an} \fl \R$) is also an FS function on $X$.
\end{ex}
\subsection{Semi-positive metrics}
From now on, we will assume that $A$ is an integral Banach ring. Recall that $\M(A)^{\eta}$ is the subset of $\M(A)$ whose elements $\lvert \cdot \rvert_x$ have trivial kernel. In particular, the residue field $\mathscr{H}(x)$ of $\M(A)$ at $x$ is the completion of the fraction ring $\kappa$ of $A$ with respect to $\lvert \cdot \rvert_x$, so that $X_{\mathscr{X}(x)}$ is the flat base change of $X$ to $\mathscr{H}(x)$.
\begin{defn}
Let $X$ be a scheme of finite type over $A$, and $L$ a semi-ample line bundle on $X$. A plurisubharmonic (or semi-positive) metric $\phi$ on $L$ is a singular metric on $L$ that is the pointwise limit of a decreasing net of tropical Fubini-Study metrics on $L$, and such that $ \phi_x \not\equiv - \infty$, for all $x \in \M(A)^{\eta}$, where $\phi_x$ is the restriction of $\phi$ to $X_{\mathscr{H}(x)}^{\an}$.
\\We write $\PSH(X, L)$ or $\PSH(L)$ for the set of semi-positive metrics on $L$, and $\PSH(X)$ for the set of PSH functions on $X^{\an}$.
\end{defn}
Note that since our base ring $A$ is arbitrary, the space $\PSH(X)$ could be very large; for instance even for $X = \Sp A$, every non-zero element $a \in A$ induces a PSH function $\phi_a = \log \lvert a \rvert \in \PSH(X)$ on $X^{\an} = \mathscr{M}(A)$. Indeed, we have $\phi = \lim_j (\phi_j)_j$, with:
$$\phi_j = \max( \log \lvert a \rvert , \log \lvert 1-a \rvert -j),$$
and $\phi(x) = \log \lvert a(x) \rvert > - \infty$ whenever $x \in \M(A)^{\eta}$.
\\Let us point out that while the condition $ \phi_x \not\equiv - \infty$ for all $x \in \M(A)^{\eta}$ is natural in the setting of hybrid spaces (we will see later that it translates into finiteness of Lelong numbers) and spaces over $\M(\Z)$, it might be too strong in general - the analytification of $\CP^2$ over a trivially valued field contains points that are pluripolar and Zariski dense. Such a point lies in an affinoid domain $\M(A)$, which will then admit a psh function in the sense of \cite{BJtriv} that is $- \infty$ at a Zariski-dense point. 
\begin{defn} We write $\CPSH(X, L)$ for the set of continuous, plurisubharmonic metrics on $L$. It is endowed with the topology of uniform convergence on $X$.
\end{defn}
Note that $\FS^{\tau}(X, L) \subset \CPSH(X,L)$ by definition.
\begin{prop} \label{prop psh} The following properties hold:
\begin{enumerate}
\item if $\phi \in \PSH(L)$ and $c \in \R$, $\phi+c \in \PSH(L)$;
\item if $\phi_i \in \PSH(L_i)$ for $i=1,2$, $\phi_1+\phi_2 \in \PSH(L_1+L_2)$;
\item if $\phi_1, \phi_2 \in \PSH(L)$, then $\max \{ \phi_1, \phi_2 \} \in \PSH(L)$;
\item if $\phi$ is a singular metric on $L$ such that $m\phi \in \PSH(mL)$ for $m \ge 1$, then $\phi \in \PSH(L)$;
\item if $\phi_1, \phi_2 \in \PSH(L)$ and $c_1, c_2 \in \R_{\ge 0}$ with $c_1+c_2=1$, then $c_1 \phi_1 + c_2 \phi_2 \in \PSH(L)$;
\item if $(\phi_j)_j$ is a decreasing net in $\PSH(L)$ and $\phi= \lim_j \phi_j$ is such that $\phi_x \not\equiv - \infty$ for all $x \in \M(A)^{\eta}$, then $\phi \in \PSH(L)$;
\item if $X/A$ is proper and $(\phi_j)_j$ is a net in $\PSH(L)$ converging uniformly to $\phi$, then $\phi \in \PSH(L)$.
\end{enumerate}
\end{prop}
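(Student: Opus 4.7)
The strategy is to deduce each of the seven assertions from the corresponding property of $\FS^\tau(L)$ gathered in Proposition~\ref{prop.FS}, together with the definition of $\PSH(L)$ as pointwise decreasing limits of nets in $\FS^\tau(L)$ subject to the non-degeneracy condition $\phi_x \not\equiv -\infty$ on the fibers over $x \in \M(A)^\eta$. Items (1)--(4) are formal: given decreasing nets $\phi_{i,\alpha} \searrow \phi_i$ with $\phi_{i,\alpha} \in \FS^\tau(L_i)$, the nets $\phi_\alpha+c$, $\phi_{1,\alpha}+\phi_{2,\alpha}$, $\max(\phi_{1,\alpha},\phi_{2,\alpha})$, and $m^{-1}\psi_\alpha$ (where $\psi_\alpha \searrow m\phi$) are decreasing nets in $\FS^\tau$ by items (1), (3), (2), (4) of Proposition~\ref{prop.FS} respectively, with the correct pointwise limits. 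The non-degeneracy on Zariski-dense fibers is trivial in (1), (3), (4), and in case (2) reduces to the analogous statement over the complete valued field $\mathscr{H}(x)$, namely that the sum of two PSH metrics not identically $-\infty$ is not identically $-\infty$ (which follows from the thinness of pluripolar sets).

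For (6), let $\phi_\alpha \searrow \phi$ be a decreasing net of PSH metrics, and fix for each $\alpha$ FS approximants $\phi_{\alpha,\beta} \searrow \phi_\alpha$; then $\phi = \inf_{(\alpha,\beta)} \phi_{\alpha,\beta}$, and a directed refinement of the product indexing (following the argument of \cite[Prop.~5.6]{BJtriv}) exhibits $\phi$ as the pointwise decreasing limit of a net in $\FS^\tau(L)$, which together with the non-degeneracy hypothesis gives $\phi \in \PSH(L)$. Item (7) then reduces to (6) by a standard trick: assuming $X/A$ proper and $\phi_n \to \phi$ uniformly on the compact space $X^{\an}$, extract a subsequence with $\lVert \phi_{n_k}-\phi \rVert_\infty \leq 2^{-k-1}$, and set $\psi_k := \phi_{n_k}+2^{-k}$; a direct estimate gives $\psi_k \searrow \phi$, each $\psi_k \in \PSH(L)$ by (1), and $\phi$ is continuous, hence automatically satisfies the non-degeneracy condition.

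Finally, for (5): the rational case is immediate from iterating (2) and then applying (4), since for $c_i = p_i/q \in \Q_{\geq 0}$ with $p_1+p_2=q$ one has $c_1\phi_1+c_2\phi_2 = q^{-1}(p_1\phi_1+p_2\phi_2)$. For real coefficients, fix FS approximants $\phi_{i,\alpha} \searrow \phi_i$ and rationals $c_1^{(n)} \searrow c_1$ with values in $[c_1,1]$; on the compact space $X^{\an}$ the continuous function $\phi_{1,\alpha}-\phi_{2,\alpha}$ is bounded by some $M_\alpha < \infty$, and the metrics
\[
\tilde\psi_{n,\alpha} := c_1^{(n)}\phi_{1,\alpha}+(1-c_1^{(n)})\phi_{2,\alpha}+(c_1^{(n)}-c_1)M_\alpha
\]
form for each fixed $\alpha$ a decreasing sequence in $n$ of PSH metrics (by the rational case combined with (1)) with pointwise limit $c_1\phi_{1,\alpha}+c_2\phi_{2,\alpha}$; applying (6) once and then a second time as $\alpha$ varies, with non-degeneracy checked as in (2), yields $c_1\phi_1+c_2\phi_2 \in \PSH(L)$. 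The main technical difficulty lies in (6) -- the construction of a directed refinement combining two layers of decreasing nets -- and in the consistent tracking of non-degeneracy on Zariski-dense fibers throughout (2), (5), and (6), which ultimately reduces to corresponding results over a complete valued field.
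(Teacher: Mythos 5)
Your overall strategy is the same as the paper's, whose proof consists of one sentence: items (1)--(5) are read off from Proposition~\ref{prop.FS}, and (6)--(7) are quoted from \cite[lem.~4.6]{BJtriv}. You supply the details the paper omits (product nets for (2), the directed refinement for (6), the non-degeneracy checks on fibers over $\M(A)^{\eta}$ via pluripolarity over the residue field), and these are all in the right spirit. Two steps, however, do not work as written.

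First, the reduction of (7) to (6): with $\lVert \phi_{n_k}-\phi\rVert_\infty \le 2^{-k-1}$ and $\psi_k = \phi_{n_k}+2^{-k}$ one only gets $\psi_k - \psi_{k+1} \ge -2^{-k-1}-2^{-k-2}+2^{-k-1} = -2^{-k-2}$, so the sequence need not be decreasing. The trick is fine but the constants must be chosen with faster decay, e.g.\ $\lVert \phi_{n_k}-\phi\rVert_\infty \le 4^{-k}$ and $\psi_k = \phi_{n_k}+2\cdot 4^{-k}$. Second, item (5) is stated for $X/A$ of finite type, not proper, so $X^{\an}$ need not be compact and the uniform bound $M_\alpha$ on $\phi_{1,\alpha}-\phi_{2,\alpha}$ is not available; your argument silently imports the hypothesis of (7). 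A compactness-free route from the rational case to the real one: choose rationals $\lambda'_n \searrow c_1$ and $\lambda''_n \nearrow c_1$ in $[0,1]$ and set
$$w_n = \max\big(\lambda'_n\phi_1+(1-\lambda'_n)\phi_2,\ \lambda''_n\phi_1+(1-\lambda''_n)\phi_2\big),$$
which lies in $\PSH(L)$ by the rational case and (3); at a point where $\phi_1\ge\phi_2$ the first term realizes the maximum and decreases to $c_1\phi_1+c_2\phi_2$, at a point where $\phi_1\le\phi_2$ the second one does, so $w_n$ decreases pointwise to $c_1\phi_1+c_2\phi_2$ and (6) applies. With these two repairs your proof is complete and matches the paper's intended argument.
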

Note that the difference of two singular psh metrics does not make sense as a function in general, so that the last item means the following: if $(\phi_j)_j$ is a net in $\PSH(L)$, such that there exists a net of continuous functions $(f_j)_j$ in $\mathcal{C}^0(X^{\an})$ converging uniformly to zero and such that $\phi_j = \phi +f_j$, then $\phi \in \PSH(L)$.
\begin{proof} The first 5 items are straightforward consequences of the corresponding properties for $\FS^{\tau}$, stated in prop. \ref{prop.FS}; while (6) and (7) follows from \cite[lem. 4.6]{BJtriv}, (i) and (ii) respectively.
\end{proof}
The subset $\CPSH(X,L) \subset \PSH(X,L)$ is naturally endowed with the topology of uniform convergence; the next proposition states that for this topology, $\FS^{\tau}(L)$ is dense in $\CPSH$:
\begin{prop}{\cite[prop. 5.20]{BJtriv}}
Assume that $X/A$ is proper, and let $\phi$ be a continuous metric on $L$. Then $\phi \in \CPSH(X,L)$ if and only if there exists a net $(\phi_j)_{j}$ in $\FS^{\tau}(L)$ converging uniformly to $\phi$.
\end{prop}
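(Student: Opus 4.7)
The proof splits naturally into the two implications, with the harder half being the "only if" direction, which requires upgrading pointwise decreasing convergence to uniform convergence.

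For the "if" direction, suppose $(\phi_j)_j$ is a net in $\FS^\tau(L)$ converging uniformly to $\phi$. Since each $\phi_j$ is in particular in $\PSH(X,L)$ and $X/A$ is proper, item (7) of Proposition \ref{prop psh} applies to conclude that $\phi \in \PSH(X,L)$. As $\phi$ is assumed continuous (and uniform limits of continuous metrics are continuous in any case), this places $\phi$ in $\CPSH(X,L)$.

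For the "only if" direction, start from the defining property: since $\phi \in \CPSH(X,L) \subset \PSH(X,L)$, by the very definition of $\PSH(X,L)$ there exists a net $(\phi_j)_{j \in J}$ in $\FS^\tau(L)$ decreasing pointwise to $\phi$. The key observation is that although each $\phi_j$ and $\phi$ are metrics on $L$ (and hence not a priori real-valued functions on $X^{\an}$), their difference $u_j := \phi_j - \phi$ is a well-defined real-valued function on $X^{\an}$, independent of the choice of local trivializations of $L$. Because both $\phi_j$ and $\phi$ are continuous metrics by hypothesis, $u_j$ is a continuous function, and because the net $(\phi_j)_j$ decreases to $\phi$, the net $(u_j)_j$ is a monotonically decreasing net of non-negative continuous functions converging pointwise to $0$ on $X^{\an}$.

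At this point I invoke the properness assumption: $X/A$ proper implies that $X^{\an}$ is compact. On a compact Hausdorff space, a monotonically decreasing net of continuous real-valued functions converging pointwise to a continuous limit converges uniformly, by Dini's theorem (the standard statement is for sequences but the net version is identical, using the compactness characterization in terms of finite subcovers of sub-level sets $\{u_j < \varepsilon\}$). Applying this to the net $(u_j)_j$, we obtain $u_j \to 0$ uniformly, that is, $\phi_j \to \phi$ uniformly in the metric sense on $L$. This yields the desired net of tropical Fubini-Study metrics converging uniformly to $\phi$.

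The main potential obstacle is purely bookkeeping around the Dini argument: one has to be sure that $u_j$ is genuinely a continuous real-valued function (not merely upper semi-continuous and $\R \cup \{-\infty\}$-valued as would be the case for a generic singular psh metric) and that the monotonicity of the net of metrics translates into monotonicity of the net of difference functions. Both facts are immediate from the hypothesis that $\phi$ is continuous, so no further technical work is needed.
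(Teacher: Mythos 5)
Your proof is correct and is essentially the argument of the cited source \cite[prop. 5.20]{BJtriv} (the paper itself gives no proof, only the citation): the ``if'' direction via stability of $\PSH$ under uniform limits, and the ``only if'' direction by applying Dini's lemma (in its net version) on the compact space $X^{\an}$ to the decreasing net of continuous functions $\phi_j-\phi$. No gaps.
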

Note that the natural topology of uniform convergence on the subset of continuous PSH metrics does not extend to $\PSH(L)$, so that it is unclear in this generality how to put a reasonable topology on $\PSH(L)$. It is however know how to put a topology on $\PSH(L)$ when $A$ is a discretely or trivially-valued field, see \cite{BFJ2}, \cite{BJv4}.
\\While the Fubini-Study metrics considered above are always continuous - hence bounded - one can also consider FS metrics with singularities:
\begin{ex} \label{ex fs sing} Assume that $L$ is a semi-ample, and let $s \in H^0(X, mL)$ for some $m \ge 1$ be a non-zero global section. Then $ \phi =m^{-1} \log \lvert s \rvert \in \PSH(X, L)$, since we may write $\phi$ as the decreasing limit of the following:
$$\phi_j =(md)^{-1} \max \big(\log \lvert s^d \rvert , \max_{\alpha \in A} (\log \lvert s_{\alpha} \rvert -j) \big),$$
where $(s_{\alpha})_{\alpha \in A}$ is a family of sections of $mdL$ without common zeroes. Moreover, if $x \in \M(A)$ is a Zariski-dense point, the base change of $s$ to $X_{\mathscr{H}(x)}$ is non-zero by flatness, so that $\phi_x \not\equiv - \infty$.
\\More generally, by prop. \ref{prop psh}, for semi-ample $L$ and any finite family $(s_{\alpha})_{\alpha \in A}$ of sections of $mL$, the metric:
$$\phi = m^{-1} \max_{\alpha \in A} ( \log \lvert s_{\alpha} \rvert + c_{\alpha})$$
is semi-positive, i.e. $\phi \in \PSH(X,L)$.
\end{ex}
\begin{rem}
Let us point out that unlike the class $\FS^{\tau}$, the class of plurisubharmonic metrics is not - strictly speaking - stable under base change. Indeed, if the ring homomorphism $A \fl B$ is not flat and $s \in H^0(X, mL)$ is a non-zero global section, it could very well happen that the section $s_B \in H^0(X_B, mL_B)$ is the zero section. As a result, the base change to $X_B$ of the psh metric $\phi =m^{-1} \log \lvert s \vert$ satisfies $\phi_B \equiv - \infty$. For instance, if $\phi \in \PSH(X, L)$ and $x \in \M(A)$ is not a Zariski-dense point, then the restriction $\phi_x$ to $X^{\an}_{\mathscr{H}(x)}$ may be identically $-\infty$. 
\\This occurs in the complex world as well, as a psh metric on a the total space of a holomorphic fibration $X \fl B$ may be identically $- \infty$ on certain fibers $X_b$. One easy way to remedy this is for instance to allow psh metrics to be identically $-\infty$, we choose not to as this psh metric would have to be treated separately in several proofs, making the exposition more cumbersome.
\end{rem}
\subsection{Examples}
We start with the case where $A=\C$ with the usual absolute value. The following statement is a mere reformulation of theorem \ref{theo psh1}:
\begin{theo} \label{theo psh2} Let $X$ be a projective complex variety, and let $L$ be a semi-ample line bundle on $X$.
\\Then $\PSH(X, L)$ is the space of plurisubharmonic metrics on $L$ in the sense of usual pluripotential theory.
\end{theo}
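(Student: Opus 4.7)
The plan is to simply unpack definitions and observe that the statement is essentially a direct translation of Theorem \ref{theo psh1}. Since $A = \C$ is a field, the Berkovich spectrum $\mathscr{M}(\C)$ consists of a single point $x_0$, and $\mathscr{M}(\C)^\eta = \{x_0\}$, so that the fiberwise non-degeneracy condition $\phi_x \not\equiv -\infty$ appearing in the definition of $\PSH(X,L)$ becomes the single requirement $\phi \not\equiv -\infty$, matching the standing convention for singular psh metrics in complex pluripotential theory.

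For the inclusion of the paper's class into the classical one, I would argue as follows. Let $\phi$ belong to $\PSH(X,L)$ in the sense of this paper, so $\phi = \lim_j \phi_j$ for some decreasing net $(\phi_j)_j$ of tropical Fubini-Study metrics, with $\phi \not\equiv -\infty$. As already pointed out in the paragraph before Theorem \ref{theo psh1}, every tropical Fubini-Study metric $m^{-1} \max_j(\log|s_j| + a_j)$ is psh in the classical sense, being a finite maximum of metrics of the form $m^{-1} \log|s_j|$ translated by constants, and the latter are psh by the Lelong-Poincaré formula. The classical fact that a decreasing limit of psh metrics is either psh or identically $-\infty$ then forces $\phi$ to be psh in the classical sense.

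For the converse direction, let $\phi$ be a classical psh metric on $L$. By Theorem \ref{theo psh1}, $\phi$ is the pointwise decreasing limit of a net of tropical Fubini-Study metrics on $L$; the classical psh condition also guarantees $\phi \not\equiv -\infty$ on each connected component of $X$, hence the single non-degeneracy condition at $x_0 \in \mathscr{M}(\C)^\eta$ is satisfied. Therefore $\phi$ lies in the paper's $\PSH(X,L)$.

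There is no genuine obstacle here, as all the real content has been packaged into Theorem \ref{theo psh1} (which in turn rests on Demailly's regularization theorem in the smooth ample case and the extension of \cite[thm.~7.1]{BE} in the semi-ample case); the only point requiring care is the discrepancy between the standard $L^2$-Fubini-Study regularizations produced by Demailly's theorem and the tropical (max-type) Fubini-Study metrics used here, but this has already been reconciled in the proof of Theorem \ref{theo psh1} via the approximation of $\chi(x) = \frac{1}{2m}\log\sum_\alpha e^{2m x_\alpha}$ by maxima, as invoked through Lemma \ref{prop homo}.
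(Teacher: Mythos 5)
Your proposal is correct and follows exactly the route the paper intends: the paper offers no separate proof of this statement, declaring it ``a mere reformulation of theorem \ref{theo psh1}'', and your argument simply makes that reformulation explicit (one inclusion from Theorem \ref{theo psh1} plus the trivial non-degeneracy check at the unique Zariski-dense point of $\M(\C)$, the other from the classical stability of $\PSH(X,L)$ under decreasing limits of tropical Fubini-Study metrics, which the paper itself records in the paragraph following Theorem \ref{theo psh1}). No gaps.
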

As mentioned above, for any Banach ring $A$, the above definitions for $X= \Sp A$ and $L= \gO_X$ yield a space $\PSH(X)$ of plurisubharmonic functions on $\mathscr{M}(A)$. We will compute this space in simple examples.
\\We start with the case where $A = k^{\hyb}$ is a hybrid field. Since $k$ is non-trivially valued, we may assume (up to scaling) that $\log \lvert k^{\times} \rvert \supseteq \Z$. In that case, the homeomorphism:
$$\lambda : \mathscr{M}(A) \xrightarrow{\sim} [0,1]$$
is in fact such that $\lambda$ is a Fubini-Study function on $X$, since $\lambda(x) = \log \lvert a \rvert_x$ for any $a \in k^{\times}$ such that $\log \lvert a \rvert =1$.
\\Conversely, any Fubini-Study function on $X$ is of the form:
$$\phi(x) = m^{-1} \max_{j \in J} \big(\log \lvert a_j \rvert_x + c_j\big),$$
for $a_j \in k^{\times}$ and $c_j \in \Z$. Since $\log \lvert a_j \rvert_x = \lambda(x) \log \lvert a_j \rvert$, $\phi$ is a finite maximum of affine functions, hence convex, and $\FS(X)$ contains all finite maxima of affine functions with rational coefficients. Taking decreasing limits (which have finite values everywhere since all points in $\M(k^{\hyb})$ are Zariski-dense), we conclude that the homeomorphism $\lambda$ identifies the space $\PSH(\mathscr{M}(k^{\hyb}))$ with the space of real-valued convex functions on the segment $[0,1]$. In particular, plurisubharmonic functions on $X$ are continuous away from the boundary of the interval. Note however that the function $\phi : \M(k^{\hyb}) \fl \R$ defined by $\phi(0)=1$, $\phi(\lambda) =0$ for $\lambda >0$ is also psh.
\\We now move on to the case $A=A_r$, the ring of Laurent series that are convergent for the hybrid norm on $\C$; recall that we have a canonical homeomorphism from the hybrid circle $C^{\hyb}(r) :=\M(A_r) \simeq \bar{\D}_r$ to the closed Euclidean disk. The following proposition asserts that away from the boundary of the closed disk, we may, after rescaling, identify psh functions on the hybrid circle with subharmonic functions on the punctured disk that have logarithmic growth at the puncture:
\begin{prop}
Let:
$$A_r = \{ f = \sum_{n \in \Z} a_n t^n \in \C((t))  \; / \; \lVert f \rVert_{\hyb} = \sum_{n} \lVert a_n \rVert_{\hyb} \; r^n < \infty \},$$
and write $\tau : \bar{\D}_r  \xrightarrow{\sim} C^{\hyb}(r)$ the homeomorphism from prop. \ref{cercle hyb}.
\\There exists an order-preserving, injective map:
$$\rho_r : \PSH(C^{\hyb}(r)) \longrightarrow \SH(\D_r) + \R \log \lvert t \rvert$$
$$ \phi \longmapsto \big( t \mapsto \log_r \lvert t \rvert \times (\phi (\tau(t)) \big)$$
for $t \neq 0$. Moreover, if $\phi$ is continuous, then $\phi(0) = \log r \times \nu_0( \rho_r(\phi))$ is a negative multiple of the (generalized) Lelong number of $\rho_r(\phi)$ at $0$.
\\Conversely, for any $r'>r$, there exists an order-preserving, injective map: 
$$ \rho_{r', r} : \SH(\D_{r'}) + \R \log \lvert t \rvert \longrightarrow \PSH(C^{\hyb}(r)),$$
$$\phi \longmapsto \big(\tilde{\phi} : t \mapsto \frac{\phi(\tau^{-1}(t))}{\log_r \lvert t \rvert} \big)$$
with $\tilde{\phi}(0) = \frac{\nu_{0}(\phi)}{\log r}$.
Finally, the composition $\rho_r \circ \rho_{r', r}$ is (up to a scaling factor) the usual restriction map.
\end{prop}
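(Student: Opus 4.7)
The proof rests on the identity from Lemma~\ref{cercle hyb}: for $a \in A_r$ and $t \in \bar{\D}_r^*$,
\[
\log_r|t| \cdot \log|a|_{\tau(t)} = \log|a(t)|,
\]
which translates Fubini--Study functions on $C^{\hyb}(r) = \M(A_r)$ into classical subharmonic potentials on $\D_r$, up to an $\R\log|t|$-correction at the origin coming from poles of elements of $A_r$. The plan is to verify the claims first on the dense classes $\FS^{\tau}(C^{\hyb}(r))$ and $\log|A_r|$, and then pass to limits using the density of tropical FS metrics in $\PSH$ on the non-archimedean side, and a one-dimensional Demailly-type regularization on the complex side.

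For $\rho_r$: a direct computation using the identity gives, for $\phi = m^{-1}\max_j(\log|a_j|+c_j) \in \FS^{\tau}$,
\[
\rho_r(\phi)(t) = m^{-1}\max_j\bigl(\log|a_j(t)| + (c_j/\log r)\log|t|\bigr),
\]
which lies in $\SH(\D_r) + \R\log|t|$ since each $a_j \in A_r$ is holomorphic on $\D_r$ with at most a pole of bounded order at $0$, and this class is stable under finite maxima and $\R\log|t|$-shifts. Order-preservation follows from $\log_r|t| > 0$ on $\D_r^*$. Extending to all $\phi \in \PSH$ is done by taking the decreasing limit of such FS approximations; the assumption $\phi_x \not\equiv -\infty$ at Zariski-dense points rules out the limit being identically $-\infty$ on $\D_r^*$. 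For continuous $\phi$, continuity at $0$ gives $\phi(\tau(t)) \to \phi(0)$ as $t \to 0$, so
\[
\frac{\rho_r(\phi)(t)}{\log|t|} = \frac{\phi(\tau(t))}{\log r} \xrightarrow[t \to 0]{} \frac{\phi(0)}{\log r},
\]
identifying $\nu_0(\rho_r(\phi)) = \phi(0)/\log r$, i.e.\ $\phi(0) = (\log r)\nu_0(\rho_r(\phi))$, a negative multiple since $\log r < 0$. Injectivity is then transparent: $\rho_r(\phi)|_{\D_r^*}$ determines $\phi$ on $C^{\hyb}(r)\setminus\{0\}$ via $\tau$, and $\phi(0)$ is recovered from the Lelong number; the extension to non-continuous $\phi \in \PSH$ follows by FS approximation combined with the monotonicity of $\nu_0$ under decreasing psh limits.

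For $\rho_{r', r}$: the identity shows that $\rho_{r', r}(\log|f|) = \log|f|_x \in \FS^{\tau}$ whenever $f \in A_r$. For arbitrary $\psi \in \SH(\D_{r'}) + \R\log|t|$, the Riesz representation writes $\psi = \int \log|t-w|\,d\mu(w) + h(t)$ for a locally finite positive measure $\mu$ on $\D_{r'}$ and a harmonic function $h$; one then approximates $\mu|_{\bar{\D}_r}$ from above by rational combinations of Dirac masses supported in $\D_{r'}$, and the harmonic part via Runge-type approximation by $m^{-1}\log|g_m|$ for nowhere-vanishing holomorphic functions $g_m$ on $\bar{\D}_r$. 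This produces a decreasing sequence $\psi_m = m^{-1}\log|f_m|$ with $f_m \in A_r$ converging to $\psi$ on $\bar{\D}_r$; the gap $r' > r$ is precisely what lets the approximants extend holomorphically across $\bar{\D}_r$, hence lie in $A_r$. Applying $\rho_{r',r}$ termwise gives a decreasing FS approximation of $\rho_{r',r}(\psi)$, so $\rho_{r',r}(\psi) \in \PSH(C^{\hyb}(r))$, and order-preservation plus injectivity follow directly from the defining formula. The composition $\rho_r \circ \rho_{r', r}(\psi)(t) = \log_r|t| \cdot \psi(t)/\log_r|t| = \psi(t)$ on $\D_r^*$ reduces to a direct cancellation, reproducing restriction to $\D_r$ up to the prescribed normalization at $0$. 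The main technical obstacle is the regularization step above: classical convolution-smoothing of subharmonic functions does not land in $\log|A_r|$, so the one-variable structure (Riesz decomposition plus Runge approximation) must be exploited to produce approximants specifically of the required FS form.
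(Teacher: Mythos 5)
Your treatment of the forward map $\rho_r$ is essentially the paper's: compute on tropical Fubini--Study functions using the identity $\log_r\lvert t\rvert\cdot\log\lvert a\rvert_{\tau(t)}=\log\lvert a(t)\rvert$, then pass to decreasing limits; the Lelong-number identification for continuous $\phi$ is the same. Where you diverge is the converse map $\rho_{r',r}$: the paper does not attempt a self-contained one-variable argument at all, but simply invokes the general hybrid extension theorem (theorem \ref{theo psh hyb}), whose proof runs through multiplier ideals and Ohsawa--Takegoshi. Your attempt to replace this by Riesz decomposition plus Runge approximation is a genuinely different route, and this is where there is a real gap.

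The gap is this: you claim to produce a \emph{decreasing} sequence $\psi_m=m^{-1}\log\lvert f_m\rvert$ with $f_m\in A_r$ converging to $\psi$ on $\bar{\D}_r$. This is impossible unless $\psi$ is harmonic away from its polar set. Indeed, each $\psi_m$ equals $-\infty$ on the zero set of $f_m$; if $\psi_m\ge\psi$ and $\psi$ is finite at a point, then $f_m$ cannot vanish there, so for $\psi$ finite on $\D_r^*$ (e.g.\ $\psi$ smooth and strictly subharmonic) every $f_m$ is zero-free on $\D_r^*$, each $\psi_m$ is harmonic there, and a decreasing limit of harmonic functions is harmonic or $\equiv-\infty$ --- so the limit cannot carry any Riesz mass. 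This is precisely why the class $\FS^{\tau}$ is built from finite \emph{maxima} $m^{-1}\max_\alpha(\log\lvert f_\alpha\rvert+c_\alpha)$ with the $f_\alpha$ having no common zeros; the regularization you need is a Bergman-kernel/Demailly-type approximation by such maxima, not by single logarithms, which is exactly the content of the machinery behind theorem \ref{theo psh hyb}. A second, related point you pass over too quickly: even with maxima, Lelong numbers can only drop along decreasing limits (cf.\ the paper's example $\max(\log\lvert t\rvert,-j)\searrow\log\lvert t\rvert$), so to land on the prescribed value $\tilde{\phi}(0)=\nu_0(\phi)/\log r$ one must force the approximants to retain an atom at $0$ of weight increasing to $\nu_0(\phi)$; "approximating $\mu$ from above by rational Dirac combinations" does not by itself guarantee this. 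Finally, be aware that your injectivity argument for $\rho_r$ on all of $\PSH(C^{\hyb}(r))$ via "recovering $\phi(0)$ from the Lelong number" cannot work for discontinuous $\phi$: the function $\eta$ with $\eta(0)=1$ and $\eta\equiv 0$ elsewhere and the constant $0$ are both psh and have the same image on $\D_r^*$.
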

\begin{proof}
Let $\phi \in \FS^{\tau} (C^{\hyb}(r))$, and write:
$$\phi = m^{-1} \max_{\alpha \in A} (\log \lvert f_{\alpha} \rvert + c_{\alpha} ),$$
with $f_{\alpha} \in A_r$, so that in particular the formal series $f_{\alpha}$ induces a holomorphic function $F_{\alpha}$ on $\D_r$. Moreover, we have $\log \lvert f_{\alpha} \rvert (\tau(t)) = \frac{\log r}{\log \lvert t \rvert} \log \lvert F_{\alpha}(t) \rvert$ for $t \neq 0$.Up to shifting $\phi$ by a constant, we may assume that $\phi \le 0$ on $C^{\hyb}(r)$, so that each term in the maximum is nonpositive.
\\ Thus, defining $\rho_r(\phi) (t) = \log_r \lvert t \rvert \times \phi(\tau(t))$ for $t \neq 0$, we have:
$$\rho_r(\phi) (t) = \max_{\alpha \in A} \big(\log \lvert F_{\alpha}(t) \rvert + c_{\alpha}\log_r \lvert t \rvert \big),$$
since all the terms in the maximum have the same sign. This implies that $\rho_r(\phi)$ extends at $t=0$ as the sum of a subharmonic function on $\D_r$ and a multiple of $\log \lvert t \rvert$, and with Lelong number at zero:
$$\nu_0(\rho_r(\phi)) = \max_{\alpha \in A} (\ord_0(f_{\alpha}) + \frac{c_{\alpha}}{\log r}) = \frac{ \phi(0)}{\log r} \ge 0.$$
Now if $\phi \in \PSH(C^{\hyb}(r))$, it is finitely-valued at the Zariski-dense point $0$, so that up to shifting by a constant, we may assume that $\phi(0) =-1$. We now write $\phi$ as the decreasing limit of a net $(\phi_j)_j$ in $\FS^{\tau}$, with $\phi_j(0) \le 0$ for all $j$ large enough. By the computations above, the latter condition means precisely that $\rho_r(\phi_j)$ extends over zero as a subharmonic function. We then define the function $\rho_r(\phi) \in \SH(\D_r)$ as the decreasing limit of the $\rho_r(\phi_j)$, which is independent of the choice of decreasing sequence, since $\rho_r(\phi)$ is determined uniquely by $\phi$ outside $0$. If $\phi$ is furthermore continuous, then:
$$\phi(0) = \lim_{t \rightarrow 0} \phi(\tau(t)) = \lim_{t \rightarrow 0} \frac{\rho_r(\phi)(t)}{\log_r \lvert t \rvert} = \log r \times \nu_0((\rho_r(\phi)).$$
\\Conversely, let $r'>r$ and let $\phi \in \SH(\D_{r'})$. The fact that $\frac{\log r}{\log \lvert t \rvert} \cdot \phi \in \PSH(C^{\hyb}(r))$ follows from the more general theorem \ref{theo psh hyb}.
\\Finally it is clear that from the constructions that if $\phi \in \SH(\D_{r'}) + \R \log \lvert t \rvert$, then $\rho_r(\rho_{r', r}(\phi))= \phi_{| \D_r}$.
\end{proof}
We furthermore expect that the image of the restriction of $\rho_r$ to $\CPSH(C^{\hyb}(r))$ is the space of continuous subharmonic functions on $\D^*_r$, extending continuously to the boundary of the disk and with finite Lelong number at zero.
\begin{rem}
Let $\eta : C^{\hyb}(r) \fl \R$ be such that $\eta(0) = 1$ and $\eta \equiv 0$ outside zero. Then unsatisfyingly, $\eta \in \PSH(C^{\hyb}(r))$; we interpret this as the non-archimedean realization of the following phenomenon. Let:
$$\psi_j = \max ( \log \lvert t \rvert, -j),$$
which decrease to $\psi = \log \lvert t \rvert$. We have $\nu_0(\psi_j) \equiv 0$ since $\psi_j$ is bounded near $0$, while $\nu_0(\psi) = 1 > \lim_j \nu_0(\psi_j)$. Writing $\psi_j^{\hyb}$ and $\psi^{\hyb}$ the associated psh functions on $C^{\hyb}(r)$ for some $r \in (0,1)$, the jump of Lelong numbers along this decreasing sequence means that the non-archimedean data $\nu_0(\psi)$ attached to $\psi$ differs from the restriction $\psi^{\hyb}(0) := \lim_j \psi_j^{\hyb}(0) = \lim_j \nu_0(\psi_j)$ to the origin of the hybrid data associated to $\psi$. The point $0 \in C^{\hyb}(r)$ is in fact non-pluripolar (as it is Zariski-dense), and thus is not negligible in the sense of hybrid pluripotential theory.
\end{rem}
We also describe subharmonic functions on the Berkovich spectrum $\M(\Z)$:
\begin{prop}
Let $X= \M(\Z)$, and write $X = \bigcup_{p \in \mathcal{P} \cup \infty} I_p$ as the union of the p-adic and archimedean branches.
\\Then a continuous function $\phi : X \fl \R \cup \{ - \infty \}$ is psh if and only:
\begin{itemize}
\item for every prime number $p$, its restriction $\phi_p$ to the branch $I_p$ is convex, with negative outgoing slopes $s_p, \tilde{s}_p$ at $0$ and $+ \infty$ respectively, and value at infinity $\phi_p(\lvert \cdot \rvert_p^{\infty}) \in \R \cup \{-\infty\}$,
\item its restriction to the branch $I_{\infty}$ is convex and increasing, with positive slope at $0$,
\item the sum of slopes at zero $\sum_{p \in \mathcal{P}} s_p + s_{\infty} \ge 0$; in particular the sum $\sum_{p \in \mathcal{P}} -s_p < + \infty$.
\end{itemize}
In other words, the function $\phi$ is psh on $\M(\Z)$ if and only it is subharmonic in the usual sense on the $\R$-tree $\M(\Z)$.
\end{prop}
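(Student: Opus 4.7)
The argument naturally decomposes into two implications. For the direction PSH $\Rightarrow$ tree-subharmonic, the building block is that for every positive integer $n$, the function $x \mapsto \log\lvert n\rvert_x$ on $\M(\Z)$ is affine along each branch: on $I_p$ (parametrized by $\epsilon \in [0, +\infty]$) it equals $-\epsilon v_p(n) \log p$, on $I_\infty$ (parametrized by $\epsilon \in [0,1]$) it equals $\epsilon \log n$, and it vanishes at $x_0$; unique factorization $\log n = \sum_p v_p(n) \log p$ then implies that the outgoing slopes at $x_0$ sum to exactly zero. Taking finite maxima and adding constants, every tropical Fubini-Study function $\phi = m^{-1} \max_{j \in J}(\log\lvert n_j\rvert + c_j)$ (with $\gcd_j n_j = 1$) is convex along each branch; the indices $J^\ast := \{j : c_j = \max_k c_k\}$ active near $x_0$ yield $s_p = -\min_{j \in J^\ast} v_p(n_j) \log p$ and $s_\infty = \max_{j \in J^\ast} \log n_j$, so that $\sum_p s_p + s_\infty = \log\bigl(\max_{J^\ast} n_j / \gcd_{J^\ast} n_j\bigr) \ge 0$. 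These three conditions pass to a continuous decreasing limit: convexity is preserved under decreasing limits of finite-valued convex functions, and the balance inequality at $x_0$ amounts to positivity of the tree-Laplacian $\Delta \phi$, which is preserved as a weak limit of positive Radon measures.

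For the converse direction, the plan is to approximate $\phi$ from above by tropical Fubini-Study functions in two steps. First, using standard convex approximation along each branch together with the summability $\sum_p (-s_p) < +\infty$ (a direct consequence of the balance condition and of $s_\infty < +\infty$) to control the infinitely many branches, write $\phi$ as a decreasing limit of piecewise-affine tree-subharmonic functions having finitely many breakpoints on each branch, non-trivial slopes on only finitely many branches, and slopes in the natural rational lattice ($\Z_{\ge 0} \cdot \log p$ on each $I_p$, suitable combinations on $I_\infty$). Second, realize any such piecewise-affine tree-subharmonic function as a tropical Fubini-Study function: the fundamental building block is $\log\lvert n\rvert$ for $n = \prod_p p^{v_p}$, whose tree profile is exactly the affine function with prescribed slopes and balance zero, and general piecewise-affine profiles are obtained by taking maxima of shifted copies of such building blocks. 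The coprimality $\gcd_j n_j = 1$ in the definition of tropical Fubini-Study is compatible with such a realization precisely when $\sum_p s_p + s_\infty \ge 0$, which is our hypothesis.

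The main obstacle will be the second step of the converse direction, namely the Diophantine realization of a piecewise-affine tree-subharmonic function as a tropical Fubini-Study function. The balance inequality is the only obstruction to such a realization, but organizing the corners and breakpoints of $\phi$ into a coherent family of integers $n_j$ with prescribed $p$-adic valuations, archimedean size, and $\gcd_j n_j = 1$ requires a careful combinatorial argument, analogous to the toric realization of concave piecewise-affine support functions as tropical polynomials on a lattice.
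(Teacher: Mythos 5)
Your forward direction matches the paper's: you compute the branch profiles of a tropical Fubini--Study function $m^{-1}\max_j(\log\lvert n_j\rvert + c_j)$, read off the slopes at the root from the active indices, and pass to decreasing limits. (Your value $s_\infty=\log\max_{J^*}n_j$, giving balance $\log(\max/\gcd)$, is in fact the correct archimedean slope; the paper writes $\log\lcm$ there, but the conclusion is the same.) The limit step deserves one more line in either formulation: you should say why the balance inequality survives the decreasing limit, e.g.\ via Dini and local uniform convergence near the root, as the paper does, or by weak convergence of the tree Laplacians as you suggest.

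The converse is where you diverge from the paper, and where your argument has a genuine gap. You reduce to realizing a bounded, piecewise-affine, tree-subharmonic profile with lattice slopes as a \emph{single} tropical Fubini--Study function $\max_j(\log\lvert n_j\rvert+c_j)$ with $\gcd_j n_j=1$, and you explicitly defer this ``Diophantine realization'' as the main obstacle. But this is precisely the hard content of the converse: each index $j$ contributes a \emph{globally balanced} affine function on the tree (its outgoing slopes at the root sum to exactly zero, by unique factorization), so prescribing the profile on one $p$-adic branch forces side effects on all other branches and on $I_\infty$ through the choice of $v_q(n_j)$ for $q\neq p$ and of $\lvert n_j\rvert_\infty$; the coprimality constraint and the fact that achievable archimedean slopes form only a countable set add further constraints. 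Asserting that ``the balance inequality is the only obstruction'' is exactly the statement to be proved, and nothing in your sketch establishes it.

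The paper avoids this entirely by exploiting that $\PSH(\M(\Z))$ is closed under \emph{sums}, not just maxima. It first shows (Step 1) that any increasing convex function supported on $I_\infty$ alone is psh, as a decreasing limit of $\max\{r_j\log\lvert q\rvert+b,0\}$; then (Step 2) approximates each $\phi_p$ separately by Fubini--Study functions built from powers of the \emph{single} prime $p$ (these are affine on $I_\infty$ and vanish on all other branches, so no cross-branch interference occurs); it sums over the $k$ smallest primes, corrects the resulting monotonicity defect on $I_\infty$ by small additive constants (Step 3), and finally adds a purely archimedean psh function to absorb the slack $s_\infty-(-\sum_p s_p)\ge 0$ (Step 4). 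If you want to keep your max-based realization, you must actually carry out the combinatorial construction of the integers $n_j$; otherwise I would recommend restructuring the converse around additivity as above.
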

As a consequence, a point $x \in \M(\Z)$ is polar (that is, contained in $\{ \phi = -\infty \}$ for some $\phi \in \CPSH(X)$) if and only if it is the outer end of a p-adic branch.
\begin{proof}
Let $\phi \in \FS(X)$ be a Fubini-Study function, then there exists a family of integers $(n_{\alpha})_{\alpha \in A}$, with $\min_{\alpha \in A} v_p(n_{\alpha}) =0$ for every prime $p$, such that:
$$\phi =m^{-1} \max_{\alpha \in A} (\log \lvert n_{\alpha} \rvert + c_{\alpha}),$$
with $c_j \in \R$. Denoting $0 \in X$ the trivial absolute value, we have $\phi(0) =m^{-1} \max_{\alpha} c_{\alpha}$, and we write $A' \subset A$ the set of indices realizing the maximum. Set $n_1 = \gcd_{\alpha \in A'} n_{\alpha}$, and $n_2 = \lcm_{\alpha \in A'} n_{\alpha}$.
\\Under the homeomorphism $I_p \simeq [0, + \infty]$, we have:
$$\phi_p(\eps) =m^{-1} \max_{\alpha \in A} (-v_p(n_{\alpha}) \log p \cdot \eps + c_{\alpha}),$$
which shows that $\phi_p$ is a piecewise-affine convex function, and with slope at zero: 
$$s_p = (-\log p) \min_{\alpha \in A'} v_p(n_{\alpha}) = \log \lvert n_1 \rvert_p \le 0,$$
and constant for $\eps \gg1$ since there exists a $n_{\alpha}$ with $v_p(n_{\alpha})=0$, so that the slope at infinity $\tilde{s}_{p} =0$. 
A similar computation on the archimedean branch shows that $\phi_{\infty}$ is also convex, with:
$$s_{\infty}= \sum_{p \in \mathcal{P}} \log p \cdot \big( \max_{\alpha \in A'} v_p(n_{\alpha}) \big) = -\sum_{p \in \mathcal{P}} \log \lvert n_2 \rvert_p = \log \lvert n_2 \rvert_{\infty} \ge 0,$$
so that the sum of slopes:
$$s_{\infty} + \sum_{p \in \mathcal{P}} s_p = \log \bigg\lvert \frac{n_2}{n_1} \bigg\rvert_{\infty}$$
is positive. Now if $\phi$ is a continuous psh function on $X$ (with possibly infinite values), then it is the uniform limit of Fubini-Study functions near zero, hence the only if part by taking decreasing - hence locally uniform by Dini's lemma - limits.
\\Conversely, let $\phi : X \fl \R \cup \{-\infty \}$ be a function satisfying the above properties. We assume that $\phi(0) = 0$, so that $\phi_p \le 0$ for every prime $p$, and $\phi_{\infty} \ge 0$. We divide the argument in several steps.
\\ \emph{Step 1:} Assume that $\phi_p \equiv 0$ for every prime $p$, and $\phi_{\infty} : [0,1] \fl \R$ is a continuous, increasing convex function, with $\phi_{\infty}(0)=0$. 
\\For any $a \ge 0$ and $b \le 0$ two real numbers, the function $\psi =\psi_{a, b}$ on $X$ defined by $\psi_{\infty}(x) = ax+b$ and $\psi_p \equiv 0$ for every prime $p$ is psh (although not necessarily continuous at $0$). Indeed, choosing a prime $q$ and writing $a = \lim_j r_j \log q$ as the decreasing limit of rational multiples of $\log q$, we see that $\psi = \lim_j \max \{ r_j \log \lvert q \rvert +b, 0 \}$ as a decreasing limit - the max is realized by $0$ on every $p$-adic branch, even when $p=q$ due to the assumption on $a, b$.
\\As a result, writing $\phi_{\infty}$ as a decreasing limit of piecewise affine convex functions of the form $\max_{\alpha \in A_j} (a_{\alpha} x +b_{\alpha})$ as above (recall that $\min_{\alpha} a_{\alpha} \ge 0$, and $\max_{\alpha} b_{\alpha} =0$ since $\phi_{\infty}(0)=0$), we get that:
$$\phi = \lim_j \max_{\alpha \in A_j} \psi_{a_{\alpha}, b_{\alpha}}$$
as a decreasing limit, and $\phi \in \PSH(X)$.
\\ \emph{Step 2:} we now regularize the $\phi_p$. By our assumptions on the slopes $s_p$, $\tilde{s}_p$, for every prime $p$, we may find a decreasing sequence $(\phi_{j,p})$ of convex functions on $I_p$ of the form:
$$\phi_{j,p}(\eps) = m^{-1} \max_{\alpha \in A_p} (-\ell_{\alpha} \log p \cdot \eps +c_{\alpha})$$
converging to $\phi_p$, where the $\ell_{\alpha}$'s are positive integers and such that $\phi(0) =\phi_p(0) = \max_{\alpha} c_{\alpha} =0$. Write $s_{j, p} = \min_{\alpha, c_{\alpha} =0} (-\ell_{\alpha} \log p)$ the slope at zero of $\phi_{j, p}$; by continuity near zero, the $s_{j, p}$ decrease to $s_p$. We use the same notation for the (singular) Fubini-Study function:
$$\phi_{j, p} =m^{-1} \max_{\alpha \in A} ( \log \lvert p^{\ell_{\alpha}} \rvert +c_{\alpha}),$$
by straightforward computation we see that the restriction of $\phi_{j, p}$ to the branch $I_{\infty}$ is linear:
$$\phi_{j, p}(\lvert \cdot \rvert^x_{\infty}) = -s_{j, p} x.$$
For $k \in \N$, let $\mathcal{P}_k =\{ 2,..,p_k \}$ be the $k$ smallest primes. We set:
$$\phi_{k} = \sum_{p \in \mathcal{P}_k} \phi_{p, k},$$
which is psh on $X$, and such that $(\phi_k)_k$ decreases to $\phi$ on each $p$-adic branch.
\\ \emph{Step 3:} this does not yield the desired outcome on the archimedean branch: the restrictions of $\phi_k$ to the archimedean branch are increasing to $x \mapsto s x$, where we have set $s := - \sum_{p} s_p \le s_{\infty}$. However, the convergence is uniform on $I_{\infty} \simeq [0,1]$, so that (after extraction of a subsequence) we may find a decreasing sequence $(\eps_k)_k$ of constants going to zero, such that on $I_{\infty}$, the $\phi'_{k, \infty} = \phi_{k, \infty} + \eps_k$ decrease to $x \mapsto sx$. As a result, the psh functions $\phi'_k = \phi_k + \eps_k$ decrease on $X$, and the limit $\phi'$ satisifies $\phi'_p = \phi_p$, and $\phi'_{\infty}(x) = sx$ for $x \in [0,1]$.
\\ \emph{Step 4:} by step $1$, the function $\psi : X \fl \R$ such that $\psi_p \equiv 0$ for every prime $p$, and $\psi_{\infty}(x) = \phi_{\infty}(x) - sx$ is psh, since $s\le s_{\infty}$. As a result, $\phi = \phi' + \psi$ is indeed subharmonic on $\M(\Z)$.
\end{proof}
\section{PSH metrics on hybrid spaces} \label{sec psh hyb}
Throughout this section, we let $X \xrightarrow{\pi} \D^*$ be a degeneration of projective complex manifolds, endowed with a semi-ample line bundle $L$. We fix $r \in (0,1)$, and write $X^{\hyb} \xrightarrow{\pi_{\hyb}} \bar{\D}_r$ the associated hybrid space, which is the analytification of $X$ viewed as an $A_r$-scheme, see section \ref{sec hyb}.
\\We will use the $t$-adic valuation on $K = \C((t))$ normalized so that $\lvert t \rvert = r$, and write:
$$\log_r \lvert t \rvert = \frac{\log \lvert t \rvert}{\log r},$$
which is non-negative on $\bar{\D}_r$.
\begin{defn}
Let $X \xrightarrow{\pi} \D^*$ be a degeneration of projective complex manifolds, and let $L$ be a line bundle on $X$. A hybrid (continuous) metric $\phi$ on $L$ is a singular (resp. continuous) metric on $L^{\hyb}$ in the sense of the previous section, viewing $X$ as an $A_r$-scheme.
\\We write $\PSH(L^{\hyb})$ for the set of hybrid semi-positive metrics on $L$.
\end{defn}
Using the explicit description of the hybrid space from prop. \ref{topo hyb}, we are able to describe more concretely continuous hybrid metrics on $L$:
\begin{prop} \label{prop metric hyb}
Let $X$ be a degeneration of complex manifolds, and $L$ be a line bundle on $X$. A continuous hybrid metric $\phi$ on $L$ is equivalent to the data of a continuous family of metrics $(\phi_t)_{t \in \bar{\D}_r^*}$ on the $L_t$'s, together with a continuous metric $\phi_0$ on $L^{\an}$, such that the following holds: for every Zariski open subset $U \subset X$ and any non-vanishing section $s \in H^0(U, L_{| U})$, the function:
$$ z \mapsto \frac{\log \lVert s(z) \rVert_{\phi_t}}{\log_r \lvert t \rvert}$$
on $U^{\hol}$ (with the Euclidean topology) extends as a continuous function to $U^{\hyb}$ via $x \in U^{\an} \mapsto \log \lVert s(x) \rVert_{\phi_0}$.
\end{prop}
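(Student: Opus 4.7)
\medskip

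\noindent\textbf{Proof plan.} The plan is to unwind the definition of a continuous metric on $L^{\hyb}$ fiberwise, using the explicit description of the hybrid topology given by Proposition \ref{topo hyb}, and check that the compatibility stated in the proposition matches the rescaling factor $\lambda(t) = \frac{\log r}{\log |t|} = (\log_r |t|)^{-1}$ relating the Euclidean absolute value on $\C$ to its avatar on the fiber $\pi_{\hyb}^{-1}(\tau(t))$.

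For the forward direction, I would start from a continuous hybrid metric $\phi$. Restriction under the base change $A_r \to \C((t))$ (with $|t| = r$) produces a continuous non-archimedean metric $\phi_0$ on $L^{\an}$, which is nothing but $\phi_{|\pi_{\hyb}^{-1}(0)}$. For $t \in \bar{\D}_r^*$, the homeomorphism $\beta$ from Proposition \ref{topo hyb} identifies $\pi_{\hyb}^{-1}(\tau(t))$ with the complex fiber $X_t^{\hol}$, but with absolute values rescaled by the exponent $\lambda(t)$. Given a Zariski open $U\subset X$ with non-vanishing section $s\in H^0(U,L_{|U})$, I would define the complex Hermitian metric $\phi_t$ on $L_t$ by requiring
\[
\log\lVert s(\beta(z))\rVert_{\phi} \;=\; \lambda(t)\,\log\lVert s(z)\rVert_{\phi_t} \;=\; \frac{\log\lVert s(z)\rVert_{\phi_t}}{\log_r|t|}.
\]
The identity $\lvert f(\beta(z))\rvert = \lvert f(z)\rvert^{\lambda(t)}$ from Proposition \ref{topo hyb} ensures that $\phi_t$ does satisfy $\lVert f s\rVert_{\phi_t} = \lvert f\rvert\lVert s\rVert_{\phi_t}$, so $\phi_t$ is a bona fide continuous metric on $L_t$. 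Continuity of $\phi$ on $U^{\hyb}$ then translates directly via $\beta$ (and the identification of $\pi_{\hyb}^{-1}(0)$ with $X^{\an}$) into the stated continuity of $z\mapsto \log\lVert s(z)\rVert_{\phi_t}/\log_r|t|$ extended by $\log\lVert s\rVert_{\phi_0}$ at points of $U^{\an}$.

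For the converse, given the data $(\phi_t)_{t\in\bar{\D}_r^*}$ and $\phi_0$, I would build $\phi$ by the above formula: trivializing $L$ on a Zariski open cover $\{U_i\}$ by sections $s_i$, set $\lVert s_i\rVert_\phi$ on $U_i^{\hyb}$ to equal $\exp\bigl(\log\lVert s_i(z)\rVert_{\phi_t}/\log_r|t|\bigr)$ on $U_i\cap X_t$ and $\lVert s_i\rVert_{\phi_0}$ on $U_i^{\an}$. The hypothesis is precisely that this defines a continuous positive function on $U_i^{\hyb}$. Checking the transformation rule $\lVert f s_i\rVert_\phi = \lvert f\rvert\lVert s_i\rVert_\phi$ for $f\in H^0(U_i,\gO_{U_i})$ reduces, away from $t=0$, to the identity $\log\lvert f(\beta(z))\rvert = \log\lvert f(z)\rvert/\log_r|t|$ combined with $\log\lVert fs_i\rVert_{\phi_t} = \log\lvert f\rvert + \log\lVert s_i\rVert_{\phi_t}$; at $t=0$ it is built into $\phi_0$ being a metric on $L^{\an}$. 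Consistency on overlaps $U_i\cap U_j$ follows from the same rule applied to the transition units $g_{ij}$.

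The main obstacle is keeping the rescaling coherent: one must check that the factor $1/\log_r|t|$ is exactly what is required for $\phi$ to descend to an honest metric on $L^{\hyb}$ (as opposed to, say, on a tensor power), and that the putative continuous extension at $t=0$ is forced to take the value dictated by $\phi_0$ for the global definition of a continuous metric to make sense. Both points are clarified by Proposition \ref{topo hyb} and by the universal property of the base change $A_r \to \mathscr{H}(\tau(t))$, which identifies the absolute value on the residue field at $\tau(t)\neq 0$ with $\lvert\cdot\rvert^{\lambda(t)}$ and at $\tau(0)=0$ with the $t$-adic valuation used to define $\phi_0$; once these are in place, the bijection between continuous hybrid metrics and families $(\phi_t)_{t\in\bar{\D}_r^*}$ together with $\phi_0$ satisfying the boundary continuity condition is formal.
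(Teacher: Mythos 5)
Your proposal is correct and follows essentially the same route as the paper: both use the homeomorphisms $\beta_t$, $\beta_0$ from Proposition \ref{topo hyb} and the rescaling identity $\lvert f(\beta(z))\rvert=\lvert f(z)\rvert^{\log r/\log\lvert t\rvert}$ to define $\phi_t$ by $\log\lVert s(\beta_t(z))\rVert_{\phi}=\log\lVert s(z)\rVert_{\phi_t}/\log_r\lvert t\rvert$, to verify the transformation rule under multiplication by functions, and to translate continuity of $\phi$ on $U^{\hyb}$ into the stated boundary condition. The only difference is that you spell out the converse (gluing over a trivializing cover and checking the cocycle compatibility), which the paper dismisses as clear.
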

\begin{proof}
If $\lvert t \rvert >0$, we have a canonical homeomorphism $\beta_t : X_t \xrightarrow{\sim} X^{\hyb}_t$, such that for any (local) regular function on $X$, we have:
$$\lvert f(\beta(z)) \rvert = \lvert f(z) \rvert^{\frac{\log r}{\log \lvert t \rvert}},$$
and a homeomorphism $\beta_0 : X^{\an} \xrightarrow{\sim} \pi_{\hyb}^{-1}(0).$
\\Hence if $\phi$ is a continuous hybrid metric on $L$, it induces a continuous family $\{\phi_{t}\}_{t\in \bar{\D}_r}$ of metrics $\phi_t$ on $L_t$, obtained as follows: if $U \subset X$ is a Zariski open and $s$ a trivialization of $L$ on $U$, set:
 $$ \lVert s(z) \rVert_{\phi_{t}} := \lVert s(\beta_t(z)) \rVert_{\phi}^{\frac{\log \lvert t \rvert}{\log r}},$$
 for $z \in U \cap X_t$. The fact that this defines a continuous metric on $L_t$ is an easy consequence of the above equality for functions. 
\\Similarly, the formula:
$$\log \lVert s(x) \rVert_{\phi_0} := \log \lVert s(\beta_0(x)) \rVert_{\phi}$$ 
defines a continuous metric $\phi_0$ on $L^{\an}$.
\\The fact that the data of $\phi_0$ and $\{ \phi_{t} \}_{t \in \bar{\D}^*_r}$ recovers $\phi$ uniquely is clear.
\end{proof}
We will sometimes write the above relation more loosely as:
 $$\phi_{t} = \phi_{| X_t} = \frac{\log \lvert t \rvert}{\log r} \phi_{|X^{\hyb}_{t}},$$
 where the left-hand side of the second equality lives on the complex fiber $X_t$ and the right-hand side lives on the Berkovich analytic space $X^{\hyb}$.
 \begin{ex}
 Let $s \in H^0(X,mL)$ be a global section of $mL$, and let:
 $$\phi = m^{-1} \log \lvert s \rvert \in \PSH(L^{\hyb})$$
 be the associated (singular) hybrid metric. Then one checks directly that for any $t \neq 0$ such that $s_t \not \equiv 0$, the metric $\phi_{t} \in \PSH(X_t, L_t)$ is equal to $m^{-1} \log \lvert s_t \rvert$, where $s_t = s_{| X_t} \in H^0(X_t, mL_t)$.
 \\Now let $c \in \R$ be a constant, and let:
 $$\phi = m^{-1} (\log \lvert s \rvert +c)  \in \PSH(L^{\hyb}).$$
 Then we have $\phi_{t} = m^{-1} (\log \lvert s \rvert +c \log_r \lvert t \rvert) \in \PSH(X_t, L_t).$
 \end{ex}
\begin{rem} \label{rem pure} Let $\phi = m^{-1} (\log \lvert s \rvert +\frac{c}{\log r})  \in \PSH(L^{\hyb})$ as in the example above. Then $\phi$ is the decreasing limit of the $(\phi_j)_{j \in \N}$, where:
$$\phi_j = m^{-1} (\log \lvert s \rvert +\frac{c_j}{\log r}),$$
where $(c_j)_{j \in \N}$ is a sequence of rational numbers decreasing to $c$. Up to replacing $m$ by a high enough multiple depending on the denominator of $c_j$, we may furthermore assume that $c_j \in \Z$, so that:
$$\phi_j = m_j^{-1} \log \lvert t^{c_j} s \rvert$$
is a pure Fubini-Study metric. Since finite maxima commute with decreasing limits, any tropical Fubini-Study metric can be written as a decreasing limit of pure Fubini-Study metrics. As a consequence, any psh metric on $L^{\hyb}$ can be written as a decreasing limit of \emph{pure} Fubini-Study metrics.
\\Let us emphasize that the key point here is that the constant function $\frac{c}{\log r}$ for $c \in \Q$ can be written as $\log \lvert f \rvert$ for some non-zero $f \in A_r$, which need not hold over a general Banach ring $A$ - it fails for instance for $A=\Z$.
\end{rem}
\subsection{Bergman metrics on the hybrid space}
Let $X \fl \D^*$ be a degeneration of complex manifolds, and $L$ a semi-ample line bundle on $X$.
In the sequel, it will sometimes be convenient for us to work with singular $L^2$-Bergman metrics in the complex world, i.e. metrics on $L$ of the form:
$$\phi = \frac{1}{2m} \log \big( \sum_{\alpha \in A} \lvert s_{\alpha} \rvert^2 \big),$$
where $(s_{\alpha})_{\alpha \in A}$ is a finite set of non-zero sections in $H^0(X, mL)$, possibly with common zeroes. It is clear that $\phi \in \PSH(X, L)$, and the following proposition asserts that $\phi$ extends naturally as a metric $\phi \in \PSH(L^{\hyb})$, replacing the square-norm with maxima at the non-archimedean limit:
\begin{prop} \label{prop homo sing} Let $(s_{\alpha})_{\alpha \in A}$ be finite family of global sections of $mL$ for $m \ge 1$, and set:
$$\phi_t = \frac{1}{2m} \log \big( \sum_{\alpha \in A} \lvert s_{\alpha, t} \rvert^2 \big),$$
$$\phi_0 = m^{-1} \max_{\alpha \in A} \log \lvert s_{\alpha} \rvert.$$
Then this data defines a semi-positive metric $\phi \in \PSH(X^{\hyb}, L^{\hyb})$, which we call the hybrid Bergman metric associated to the family $(s_{\alpha})_{\alpha \in A}$.
\end{prop}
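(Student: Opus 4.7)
The plan is to approximate $\phi$ uniformly on $X^{\hyb}$ by explicit tropical Fubini--Study metrics and then invoke closure of $\PSH$ under uniform limits (proposition \ref{prop psh}(7)). The key trick relies on the following scalar observation: since the non-archimedean point $0 \in \bar{\D}_r$ corresponds to the $t$-adic absolute value on $\C((t))$, which restricts to the trivial absolute value on $\C$, any positive constant $c \in \R_{>0} \subset \C^{\times} \subset A_r$ satisfies $\lvert c \rvert_0 = 1$, whereas on each complex fiber $c$ contributes its ordinary Euclidean absolute value. Rescaling the monomial sections $\sigma_\beta := \prod_\alpha s_\alpha^{\beta_\alpha}$ by such constants thus introduces entropy weights that are active on complex fibers and invisible at the non-archimedean limit.

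Concretely, for $k \ge 1$ and $\beta \in \N^{A}$ with $|\beta|=k$, set $\tilde\sigma_\beta := \binom{k}{\beta}^{1/2}\sigma_\beta \in H^0(X_{A_r}, mkL)$ and consider
$$\Psi_k := \frac{1}{mk} \max_{|\beta|=k} \log \lvert \tilde\sigma_\beta \rvert,$$
which is an element of $\PSH(X^{\hyb}, L^{\hyb})$ by example \ref{ex fs sing}. Using proposition \ref{prop metric hyb}, the equality $\lvert \binom{k}{\beta}^{1/2} \rvert_0 = 1$, and the fact that the maximum of a linear form over the simplex $\Delta_{|A|}$ is attained at a vertex, one computes $\Psi_{k,0} = m^{-1} \max_\alpha \log \lvert s_\alpha \rvert = \phi_0$ on the non-archimedean fiber, and on each complex fiber ($t \neq 0$):
$$\Psi_{k,t}(z) = \frac{1}{mk} \max_\beta \Bigl( \log \lvert \sigma_\beta(z) \rvert + \tfrac{1}{2}\log\tbinom{k}{\beta}\Bigr) = \frac{1}{2mk} \max_\beta \log\Bigl(\tbinom{k}{\beta}\lvert \sigma_\beta(z)\rvert^2\Bigr).$$
The multinomial identity $(\sum_\alpha \lvert s_\alpha\rvert^2)^k = \sum_{|\beta|=k} \binom{k}{\beta}\lvert\sigma_\beta\rvert^2$ rewrites $\phi_t$ as $(2mk)^{-1}\log\sum$ of exactly the same positive terms, so comparing $\log\max$ with $\log\sum$ gives
$$0 \le \phi_t - \Psi_{k,t} \le \frac{\log M_k}{2mk}, \qquad M_k := \tbinom{|A|+k-1}{k-1}.$$

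Since the complex-to-hybrid rescaling factor $\log r/\log \lvert t \rvert$ lies in $(0,1]$ on $\bar{\D}_r^*$, the analogous bound $|\phi - \Psi_k|\le \log M_k/(2mk)$ then holds on the entire hybrid space (trivially at $t=0$), and tends to $0$ uniformly as $k \to \infty$ since $\log M_k = O(\log k)$. Proposition \ref{prop psh}(7) concludes that $\phi \in \PSH(X^{\hyb}, L^{\hyb})$. The main obstacle I anticipate is correctly bookkeeping the constants and hybrid absolute values across the two regimes of $\bar{\D}_r$: once the interplay between the trivial value of $\C$ at $t=0$ and the Euclidean value on complex fibers is nailed down, the remainder is a routine multinomial estimate. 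No assumption on common zeros of the $s_\alpha$ is needed, since in their presence $\Psi_k$ is merely a singular tropical Fubini--Study metric (still in $\PSH$ by example \ref{ex fs sing}) and the difference $\phi - \Psi_k$ remains an honest bounded continuous function on $X^{\hyb}$ to which the closure property applies.
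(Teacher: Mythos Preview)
Your approach via the multinomial expansion is elegant and genuinely different from the paper's proof. The paper instead enlarges the family to a basepoint-free one $(s_\alpha)_{\alpha \in B}$ with $B = A \sqcup A'$, applies Lemma~\ref{prop homo} to the function $\chi(x) = \tfrac{1}{2m}\log\sum_\alpha e^{2mx_\alpha + b_\alpha}$ with weights $b_\alpha = 0$ on $A$ and $b_\alpha = -j$ on $A'$, and then lets $j \to \infty$ to obtain a \emph{decreasing} sequence of \emph{continuous} psh hybrid metrics. Your route avoids both the auxiliary lemma and the enlargement step, producing the approximants $\Psi_k$ directly from the given sections; the trade-off is that your $\Psi_k$ are singular (when the $s_\alpha$ have common zeros), whereas the paper's $\phi_j$ are continuous.

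That trade-off creates a small but genuine gap in your last paragraph. You assert that ``the difference $\phi - \Psi_k$ remains an honest bounded continuous function on $X^{\hyb}$'', but this is false in general: take $|A|=2$, $s_1 = z_1$, $s_2 = z_2$ in local coordinates, and $k=1$; then on a complex fiber $\phi_t - \Psi_{1,t} = \tfrac12\log(1 + |z_2/z_1|^2)$ on $\{|z_1|\ge|z_2|\}$, which has no limit as $(z_1,z_2)\to 0$. Since the precise meaning of item~(7) in proposition~\ref{prop psh} requires $\phi_j = \phi + f_j$ with $f_j$ \emph{continuous}, it does not apply as stated.

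The fix is immediate and does not alter your strategy. You have the genuine pointwise bounds $\Psi_k \le \phi \le \Psi_k + \epsilon_k$ with $\epsilon_k = \tfrac{\log M_k}{2mk} \to 0$. Pass to a subsequence $(k_j)$ with $\epsilon_{k_{j+1}} \le \tfrac12 \epsilon_{k_j}$ and set $\hat\Psi_j := \Psi_{k_j} + 2\epsilon_{k_j} \in \PSH(L^{\hyb})$. Then $\hat\Psi_{j+1} \le \phi + 2\epsilon_{k_{j+1}} \le \phi + \epsilon_{k_j} \le \Psi_{k_j} + 2\epsilon_{k_j} = \hat\Psi_j$, so $(\hat\Psi_j)$ is decreasing with pointwise limit $\phi$. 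Proposition~\ref{prop psh}(6) now applies (the non-triviality condition on Zariski-dense fibers follows from $\phi \ge m^{-1}\log|s_\alpha|$ for any non-zero $s_\alpha$). With this adjustment your argument is complete, and it gives a pleasantly self-contained proof that bypasses Lemma~\ref{prop homo}.
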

\begin{proof}
 We may and will assume that $mL$ is basepoint-free, up to replacing $mL$ by $dmL$ and the $s_{\alpha}$'s by their $d$-th power, for $d$ large enough.
\\We thus choose a basepoint-free set $(s_{\alpha})_{\alpha \in B}$ of sections of $mL$, where $B = A \sqcup A'$. Set:
$$\phi_{j,t} := \frac{1}{2m} \log \big( \sum_{\alpha \in B} e^{b_{\alpha,j}} \lvert s_{\alpha,t} \rvert^2 \big),$$
and:
$$\phi_{j, 0} := m^{-1} \max_{\alpha \in B} (\log \lvert s_{\alpha} \rvert + b_{\alpha,j}),$$
where $b_{\alpha, j} = 0$ if $\alpha \in A$ and $b_{\alpha, j} = -j$ for $\alpha \in A'$. Then the fact that this defines a continuous psh metric $\phi_j$ on $L^{\hyb}$ follows from the following lemma, applied to the convex function $\chi(x) = \frac{1}{2} \log \big( \sum_{\alpha} e^{2x_{\alpha} + b_{\alpha}} \big)$. Finally, the $\phi_j$'s clearly decrease to $\phi$ by construction, so that $\phi \in \PSH(L^{\hyb})$ by prop. \ref{prop psh}.
\end{proof}
\begin{lem} \label{prop homo} Let $\sigma \subset \R^N$ be the standard simplex, i.e.:
$$\sigma = \{ (x_1,...,x_N) \in (\R_{\ge 0})^N / \sum_{i=1}^N x_i =1 \},$$
and write $\mathbf{1} = (1,...,1) \in \R^N$. We let $\chi : \R^N \fl \R_{\ge 0}$ be a convex function, such that:
\begin{itemize}
\item $\chi(x+c \mathbf{1}) = \chi(x)+c$ for any $c \in \R$, $x \in \R^N$,
\item the function $\big(\chi - \max (x_1,...,x_N) \big)$ is bounded on $\R^N$.
\end{itemize}
For any set $s_1,...,s_N$ of sections of $mL$ without common zeroes and $\phi_i =m^{-1} \log \lvert s_i \rvert$, the hybrid metric $\phi_{\chi}$ on $L$ defined, using prop. \ref{prop metric hyb}, by:
$$\phi_{\chi, t} = \chi (\phi_{1,t},..., \phi_{N, t}) \in \CPSH(X_t, L_t)$$
and:
$$\phi_{\chi,0} = m^{-1} \max_{i \le N} \log \lvert s_i \rvert \in \FS^{\tau}(X^{\an}, L^{\an})$$
is a semi-positive continuous hybrid metric on $L$.
\end{lem}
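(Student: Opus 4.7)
The plan is to check first that $\phi_\chi$ is a well-defined continuous hybrid metric on $(X^{\hyb}, L^{\hyb})$ via the matching criterion of prop.~\ref{prop metric hyb}, and then to realise it as a uniform limit on $X^{\hyb}$ of tropical Fubini--Study hybrid metrics, concluding with prop.~\ref{prop psh}(7). For continuity, fix a local trivialization $s$ of $L$ on a Zariski open $U \subset X$, write $f_i = s_i/s^m$ for the local holomorphic expressions, and observe that the bound $|\chi - \max| \le C$ yields
\[
\bigl| \phi_{\chi,t}(z) - m^{-1}\max_i \log |f_i(z)|_{\C} \bigr| \le C
\]
uniformly in $(t,z)$ with $z \in U \cap X_t$. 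Dividing by $\log_r|t|$ and using the identity $\log|f_i(z)|_\C / \log_r|t| \to \log|f_i(x)|_{\NA}$ from prop.~\ref{topo hyb}, the error vanishes as $t \to 0$ and the main term converges to $m^{-1}\max_i \log|f_i(x)|_{\NA}$, which is exactly the expression of $\phi_{\chi,0}$ in the trivialization $s$.

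For the plurisubharmonic property, the two hypotheses on $\chi$ force its Legendre transform $\chi^*$ to be finite exactly on the simplex $\sigma$ and bounded there by $C$ in absolute value, whence $\chi(x) = \sup_{p \in \sigma}(p \cdot x - \chi^*(p))$. I propose to choose a countable dense family $\{p_k\} \subset \sigma \cap \Q^N$ containing all the vertices $e_1, \ldots, e_N$, and for each $n$ to form $\chi_n(x) = \max_{k \le n}(p_k \cdot x + b_k)$ with rational shifts $b_k \in (\log r)\cdot \Q$ chosen close to $-\chi^*(p_k)$, so that $\chi_n \to \chi$ uniformly on compact subsets of $\R^N$ (the density of $(\log r)\cdot \Q$ in $\R$ allowing this). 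For rational $p_k = q_k/d$ with $\sum_i (q_k)_i = d$ and $b_k = j_k \log r / (md)$ with $j_k \in \Z$, the affine combination $p_k \cdot \Phi + b_k$, where $\Phi = (\phi_1, \ldots, \phi_N)$, is realised as $(md)^{-1}\log\bigl|\,t^{j_k} \prod_i s_i^{(q_k)_i}\bigr|$, the invertibility of $t$ in $A_r$ being crucial to absorb the shifts into genuine global sections of $mdL$ over $X/A_r$. Each $\chi_n \circ \Phi$ thus assembles into a tropical Fubini--Study hybrid metric on $L$, with vertex inclusion ruling out common zeros.

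Compactness of $X^{\hyb}$ (as $X$ is projective over $A_r$) together with uniform convergence $\chi_n \to \chi$ on the compact image of $\Phi$ inside the natural compactification $[-\infty, +\infty)^N$ -- to which $\chi$ extends continuously away from the corner $(-\infty,\ldots,-\infty)$ thanks to the boundedness hypothesis -- then yields uniform convergence of the associated tropical Fubini--Study hybrid metrics to $\phi_\chi^{\hyb}$ on $X^{\hyb}$; prop.~\ref{prop psh}(7) concludes. The main obstacle I anticipate is the precise matching at the non-archimedean boundary: a direct computation shows that the non-archimedean fiber of the tropical Fubini--Study approximation $\chi_n \circ \Phi$ is $\chi_n(u)$ with $u_i = m^{-1}\log|s_i|_{\NA}$, rather than $m^{-1}\max_i u_i$, so one must check carefully that in the hybrid limit the rescaling inherent to the topology of $X^{\hyb}$ (which gave continuity in the first paragraph) tropicalises $\chi$ into $\max$ along the non-archimedean fiber while preserving the full convex function on the complex ones. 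Calibrating the approximation so that both asymptotic behaviours match those of $\phi_\chi$, using the asymptotic equality $\chi - \max = O(1)$ together with the absorption trick via powers of $t$, is the technical heart of the argument.
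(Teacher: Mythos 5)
Your continuity argument is the same as the paper's and is fine. The gap is in the plurisubharmonicity part, and it is exactly the point you flag at the end but do not resolve: with your choice of absorbing the shifts $b_k$ into powers of $t$, the restriction of the approximant $\chi_n\circ\Phi$ to the non-archimedean fiber is $\chi_n(u)$ with $u_i=m^{-1}\log\lvert s_i\rvert_0$, because $\log\lvert t\rvert\equiv\log r\neq 0$ on $X^{\an}$; hence the constants survive at $t=0$ and the approximants converge there to $\chi(u)$, not to $\phi_{\chi,0}=\max_i u_i$. Since $\chi-\max$ is bounded but generally nonzero (take e.g.\ $\chi=\max+1$, or $\chi=\frac{1}{2}\log\sum e^{2x_i}$ at $u=0$), your sequence does not converge to $\phi_\chi$ on $X^{\hyb}$, so prop.~\ref{prop psh}(7) cannot be invoked. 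No ``calibration'' of $t$-powers can fix this: any shift realized by $t^{j}$ contributes $j\log r$ on $X^{\an}$, so the tropicalization of $\chi$ into $\max$ at the non-archimedean fiber simply does not happen with this device.

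The paper closes this gap with two ingredients you are missing. First, the constants are realized not by powers of $t$ but by multiples of $\log\lvert e\rvert$ for a complex constant $e=\exp(1)\in A_r$: such an $e$ is a unit of $\C[[t]]$, so $\lvert e\rvert_0=r^{\ord_0(e)}=1$ and $\log\lvert e\rvert\equiv 0$ on $X^{\an}$, while it restricts to a nonzero constant on each complex fiber $X_t$; this is precisely the ``hybrid constant'' that is visible for $t\neq 0$ and invisible at $t=0$. Second, the piecewise-affine approximants $\chi_j=\max_{\alpha}(\langle u_\alpha,\cdot\rangle+c_\alpha)$ are chosen (via \cite[prop.~2.6]{PS22}) so that the rational slopes satisfy $\Conv(u_\alpha)_{\alpha}=\sigma$ at \emph{every} stage $j$; then $\max_\alpha\langle u_\alpha,x\rangle=\max(x_1,\dots,x_N)$ identically, so the non-archimedean fiber of every approximant equals $\phi_{\chi,0}$ exactly (not just in the limit), while the complex fibers decrease to $\phi_{\chi,t}$. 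One then concludes by decreasing limits (prop.~\ref{prop psh}(6) or the definition of $\PSH$) rather than uniform ones. Your inclusion of the vertices $e_1,\dots,e_N$ among the $p_k$ does not substitute for this, since the vertex shifts $-\chi^*(e_i)$ need not vanish.
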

\begin{proof}
The assumption $\chi(x +c \mathbf{1}) = \chi(x) +c$ ensures that $\phi_{\chi}$ is compatible with multiplication of sections by functions, hence each $\phi_{\chi, t}$ defines a continuous metric on $L_t$, for $t \in \bar{\D}_r$. We now prove that $\phi_{\chi}$ is continuous on $X^{\hyb}$ using prop. \ref{prop metric hyb}, we assume that $m=1$ for convenience. Given a nowhere-vanishing section $s$ of $L$ on a Zariski open $U \subset X$, we have:
$$\frac{ \log \lvert s(z,t) \rvert_{\phi_{\chi, t}}}{\log_r \lvert t \rvert} = \frac{ \log \lvert s(z,t) \rvert - \chi(\log \lvert s_1(z,t) \rvert,..., \log \lvert s_N(z,t) \rvert)}{\log_r \lvert t \rvert},$$
so that writing $f_i = \log \lvert \frac{s_i}{s} \rvert$, the condition $\chi(x +c \mathbf{1}) = \chi(x) +c$ implies that:
$$\frac{ \log \lvert s(z,t) \rvert_{\phi_{\chi, t}}}{\log_r \lvert t \rvert} = \frac{ \chi\big(f_1(z,t),...,f_N(z,t)\big)}{\log_r \lvert t \rvert} = \frac{\max \{ f_1,..., f_N \}}{\log_r \lvert t \rvert} + \eps(t)$$
by our assumption on $\chi$, where $\lvert \eps(t) \rvert \le \frac{C}{\log_r \lvert t \rvert}$ for some constant $C>0$. As a result, since away from the zero locus of $s_i$ the function $\frac{f_i}{\log_r \lvert t \rvert}$ extends continuously to $U^{\hyb}$ via $x \mapsto \log \lvert \frac{s_i}{s} \rvert (x)$ on $U^{\hyb}_0$, we infer that $\phi_{\chi}$ is indeed a continuous metric on $(X^{\hyb}, L^{\hyb})$. 
\\We now prove that $\phi_{\chi}$ is semi-positive. Using \cite[prop. 2.6]{PS22}, there exists a sequence $(\chi_j)_{j \in \N}$ of piecewise-affine convex functions decreasing to $\chi$, written as:
$$\chi_j = \max_{\alpha \in A_j} \big(u_{\alpha} + c_{\alpha}\big),$$
with $u_{\alpha} \in \sigma \cap \Q^N$ and $c_{\alpha} \in \R$, and such that the convex hull $\Conv(u_{\alpha})_{\alpha \in A_j} = \sigma$. We write $\underline{\phi} = (\phi_1,..., \phi_N) \in \FS(L^{\hyb})^N$, and set:
$$\phi_j = \chi_j(\underline\phi)= \max_{\alpha \in A} \big( \langle u_{\alpha}, \underline\phi \rangle + c_{\alpha} \frac{ \log \lvert e \rvert}{\log r}\big),$$
where $e \in A_r$ is $\exp(1)$ viewed a constant power series. Since the $u_{\alpha}$ are rational and lie in the standard simplex, the $ \langle u_{\alpha}, \underline\phi \rangle$ are tropical Fubini-Study metrics on $L^{\hyb}$ as convex linear combinations thereof, so that $\phi_j \in \FS^{\tau}(L^{\hyb})$, and:
$$\phi_{j, t} = \max_{\alpha \in A_j} \big( \langle u_{\alpha}, \underline{\phi_t} \rangle + c_{\alpha} \big),$$
while:
$$\phi_{j, 0} = \max_{\alpha \in A_j} \big( \langle u_{\alpha}, \underline{\phi_0} \rangle \big),$$
since $\log \lvert e \rvert \equiv 0$ on $X^{\an} = \pi_{\hyb}^{-1}(0)$.
Moreover, the $(\phi_{j,t})_j$ decrease to $\phi_t$ by the choice of $(\chi_j)_j$, while for all $j$, $\phi_{j, 0} = \max_{i \le N} \phi_i$, as the formula $\max_{\alpha \in A_j} \langle u_{\alpha} , x \rangle = \max (x_1,...,x_N)$ holds for all $x \in \R^N$ since $\Conv(u_{\alpha})_{\alpha \in A_j} = \sigma$.
\\All in all, $(\phi_j)_j$ is a decreasing sequence in $\FS^{\tau}(L^{\hyb})$ converging pointwise to $\phi_{\chi}$, whence $\phi_{\chi} \in \PSH(L^{\hyb})$.
\end{proof}
\begin{ex} \label{ex homo}
Let $X = \C \CP^N \times \D^*$ with the standard polarization, and let $\phi_t = \phi_{FS}$ be the usual complex analytic Fubini-Study metric on $X_t$, i.e. :
$$\phi_{FS} = \log ( \lvert z_0 \rvert^2 +...+ \lvert z_N \rvert^2),$$
where the $z_i$ are standard homogeneous coordinates on $\CP^n$.
Let furthermore:
$$\phi_0= \max \{ \lvert z_0 \rvert,..., \lvert z_N \rvert \}$$ 
be the non-archimedean Fubini-Study metric on $\CP_K^{n, \an}$.
\\Then the metric $\phi_{FS}$ on $\gO(1)$ on $X^{\hyb}$ obtained by gluing the two above metrics is a continuous semi-positive metric. Note that with our definition, this metric is \emph{not} a Fubini-Study metric on $\gO(1)$, since we would have to work with the $\max$ instead of the square norm on the complex fibers.
\end{ex}
\begin{ex} Let $(X, L)$ be a complex polarized variety, and set $V = H^0(X, mL)$, for some $m>0$ such that $mL$ is globally generated. We let $\mathcal{N}(V)$ be the space of Hermitian norms of $V$, which is a symmetric space, as fixing a reference norm yields an identification:
$$\mathcal{N}(V) = \GL(V) / U(V).$$
Let $N = \dim V$, and $\mathbf{e} = (e_1,...,e_N)$ be a basis of $V$. Then for each tuple $(\lambda_1,..., \lambda_N)$ of real numbers, define the associated hermitian norm:
$$\lVert \sum_{i=1}^N {a_i} e_i \rVert^2 = \sum_{i=1}^N \lvert a_i \rvert^2 e^{-2\lambda_i}.$$
This yields an embedding:
$$\iota_{\mathbf{e}} : \R^N \hookrightarrow \mathcal{N}(V),$$
whose image consists precisely of the norms diagonalized by the basis $\mathbf{e}$. The image \\$\mathbb{A}_{\mathbf{e}}(\R^N): = \iota_{\mathbf{e}}(\R^N)$ is called the apartment associated to the basis $\mathbf{e}$.
We let $I \subset \R$ be an interval (not necessarily bounded), and $\gamma : I \fl \mathcal{N}(V)$ a geodesic. Then there exists a basis $\mathbf{e}$ of $V$ such that $\gamma(I) \subset \mathbb{A}_{\mathbf{e}}$, and an affine map $\alpha : I \fl \R^N$ such that:
$$\gamma = \iota_{\mathbf{e}} \circ \alpha.$$
More concretely, writing $\alpha(y) = ( \alpha_1 y + \beta_1,..., \alpha_n y+\beta_n)$, we have:
$$\lVert \sum_{i=1}^N a_i e_i \rVert_{y}^2 = \sum_{i=1}^N \lvert a_i \rvert^2 e^{-2 \beta_i -2 \alpha_i y}.$$
Assume that $I = (y_0, + \infty)$ for some $y_0 \in \R$, so that $\gamma$ is a geodesic ray. Then we see easily that for any non-zero $v \in V$, the limit $- \frac{\log \lVert v \rVert_y }{y}$ exists and is equal to $- \log \lVert v \rVert_{\gamma^{\NA}}$, where $\gamma^{\NA}$ is the non-archimedean norm defined by:
$$\lVert \sum_{i=1}^N {a_i} e_i \rVert_{\gamma^{\NA}} = \max_{i \le N} (\lvert a_i \rvert_0 \; e^{-\alpha_i}),$$
where $\lvert \cdot \rvert_0$ is the trivial norm on $\C$. Furthermore, two geodesic rays induce the same non-archimedean norm at infinity if and only they are parallel, i.e. $(\alpha - \alpha')$ is constant. Thus, the space $\mathcal{N}(V)^{\NA}$ of non-archimedean norms on $V$  can be interpreted as the space of asymptotic directions in $\mathcal{N}(V)$.
\\We now explain how each geodesic ray $\gamma : [0, + \infty) \fl \mathcal{N}(V)$ gives rise to a psh hybrid metric on $L$, which we call the associated Bergman metric.
More generally, let $W \subset V$ be a basepoint-free subspace, i.e. a subvector space such that the sections in $W$ have no common basepoints,  and let $\gamma : [0, + \infty) \fl \mathcal{N}(W)$ be a geodesic ray. By the above discussion, there exists a basis $\mathbf{e} = (s_1,..., s_N)$ of $W$, and tuples $\alpha, \beta  \in \R^N$ such that:
$$\lVert \sum_{i=1}^N a_i s_i \rVert^2 = \sum_{i=1}^N \lvert a_i \rvert^2 e^{-2\alpha_i y}.$$
The hybrid Bergman metric associated to $\gamma$, defined by:
$$\phi_{\lambda} =(2m)^{-1}(\log  \sum_{i=1}^N \lvert s_i \rvert^2 e^{\frac{2\alpha_i}{\lambda}}),$$
where $\phi_{\lambda}$ is the pull-back to $X^{\hol}$ of $\phi_{| X^{\hyb}_{\lambda}}$ via the rescaling of the absolute value (see the proof of prop. \ref{prop metric hyb}), and:
$$\phi_{0} = m^{-1} \max_{i \le N} ( \log \lvert s_i \rvert + \alpha_i),$$
is a continuous psh metric on $L^{\hyb}$, as follows from prop. \ref{prop homo} applied to the convex function $\chi(x_1,...,x_N) = (2m)^{-1} \log( \sum_{i=1} e^{2m x_i}).$
\\Note that $\phi_{\lambda}$ is the classical Bergman metric associated to the norm $\lVert \cdot \rVert_y$, while $\phi_0$ is the non-archimedean Bergman metric associated to the non-archimedean norm $\gamma^{\NA}$.
\end{ex}
\subsection{The non-archimedean limit of a psh family} \label{sec psh na}
We still let $X \fl \D^*$ be a degeneration of complex varieties, but now assume that our line bundle $L$ on $X$ is relatively ample. We let $\phi \in \PSH(X, L)$ be a semipositive metric on $L$. Following the general heuristic of viewing non-archimedean geometry as the asymptotic limit of Kähler geometry, we will explain how under a reasonable growth condition on $\phi$, the family of generic Lelong numbers of $\phi$ along prime vertical divisors on models of $X$ naturally induces a non-archimedean psh metric $\phi^{\NA} \in \PSH(X^{\an}, L^{\an})$. This construction is due to \cite{BBJ} in the isotrivial case and \cite{Reb} in the general case. Let us start with a definition.
\begin{defn}{\cite[lem. 2.3.2]{Reb}} \label{def log g} The metric $\phi \in \PSH(X,L)$ has logarithmic growth at $t=0$ if one of the following equivalent conditions are satisfied:
\begin{itemize}
\item there exists a normal model $(\X, \Ld)$ such that $\phi$ extends as a psh metric on $\Ld$,
\item for any normal model $(\X, \Ld)$, there exists $a \in \R$ such that $\phi + a \log \lvert t \rvert$ extends as a psh metric on $\Ld$,
\item there exists a normal model $(\X, \Ld)$ and a smooth metric $\phi_0$ on $\Ld$ such that $\sup_{X_t} (\phi - \phi_0) \le C \frac{\log \lvert t \rvert}{\log r}$ for some constant $C>0$.
\end{itemize}
\end{defn}
We will now explain how each psh ray $\phi$ on $(X, L)$ with logarithmic growth induces a psh metric $\phi^{\NA}$ on $(X^{\an}, L^{\an})$. We fix a model $(\X, \Ld)$ of $(X, L)$ with $\Ld$ nef, so that the associated model metric $\phi_{\Ld} \in \PSH(L^{\an})$ is semi-positive.
\\Let $v \in X^{\an}$ be a divisorial valuation, so that there exists a model $\X'$ of $X$ and a prime divisor $E \in \Div_0(\X')$ such that $v =v_E= (-\log r) \times b_E^{-1} \ord_E$, with $b_E = \ord_E(t)$. We may furthermore assume that $\X'$ dominates $\X$, via a morphism $\rho : \X' \fl \X$.
\\By logarithmic growth, there exists $a \in \R$ such that the metric $ \phi_a :=\phi + a \log \lvert t \rvert$ extends as a psh metric on $(\X', \rho^*(\Ld))$.
We choose a psh metric $\phi_E$ on $\X'$ with divisorial singularities along $E$, i.e. $\phi_E = \log \lvert z_E \rvert +O(1)$ locally, where $z_E$ is an equation of $E$. We define:
$$\nu_E(\phi) =  \sup \{ c \in \R / \phi_a \le c \phi_E + O(1) \}-a$$
the generic Lelong number of $\phi$ along $E$ \cite{BFJ0} - which is easily seen to be independent of $a$ - and set:
$$\psi^{\NA}(v) := \frac{\log r}{b_E} \nu_E(\phi).$$
\begin{theo}{\cite[thm. 6.2]{BBJ}, \cite[thm. 3.3.1]{Reb}} \label{theo ext phina}
\\The function $\psi^{\NA} : X^{\xdiv} \fl \R$ admits a unique lower semi-continuous extension to $X^{\an}$, still denoted by $\psi^{\NA}$, and the metric $\phi^{\NA}$ on $L^{\an}$ defined by:
$$\phi^{\NA} := (\phi_{\Ld} + \psi^{\NA}) \in \PSH(X^{\an}, L^{\an}),$$
is a semi-positive metric on $L^{\an}$.
\end{theo}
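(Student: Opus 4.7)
The plan is to realise $\phi^{\NA}$ as an increasing limit, over $m\to\infty$, of tropical Fubini-Study metrics on $X^{\an}$ obtained as non-archimedean restrictions of hybrid Bergman metrics attached to finite-dimensional spaces of sections of $mL$ over the model $\X$, and then to identify this limit on divisorial valuations with $\phi_{\Ld}+\psi^{\NA}$ via the asymptotics of the vanishing order of these sections. The strategy is the one of \cite{BBJ} and \cite{Reb}, with the twist that Proposition \ref{prop homo sing} automatically delivers plurisubharmonicity on the non-archimedean fibre as a by-product.

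First I would reduce to the case where $\phi$ itself extends as a psh, locally bounded above metric on $\Ld$ with $\Ld$ nef: using the equivalent characterisations of Definition \ref{def log g}, one shifts $\phi$ by $a\log\lvert t\rvert$ for a suitable $a\in\R$, which only modifies $\phi^{\NA}$ and $\phi_{\Ld}$ by the same model contribution attached to the vertical divisor $a\X_0$ and so leaves the statement invariant. Fix then $m_0\ge1$ with $m_0\Ld$ relatively basepoint-free. For every $m=km_0$, consider the $\mathcal{O}_{\bar{\D}_r}$-lattice $V_m\subset H^0(\X,m\Ld)$ of sections whose fibrewise sup-norm against $m\phi$ is locally bounded across $t=0$, pick a basis $(s_{m,i})$ of $V_m$ diagonalising the Hermitian norm induced by $\phi$ on the generic fibre, and record the asymptotic weights $\alpha_{m,i}\in\R$ of these norms as $t\to0$ (as in the geodesic ray example following Lemma \ref{prop homo}). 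By Proposition \ref{prop homo sing}, these data assemble into a psh hybrid metric $\phi_m^{\hyb}$ on $L^{\hyb}$ whose non-archimedean restriction is the tropical Fubini-Study metric
\begin{equation*}
\phi_m^{\NA} \;=\; m^{-1}\max_{i}\bigl(\log\lvert s_{m,i}\rvert+\alpha_{m,i}\bigr)\;\in\;\PSH(X^{\an},L^{\an}).
\end{equation*}

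Next I would evaluate $\phi_m^{\NA}$ on a divisorial valuation $v_E$: up to a bounded-in-$m$ error controlled by the $\alpha_{m,i}$, it equals $\phi_{\Ld}(v_E)+(\log r/b_E)\cdot m^{-1}\min_i\ord_E(s_{m,i})$. The key input is the asymptotic identity
\begin{equation*}
\nu_E(\phi)\;=\;\lim_{m\to\infty}\;m^{-1}\min_{s\in V_m\setminus 0}\ord_E(s),
\end{equation*}
which identifies the generic Lelong number with the vanishing order of the $L^2$-multiplier module $V_m$ along $E$. This yields convergence of $\phi_m^{\NA}$ to $\phi_{\Ld}+\psi^{\NA}$ on the dense subset $X^{\xdiv}\subset X^{\an}$. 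I would then define $\phi^{\NA}$ as the semicontinuous envelope of $\sup_m \phi_m^{\NA}$ on $X^{\an}$; by density of divisorial valuations, this envelope is the unique semicontinuous extension of $\phi_{\Ld}+\psi^{\NA}$, providing the claimed extension of $\psi^{\NA}$, and it is plurisubharmonic because $\PSH(X^{\an},L^{\an})$ is stable under uniformly bounded above suprema followed by regularisation, by \cite{BE}.

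The hard part will be the asymptotic identity relating $\nu_E(\phi)$ to the order of vanishing of $V_m$ along $E$. Fibrewise over a fixed $t\in\D^*$ it is a standard Demailly-type statement based on Ohsawa--Takegoshi extension, but in our relative setting one needs a version that is uniform in $t$ as $t\to 0$: this is exactly where the logarithmic growth hypothesis enters, as it guarantees that the lattices $V_m$ extend across the puncture with the correct generic rank and that the weights $\alpha_{m,i}$ have well-defined limits encoding the singularities of $\phi$ at $t=0$. Every other ingredient---stability of $\FS^{\tau}$ under finite maxima and base change, the plurisubharmonicity of the hybrid Bergman metric, and the closure properties of $\PSH(X^{\an},L^{\an})$---is already available from Sections \ref{sec NA ppt} and \ref{sec psh hyb}.
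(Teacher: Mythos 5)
First, note that the paper does not reprove this statement: it is quoted from \cite{BBJ} and \cite{Reb}, with the strategy summarized at the start of section \ref{sec proof psh} and the complex-analytic half carried out in the appendix. Your proposal follows the same general Demailly-type route (Bergman/Fubini--Study approximants built from finite-dimensional spaces of sections of $m\Ld$, evaluated on divisorial valuations), but it has two genuine gaps.

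The first and most serious is the concluding step. The correct approximants are $\phi_{\Ld}+(m+m_0)^{-1}\phi_{\mathscr{I}(m\phi)}$, and subadditivity of multiplier ideals (equivalently, superadditivity of $m\mapsto\ord_E(\mathscr{I}(m\phi))$, whose normalization \emph{increases} to $\nu_E(\phi)$) makes this a \emph{decreasing} sequence converging to $\phi_{\Ld}+\psi^{\NA}$, since $\log r<0$; one then concludes with closure of $\PSH$ under decreasing limits (prop.~\ref{prop psh}). You instead assert an increasing limit and finish by taking the upper-semicontinuous regularization of a supremum, claiming that $\PSH(X^{\an},L^{\an})$ is stable under this operation "by \cite{BE}". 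That closure property is not in \cite{BE} and is not a formal fact: it is the envelope/regularization problem for non-archimedean psh metrics, known for discretely valued fields of equicharacteristic zero only through arguments that themselves run through multiplier ideals and global generation. As written, your proof therefore rests on a tool at least as deep as the theorem you are proving, and one that is unnecessary once the monotonicity is set up in the right direction.

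The second gap concerns the spaces $V_m$. You define them by a fibrewise sup-norm boundedness condition and defer the identity $\nu_E(\phi)=\lim_m m^{-1}\min_{s\in V_m\setminus 0}\ord_E(s)$ as "the hard part". That identity is Demailly's approximation theorem, and it is only standard for the $L^2$ multiplier-ideal modules $H^0(\X,(m+m_0)\Ld\otimes\mathscr{I}(m\phi))$, where the lower bound comes from Ohsawa--Takegoshi extension and the upper bound from the mean-value inequality; with sup-norm lattices the statement is not the one in the literature, and the comparison between sup-norm and $L^2$ lattices (and the finite generation and diagonalizability you assume for them across $t=0$) would need separate justification. Relatedly, to know that the resulting maxima are tropical Fubini--Study metrics computing $\phi_{\mathscr{I}(m\phi)}$ on all of $X^{\an}$ (not merely at the divisorial points where your chosen sections happen to generate), you need the global generation of $\gO_{\X}((m+m_0)\Ld\otimes\mathscr{I}(m\phi))$ over a neighbourhood of the puncture, i.e.\ prop.~\ref{prop glob gen} via Castelnuovo--Mumford regularity and Nadel vanishing; this input, and the resulting $m_0/(m+m_0)$ correction term, is absent from your argument.
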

\begin{ex} \label{ex bdd model} Let $\phi \in \PSH(X,L)$ be a locally bounded psh metric, and assume that there exists a normal model $(\X, \Ld)$ of $(X, L)$ with $\Ld$ nef, such that $\phi$ extends to $\X$ as a locally bounded, psh metric on $\Ld$. Then we have $\psi_{\NA} \equiv 0$, so that $\phi^{\NA} = \phi_{\Ld}$. In other words, the non-archimedean limit of a metric extending without singularities as a semi-positive metric on a nef model $(\X, \Ld)$ is simply the associated model metric.
\end{ex}
\begin{ex} \label{ex Lelong TFS}
Let:
$$\phi = \max_{\alpha \in A} \big(\log \lvert s_{\alpha} \rvert + c_{\alpha} \big)$$
be a tropical Fubini-Study metric on $L^{\hyb}$, and let $v \in X^{\xdiv}$ be a divisorial valuation. We let $(\X, \Ld)$ be a model of $(X,L)$ such that $v = v_E$ for some $E \subset \X_0$, and the $s_{\alpha}$'s extend as holomorphic sections of $\Ld$ on $\X$. Then:
$$\nu_E(\phi) =(\phi_0- \phi_{\Ld})(v_E),$$
where $\phi_0$ is the induced FS metric on $X^{\an}$. Indeed, since the $s_{\alpha}$'s are holomorphic sections of $\Ld$, we have:
$$(\phi_0- \phi_{\Ld})(v_E) = \max_{\alpha \in A} \big( v_E(\frac{s_{\alpha}}{s_{\Ld}}) +c_{\alpha}),$$
where $s_{\Ld}$ is a trivialization of $\Ld$ at the generic point of $E$, while:
$$\phi_t = \max_{\alpha \in A} \big(\log \lvert s_{\alpha} \rvert + c_{\alpha} \log_r \lvert t \rvert \big)$$
is easily seen to have Lelong number:
$$\nu_E(\phi) = \min_{\alpha \in A} \big( \ord_E(\frac{s_{\alpha}}{s_{\Ld}}) +\frac{c_{\alpha}b_E}{\log r} \big)$$
along $E$. We infer from this that $\psi^{\NA} = (\phi_0 - \phi_{\Ld})$, so that when $\phi$ is a tropical Fubini-Study metric on $L^{\hyb}$, $\phi^{\NA} = \phi_0$ is simply its restriction to $X^{\an}$.
\end{ex}
We are now ready to prove Theorem A: the metric $\phi^{\NA}$ in fact extends $\phi$ as a semi-positive metric on the hybrid space, and up to shrinking the disk, every \emph{continuous} semi-positive metric on $L^{\hyb}$ arises in this way.
\begin{theo}\label{theo psh hyb}
Let $(X,L)$ be a polarized degeneration of complex polarized varieties over $\D^*$,  and $\phi = (\phi_t)_{t \in \D^*} \in \PSH(X,L)$ be a semipositive metric on $L$, with logarithmic growth at $t=0$. Then the metric $\phi^{\hyb}$ on $L^{\hyb}$ defined by setting:
$$\phi^{\hyb}_{|X_t} = \phi_t;$$
$$\phi^{\hyb}_0 = \phi^{\NA},$$
is semi-positive, i.e. $\phi^{\hyb} \in \PSH(X^{\hyb}, L^{\hyb})$.
\\Conversely, let $\phi^{\hyb} \in \CPSH(X^{\hyb}, L^{\hyb})$ and $\eps>0$, and set:
$$\phi_t = \phi_{| X_t}$$
for $\lvert t \rvert < r- \eps$.
Then $(\phi_t)_{t \in \D_{r-\eps}^*}$ is a psh metric on the restriction of $L$ to $X_{| \D^*_{r- \eps}}$, with logarithmic growth at $0$, and such that $\phi^{\NA} = \phi^{\hyb}_0$.
\end{theo}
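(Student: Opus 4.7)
My plan is to prove the forward direction by exhibiting $\phi^{\hyb}$ as a (decreasing, or uniform) limit of hybrid metrics that are already known to be psh, namely the hybrid Bergman metrics from Proposition \ref{prop homo sing}. Using Definition \ref{def log g}, fix a normal model $(\X, \Ld)$ of $(X, L)$ with $\Ld$ nef on which $\phi$ extends as a psh metric. For each $m \ge 1$, choose a basis $(s_{i,m})_{i=1,\dots,N_m}$ of $H^0(\X, m\Ld)$ that is orthonormal for the $L^2$-norm induced by $e^{-2m\phi}$ and a smooth volume form on $\X$ (only finite-$L^2$ sections are kept); by Proposition \ref{prop homo sing} the data
\[
\phi_{m,t} = (2m)^{-1} \log \sum_{i} |s_{i,m,t}|^2, \qquad \phi_{m,0} = m^{-1} \max_{i} \log |s_{i,m}|
\]
defines a continuous psh hybrid metric $\phi_m^{\hyb}$ on $L^{\hyb}$.

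By Demailly's $L^2$-regularization theorem, applied fiberwise with the uniform control furnished by the global sections on $\X$, the complex restrictions $\phi_{m,t}$ converge to $\phi_t$ pointwise with an $O(\log m / m)$ error. Simultaneously, the non-archimedean restrictions $\phi_{m,0}$ converge to $\phi^{\NA}$: this is essentially \cite[thm.~6.2]{BBJ} and \cite[thm.~3.3.1]{Reb}, since Example \ref{ex Lelong TFS} computes $\nu_E(\phi_m^{\hyb}) = \min_i \ord_E(s_{i,m})/m$, which tends to $\nu_E(\phi)$ by the asymptotic theory of multiplier ideals. A standard averaging/sup-regularization trick (as in \cite{BBJ}, say $\tilde\phi_m^{\hyb} = \sup_{k \ge m} \phi_k^{\hyb} + \epsilon_m$ with $\epsilon_m \to 0$) produces from the $\phi_m^{\hyb}$ a genuine decreasing sequence of psh hybrid metrics converging to $\phi^{\hyb}$. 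By items (6)--(7) of Proposition \ref{prop psh}, $\phi^{\hyb}$ is therefore psh on $L^{\hyb}$; the condition $\phi_0^{\hyb} \not\equiv -\infty$ at the only Zariski-dense point is automatic since $\phi^{\NA}$ is a bona fide psh metric.

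For the converse, let $\phi^{\hyb} \in \CPSH(X^{\hyb}, L^{\hyb})$ and $\eps > 0$. Proposition \ref{prop metric hyb} yields a continuous family $(\phi_t)_{t \in \bar\D^*_r}$ of complex metrics on the $L_t$. Approximating $\phi^{\hyb}$ uniformly by hybrid tropical Fubini-Study metrics $\psi_j^{\hyb}$ via the density statement for $\FS^{\tau}$ in $\CPSH$, the complex restrictions $\psi_{j,t}$ are (rescaled) classical FS metrics converging uniformly to $\phi_t$ on each $X_t$; hence $(\phi_t)_{t \in \D^*_{r-\eps}} \in \PSH(X, L)$. The logarithmic growth follows from the uniform bound $|\phi^{\hyb} - \phi_{\mathrm{ref}}^{\hyb}| \le C$ on the compact space $X^{\hyb}$ for a smooth reference model metric $\phi_{\mathrm{ref}}$: under the rescaling in Proposition \ref{prop metric hyb} this becomes $|\phi_t - \phi_{\mathrm{ref},t}| \le C \log_r|t|$, matching the third criterion in Definition \ref{def log g}. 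Finally $\phi^{\NA} = \phi_0^{\hyb}$ is obtained by passing to the uniform limit in the identity $\psi_j^{\NA} = \psi_{j,0}^{\hyb}$ of Example \ref{ex Lelong TFS}, using that both generic Lelong numbers and restriction to the NA fiber are continuous under uniform convergence of metrics.

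The main obstacle will be the convergence of the non-archimedean restrictions $\phi_{m,0} \to \phi^{\NA}$ in the forward direction, in a form strong enough to combine with the complex-fiber convergence into a single monotone or uniform convergence of hybrid metrics. This requires the fine multiplier-ideal asymptotics and Ohsawa--Takegoshi extension machinery developed in \cite{BBJ} and \cite{Reb}, adapted to the relative setting over the disk and performed consistently with respect to the choice of sections $s_{i,m}$ used to build $\phi_m^{\hyb}$.
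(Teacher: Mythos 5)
Your strategy for the forward direction --- regularizing $\phi$ by hybrid Bergman metrics built from $L^2$ sections, invoking Proposition \ref{prop homo sing} for their plurisubharmonicity, and passing to the limit via Proposition \ref{prop psh} --- is the same as the paper's, and your converse argument matches the paper's essentially verbatim. However, two steps in the forward direction are genuine gaps rather than routine details. The first is the extraction of a \emph{decreasing} sequence: the device $\tilde\phi_m^{\hyb}=\sup_{k\ge m}\phi_k^{\hyb}+\eps_m$ is not available here, because the class $\PSH(L^{\hyb})$ over a Banach ring is by construction stable only under finite maxima and decreasing limits, and no stability under countable (usc-regularized) suprema is established in this framework. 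The paper instead proves an almost-subadditivity estimate
$$\phi_{m_1+m_2,m_0}\le \frac{C}{m_1+m_2+m_0}+\frac{(m_1+m_0/2)\,\phi_{m_1,m_0/2}}{m_1+m_2+m_0}+\frac{(m_2+m_0/2)\,\phi_{m_2,m_0/2}}{m_1+m_2+m_0}$$
by applying Ohsawa--Takegoshi to extend sections from the diagonal of $\X\times\X$, and then restricts to the dyadic subsequence $m=2^j$, for which $\phi_j=\phi_{2^j-m_0,m_0}+2^{-j-2}C$ is genuinely decreasing. Without this (or a substitute) your Bergman metrics need not be monotone and item (6) of Proposition \ref{prop psh} does not apply.

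The second gap concerns the non-archimedean fiber. For the restriction of the $m$-th Bergman metric to $X^{\an}$ to equal $\phi_{\Ld}+(m+m_0)^{-1}\phi_{\mathscr{I}_m}$ --- which is what decreases to $\phi^{\NA}$ by \cite{Reb} --- the chosen sections must generate the sheaf $(m+m_0)\Ld\otimes\mathscr{I}(m\phi)$ near the relevant closed disk. Over the open base $\D$ the space $H^0(\X,m\Ld)$ is infinite-dimensional over $\C$, so the finite orthonormal basis $(s_{i,m})_{i\le N_m}$ you write down does not exist as stated; and even granting a Hilbert basis, generation of the multiplier ideal by those sections is exactly what the paper secures by twisting by a fixed $m_0\Ld$ and invoking relative Castelnuovo--Mumford regularity plus Nadel vanishing (Proposition \ref{prop glob gen}), while the passage from finite partial sums (to which Proposition \ref{prop homo sing} applies) to the full Bergman kernel requires the uniform convergence of Proposition \ref{prop som part}, proved via the strong Noetherian property, together with item (7) of Proposition \ref{prop psh}. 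You flag this cluster of difficulties in your closing paragraph, but it constitutes the actual content of the proof rather than an adaptation one can defer.
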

The proof of the theorem will be provided in section \ref{sec proof psh}.
\\Note that the continuity assumption cannot be removed: in the case where $X$ is a point, the function $\eta$ such that $\eta(0)=1$ and $\eta \equiv 0$ on $C^{\hyb} \setminus 0$ is psh on the hybrid circle, but $\eta(0)$ is not the Lelong number at zero of the induced subharmonic function on the punctured disk. This says essentially that the point $0 \in C^{\hyb}$ is non-pluripolar, so that it is "large" in the sense of pluripotential theory: psh metrics are not uniquely determined by their restriction outside zero. It has however dense complement, so that it is negligible topologically, and thus \emph{continuous} psh metrics are determined by their restriction outside zero.
\\The next proposition characterizes, given $\phi \in \PSH(X, L)$, the set of semi-positive metrics on $L^{\an}$ that arise as restrictions to $X^{\an}$ of hybrid metrics extending $\phi$, under a finite-energy assumption. Let $\phi \in \PSH(L^{\an})$, and assume that $\phi$ lies in the class $\mathcal{E}^1(L^{\an})$ of finite-energy metrics on $L$ - we won't give the definition here, and refer to \cite[§6]{BFJ1}, note however that $\mathcal{E}^1(L^{\an})$ contains all continuous psh metrics on $L$. Then it follows from \cite{Reb} that after a suitable choice of boundary data, there exists a canonical extension $\phi^{\hyb} \in \PSH(L^{\hyb})$ of $\phi$ to the hybrid space, which also lies in the class $\mathcal{E}^1(L) \subset \PSH(X,L)$ of  fiberwise-finite energy metrics - for all $t \neq 0$, $\phi_t$ is a finite-energy metric on $L_t$. The extension is canonical in the sense that it is relatively maximal in the sense of pluripotential theory \cite{Klim}, and the maximal extension mapping: $\mathcal{E}^1(L^{\an}) \fl \mathcal{E}^1(L)$ defined this way is an isometric embedding for the Darvas metric \cite{Dar} on $\mathcal{E}^1(L)$ and its non-archimedean analog on $\mathcal{E}^1(L^{\an})$.
\begin{prop} \label{prop semic phina}
Let $\phi \in \PSH(L^{\hyb})$, write $\phi_0 \in \PSH(L^{\an})$ its restriction to $X^{\an}$ and $\phi^{\NA} \in \PSH(L^{\an})$ the non-archimedean metric associated to $\phi_{| X}$, as defined by theorem \ref{theo ext phina}. Then the inequality:
$$\phi_0 \ge \phi_{\NA}$$
holds on $X^{\an}$. 
\\Conversely, let $\phi \in \mathcal{E}^1(L)$ be a psh metric of fiberwise-finite energy. Then for any $\psi \in \mathcal{E}^1(L^{\an})$ such that $\psi \ge \phi^{\NA}$, there exists a psh extension $\tilde{\phi} \in \mathcal{E}^1(L^{\hyb})$ of $\phi$ to $L^{\hyb}$ satisfying $\tilde{\phi}_0 = \psi$.
\end{prop}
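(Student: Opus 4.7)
For the first inequality, I would reduce to tropical Fubini-Study metrics by monotone approximation. Writing $\phi$ as the pointwise decreasing limit of a net $(\phi_j) \subset \FS^{\tau}(L^{\hyb})$, Example \ref{ex Lelong TFS} gives $\phi_j^{\NA} = (\phi_j)_0$ on $X^{\an}$ for each $j$. The assignment $\phi \mapsto \phi^{\NA}$ is monotone with respect to $\leq$: if $\phi \leq \phi'$ then $\nu_E(\phi) \geq \nu_E(\phi')$ along every prime vertical divisor $E$, so $\phi^{\NA}(v_E) \leq \phi'^{\NA}(v_E)$ (using $\log r < 0$), and the inequality extends to all of $X^{\an}$ by lower semi-continuous regularization. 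Thus $\phi^{\NA} \leq \phi_j^{\NA} = (\phi_j)_0$ for every $j$, and passing to the decreasing limit $(\phi_j)_0 \searrow \phi_0$ yields $\phi^{\NA} \leq \phi_0$ on $X^{\an}$.

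For the existence statement, the approach I have in mind is to construct $\tilde\phi$ as the upper envelope
$$\tilde\phi := \bigl(\sup \, \mathcal{F}\bigr)^*, \qquad \mathcal{F} := \{u \in \mathcal{E}^1(L^{\hyb}) : u_{|X} \leq \phi,\; u_0 \leq \psi\},$$
where $(\cdot)^*$ denotes the usc regularization. Standard envelope arguments should yield $\tilde\phi \in \PSH(L^{\hyb})$. By theorem \ref{theo psh hyb}, the canonical extension $\phi^{\hyb}$ of $\phi$ satisfies $\phi^{\hyb}_0 = \phi^{\NA} \leq \psi$, so $\phi^{\hyb} \in \mathcal{F}$; this forces $\tilde\phi_{|X} \geq \phi$, and together with the defining constraint gives $\tilde\phi_{|X} = \phi$. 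Similarly $\tilde\phi_0 \leq \psi$ is immediate from the construction.

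The main obstacle, and the true content of the proposition, is the reverse inequality $\tilde\phi_0 \geq \psi$, where the assumption $\psi \in \mathcal{E}^1(L^{\an})$ becomes essential. My strategy would be to invoke Rebouleau's isometric embedding $\mathcal{E}^1(L^{\an}) \hookrightarrow \mathcal{E}^1(L^{\hyb})$ to produce the canonical (maximal) hybrid extension $\psi^{\hyb}$ of $\psi$, and then, for each $\epsilon > 0$, to manufacture a competitor $u_\epsilon \in \mathcal{F}$ whose non-archimedean boundary $(u_\epsilon)_0$ matches $\psi$ up to $\epsilon$ at a prescribed point of $X^{\an}$. This requires truncating $\psi^{\hyb}$ against $\phi^{\hyb}$ by a rooftop-envelope construction of Darvas type \cite{Dar}, using the monotonicity and isometric character of the maximal extension procedure together with the finite-energy structure to control the resulting suprema. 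I expect this gluing step, where the geometry of the Darvas metric plays a crucial role and where the assumption $\psi \geq \phi^{\NA}$ is genuinely used, to be the main technical difficulty.
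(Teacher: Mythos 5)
The first half of your argument is correct and is essentially the paper's proof: reduce to the tropical Fubini--Study approximants $\phi_j$, use $\phi_j^{\NA}=(\phi_j)_0$ from example \ref{ex Lelong TFS}, and conclude by monotonicity of generic Lelong numbers along the decreasing net together with the fact that psh metrics on $L^{\an}$ are determined by their values at divisorial points.

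The converse, however, has a genuine gap. You set up an upper envelope $\tilde\phi=(\sup\mathcal{F})^*$ and then explicitly defer the crux, $\tilde\phi_0\ge\psi$, to an unspecified ``rooftop-envelope'' gluing step that you yourself flag as the main technical difficulty; that step is never carried out, so the existence statement is not proved. Moreover the envelope framework itself is not available here: plurisubharmonicity on $X^{\hyb}$ is \emph{defined} via decreasing nets of tropical Fubini--Study metrics, and the paper establishes no result saying that the usc regularization of a supremum of hybrid psh metrics is again psh. Even your ``immediate'' bound $\tilde\phi_0\le\psi$ is delicate: $X^{\an}$ has empty interior in $X^{\hyb}$, so the usc regularization at a non-archimedean point is governed by limsups from the nearby complex fibers and need not respect the constraint $u_0\le\psi$.

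The paper's actual construction bypasses all of this with one explicit trick. Let $\psi^{\hyb}\in\mathcal{E}^1(L^{\hyb})$ be the canonical finite-energy extension of $\psi$ furnished by \cite{Reb} and theorem \ref{theo psh hyb}, and let $e=\exp(1)\in A_r$. The function $\log\lvert e\rvert$ is a global DFS function on $X^{\hyb}$ (both $\pm\log\lvert e\rvert=\log\lvert e^{\pm1}\rvert$ are psh) which vanishes identically on $X^{\an}$ and corresponds to the constant $1$ on each rescaled complex fiber. One then takes the decreasing limit of $\eta_j=\max\bigl(\psi^{\hyb}-j\log\lvert e\rvert,\ \phi^{\hyb}\bigr)$: on $X^{\an}$ one gets $(\eta_j)_0=\max(\psi,\phi^{\NA})=\psi$ for every $j$ --- this is exactly where the hypothesis $\psi\ge\phi^{\NA}$ enters --- while on $X_t$ one gets $\max(\psi^{\hyb}_t-j,\phi_t)\searrow\phi_t$; the limit is psh because it dominates $\phi^{\hyb}$, hence is not $-\infty$ on Zariski-dense fibers. (The paper's displayed formula attaches $-j\log\lvert e\rvert$ to the other term, which would make the limit restrict to $\psi^{\hyb}$ rather than $\phi$ on $X$; the intended construction is the one just described.) This maximum-plus-tilting argument is the idea your proposal is missing.
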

\begin{proof}
Write $\phi = \lim_j \phi_j$ as the decreasing limit of a net in $\FS^{\tau}(L^{\hyb})$. It follows from example \ref{ex Lelong TFS} that for all $j$, the equality $\phi^0_j = \phi^{\NA}_j$ holds, hence $\phi_0$ is the decreasing limit of the $\phi^{\NA}_j$. Since $\phi_0$ and $\phi^{\NA}$ are psh on $L^{\an}$, they are determined by their (finite) values on divisorial points, so that it is enough to prove that if $v_E$ is a divisorial valuation on $X$, the inequality:
$$\lim_j (\psi^{\NA}_j(v_E)) \le \psi^{\NA}(v_E)$$
holds, which follows from semi-continuity of Lelong numbers.
\\Conversely, assume that $\phi \in \mathcal{E}^1(L)$ and $\psi \in \mathcal{E}^1(L^{\an})$, with $\psi \ge \phi^{\NA}$. By \cite{Reb} and theorem \ref{theo psh hyb}, there exists $\psi^{\hyb} \in \mathcal{E}^1(L^{\hyb})$ such that $\psi^{\hyb}_0 = \psi$. We set:
$$\eta_j = \max( \psi^{\hyb}, \phi^{\hyb} -j \log \lvert e \rvert),$$
then the decreasing limit of the $(\eta_j)_j$ is a psh hybrid metric, restricting to $\psi$ on $X^{\an}$ and to $\phi$ on $X$.
\end{proof}
\begin{ex} Let $(A, L) \fl \D^*$ be a polarized, maximal degeneration of abelian varieties. We let $\omega_t \in c_1(L_t)$ be the flat Kähler metric on $L_t$, then there exists a family of smooth metrics $\phi_t \in \PSH(X_t, L_t)$ - called the cubic metrics - such that $\omega_t = dd^c \phi_t$. Then it follows from the proof of \cite[thm. 4.13]{GO} that $\phi \in \PSH(X, L)$ and has logarithmic growth at $t=0$. Moreover, the associated non-archimedean metric $\phi^{\NA}$ is computed explicitly in \cite[thm. 4.3]{Liu}, and \cite[thm. 4.13]{GO} states that the induced hybrid metric $\phi^{\hyb}$ is in fact continuous, i.e. $\phi^{\hyb} \in \CPSH(A^{\hyb}, L^{\hyb})$.
\end{ex}
We also prove that given a continuous hybrid metric, it is enough to test its plurisubharmonicity outside zero. 
\begin{prop} \label{prop psh cont} Let $\phi \in \PSH(X, L)$, and assume $\phi$ extends as a continuous metric on $L^{\hyb}$, that we still denote $\phi$. Then the extension is semi-positive, i.e. $\phi \in \CPSH(L^{\hyb})$.
\end{prop}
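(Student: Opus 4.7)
The strategy is to use Theorem \ref{theo psh hyb} to produce a canonical psh hybrid extension $\phi^{\hyb}$ of $\phi$, and to identify it with the given continuous extension $\tilde\phi$.

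First, I argue that $\phi$ has logarithmic growth at $t=0$. Fix any nef normal model $(\X, \Ld)$ of $(X, L)$, a smooth reference metric $\phi_{\mathrm{ref}}$ on $\Ld$, and the associated continuous hybrid extension $\phi_{\mathrm{ref}}^{\hyb}$ satisfying $\phi^{\hyb}_{\mathrm{ref}, 0} = \phi_{\Ld}$. The difference $\tilde\phi - \phi_{\mathrm{ref}}^{\hyb}$ is continuous on the compact space $X^{\hyb}$, hence bounded by some $C > 0$; by the rescaling formula of Proposition \ref{prop metric hyb}, this translates into $\lvert \phi_t - \phi_{\mathrm{ref}, t} \rvert \le C \tfrac{\log \lvert t \rvert}{\log r}$ on each $X_t$, which is the third characterization of logarithmic growth in Definition \ref{def log g}.

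Theorem \ref{theo psh hyb} then produces $\phi^{\hyb} \in \PSH(L^{\hyb})$ with $\phi^{\hyb}_{|X_t} = \phi_t$ and $\phi^{\hyb}_0 = \phi^{\NA}$. Since $\tilde\phi$ and $\phi^{\hyb}$ agree on the dense open subset $X \subset X^{\hyb}$, it suffices to check the pointwise equality $\tilde\phi_0 = \phi^{\NA}$ on $X^{\an}$. Upper semi-continuity of $\phi^{\hyb}$ combined with continuity of $\tilde\phi$ and density of $X$ in $X^{\hyb}$ immediately yields one inequality: for every $v \in X^{\an}$,
\[
\phi^{\NA}(v) = \phi^{\hyb}_0(v) \ge \limsup_{X \ni x \to v} \phi^{\hyb}(x) = \lim_{X \ni x \to v} \tilde\phi(x) = \tilde\phi_0(v).
\]

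For the opposite inequality, I evaluate both sides at a divisorial valuation $v_E$ attached to a prime vertical divisor $E$ on an snc model $\X$. Trivializing $L$ near a generic point of $E$ by a local section $s$, the continuity criterion of Proposition \ref{prop metric hyb} identifies $\log \lVert s \rVert_{\tilde\phi_0}(v_E)$ with the continuous extension at $v_E$ of $z \mapsto \tfrac{\log \lVert s(z) \rVert_{\phi_t}}{\log_r \lvert t \rvert}$, while unpacking the definition of $\phi^{\NA}$ from Theorem \ref{theo ext phina} through the generic Lelong number $\nu_E(\phi)$ shows that this same extension equals $\log \lVert s \rVert_{\phi^{\NA}}(v_E)$. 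Hence $\tilde\phi_0 = \phi^{\NA}$ on the dense set $X^{\xdiv}$, and combining this with the pointwise bound $\tilde\phi_0 \le \phi^{\NA}$ and the description of $\phi^{\NA}$ as the upper semi-continuous envelope of its divisorial values (Theorem \ref{theo ext phina}) upgrades to $\tilde\phi_0 = \phi^{\NA}$ on all of $X^{\an}$, giving $\tilde\phi = \phi^{\hyb} \in \CPSH(L^{\hyb})$. The main obstacle is this divisorial calculation: translating the hybrid continuity of $\tilde\phi$ at $v_E$ into the valuation-theoretic definition of $\nu_E(\phi)$ requires a transverse disk computation at a generic point of $E$, matching the archimedean asymptotics of $\phi_t$ with the generic Lelong number.
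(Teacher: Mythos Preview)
Your overall strategy matches the paper's: reduce to showing $\tilde\phi_0 = \phi^{\NA}$ on $X^{\an}$ and then invoke Theorem~\ref{theo psh hyb}. Your explicit verification of logarithmic growth is a nice addition, and the upper-semicontinuity inequality $\phi^{\NA} \ge \tilde\phi_0$ is correct. The divisorial step you flag as ``the main obstacle'' is exactly where the paper inserts its key lemma: the generic Lelong number $\nu_E(\phi)$ equals the Lelong number at a very general point $z_\infty \in E$, and a sequence $z_j \to z_\infty$ in a model $\X$ converges to $v_E$ in $X^{\hyb}$, so continuity of $\tilde\phi$ along that sequence recovers $\tilde\phi_0(v_E) = \phi^{\NA}(v_E)$. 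Your ``transverse disk computation'' gestures at this but does not pin down the crucial input (Lelong number at a very general point equals the generic one, cf.\ \cite{BFJ0}).

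The genuine gap is in your last step. You write that $\phi^{\NA}$ is ``the upper semi-continuous envelope of its divisorial values'' and cite Theorem~\ref{theo ext phina}, but that theorem asserts a \emph{lower} semi-continuous extension, and in any case neither the usc nor the lsc property alone lets you pass from equality on the dense set $X^{\xdiv}$ to equality on $X^{\an}$: upper semi-continuity of $\psi^{\NA}$ only reproduces the inequality $\psi^{\NA} \ge \tilde\psi_0$ you already had. The paper closes this gap differently: once $\tilde\phi_0$ and $\phi^{\NA}$ agree on divisorial points, they agree on every skeleton $\Sk(\X)$ (both restrict to continuous functions there, equal on the dense rational points), hence $(\tilde\phi_0 - \phi_{\Ld}) \circ \rho_\X = (\phi^{\NA} - \phi_{\Ld}) \circ \rho_\X$ for every snc model $\X$. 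One then invokes \cite[prop.~7.6]{BFJ2}: a psh potential $\psi$ satisfies $\psi = \lim_\X \psi \circ \rho_\X$, while for the continuous $\tilde\psi_0$ this follows from $\rho_\X \to \mathrm{id}$. That retraction argument is what you are missing.
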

\begin{proof} By theorem \ref{theo psh hyb}, it is enough to prove that $\phi_0 = \phi^{\NA}$. By continuity of $\phi$ and the following lemma, we have:
$$\phi_0(v_E) = \phi^{\NA}(v_E)$$
for every divisorial valuation $v_E \in X^{\xdiv}$. Thus, after fixing a reference model metric $\phi_{\Ld}$, for every snc model $\X$ on which $\phi_{\Ld}$ is determined, we have $(\phi_0 - \phi_{\Ld}) \circ \rho_{\X} = (\phi_{\NA} - \phi_{\Ld}) \circ \rho_{\X}$ since those two continuous functions agree on the rational points of $\Sk(\X)$, and the result follows from \cite[prop. 7.6]{BFJ2}.
\end{proof}
\begin{lem} Let $\phi \in \PSH(X, L)$ with logarithmic growth and $v_E \in X^{\xdiv}$ a divisorial valuation. Let $(\X, \Ld)$ be an snc model of $(X, L)$ such that $v_E$ is determined on $\X$, and $\phi$ extends as a psh metric on $\Ld$. We fix a bounded reference metric $\phi_0$ on $\Ld$, and write $\phi^{\NA} = \phi_{\Ld} + \psi^{\NA}$ as in theorem \ref{theo ext phina}.
\\There exists a sequence $(z_j)_{j \in \N}$ in $X$ converging to $v_E$ in $X^{\hyb}$, such that the sequence $(\frac{(\phi-\phi_{0})(z_j)}{\log_r \lvert t \rvert})_j$ converges to $\psi^{\NA}(v_E)$.
\end{lem}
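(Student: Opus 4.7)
The plan is to reduce to a local pluripotential computation in a neighborhood of a generic point $x_0 \in E$, and then to extract the desired sequence along a suitably chosen transverse complex line. First I would trivialize $\Ld$ on a small open neighborhood $U$ of $x_0$, turning $\phi$ into a local psh function $\varphi$ and $\phi_0$ into a locally bounded function $\varphi_0$; their difference $u := \varphi - \varphi_0$ is then a quasi-psh function on $U$ whose generic Lelong number along $E$ coincides with $\nu_E(\phi)$. I would choose $x_0 \in E$ satisfying the following generic conditions: $x_0$ lies off the other components of $\X_0$ (possible since $\X$ is snc), the pointwise Lelong number $\nu(u, x_0)$ equals $\nu_E(u) = \nu_E(\phi)$ (possible because by Siu's analyticity theorem the upper-level sets $\{\nu(u,\cdot) > \nu_E\}$ are proper analytic subsets of $E$), and $x_0$ avoids the finitely many additional proper analytic subsets that will appear below. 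In local coordinates $(z_E, y_1,\dots, y_n)$ centered at $x_0 = 0$ in which $z_E$ is a defining equation of $E$, one has $t = w\,z_E^{b_E}$ for some local holomorphic unit $w$.

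The core of the argument is a slicing step. By the Kiselman--Demailly slicing lemma for Lelong numbers, for Lebesgue-almost every direction $\vec{a} = (a_0,\dots, a_n) \in \C^{n+1}$ with $a_0 \neq 0$ (transversality to $E$), the restriction $\tilde u(\lambda) := u(\lambda \vec{a})$ is quasi-subharmonic on a disk around $0$ with Lelong number exactly $\nu_E$ at $\lambda = 0$. Fix such an $\vec{a}$. By the very definition of the Lelong number as the limiting slope of the sup over small balls, one can then extract a sequence $\lambda_j \to 0$ for which $\tilde u(\lambda_j)/\log \lvert \lambda_j \rvert \to \nu_E$. Setting $z_j := \lambda_j \vec{a}$ in the chart, the point $z_j$ lies in $X$ for $j$ large since $t(z_j) = w(z_j)(a_0\lambda_j)^{b_E} \neq 0$.

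It only remains to verify the two convergences. The identity $\log \lvert t(z_j) \rvert = b_E \log\lvert \lambda_j \rvert + \log \lvert w(z_j) \rvert + b_E \log \lvert a_0 \rvert$, together with $w(z_j) \to w(0) \neq 0$, gives $\log \lvert t(z_j) \rvert/\log \lvert \lambda_j \rvert \to b_E$, hence $u(z_j)/\log \lvert t(z_j) \rvert \to \nu_E/b_E$; multiplying by $\log r$ yields $(\phi - \phi_0)(z_j)/\log_r \lvert t(z_j) \rvert \to (\log r)\nu_E/b_E = \psi^{\NA}(v_E)$. On the other hand, for any rational function $f$ on $\X$ regular and non-vanishing on $E$ in a neighborhood of $x_0$ (another generic condition absorbed in the choice of $x_0$), one writes $f = z_E^{\ord_E(f)} g$ with $g$ a local unit at $x_0$, whence $\log\lvert f(z_j) \rvert/\log\lvert t(z_j) \rvert \to \ord_E(f)/b_E$; by Proposition~\ref{topo hyb} this translates into $\lvert f(z_j) \rvert^{\log r / \log\lvert t(z_j) \rvert} \to r^{\ord_E(f)/b_E} = e^{-v_E(f)}$, and since such continuous functions generate the topology of $X^{\hyb}$ near the non-archimedean fiber, this gives $z_j \to v_E$ in $X^{\hyb}$. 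The main obstacle is the slicing step: one must check that the Kiselman--Demailly lemma applies to the quasi-psh function $u$ (which follows by a standard local reduction adding a smooth potential, without changing Lelong numbers) and that the extracted sequence $(\lambda_j)$ realizes the Lelong number and not merely its $\limsup$; a secondary technical point is that all the genericity conditions on $x_0$ can be met simultaneously, which is clear since each excluded set is a proper Zariski-closed subset of $E$.
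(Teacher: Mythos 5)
Your argument follows the same route as the paper's proof: both reduce to the fact that the generic Lelong number of $\phi-\phi_0$ along $E$ is attained as the pointwise Lelong number at a very general point $z_\infty$ of $E$ lying on no other component of $\X_0$, and then produce the sequence by approaching $z_\infty$ from $X$. Your Kiselman--Demailly slicing step, which guarantees that the chosen sequence actually realizes the Lelong number as a limit of slopes (and your genericity bookkeeping for the hybrid convergence), only makes explicit what the paper's proof compresses into the phrase ``by construction'', so the proposal is correct and, if anything, more complete.
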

\begin{proof}Recall that up to a negative scaling factor, $\psi^{\NA}(v_E)$ is the generic Lelong number of the psh function $(\phi- \phi_{0})$ along $E$, hence is equal to the Lelong number of $(\phi- \phi_{0})$ at a very general point of $E$ \cite{BFJ0}. Thus, we may choose a point $z_{\infty} \in E$ such that $z_{\infty}$ is not contained in any other irreducible component of $\X_0$, and such that:
$$\psi^{\NA}(v_E) = \frac{ \log r}{b_E} \nu_{z_{\infty}}(\phi- \phi_{0}).$$
Choose a sequence $(z_j)_j$ in $X$ converging to $z_{\infty}$ inside $\X$. Then by construction $(\frac{(\phi-\phi_{0})(z_j)}{\log_r \lvert t \rvert})_j$ converges to $\psi^{\NA}(v_E)$, and $(z_j)_j$ converges to $v_E$ in $\X^{\hyb}$ (see \cite[def. 2.3]{BJ}), hence in $X^{\hyb}$, which concludes the proof.
\end{proof}
The following question is taken from Favre \cite[question 1]{Fav}:
\begin{qu} Let $\phi \in \CPSH(L)$ be a continuous, semi-positive metric on $L$, and assume that $\phi^{\NA}$ is a continuous metric on $L^{\an}$.
\\Then is it true that $\phi^{\hyb} \in \CPSH(L^{\hyb})$ ?
\end{qu}
In view of the proof of theorem \ref{theo psh hyb}, this amounts to proving that assuming $\phi^{\NA} \in \CPSH$, we have an estimate of the form:
$$ \phi_m - \phi \le \eps_m \big\lvert \log \lvert t \rvert \big\rvert^{-1},$$
on $X$ where $\phi_m$ are the Bergman kernels regularizing $\phi$, and $\eps_m \xrightarrow[m \rightarrow \infty]{} 0$ is independent of $t$. Such a bound seems difficult to attain without a uniform estimate on the oscillation of $\phi$.
\subsection{Proof of theorem \ref{theo psh hyb}} \label{sec proof psh}
This section is devoted to the proof of theorem \ref{theo psh hyb}. If $\rho>0$, we will write $\D_{\rho} := \{ \lvert t \rvert < \rho \}$ the open disk of radius $\rho$, and if $\X$ is a model of $X$, $\X_{\rho}:= \X_{| \D_{\rho}}$. 
\\We let $\phi \in \PSH(X, L)$ be a psh metric on $L$ with logarithmic growth, so that after choosing an ample model $(\X, \Ld)$ of $(X, L)$, there exists $c \in \R$ such that $\phi_c = \phi+ c \log \lvert t \rvert$ extends as a psh metric on $\Ld$ - to alleviate notation, we will still denote $\phi_c$ by $\phi$. 
The basic idea of the proof of theorem \ref{theo ext phina} is that if $\mathscr{I}_m :=\mathscr{I}(m \phi)$ is the multiplier ideal of the psh metric $m \phi$ on $m\Ld$, then the sequence of piecewise-affine functions $m^{-1} \phi_{\mathscr{I}(m\phi)}$ on $X^{\an}$ decrease pointwise to the relative potential $(\phi^{\NA} -\phi_{\Ld})$ on $X^{\an}$. However for $m \gg 1$, up to a controlled error term, the sheaf $\gO_{\X}(m \Ld \otimes \mathscr{I}_m)$ is relatively globally generated on $\X$, so that the $m^{-1} \phi_{\mathscr{I}(m\phi)}$ are a sequence of $\phi_{\Ld}$-psh functions on $X^{\an}$, hence $\phi \in \PSH(X^{\an}, L^{\an})$.
\\We will roughly apply the same idea here, except we will also have to regularize $\phi$ itself by a sequence of psh metrics with analytic singularities along $\mathscr{I}(m \phi)$ (see def. \ref{def ansing}). The procedure we apply to produce such a sequence is standard in complex pluripotential theory and goes back to the work of Demailly \cite{Dem}, so that we merely outline the proof here and rather refer the reader to the appendix \ref{appendix psh} for the technical details. 
\\We let $\psi \in \PSH(\X, \Ld)$ be a smooth metric, whose curvature form $\omega := dd^c \psi$ is a Kähler form on $\X$. We choose $\eps \in (0, 1-r)$ and $m_0 \in \N$ such that for all $m \ge 0$, the sheaf $\gO_{\X}((m+m_0)\Ld \otimes \mathscr{I}_m)$ is globally generated over $\X_{r+\eps}$ (see prop. \ref{prop glob gen}). We write:
$$\psi_{m, m_0} = \big( m \phi + m_0\psi \big) \in \PSH(\X, (m+m_0)\Ld),$$
so that $\mathscr{I}_m = \mathscr{I}(m\phi) = \mathscr{I}(\psi_{m, m_0}) $ is the multiplier ideal of the psh metric $\psi_{m, m_0}$ on $(\X, (m+m_0)\Ld)$. 
\\We will regularize $\phi$ by the Bergman metrics associated to the multiplier ideal $\mathscr{I}_m$. More explicitly, set $V_{m, m_0} := H^0(\X_{r+\eps}, (m+m_0)\Ld \otimes \mathscr{I}_{m})$ and define $ \mathscr{H}_{m, m_0} \subset V_{m, m_0}$ as the following Hilbert space:
$$\mathscr{H}_{m, m_0} = \{ s \in V_{m, m_0} / \lVert s \rVert^2 := \int_{\X_{r+\eps}} \lvert s \rvert^2_{\psi_{m, m_0}} \omega^{n+1} < \infty \}.$$
For every couple $(m, m_0)$, we may choose a Hilbert basis $\mathscr{B}_{m, m_0} = (s_{m, m_0, l})_{l \in \N}$ of $ \mathscr{H}_{m, m_0}$, and we now set:
$$\phi_{m, m_0} = \frac{1}{2(m+m_0)} \log ( \sum_{l \in \N} \lvert s_{m, m_0, l} \rvert^2),$$
and:
$$\phi^{\NA}_{m, m_0} =\phi_{\Ld}+ (m+m_0)^{-1} \phi_{\mathscr{I}_m}.$$
It is clear that $\phi_{m, m_0} \in \PSH(\X, \Ld)$, and $\phi_{m, m_0}^{\NA} \in \PSH(X^{\an}, L^{\an})$. We claim that this defines a semi-positive hybrid metric on $L$:
\begin{prop} \label{prop sum psh}
For every $m \in \N_{>0}$, the hybrid metric $\phi^{\hyb}_{m, m_0}$ on $L$ defined by:
$$\phi^{\hyb}_{m, m_0, t} = (\phi_{m, m_0})_{| X_{t}}$$
and:
$$\phi^{\hyb}_{m, m_0, 0} = \phi^{\NA}_{m, m_0}$$
is semi-positive, i.e. $\phi^{\hyb}_{m, m_0} \in \PSH(X^{\hyb}, L^{\hyb})$.
\end{prop}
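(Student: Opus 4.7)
The plan is to realize $\phi^{\hyb}_{m,m_0}$ as a continuous semi-positive hybrid metric by combining Proposition \ref{prop homo sing} and Lemma \ref{prop homo} after a reduction to a finite family of global algebraic sections.

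First, I would reduce the infinite Bergman sum $\sum_l |s_{m,m_0,l}|^2_{\psi_{m,m_0}}$ to finite data. By coherence of $\mathscr{I}_m$ and relative ampleness of $\Ld$, the $\gO(\D_{r+\eps})$-module $H^0(\X_{r+\eps}, (m+m_0)\Ld \otimes \mathscr{I}_m)$ is finitely generated. I would fix a family of generators $(\sigma_1,\ldots,\sigma_N)$, taken to be global algebraic sections on $\X$ (clearing denominators by a power of $t$ which can be absorbed later since $t \in A_r$). Any $L^2$ element of $\mathscr{H}_{m,m_0}$ is then a holomorphic $\gO(\D_{r+\eps})$-linear combination of the $\sigma_i$, so the Bergman kernel admits a representation
$$\sum_l |s_{m,m_0,l}|^2_{\psi_{m,m_0}} = \sum_{i,j=1}^N h_{ij}(t)\,\sigma_i\,\overline{\sigma_j},$$
where $(h_{ij}(t))$ is a positive-definite Hermitian matrix depending holomorphically on $t \in \D_{r+\eps}$.

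Second, on each fiber $X_t$, $t \neq 0$, this expression gives a complex Bergman metric in the finite family $(\sigma_i|_{X_t})$, weighted by $(h_{ij}(t))$. I would apply Lemma \ref{prop homo} with the convex function $\chi(x) := (2(m+m_0))^{-1}\log \sum_{i,j} h_{ij}(t)\, e^{x_i+x_j}$; since $\chi$ here depends on $t$, one must first decompose the weights $h_{ij}(t)$ into monomials in $t$, view these as elements of $A_r$, and absorb them into the sections via multiplication, using Remark \ref{rem pure} to approximate by pure Fubini-Study data. The resulting hybrid Bergman metric obtained from the enlarged finite family then falls under Proposition \ref{prop homo sing} and hence lies in $\PSH(L^{\hyb})$.

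Third, I would verify that the non-archimedean fiber of the resulting hybrid metric coincides with $\phi^{\NA}_{m,m_0} = \phi_\Ld + (m+m_0)^{-1}\phi_{\mathscr{I}_m}$. Since the $\sigma_i$ generate $(m+m_0)\Ld \otimes \mathscr{I}_m$ at every point of $\X_{r+\eps}$ (by the global generation assumption from Proposition \ref{prop glob gen}), the tropicalization $(m+m_0)^{-1}\max_i \log|\sigma_i|$ agrees with $\phi_\Ld + (m+m_0)^{-1}\phi_{\mathscr{I}_m}$ on divisorial valuations by the definition of $\phi_{\mathscr{I}_m}$ as $\sup_f \log|f|$ over local generators; density of divisorial valuations in $X^{\an}$ then extends the equality to all of $X^{\an}$.

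The main technical obstacle is absorbing the holomorphic $t$-dependence of the Bergman weights $h_{ij}(t)$ into the section data so that Lemma \ref{prop homo} or Proposition \ref{prop homo sing} applies directly. The cleanest route is to approximate $\phi^{\hyb}_{m,m_0}$ from above by a decreasing sequence of pure Fubini-Study hybrid metrics built from finitely many monomials $t^k\sigma_i$, controlling the truncation error through the $L^2$-boundedness of $\mathscr{H}_{m,m_0}$; this sidesteps the need to extend Lemma \ref{prop homo} to $t$-dependent convex functions and places $\phi^{\hyb}_{m,m_0}$ in $\PSH(L^{\hyb})$ via Proposition \ref{prop psh}(6).
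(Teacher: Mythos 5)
Your overall strategy (reduce to finite section data, apply Proposition \ref{prop homo sing}, identify the non-archimedean fiber via global generation) is the right one, but the specific reduction you propose breaks down. If you expand each Hilbert basis element as $s_{m,m_0,l} = \sum_i a_{l,i}(t)\,\sigma_i$ over a finite generating family, the resulting weight matrix is the Gram-type expression $h_{ij}(t) = \sum_l a_{l,i}(t)\overline{a_{l,j}(t)}$, which is \emph{not} holomorphic in $t$ (and whose convergence is itself unclear). Consequently it cannot be "decomposed into monomials in $t$" and absorbed into the sections as elements of $A_r$; the entire mechanism for feeding the data into Lemma \ref{prop homo} or Remark \ref{rem pure} collapses at this point. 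Your fallback in the last paragraph — a decreasing sequence of pure Fubini--Study metrics controlled by "$L^2$-boundedness" — is also not enough: the Bergman metric is an $L^2$-sum, not a max, so no decreasing sequence of tropical Fubini--Study metrics converges to it without the intermediate device of Proposition \ref{prop homo sing}, and mere $L^2$-boundedness of $\mathscr{H}_{m,m_0}$ does not yield uniform convergence of the partial Bergman sums.

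The paper's proof avoids the Gram matrix entirely: it truncates the Hilbert basis, setting $\phi_{m,m_0,q} = \tfrac{1}{2(m+m_0)}\log\bigl(\sum_{l\le q}\lvert s_{m,m_0,l}\rvert^2\bigr)$ and $\phi^{\NA}_{m,m_0,q} = (m+m_0)^{-1}\max_{l\le q}\log\lvert s_{m,m_0,l}\rvert$, and applies Proposition \ref{prop homo sing} directly to each finite family $(s_{m,m_0,l})_{l\le q}$. It then concludes by \emph{uniform} convergence (Proposition \ref{prop psh}(7), not a decreasing limit): on the complex part this is exactly Proposition \ref{prop som part} from the appendix, whose proof runs the strong Noetherian property on the kernel $\sum_l s_l(z)\overline{s_l(\bar w)}$ on $\X_{r+\eps}\times\X_{r+\eps}$ to get two-sided bounds $F_q\le F\le C_qF_q$ with $C_q\to1$; on the non-archimedean fiber one has $\phi^{\NA}_{m,m_0,q} = \phi_{\Ld}+(m+m_0)^{-1}\phi_{\mathscr{J}_q}$ with $\mathscr{J}_q\equiv\mathscr{I}_m$ for $q\gg1$ by the same Noetherian argument and global generation, and Dini's lemma finishes. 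This uniform-convergence input is the essential missing ingredient in your write-up; your step three (identifying $\phi^{\NA}_{m,m_0}$ on divisorial valuations) is correct in spirit and matches what the paper does.
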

\begin{proof} 
For $q \in \N$, set:
$$\phi_{m, m_0, q} = \frac{1}{2(m+m_0)} \log ( \sum_{l \le q} \lvert s_{m, m_0, l} \rvert^2 ),$$
and:
$$ \phi^{\NA}_{m, m_0, q} =(m+m_0)^{-1} \max_{l \le q} (\log \lvert s_{m, m_0, l} \rvert).$$
It follows from prop. \ref{prop homo sing} that this defines a semi-positive hybrid metric $\phi^{\hyb}_{m, m_0, q} \in \PSH(X^{\hyb}, L^{\hyb})$, we will prove that the $(\phi^{\hyb}_{m, m_0, q})_{q \in \N}$ converge uniformly to $\phi^{\hyb}_{m, m_0}$ on $X^{\hyb}$.
\\By prop. \ref{prop som part}, the $\phi_{m, m_0, q}$ converge uniformly to $\phi_{m, m_0}$ on $\X_{r+ \eps}$ as $q \rightarrow \infty$, so that it remains to prove uniform convergence over $X^{\an}$.
\\We have that 
$$\phi^{\NA}_{m, m_0, q} =  \phi_{\Ld} + (m+m_0)^{-1} \phi_{\mathscr{J}_q},$$
where $\mathscr{J}_q = \mathscr{I} \big( (s_{m, m_0, l})_{l \le q} \big) \equiv \mathscr{I}_m$ over $\X_{r+\eps}$ for $q \gg 1$ by the strong noetherian property for coherent sheaves and global generation (see the proof of prop. \ref{prop som part}), which concludes by Dini's lemma.
\end{proof}
We now conclude the proof of theorem \ref{theo psh hyb}. After extracting a subsequence, the $(m+m_0)^{-1} \phi_{m, m_0}$ decrease to $\phi$ on $\X_{r+\eps}$ by theorem \ref{theo reg psh} and its proof, while the $\phi^{\NA}_{m, m_0}$ decrease to $\phi^{\NA}$ on $X^{\an}$ by the proof of \cite[thm. 3.3.1]{Reb}. This proves that $\phi^{\hyb} \in \PSH(X^{\hyb}, L^{\hyb})$.
\\For the (partial) converse, if $\phi \in \CPSH(L^{\hyb})$, and $s \in H^0(U, L)$ is a local trivialization, then the function on $U$:
$$z \mapsto \frac{\log \lvert s(z) \rvert_{\phi_t}}{\log \lvert t \rvert}$$
extends continuously to $X^{\hyb}$ by prop. \ref{prop metric hyb}, hence is bounded. This proves that $\phi$ induces a psh metric with logarithmic growth on $L$, hence we can define $\phi^{\NA}$ as above. By prop. \ref{prop semic phina}, we have $\phi^{\NA} \le \phi_0$, while the semi-continuity of the hybrid metric induced by $(\phi, \phi^{\NA})$ implies $\phi_0 = \lim_{t \rightarrow 0} \phi_t \le \phi^{\NA}$, which concludes.
\\This can also be deduced using the following:
\begin{lem} The map:
$$( \cdot)^{\NA} : \CPSH(L^{\hyb}) \fl \CPSH(L^{\an})$$
is well-defined and continuous with respect to the topologies of uniform convergence on both spaces.
\end{lem}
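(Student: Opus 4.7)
The plan is to reduce both assertions to the identification $\phi^{\NA} = \phi_0$ that was established in the converse part of Theorem \ref{theo psh hyb}.

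First I would verify well-definedness. Given $\phi \in \CPSH(L^{\hyb})$, the partial converse in Theorem \ref{theo psh hyb} (together with Proposition \ref{prop metric hyb}) shows that on a slightly smaller disk $\bar{\D}_{r-\eps}$ the restriction $\phi_{|X}$ is a psh metric of logarithmic growth at $t=0$, so $\phi^{\NA}$ is defined, and moreover $\phi^{\NA} = \phi_{|X^{\an}} = \phi_0$. Since $X^{\an} \subset X^{\hyb}$ is a closed subspace (it is the fiber $\pi_{\hyb}^{-1}(0)$), the restriction of the continuous metric $\phi$ to $X^{\an}$ is continuous; hence $\phi^{\NA} \in \CPSH(L^{\an})$.

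For continuity I would argue as follows. If $\phi, \psi \in \CPSH(L^{\hyb})$, then $\phi - \psi$ is a well-defined continuous function on $X^{\hyb}$, and the topology of uniform convergence is by definition that induced by $\lVert \phi - \psi \rVert_{L^{\infty}(X^{\hyb})}$. Given a net $\phi_j \to \phi$ in $\CPSH(L^{\hyb})$, the identification from the previous paragraph yields
\[
\phi_j^{\NA} - \phi^{\NA} = (\phi_j)_0 - \phi_0 = (\phi_j - \phi)_{|X^{\an}},
\]
and the restriction map $\mathcal{C}^0(X^{\hyb}) \to \mathcal{C}^0(X^{\an})$ is contracting for the sup-norm, so
\[
\lVert \phi_j^{\NA} - \phi^{\NA} \rVert_{L^{\infty}(X^{\an})} \le \lVert \phi_j - \phi \rVert_{L^{\infty}(X^{\hyb})} \xrightarrow[j]{} 0.
\]
This gives the desired uniform continuity of $(\cdot)^{\NA}$.

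There is no real obstacle here: the whole content has been absorbed into the equality $\phi^{\NA} = \phi_0$ for continuous hybrid metrics, which was itself the delicate part of Theorem \ref{theo psh hyb} and rests on the density of divisorial points in $X^{\an}$ together with semi-continuity of Lelong numbers (Proposition \ref{prop semic phina}). Once that equality is in hand, both well-definedness and the continuity estimate are formal.
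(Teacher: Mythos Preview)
Your argument is correct, but it runs in the opposite logical direction from the paper's. You deduce the lemma from the equality $\phi^{\NA}=\phi_0$ (the converse part of Theorem~\ref{theo psh hyb}), after which well-definedness and continuity are immediate because restriction to the closed fiber $X^{\an}$ is $1$-Lipschitz for the sup-norm. The paper instead proves the lemma \emph{without} assuming $\phi^{\NA}=\phi_0$: it uses only that the map $\phi\mapsto\phi^{\NA}$ (defined via generic Lelong numbers) is order-preserving and satisfies $(\phi+c)^{\NA}=\phi^{\NA}+c$, which together force $(\cdot)^{\NA}$ to be $1$-Lipschitz; writing a continuous $\phi$ as a uniform limit of $\FS^\tau$ metrics then gives continuity of $\phi^{\NA}$. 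The point of the paper's route is that the lemma becomes an \emph{independent} tool: once $(\cdot)^{\NA}$ and the restriction $(\cdot)_0$ are both known to be continuous and to agree on the dense set $\FS^\tau(L^{\hyb})$, one recovers $\phi^{\NA}=\phi_0$ as a corollary. Your route is shorter but makes the lemma logically posterior to that equality, so it no longer yields the alternative proof of the converse that the paper extracts from it. Since the paper does give a direct proof of $\phi^{\NA}=\phi_0$ just before stating the lemma, your argument is not circular---but you should be aware that in the paper's organization the lemma is meant to stand on its own.
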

\begin{proof}This follows from the fact that $( \cdot)^{\NA}$ is order-preserving \cite[thm. 3.3.1]{Reb}, and that $(\phi+c)^{\NA} = \phi^{\NA} +c$ for $c \in \R$.
\\Indeed, writing $\phi \in \CPSH(L^{\hyb})$ as the uniform limit of a net $(\phi_j)_j$ in $\FS^{\tau}$, the fact that $(\cdot)^{\NA}$ is order-preserving implies that the $(\phi_j^{\NA})_j$ converge uniformly to $\phi^{\NA}$, which is then continuous.
\end{proof}
It now follows that $(\cdot)^{\NA}$ and the restriction map to $X^{\an}$:
$$(\cdot)_{0} : \CPSH(L^{\hyb}) \fl \CPSH(L^{\an})$$
are both continuous, and coincide on the dense set $\FS^{\tau}$ by example \ref{ex Lelong TFS}, and thus coincide on $\PSH(L^{\hyb})$. This concludes the proof of theorem \ref{theo psh hyb}.
\subsection{The isotrivial case}
Let $(X,L)$ be a projective complex variety, and write $(X_0^{\an}, L_0^{\an})$ the associated Berkovich space obtained by endowing the field of complex numbers with the trivial absolute value. The latter analytic space has proven itself to be a powerful tool in Kähler geometry, and in particular has been central in the variational proof of the Yau-Tian-Donaldson conjecture by Berman-Boucksom-Jonsson \cite{BBJ}.
\\Without going into the details, the proof involves the study of various convex functionals on the space $\PSH(X, L)$, and relating their slopes at infinity on geodesic rays with the corresponding non-archimedean functionals on the space $\PSH(X^{\an}_0, L^{\an}_0)$.
\begin{defn} Let $(X, L)$ be a normal projective complex variety. A family $(\phi_y)_{y >0}$ of psh metrics on $L$ is called a psh ray if and only the $\bS^1$-invariant metric:
$$\Phi(x, t) := \phi_{- \log \lvert t \rvert} (x)$$
on $(X \times \D^*, p_1^*L)$ is psh.
\end{defn}
Let $\phi_0$ be a smooth, positively curved reference metric on $L$. If $(\phi_y)_{y >0}$ is a psh ray on $X$, then the function $y \mapsto \sup_{X} (\phi_y-\phi_0)$ is convex, so that its slope at infinity:
$$p_{\max} := \lim_{y \rightarrow \infty} \frac{ \sup_X (\phi_y - \phi_0)}{y}$$
exists in $\R \cup \{ +\infty \}$ and is independent of the choice of $\phi_0$. It is immediate that $p_{\max} < + \infty$ if and only there exists $C>0$ such that $\sup_X \phi_y \le C y$ (with slight abuse of notation), in which case we will say that the ray $(\phi_y)_{y \in (y_0, + \infty)}$ has \emph{linear growth}. This is easily seen to be equivalent to the fact that the psh metric $\Phi$ on $X \times \D^*$ has logarithmic growth at zero.
\\Following the general heuristic of viewing non-archimedean geometry as the asymptotic limit of Kähler geometry, each psh ray $(\phi_y)_y$ on $(X, L)$ with linear growth induces a psh metric $\phi^{\NA}$ on $(X_0^{\an}, L_0^{\an})$, defined as follows. We fix a smooth reference metric $\phi_0$ on $L$, whose curvature form $\omega_0 = dd^c \phi_0$ is a Kähler form on $X$, and write $\psi_y := (\phi_y - \phi_0) \in \PSH(X, \omega_0)$.
\\By linear growth, there exists $a \in \R$ such that the function:
$$\Psi_a(x, t) = \psi_{- \log \lvert t \rvert} (x) + a \log \lvert t \rvert$$
on $X \times \D^*$ is bounded from above near $X \times \{0 \}$, hence extends as a quasi-psh function on $X \times \D$, that we still denote by $\Psi_a$. Now if $v \in X_0^{\an}$ is a divisorial valuation on $X$, and $w= \gamma(v) \in (X_K)^{\an}$ denotes the Gauss extension of $v$ to the base change $X_K :=X \times_{\C} K$, one defines $w(\Psi_a)$ as the generic Lelong number of $\Psi_a$ along the center of $w$, as in section \ref{sec psh na}. In other words, if $\phi$ is a psh ray on $X$ and $v \in X^{\xdiv}$, then:
$$\phi^{\NA}(v) = \Phi^{\NA}(\gamma(v)),$$

\begin{theo}{\cite[thm. 6.2]{BBJ}} The function $\psi^{\NA} : X^{\xdiv} \fl \R$ extends continuously to $X^{\an}$, and the metric $\phi^{\NA}$ on $L^{\an}$ defined by:
$$\phi^{\NA} = (\psi^{\NA} + \phi^{\triv}) \in \PSH(X^{\an}, L^{\an})$$
is plurisubharmonic.
\end{theo}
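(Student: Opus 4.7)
The plan is to reduce to Theorem A (theorem \ref{theo psh hyb}) by viewing the psh ray $(\phi_y)_y$ as an $\bS^1$-invariant psh metric $\Phi$ on the trivial degeneration $\tilde X = X \times \D^*$, and then transferring the result to the trivially-valued analytification $X^{\an}_0$ via the machinery of section \ref{sec iso}.

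First, since $\Phi(x,t) = \phi_{-\log \lvert t \rvert}(x)$ is by construction a psh metric on $(\tilde X, p_1^*L)$, and linear growth of the ray ($p_{\max} < +\infty$) translates precisely into logarithmic growth of $\Phi$ at $t=0$, theorem \ref{theo psh hyb} applies and yields a psh hybrid metric $\Phi^{\hyb} \in \PSH(\tilde X^{\hyb}_K, L^{\hyb})$ whose restriction to the non-archimedean fiber $X^{\an}_K$ is the lsc metric $\Phi^{\NA}$ of theorem \ref{theo ext phina}.

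Next, I would exploit the $\bS^1$-invariance of $\Phi$ to descend $\Phi^{\hyb}$ through the base change map $F : \tilde X^{\hyb}_K \to X^{\hyb}_0$ of prop. \ref{prop bc hyb}. Since $\Phi(x,t)$ depends on $t$ only through $\lvert t \rvert$, the explicit description of $F$ on fibers shows that $\Phi^{\hyb} = F^*\tilde\phi^{\hyb}$ for a well-defined psh hybrid metric $\tilde\phi^{\hyb}$ on $(X^{\hyb}_0, L^{\hyb}_0)$; concretely, any defining decreasing net of tropical Fubini--Study approximants of $\Phi^{\hyb}$ can be $\bS^1$-averaged to descend to one for $\tilde\phi^{\hyb}$. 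Its restriction $\tilde\phi^{\hyb}_{|\pi_0^{-1}(0)}$ thus lies in $\PSH(X^{\an}_0, L^{\an}_0)$, and since $F|_{\pi_K^{-1}(0)}$ is the retraction $f$ of the Gauss section $\gamma$, the definition $\phi^{\NA}(v) = \Phi^{\NA}(\gamma(v))$ yields $\tilde\phi^{\hyb}_{|\pi_0^{-1}(0)}(v) = \Phi^{\NA}(\gamma(v)) = \phi^{\NA}(v)$ on the dense subset $X^{\xdiv} \subset X^{\an}_0$.

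The main obstacle is the continuity of $\psi^{\NA}$: theorem \ref{theo ext phina} only produces an lsc extension in general, so the continuity asserted here is genuinely stronger and specific to the trivially-valued setting. I would address this by tracking through the Bergman-kernel regularization of $\Phi^{\hyb}$ from section \ref{sec proof psh}: by choosing $\bS^1$-equivariant Hilbert bases (using that $H^0(\X_{r+\eps}, (m+m_0)\Ld \otimes \mathscr{I}_m)$ carries a natural $\bS^1$-action and decomposes into isotypic components), the continuous psh approximants $\Phi^{\hyb}_{m,m_0}$ descend to continuous psh hybrid metrics on $X^{\hyb}_0$. Their restrictions to $X^{\an}_0$ are tropical Fubini--Study metrics, hence continuous on the compact trivially-valued Berkovich space, and they decrease pointwise to $\phi^{\NA}$; Dini's lemma then upgrades this to uniform convergence, giving continuity of $\phi^{\NA}$ and completing the proof.
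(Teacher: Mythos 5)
Your overall strategy — realize the ray as an $\bS^1$-invariant metric $\Phi$ on $X \times \D^*$ with logarithmic growth and specialize the degeneration results — is the paper's strategy too; the paper disposes of this statement in one line by observing that it is the specialization of theorem \ref{theo ext phina} along the Gauss section $\gamma$, using $\phi^{\triv} = \gamma^*\phi_{\Ld}$ for the trivial model $\Ld = p_1^*L$ on $X \times \D$ (and that $\gamma^*$ sends tropical Fubini--Study metrics on $L_K^{\an}$ to tropical Fubini--Study metrics on $L_0^{\an}$, hence preserves $\PSH$ under decreasing limits). However, your proposal has two concrete gaps. First, the descent step is backwards: writing $\Phi^{\hyb} = F^*\tilde\phi^{\hyb}$ requires $\Phi^{\NA}$ to be constant on the fibers of $f : X_K^{\an} \fl X_0^{\an}$, and $\bS^1$-invariance of $\Phi$ does not give this — a valuation $w \in f^{-1}(v)$ may assign to $\sum_k s_k t^k$ a value strictly larger than $\min_k(v(s_k)+k)$, so even tropical Fubini--Study metrics on $L_K^{\an}$ need not factor through $f$. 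The correct mechanism is to restrict along the section $\gamma$ of $f$, and $\phi^{\NA} = \Phi^{\NA}\circ\gamma$ is then the \emph{definition}, not something recovered from a descent. (Your instinct about $\bS^1$-equivariant Hilbert bases is the right one, and it is exactly what the paper uses for the \emph{subsequent} proposition about $\phi^{\hyb}$ being psh on $X_0^{\hyb}$; but "averaging a decreasing net of tropical Fubini--Study approximants over $\bS^1$" is not an operation that stays in that class.)

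Second, the continuity argument is circular: Dini's lemma requires the pointwise limit to be continuous as a hypothesis, so it cannot be used to \emph{establish} continuity of $\phi^{\NA}$; a decreasing limit of continuous functions on a compact space is in general only upper semi-continuous, and the convergence need not be uniform. Indeed, the paper explicitly flags (in the question following theorem \ref{theo psh hyb}) that upgrading the Bergman regularization to a uniform estimate of the required type "seems difficult to attain". The continuity assertion in the statement is not reproved in the paper at all — it is imported wholesale from \cite[thm. 6.2]{BBJ} — and the general theorem \ref{theo ext phina} only yields a lower semi-continuous extension, so this is precisely the point where a citation-free argument would need a genuinely new input rather than Dini.
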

This is essentially a special case of thm. \ref{theo ext phina}, since the trivial metric $\phi^{\triv}$ is the restriction to the Gauss section (see section \ref{sec iso}) of the model metric $\phi_{\Ld} \in \PSH(X_K^{\an}, L_K^{\an})$, where $\Ld = p_1^*L$ is the trivial model, living on $\X = X \times \D$.
\begin{prop} Let $r \in (0,1)$. Then the metric $\phi^{\hyb}$ on $L^{\hyb}$ defined by:
$$\phi^{\hyb}_0 = \phi^{\NA},$$
$$\phi^{\hyb}_{| X_{\lambda}} = \lambda \phi_{1/\lambda}$$
is semi-positive on $X^{\hyb}_0$. Moreover, we have:
$$\Phi^{\hyb} = F^* \phi^{\hyb},$$
where $F : X^{\hyb}_K \fl X^{\hyb}_0$ is the base change map from prop. \ref{prop bc hyb}.
\end{prop}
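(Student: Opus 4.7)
The plan is to obtain this proposition as a corollary of Theorem~\ref{theo psh hyb} applied to the isotrivial family $X \times \D^* \fl \D^*$ endowed with the $S^1$-invariant psh metric $\Phi(x,t) = \phi_{-\log\lvert t \rvert}(x)$. Linear growth of the ray $(\phi_y)_y$ is tautologically equivalent to logarithmic growth of $\Phi$ at $t = 0$, so Theorem A produces a psh hybrid metric $\Phi^{\hyb} \in \PSH(X^{\hyb}_K, p_1^*L^{\hyb})$ whose non-archimedean fiber is $\Phi^{\NA}$ and whose complex fibers are $\Phi_t = \phi_{-\log\lvert t\rvert}$.

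I would then verify the identity $\Phi^{\hyb} = F^*\phi^{\hyb}$ fiberwise via Proposition~\ref{prop bc hyb}. For $t \in \bar\D_r^*$, the restriction $F_t$ is the identity on $X^{\hol}$ once both fibers are identified with $X^{\an}$ endowed with the common rescaled absolute value $\lvert\cdot\rvert^{\lambda(t)}$; using the convention of Proposition~\ref{prop metric hyb} on both sides, the underlying classical complex metric on $X^{\hol}$ is $\phi_{-\log\lvert t\rvert}$ in both cases, so the hybrid metrics agree on every punctured-disk fiber. Over $t = 0$, $F_0 = f: X^{\an}_K \fl X^{\an}_0$ is the base change, and the Gauss-section interpretation recalled after Theorem~\ref{theo ext phina} gives $\Phi^{\NA} = f^*\phi^{\NA}$, so the central fibers also match.

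To conclude that $\phi^{\hyb}$ itself belongs to $\PSH(X^{\hyb}_0, L^{\hyb}_0)$, I would revisit the Bergman regularization scheme used in the proof of Theorem~\ref{theo psh hyb} and exploit the $S^1$-invariance of $\Phi$. Since the $L^2$-inner product on $\mathscr{H}_{m,m_0}$ is $S^1$-invariant, one can pick an orthonormal Hilbert basis compatible with the Fourier decomposition, each basis element being of the form $s_l = u_l\, t^{k_l}$ with $u_l \in H^0(X, (m+m_0)L)$ and $k_l \in \N$. The associated Bergman regularizations $\Phi^{\hyb}_{m,m_0}$ are then $S^1$-invariant tropical Fubini-Study metrics on $X^{\hyb}_K$, and I would show they arise as $F^*\phi^{\hyb}_{m,m_0}$ for tropical Fubini-Study metrics $\phi^{\hyb}_{m,m_0}$ on $X^{\hyb}_0$ whose underlying sections are the purely-complex $u_l$ (the $|t|^{k_l}$-factors being absorbed into additive constants parametrized by $\lambda$ via $\lambda(t)=\log r/\log\lvert t\rvert$). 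Passing to the decreasing limit as in the end of Section~\ref{sec proof psh} identifies $\phi^{\hyb}$ as a decreasing limit of $\FS^{\tau}$-metrics on $L^{\hyb}_0$, hence $\phi^{\hyb} \in \PSH(X^{\hyb}_0, L^{\hyb}_0)$ by Proposition~\ref{prop psh}.

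The main obstacle is the descent step: carefully checking that $S^1$-invariant tropical Fubini-Study metrics on $X^{\hyb}_K$ really are $F^*$-pullbacks of $\FS^\tau$ metrics on $X^{\hyb}_0$, which requires reconciling the two different rescaling conventions (exponent $\log\lvert t\rvert/\log r$ on the $A_r$-side versus exponent $1/\lambda$ on the $\C^{\hyb}$-side) and handling the boundary $\lambda=0$ via the Gauss-section compatibility established in Step~2. The rest of the argument is essentially a bookkeeping exercise once this compatibility is in place.
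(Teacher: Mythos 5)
Your proposal is correct and follows essentially the same route as the paper: the key step in both is to refine the Bergman regularization from the proof of Theorem \ref{theo psh hyb} by choosing a Hilbert basis of $\mathscr{H}_{m,m_0}$ adapted to the $\bS^1$-weight decomposition, so that each basis element is of the form $u\,t^{\ell}$ and the $t^{\ell}$-factor is absorbed into a genuine additive constant (namely $\ell \log r$), exhibiting the regularizations as pullbacks under $F$ of tropical Fubini--Study metrics on $L^{\hyb}_0$. The descent step you flag as the main obstacle is handled in the paper in exactly the way you describe, by replacing $\log\lvert s_{m,m_0,\ell}\rvert$ on $X^{\an}_0$ by $\log\lvert s_{m,m_0,\ell}\rvert - \ell$.
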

\begin{proof} The equality $\Phi^{\hyb} = F^* \phi^{\hyb}$ is straightforward from the construction of $\phi^{\hyb}$.
\\To prove that $\phi^{\hyb}$ is psh, the argument is the same as in the proof of theorem \ref{theo psh hyb}, except we need the sections $s_{m, m_0}$ from the proof of prop. \ref{prop sum psh} to be equivariant with respect to the $\bS^1$-action on $X \times \D$. To achieve this, with the notation of the previous section for $\ell \in \Z$, we let $\mathscr{H}_{m, m_0, \ell}\subset \mathscr{H}_{m, m_0}$ be the space of $\ell$-equivariant sections, i.e. sections $s$ such that $(e^{i\theta})^*s = e^{i \ell \theta} \cdot s$. Then $\mathscr{H}_{m, m_0}$ is the completion of $\bigoplus_{\ell \in \Z} \mathscr{H}_{m, m_0, \ell}$, so that we may choose a Hilbert basis of $\mathscr{H}_{m, m_0}$ adapted to the weight decomposition. The rest of the proof of theorem \ref{theo psh hyb} carries out without changes after replacing $\log \lvert s_{m, m_0, \ell} \rvert$ on $X^{\an}_0$ by $(\log \lvert s_{m, m_0, \ell} \rvert -\ell)$ for $s_{m, m_0, \ell} \in \mathscr{H}_{m, m_0, \ell}$, so that we omit the details.
\end{proof}
\section{The Monge-Ampère operator} 
\label{sec MA}
In this section, we discuss families of Monge-Ampère measures associated to a continuous psh metric on an analytic space over a Banach ring, and explain how to deduce theorem \ref{theo Favre intro} from \cite{Fav}.
\subsection{The case of a valued field}
Let $K$ be a complete valued field, $X$ an $n$-dimensional projective scheme over $K$, and let $L_1,..., L_n$ be semi-ample line bundles on $X$. By Ostrowski's theorem, either $K = \R$ or $\C$ with (a power of) the usual absolute value, or $K$ is non-archimedean.
\\Let us start by assuming that $K = \C$ and that $X$ is smooth. It then follows from the seminal work of Bedford-Taylor \cite{BT} that the mixed Monge-Ampère pairing:
$$(\phi_1,...,\phi_n) \mapsto dd^c \phi_1 \wedge ... \wedge dd^c \phi_n,$$
\emph{a priori} defined when each $\phi_i$ is a smooth Hermitian metric on $L_i$, actually extends in a unique way to semi-positive, locally bounded metrics. The pairing was then further extended to semi-positive singular metrics by Boucksom-Eyssidieux-Guedj-Zeriahi \cite{BEGZ}. More precisely, there exists a class $\mathcal{E}^1(X, L) \subset \PSH(X,L)$ of finite-energy metrics on $(X,L)$, such that the above mixed Monge-Ampère pairing extends uniquely to a multilinear, measure-valued pairing on $\mathcal{E}^1(X,L_1) \times ... \times \mathcal{E}^1(X,L_n)$. Note that the space $\mathcal{E}^1(X, L)$ contains in particular $\CPSH(X,L)$.
\\We now assume $K$ is non-archimedean, and that $\phi_i \in \DFS(X, L_i)$ for every $i=1,...,n$ are differences of Fubini-Study metrics. One can then associate to the $\phi_i$'s a signed Radon measure, the mixed Monge-Ampère measure $\MA(\phi_1,...,\phi_n)$, with similar properties as in the complex analytic case.
\begin{ex} \label{ex MA model} Assume that $K=k((t))$ is a discretely-valued field of characteristic zero. Then pure Fubini-Study metrics on $L_i$ are the same as model metrics on $L_i$, so that by multilinearity we may assume that $\phi_i = \phi_{\Ld_i}$, where $(\X_i, \Ld_i)$ is a nef model of $(X, L_i)$. Up to passing to a higher model, we may assume $\X_1=...=\X_n$. Then the Monge-Ampère measure has the following explicit description \cite{CL}:
$$\MA(\phi_1,...,\phi_n) = \sum_{E} b_E (\Ld_1 \cdot .... \cdot \Ld_n \cdot E) \delta_{v_E},$$
where the sum ranges over the irreducible components $E$ of $\X_0$, $b_E = \ord_E(t)$ and $\delta_{v_E}$ is the Dirac mass at the associated divisorial point $v_E = b_E^{-1} \ord_E$.
\end{ex}
In general, the mixed Monge-Ampère measure satisfies the following basic properties:
\begin{prop}{\cite[prop. 8.3]{BE}}
Let $\phi_i \in \DFS(L_i)$ for $i=1,...,n$.
\begin{itemize}
\item The pairing $(\phi_1,...,\phi_n) \mapsto \MA(\phi_1,...,\phi_n)$ is symmetric and multilinear;
\item if $\phi_i \in \FS(L_i)$ for all $i$, then $\MA(\phi_1,...,\phi_n)$ is a positive Radon measure;
\item the total mass $\int_{X^{\an}} \MA(\phi_1,...,\phi_n) = (L_1 \cdot ... \cdot L_n)$;
\item if $L_0 = L_1 = \gO_X$ and $\phi_0, \phi_1 \in \DFS(X)$, then: 
$$\int_X \phi_0 \MA(\phi_1,...,\phi_n) = \int_X \phi_1 \MA(\phi_0, \phi_2,...,\phi_n).$$
\end{itemize}
\end{prop}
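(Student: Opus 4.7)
The plan is to reduce everything to the case of pure Fubini--Study (equivalently model) metrics, where the Monge--Ampère measure admits the explicit intersection-theoretic description recalled in example \ref{ex MA model}, and then to read off the four properties from standard properties of the intersection form on models. Concretely, since any $\phi \in \DFS(L)$ is by definition a difference of pure Fubini--Study metrics (see example \ref{ex model}, which identifies pure FS metrics with model metrics), the first step is to pick a common snc model $\X$ of $X$ on which all the metrics $\phi_0,\phi_1,\ldots,\phi_n$ arise from nef models $\Ld_i$ of $L_i$ (possibly after passing to a dominating model, and after writing each $\phi_i$ as a difference). By construction of $\MA$ in \cite{BE} via multilinear extension from Fubini--Study tuples, it suffices to check every bullet on such model metrics.

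For the first bullet, once the explicit formula
\[
\MA(\phi_{\Ld_1},\ldots,\phi_{\Ld_n}) \;=\; \sum_{E} b_E\,(\Ld_1\cdot\ldots\cdot\Ld_n\cdot E)\,\delta_{v_E}
\]
is at our disposal, symmetry and multilinearity follow immediately from the corresponding properties of the intersection pairing $(\Ld_1,\ldots,\Ld_n)\mapsto (\Ld_1\cdot\ldots\cdot\Ld_n\cdot E)$ on $\X$; the main point is that the formula is independent of the chosen model, which follows from the projection formula applied to a morphism of snc models. For the second bullet, when each $\phi_i\in\FS(L_i)$ we may arrange the $\Ld_i$ to be relatively semi-ample, and then each intersection number $(\Ld_1\cdot\ldots\cdot\Ld_n\cdot E)$ is non-negative because $E$ is an effective divisor on $\X$ and each $\Ld_i$ is nef, giving positivity of $\MA$. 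For the third bullet, one uses that $\X_0=\sum_E b_E E$ is cut out by $t$, hence is a principal Cartier divisor linearly equivalent to the pullback of a point of $\Spec R$; flatness of $\X\to\Spec R$ then yields
\[
\sum_E b_E\,(\Ld_1\cdot\ldots\cdot\Ld_n\cdot E) \;=\; (\Ld_1\cdot\ldots\cdot\Ld_n\cdot\X_0) \;=\; (L_1\cdot\ldots\cdot L_n),
\]
the last equality being the standard fact that intersecting with any fiber computes the degree on the generic fiber.

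The last bullet is the symmetry (integration by parts) formula, and this is where one has to be slightly more careful. Writing $\phi_0=\phi_{\mathscr{I}_0}$ and $\phi_1=\phi_{\mathscr{I}_1}$ for vertical ideal sheaves, a log-resolution (possibly after further domination) reduces to the case $\mathscr{I}_j=\gO_\X(-D_j)$ for $j=0,1$ with $D_j\in\Div_0(\X)$. Using the description of $\phi_{\mathscr{I}_j}$ at a divisorial valuation $v_E$ as $\phi_{\mathscr{I}_j}(v_E)=-b_E^{-1}\ord_E(D_j)\log r$, the explicit formula above yields
\[
\int_{X^{\an}} \phi_0\,\MA(\phi_1,\phi_2,\ldots,\phi_n) \;=\; -\log r\cdot (D_0\cdot D_1\cdot \Ld_2\cdot\ldots\cdot\Ld_n),
\]
and the right-hand side is symmetric in $D_0\leftrightarrow D_1$ by symmetry of the intersection product on $\X$. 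Extending by multilinearity from pure FS to $\DFS$ finishes the proof.

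The main obstacle is not any single computation but rather the bookkeeping required to guarantee, at every stage, that a common snc model can be chosen that simultaneously resolves all of the $\phi_i$ (and the ideal sheaves defining $\phi_0,\phi_1$ in the last bullet), and that the formula $\MA(\phi_{\Ld_1},\ldots,\phi_{\Ld_n})=\sum_E b_E(\Ld_1\cdot\ldots\cdot\Ld_n\cdot E)\delta_{v_E}$ is compatible with pullback under morphisms of models; once this functoriality has been set up as in \cite{BE}, everything else reduces to formal intersection-theoretic manipulations.
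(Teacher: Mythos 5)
The paper offers no proof of this proposition: it is quoted verbatim from \cite[prop.~8.3]{BE}, so there is no internal argument to compare yours against. Your proposal is essentially the classical proof of \cite{CL}, \cite{BFJ1}: reduce to model metrics on a common snc model, invoke the explicit formula of example \ref{ex MA model}, and translate each bullet into a statement about the intersection form on $\X$ (symmetry/multilinearity of intersection numbers, non-negativity of top intersections of nef classes on the projective components $E$, principality of $\X_0=\mathrm{div}(t)$ for the mass, and the identity $\sum_E \ord_E(D_0)(\gO(D_1)\cdot\Ld_2\cdots\Ld_n\cdot E)=(D_0\cdot D_1\cdot\Ld_2\cdots\Ld_n)$ for integration by parts). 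That computation is correct and is the standard route in the discretely-valued, equicharacteristic-zero setting, which is the only case actually used later in the paper (hybrid spaces over $\C((t))$).

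There are, however, two gaps relative to the statement as given. First, the proposition is asserted for an arbitrary non-archimedean complete valued field $K$ (the paper explicitly notes that \cite{CL}, \cite{BFJ1} treat the discretely-valued equicharacteristic-zero case and that \cite{BE} extends it "to the general case"). Your argument relies on the existence of snc models and on intersection theory over the Noetherian regular scheme $\X$; this is unavailable when $K$ is trivially valued, non-discretely valued, or of positive residue characteristic, and example \ref{ex MA model} is itself stated only for $K=k((t))$ of characteristic zero. So your proof establishes the special case but not the proposition in its stated generality; \cite{BE} proceeds differently (without resolution of singularities) precisely to cover all $K$. Second, a smaller point: the metrics in $\DFS(L_i)$ are differences of \emph{tropical} Fubini--Study metrics, which carry arbitrary real constants $\lambda_j$ and are therefore not literally model metrics unless the constants lie in the value group. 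To pass from pure Fubini--Study (= model) metrics to tropical ones you need an extra limiting step, e.g. the decreasing approximation of remark \ref{rem pure} combined with the continuity of $\MA$ under uniform convergence furnished by the Chern--Levine--Nirenberg estimate (lemma \ref{lem CLN}); as written, your "extend by multilinearity" does not reach these metrics.
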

The Monge-Ampère measure can then be extended by density to more general metrics:
\begin{theo}{\cite[thm. 8.4]{BE}} 
\\Let $K$ be a complete valued field, $X/K$ an $n$-dimensional projective scheme and $L_1$,...,$L_n$ line bundles on $X$. Then the Monge-Ampère operator:
$$(\phi_1,..., \phi_n) \mapsto dd^c \phi_1 \wedge ... \wedge dd^c \phi_n$$
admits a unique extension to continuous psh metrics on the $L_i$. The extension is furthermore continuous with respect to the topology of uniform convergence and the weak topology of Radon measures.
\end{theo}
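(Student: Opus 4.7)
The plan is to define the extension by continuity from $\DFS(L_1) \times \cdots \times \DFS(L_n)$. Uniqueness is automatic: any continuous extension of the mixed MA operator is determined by its restriction to tuples of DFS metrics, since $\DFS(L_i)$ is uniformly dense in $\CPSH(X,L_i)$. Indeed, given $\phi_i \in \CPSH(L_i)$, fix any reference $\phi_i^0 \in \FS^{\tau}(L_i)$; then $\phi_i - \phi_i^0 \in \mathcal{C}^0(X^{\an})$ and the earlier density of $\DFS(X)$ in $\mathcal{C}^0(X^{\an})$ provides a uniform DFS approximation.

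For existence the heart is a quantitative continuity estimate. For tuples in $\DFS$, I would use the telescoping identity
\begin{equation*}
\MA(\phi_1,\ldots,\phi_n) - \MA(\phi_1',\ldots,\phi_n') = \sum_{i=1}^n \MA(\phi_1,\ldots,\phi_{i-1},\phi_i-\phi_i',\phi_{i+1}',\ldots,\phi_n'),
\end{equation*}
where $\phi_i - \phi_i'$ is viewed as a DFS metric on $\mathcal{O}_X$; each summand is a signed Radon measure. For a test function $f \in \DFS(X)$, the symmetry/integration-by-parts property yields
\begin{equation*}
\int_{X^{\an}} f \cdot \MA(\phi_1,\ldots,\phi_i-\phi_i',\ldots,\phi_n') = \int_{X^{\an}} (\phi_i - \phi_i') \cdot \MA(f,\phi_1,\ldots,\widehat{\phi_i},\ldots,\phi_n'),
\end{equation*}
whose modulus is bounded by $\lVert \phi_i - \phi_i' \rVert_\infty$ times the total variation of the signed measure on the right. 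Decomposing $f$ and each metric as a difference of FS metrics and using multilinearity reduces this total variation to a finite sum of positive MA masses of pure FS tuples, each of which is an intersection number of semi-ample classes, hence finite and controlled in terms of fixed background data.

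Combined with the density of $\DFS(X)$ in $\mathcal{C}^0(X^{\an})$, this estimate shows that if $(\phi_i^k)_k \subset \DFS(L_i)$ converges uniformly to $\phi_i \in \CPSH(L_i)$ for each $i$, then $(\MA(\phi_1^k,\ldots,\phi_n^k))_k$ is Cauchy in the weak topology of Radon measures; its limit $\mu$ depends only on $(\phi_1,\ldots,\phi_n)$ and not on the approximating sequences. Positivity is preserved in the weak limit once one works with FS (rather than merely DFS) approximants on each factor, available since $\FS^{\tau}(L_i)$ is dense in $\CPSH(X,L_i)$ by definition. Applying the same telescoping estimate to uniformly convergent sequences of CPSH inputs then yields the asserted joint continuity.

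The main obstacle is establishing the quantitative estimate, i.e., uniformly bounding the total variation of $\MA(f,\phi_1,\ldots,\widehat{\phi_i},\ldots,\phi_n')$ in terms of the DFS inputs. Over $\mathbb{C}$ this is the Chern--Levine--Nirenberg inequality underlying Bedford--Taylor theory. Over a discretely valued non-archimedean field of equicharacteristic zero it follows from the explicit description in example \ref{ex MA model}, reducing the bound to intersection numbers of nef line bundles on a common snc model. The general non-archimedean case is handled by approximating $K$ by a discretely valued subfield, along the lines of \cite{BE}.
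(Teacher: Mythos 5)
This theorem is quoted from \cite[thm. 8.4]{BE} and the paper supplies no proof of its own, so there is nothing internal to compare against; your sketch — defining the operator on $\DFS$ tuples, telescoping via multilinearity, swapping the test function with $\phi_i-\phi_i'$ by symmetry, and bounding total variations by intersection numbers of fixed semi-ample classes (taking care to use $\FS^{\tau}(L_i)$-approximants of the $\phi_i$ so the masses stay uniform) — is precisely the standard Chern--Levine--Nirenberg argument of \cite{BE}, i.e.\ the estimate the paper itself quotes immediately afterwards as lemma \ref{lem CLN}. The only imprecision is peripheral: the reduction of the general non-archimedean base field to the discretely valued case is not literally how \cite{BE} sets up the $\DFS$-level theory, but since both you and the paper defer that foundational step to \cite{BE}, it does not affect the argument.
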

\begin{rem}
While the class $\mathcal{E}^1(X, L)$ of finite-energy psh metrics on $L$ is defined over any non-archimedean field $K$, it is unclear in general how to extend the Monge-Ampère operator on the latter. Note however that in the case where $K$ is discretely-valued of characteristic zero, this extension was constructed in \cite{BFJ2}, and the trivially-valued case was handled in \cite{BJv4}.
\end{rem}
\begin{rem} The Chambert-Loir - Ducros \cite{CLD} approach to pluripotential theory on Berkovich spaces makes sense of the curvature current $dd^c \phi$ of a continuous psh metric, and its wedge products, in a spirit close to the work of Bedford-Taylor in the complex case. Notably, \cite[thm. 6.9.3]{CLD} states that the wedge product $dd^c\phi_1 \wedge ...\wedge dd^c \phi_n$ coincides with the mixed Monge-Ampère measure as described in example \ref{ex MA model} when $\phi_1$,..., $\phi_n$ are psh model metrics, whence the notation.
\end{rem}
\subsection{The Monge-Ampère equation}
Let $X$ be a smooth $n$-dimensional complex manifold, and $L$ an ample line bundle on $X$. Then Yau's celebrated solution to the Calabi conjecture asserts the following:
\begin{theo}{\cite{Y}} Let $\mu$ be a smooth volume form on $X$, normalized to have total mass $1$. Then there exists a unique (up to an additive constant) smooth, positive definite metric $\phi$ on $L$ such that:
$$(dd^c \phi)^n = (L^n) \mu.$$
\end{theo}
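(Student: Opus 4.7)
The plan is to apply the classical continuity method after reformulating the equation. Fix a smooth positively-curved reference metric $\phi_0$ on $L$, with Kähler curvature $\omega_0 = dd^c \phi_0 \in c_1(L)$, so that $\int_X \omega_0^n = (L^n) =: V$. Any smooth positively-curved metric $\phi$ on $L$ can be written as $\phi = \phi_0 + u$ with $u \in C^{\infty}(X,\mathbb{R})$ such that $\omega_u := \omega_0 + dd^c u > 0$; moreover $\phi$ is determined by $u$ up to a constant. Writing $\mu = V^{-1} e^f \omega_0^n$ for a smooth real function $f$ satisfying $\int_X e^f \omega_0^n = V$, the equation $(dd^c \phi)^n = V\mu$ becomes the complex Monge-Ampère equation
\[
(\omega_0 + dd^c u)^n = e^f \, \omega_0^n.
\]

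To solve it, introduce the family $(E_t)$: $(\omega_0 + dd^c u_t)^n = e^{tf + c_t}\omega_0^n$ for $t \in [0,1]$, where $c_t \in \mathbb{R}$ is the unique constant making both sides have the same total mass $V$. Let $T \subset [0,1]$ be the set of $t$ admitting a smooth solution $u_t$ with $\omega_{u_t} > 0$; then $0 \in T$ via $u_0 \equiv 0$. The openness of $T$ follows from the implicit function theorem applied in $C^{k,\alpha}$ spaces: the linearization of the Monge-Ampère operator at a solution is $v \mapsto \Delta_{\omega_{u_t}} v$, which is an isomorphism between the subspaces of functions of zero average for $\omega_{u_t}^n$.

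Closedness is the analytic core and requires $t$-uniform a priori estimates. First, a $C^0$-estimate $\|u_t\|_{L^{\infty}} \le C(f,\omega_0)$ which, following Yau, is obtained by a Moser iteration applied to powers $|u_t|^p\omega_{u_t}^n$ after integration by parts against $(\omega_{u_t}^k \wedge \omega_0^{n-k})$; alternatively Kołodziej's pluripotential argument may be used. Second, a Laplacian bound $0 < n + \Delta_{\omega_0} u_t \le C$ obtained by applying the maximum principle to an auxiliary quantity of the form $e^{-Au_t}(n+\Delta_{\omega_0} u_t)$ for $A$ depending on the lower bound of the bisectional curvature of $\omega_0$; this is the Aubin-Yau $C^2$-estimate and uses both the $C^0$-bound and $\|f\|_{C^2}$. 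The Monge-Ampère equation then gives two-sided bounds on the eigenvalues of $\omega_{u_t}$ relative to $\omega_0$, so that the equation becomes uniformly elliptic; Evans-Krylov theory supplies $C^{2,\alpha}$-estimates, and bootstrapping via Schauder estimates yields bounds in every $C^{k,\alpha}$. Arzelà-Ascoli thus shows $T$ is closed, hence $T = [0,1]$, and $\phi := \phi_0 + u_1$ is the desired smooth positively-curved solution.

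The hard step is undoubtedly the $C^0$-estimate, which was the crucial new analytic ingredient in Yau's original proof. Uniqueness, by contrast, is elementary: if $\phi$ and $\phi'$ are two smooth solutions and $u := \phi - \phi'$, write
\[
0 = (dd^c \phi)^n - (dd^c \phi')^n = dd^c u \wedge \Theta, \qquad \Theta := \sum_{k=0}^{n-1} (dd^c \phi)^k \wedge (dd^c \phi')^{n-1-k},
\]
multiply by $u$ and integrate by parts to obtain $\int_X du \wedge d^c u \wedge \Theta = 0$. Since $\Theta$ is a strictly positive $(n-1,n-1)$-form, this forces $du \equiv 0$, so $u$ is constant and $\phi, \phi'$ differ by a constant.
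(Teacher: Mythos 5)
The paper does not prove this statement: it is Yau's theorem, quoted with the citation \cite{Y}, so there is no internal argument to compare against. Your outline is the standard continuity-method proof and its architecture is correct: the reduction to $(\omega_0+dd^c u)^n=e^f\omega_0^n$, openness via the linearization $v\mapsto\Delta_{\omega_{u_t}}v$ on zero-mean functions, closedness via the a priori estimates ($C^0$ by Moser iteration, the Aubin--Yau Laplacian bound, then Evans--Krylov and Schauder bootstrap), and the integration-by-parts uniqueness argument, which you give in full and which is complete since $\Theta$ is closed and strictly positive. Of course, at this level of detail the $C^0$ and $C^2$ estimates are asserted rather than proved, and these constitute the actual content of Yau's theorem --- as you acknowledge --- so this is a faithful sketch of the known proof rather than a self-contained one, which is all that can reasonably be expected for a cited result of this depth.
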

The main motivation for this theorem was the case where $X$ is a Calabi-Yau manifold, and $\mu = i^{n^2} \Omega \wedge \bar{\Omega}$ is the square-norm of a nowhere-vanishing holomorphic $n$-form on $X$: in that case, the curvature form $\omega = dd^c \phi$ is a smooth Kähler Ricci-flat metric on $X$.
\\Throughout the years, various generalizations of the above theorem in a more singular setting have appeared in the literature: let us simply mention Kołodziej's result \cite{Kolo}, that states that under the same assumptions on $(X, L)$, then the statement of the above theorem holds for a much wider range of probability measures on $X$ (for instance, measures $\mu$ with $L^p$-density for some $p>1$), when we don't require for the solution to be smooth - here the solution is a continuous psh metric $\phi \in \CPSH(X, L)$, and the equality $(dd^c \phi)^n = \mu$ is understood in the sense of Bedford-Taylor.
\\We now let $K= k((t))$ be a discretely-valued field of equicharacteristic zero. The following result can be understood as an analog of Kołodziej's result over $K$:
\begin{theo}{(\cite[thm. A]{BFJ1}, \cite{BGJKM})} \label{theo sol MA}
\\Let $(X, L)$ be a smooth polarized variety over $K$. Let $\mu$ be a probability measure on $X^{\an}$, supported on the skeleton of an snc $R$-model of $X$. Then there exists a unique (up to an additive constant) continuous, semi-positive metric $\phi$ on $L$ satisying the non-archimedean Monge-Ampère equation:
$$\MA(\phi) = \mu.$$
\end{theo}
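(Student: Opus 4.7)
The plan is to follow the variational strategy pioneered by Berman--Boucksom in the complex setting and adapted to the non-archimedean world in \cite{BFJ1}, combined with the $L^\infty$ / continuity estimates of \cite{BGJKM}. The argument splits naturally into three parts: uniqueness, existence of a finite-energy solution, and regularity.

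For \emph{uniqueness up to constants}, I would invoke the non-archimedean domination principle: if $\phi_1, \phi_2 \in \CPSH(X,L)$ both solve $\MA(\phi) = \mu$ and $\sup(\phi_1 - \phi_2) \geq 0$, then $\phi_1 \geq \phi_2$ everywhere. The proof runs exactly as in the complex case once one has the comparison principle $\int_{\{\phi_1 < \phi_2\}} \MA(\phi_2) \leq \int_{\{\phi_1 < \phi_2\}} \MA(\phi_1)$, together with the fact that the measure $\mu$ puts no mass on pluripolar sets (since it is supported on a skeleton). Applying this twice gives $\phi_1 - \phi_2 = $ const.

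For \emph{existence}, introduce the Monge--Amp\`ere energy functional $E : \mathcal{E}^1(X,L) \to \R$, characterized by $E(\phi_{\Ld}) = \frac{1}{n+1}\sum_{k=0}^n \int (\phi_{\Ld} - \phi_{\mathrm{ref}}) \MA(\phi_{\Ld}^{(k)}, \phi_{\mathrm{ref}}^{(n-k)})$ for model metrics, and extend by monotone limits. Consider the functional
\[
F_\mu(\phi) = E(\phi) - \int_{X^{\an}} \phi \, d\mu
\]
on the space of $\sup$-normalized metrics $\phi \in \mathcal{E}^1(X,L)$ with $\sup_{X^{\an}} \phi = 0$. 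The key steps are: (i) check that $E$ is upper semicontinuous, concave, and monotone; (ii) use the hypothesis that $\mu$ is supported on a skeleton $\Sk(\X)$ to control $\int \phi \, d\mu$ on sublevel sets, yielding coercivity of $F_\mu$; (iii) extract a maximizer $\phi_\star$ via compactness in $\mathcal{E}^1$; (iv) differentiate $F_\mu$ along geodesics $\phi_\star + t(\psi - \phi_\star)$ and use the projection onto $L$-psh envelopes together with the orthogonality relations of \cite{BFJ1} to derive the Euler--Lagrange equation $\MA(\phi_\star) = \mu$.

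The main obstacle is the \emph{continuity} of $\phi_\star$, which is the content of \cite{BGJKM}. The strategy here is to establish a non-archimedean analog of Koł{}odziej's capacity estimate: for measures $\mu$ concentrated on $\Sk(\X)$ with bounded density against a reference measure of the form $\MA(\phi_{\Ld})$, one bounds the sublevel sets $\{\phi_\star < -t\}$ in terms of the Monge--Amp\`ere capacity, yielding $\phi_\star \in L^\infty$. Upgrading from bounded to continuous requires a regularization procedure via model functions and a careful analysis of envelopes, which is really the hard analytic core of \cite{BGJKM} (it rests in turn on a resolution-of-singularities argument and multiplier-ideal-type estimates on models). I would treat this continuity step as a black box invocation of \cite{BGJKM}, since reproducing it in detail is well beyond the scope of a proof sketch; the variational part of the argument is the more conceptually transparent piece.
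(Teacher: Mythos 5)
The paper does not actually prove this statement: it is quoted as a black box from \cite{BFJ1} and \cite{BGJKM}, with only the remark that the former treats the case where $X$ is defined over the function field of a curve over $k$ and the latter removes that restriction. Your sketch is therefore not competing with an argument in the text but reconstructing the strategy of the cited references, and for the existence and regularity parts it does so faithfully: the variational scheme (Monge--Amp\`ere energy $E$, coercivity of $F_\mu$ coming from the fact that sup-normalized semi-positive metrics are uniformly bounded below on the skeleton of a fixed snc model, weak compactness in $\mathcal{E}^1$, and the Euler--Lagrange equation via differentiability of $E\circ P$ for the psh envelope $P$ together with the orthogonality relation) is exactly the route of \cite{BFJ1}, and the continuity of the maximizer — resting on continuity of envelopes, proved by multiplier-ideal techniques in the algebraic case and extended in \cite{BGJKM} — is legitimately black-boxed, since that is precisely what the paper itself does. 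Note only that the perturbation in step (iv) should be $P(\phi_\star+tf)$ for $f$ continuous rather than an affine path between psh metrics; you allude to this but it is the crux of the Euler--Lagrange step.

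The one point where your sketch understates a genuine difficulty is \emph{uniqueness}. You assert that the domination principle follows from the comparison principle "exactly as in the complex case," but the Bedford--Taylor local theory underlying that comparison principle does not transfer directly to Berkovich spaces, and at the time of \cite{BFJ1} no such comparison principle was available; uniqueness in Theorem A of \cite{BFJ1} is instead imported from Yuan--Zhang's arithmetic Hodge index theorem (equivalently, an energy/Cauchy--Schwarz argument for the quadratic part of $E$). A non-archimedean domination principle in the generality you invoke was established only later. So your uniqueness paragraph is a plausible modern route, but as written it rests on an unproved comparison principle; either supply that input or cite uniqueness, like existence, as part of the black box.
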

The above theorem was proved when $X$ is defined over the function field of a curve over $k$ in \cite{BFJ1}, and then extended to varieties over non-archimedean fields of residual characteristic zero in \cite{BGJKM}.
\subsection{Family of Monge-Ampère measures} \label{sec MA family}
Let $A$ be an integral Banach ring, and $X/A$ a projective scheme. Since we may view $X^{\an}$ as the family of analytic spaces $\{ X^{\an}_{\mathscr{H}_x}
\}_{x \in \M(A)}$, which are analytic spaces over fields, one may define a Monge-Ampère operator on each fiber.
\begin{defn} Let $A$ be an integral Banach ring, $X/A$ an $n$-dimensional projective scheme, and $L_1,...,L_n$ semi-ample line bundles on $X$. If $(\phi_1,..., \phi_n)$ is a tuple of continuous psh metrics on the $L_i$, we define the associated family of Monge-Ampère measures as follows. For $x \in \M(A)$, write $X_x = \pi^{-1}(x) \simeq X^{\an}_{\mathscr{H}(x)}$, and $\iota_x : X_x \hookrightarrow X^{\an}$ the inclusion. Then we set:
$$(\MA(\phi_1,..., \phi_n))_x := (\iota_x)_* \big(\MA((\phi_1)_{|X_x},...,(\phi_n)_{|X_x})\big).$$
This is a family of measures on $X^{\an}$ parametrized by $\M(A)$.
\end{defn}
A natural question is to ask whether or not this family of measures is continuous, at least in the weak sense:
\begin{qu} Let $X$ be a flat, projective $A$-scheme, $L_1,...,L_n$ as above, and let $\phi_i \in \CPSH(L_i)$ for each $i$. Is it true that for $\psi \in \mathcal{C}^0(X^{\an})$, the function:
$$x \mapsto \int_{X^{\an}} \psi (\MA(\phi_1,..., \phi_n))_x$$
is continuous on $\M(A)$ ?
\end{qu}
The flatness assumption on $X$ is necessary to ensure that the total mass of the measure:
$$(\MA(\phi_1,..., \phi_n))_x (X^{\an}) = ((L_1)_{| X_{\mathscr{H}(x)}} \cdot ... \cdot (L_n)_{| X_{\mathscr{H}(x)}})$$
is indeed independent of $x \in \M(A)$.
\\By density of $\DFS(X) \subset \mathcal{C}^0(X^{\an})$ and $\FS(L_i) \subset \CPSH(L_i)$, and by using the following very general Chern-Levine-Nirenberg estimate, it is enough to prove the above statement for $\psi \in \DFS(X)$ and $\phi_i \in \FS(L_i)$. 
\begin{lem}{\cite[lem. 8.6]{BE}} \label{lem CLN}Let $K$ be a non-archimedean field, and $X/K$. 
Let $L_0,...,L_n$ be line bundles on $X$, and $\phi_i, \phi'_i \in \FS(L_i)$ for each $i$. Then:
$$\bigg\lvert \int_{X^{\an}} (\phi_0-\phi'_0) \MA(\phi_1,...,\phi_n) - \int_{X^{\an}} (\phi_0-\phi'_0) \MA(\phi'_1,...,\phi'_n) \bigg\rvert \le C \sum_{i=1}^n \sup_{X^{\an}} \lvert \phi_i - \phi'_i \rvert.$$
\end{lem}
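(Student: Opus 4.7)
The plan is a telescoping argument combining multilinearity of the mixed Monge--Ampère operator with the symmetry (integration-by-parts) property stated in Proposition 8.3(iv). By multilinearity (Proposition 8.3(i)) we obtain the decomposition
$$\MA(\phi_1,\dots,\phi_n) - \MA(\phi'_1,\dots,\phi'_n) = \sum_{i=1}^n T_i,$$
where
$$T_i := dd^c\phi_1 \wedge \dots \wedge dd^c\phi_{i-1} \wedge dd^c(\phi_i - \phi'_i) \wedge dd^c\phi'_{i+1} \wedge \dots \wedge dd^c\phi'_n$$
is the signed Radon measure obtained by replacing $dd^c\phi_i$ with the difference $dd^c(\phi_i-\phi'_i)$. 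The key observation is that $\phi_i-\phi'_i$, being a difference of two FS metrics on the same line bundle $L_i$, is a genuine \emph{DFS function} on $X$, so $T_i$ is a difference of two positive mixed MA-measures.

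For each $i$, I would then integrate $(\phi_0-\phi'_0)$ against $T_i$ and use symmetry to swap the roles of the (trivial-bundle) slots occupied by the DFS functions $\phi_0-\phi'_0$ and $\phi_i-\phi'_i$. After expanding both DFS functions as differences of FS functions and applying Proposition 8.3(iv) term by term, one gets
$$\int_{X^{\an}} (\phi_0-\phi'_0)\, T_i \;=\; \int_{X^{\an}} (\phi_i-\phi'_i)\, S_i,$$
where
$$S_i := dd^c\phi_1 \wedge \dots \wedge dd^c\phi_{i-1} \wedge dd^c(\phi_0-\phi'_0) \wedge dd^c\phi'_{i+1} \wedge \dots \wedge dd^c\phi'_n$$
is a signed measure with the $i$-th factor now carrying $dd^c(\phi_0-\phi'_0)$. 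Bounding the last integral by $\sup_{X^{\an}}|\phi_i-\phi'_i|$ times the total variation $\|S_i\|_{\mathrm{TV}}$ then yields the claimed estimate after summing over $i$.

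It remains to bound $\|S_i\|_{\mathrm{TV}}$ by a constant depending only on $(L_0,\dots,L_n)$. Writing $dd^c(\phi_0-\phi'_0) = dd^c\phi_0 - dd^c\phi'_0$, the measure $S_i$ is the difference of two positive MA-measures, each of total mass equal to the intersection number $(L_0 \cdot L_1 \cdots \widehat{L_i} \cdots L_n)$ or $(L_0 \cdot L_1 \cdots L_{i-1} \cdot L'_{i+1} \cdots L'_n)$ (which coincide, since $L_j = L'_j$); by Proposition 8.3(iii) this mass is a fixed intersection number, so $\|S_i\|_{\mathrm{TV}} \le 2(L_0 \cdot L_1 \cdots \widehat{L_i} \cdots L_n)$, and we may take $C := 2\max_{1\le i\le n}(L_0\cdot L_1 \cdots \widehat{L_i}\cdots L_n)$. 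The only genuinely delicate step is the second one: verifying that the symmetry of Proposition 8.3(iv)—originally stated between two trivial-bundle slots—applies when the other slots are occupied by FS metrics on non-trivial bundles. This is handled by the standard reduction to the FS case via multilinearity, since both $\phi_0-\phi'_0$ and $\phi_i-\phi'_i$ lie in $\DFS(X)$ once the bundle information is absorbed into the remaining factors.
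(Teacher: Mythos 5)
The paper does not prove this lemma --- it is quoted verbatim from \cite[lem. 8.6]{BE} --- but your argument is the standard Chern--Levine--Nirenberg proof: telescoping via multilinearity, the symmetry property of Proposition 8.3(iv) to move the supremum onto $\phi_i-\phi'_i$ (which applies directly since both $\phi_0-\phi'_0$ and $\phi_i-\phi'_i$ are DFS functions on $\gO_X$, no expansion needed), and control of the total variation of $S_i$ by the fixed intersection numbers $(L_0\cdot L_1\cdots \widehat{L_i}\cdots L_n)$ via Proposition 8.3(ii)--(iii). This is correct and is essentially the argument in the cited reference.
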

While the answer to the above question seems unclear without further assumptions over the Banach ring $A$, we are able to provide an affirmative answer in the case of hybrid spaces in the next section. We also expect the statement to hold when $A$ is the ring of integers of a number field, we mention for instance the work \cite{PoiDyn} which proves a special case of the above statement for $A= \Z$ and provides further applications.
\subsection{Hybrid metrics and admissible data}
Throughout this section, we let $X \xrightarrow{\pi} \D^*$ be a smooth degeneration of complex manifolds, relatively polarized by an ample line bundle $L$; and write $X^{\hyb}$ the associated hybrid space. In this set-up, after fixing a reference Fubini-Study metric on $(X,L)$, Favre \cite{Fav} defines a certain class of \emph{model functions} $\phi_{\mathcal{F}} : X^{\hyb} \fl \R$ associated to admissible data on $X$; and uniform functions which are uniform limits of model functions. We will explain how those are nothing but hybrid Fubini-Study and hybrid cpsh metrics on $L$. In particular, the following result is a mere reformulation of \cite[thm. 4.2]{Fav}:
\begin{theo} \label{theo cont MA}
Let $(X,L)$ be as above, and $\phi \in \CPSH(X^{\hyb}, L^{\hyb})$. Then the associated family of Monge-Ampère measures is continuous on $X^{\hyb}$ in the weak sense: for any $f \in \mathcal{C}^0(X^{\hyb})$, we have:
$$\int_{X_t} f (dd^c \phi_t)^n \xrightarrow[t \fl 0]{} \int_{X^{\an}} f \MA(\phi_0).$$
\end{theo}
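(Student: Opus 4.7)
The plan is to deduce the theorem from Favre's theorem 4.2 of \cite{Fav} by identifying the class of continuous plurisubharmonic hybrid metrics in our formalism with Favre's class of \emph{uniform} metrics on $(X^{\hyb}, L^{\hyb})$. I would first establish that Favre's \emph{model metrics}, associated to admissible data on $X$, are exactly the pure tropical Fubini--Study metrics in our sense: a metric of the form $\phi = m^{-1} \max_{\alpha} \log \lvert s_{\alpha} \rvert$ for global sections $(s_{\alpha})$ of $mL$ without common zeros. Using Proposition \ref{prop metric hyb} together with the hybrid Bergman construction of Proposition \ref{prop homo sing}, the restriction $\phi_t$ on the complex fiber $X_t$ coincides with the Bergman-type metric attached to the sections, while $\phi_0 = m^{-1} \max_\alpha \log \lvert s_{\alpha} \rvert$ on $X^{\an}$ coincides with the corresponding model metric on the non-archimedean fiber (Example \ref{ex model}), matching Favre's definitions on both sides.

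Next, I would show that $\CPSH(X^{\hyb}, L^{\hyb})$ coincides with the uniform closure of the class of pure Fubini--Study metrics. One inclusion follows from Proposition \ref{prop psh}(7). For the converse, any continuous PSH hybrid metric is a uniform limit of tropical Fubini--Study metrics by the density statement (cf.\ Proposition 5.20 of \cite{BJtriv} in the paragraph preceding Example \ref{ex fs sing}). Each tropical FS metric is in turn a decreasing limit of pure FS metrics by Remark \ref{rem pure}, and this decreasing convergence is uniform by Dini's lemma since $X^{\hyb}$ is compact (as $X/A_r$ is projective) and both the limit and the approximants are continuous. Combining, every $\phi \in \CPSH(X^{\hyb}, L^{\hyb})$ is a uniform limit of pure Fubini--Study metrics on $L^{\hyb}$, that is, a uniform metric in the sense of \cite{Fav}.

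Once the equivalence of classes is established, Favre's theorem 4.2 applies directly. The main obstacle, handled in Favre's proof, is obtaining a Chern--Levine--Nirenberg-type estimate uniform in $t \in \bar{\D}_r$ controlling $\int_{X_t} f \MA(\phi^{(k)}_t) - \int_{X_t} f \MA(\phi_t)$ along a uniform approximation $\phi^{(k)} \to \phi$ on $X^{\hyb}$. This is delicate because the relation $\phi_t = \frac{\log \lvert t \rvert}{\log r} \, \phi_{|X^{\hyb}_t}$ from Proposition \ref{prop metric hyb} amplifies potential differences by a factor $\log_r \lvert t \rvert$ diverging as $t \to 0$, so a naive fiberwise application of Lemma \ref{lem CLN} fails. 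Favre circumvents this by exploiting the explicit description of $\MA(\phi_t)$ for pure FS (model) metrics on both sides — via Example \ref{ex MA model} on the non-archimedean fiber, and via the Bergman structure on the complex fibers — and by leveraging the logarithmic growth built into the hybrid formalism to rescale the estimate appropriately. Applied to the triangle inequality
\begin{equation*}
\left\lvert \int_{X_t} f \MA(\phi_t) - \int_{X^{\an}} f \MA(\phi_0) \right\rvert \le \mathrm{I}_{k,t} + \mathrm{II}_{k,t} + \mathrm{III}_k,
\end{equation*}
where $\mathrm{II}_{k,t} \to 0$ for fixed $k$ by the pure FS case, and $\mathrm{I}_{k,t}$, $\mathrm{III}_k$ are controlled uniformly in $t$ by the refined CLN estimate applied on the complex and non-archimedean sides respectively, this yields weak continuity of the family of Monge--Ampère measures at $t=0$, hence on all of $\bar{\D}_r$.
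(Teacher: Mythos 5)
Your proposal is correct and follows the paper's proof in essentially the same way: identify Favre's model functions with pure tropical Fubini--Study metrics on $L^{\hyb}$ (via adding the reference metric $\phi_{\rf}$), deduce from Remark \ref{rem pure} together with Dini's lemma that Favre's uniform metrics are exactly the continuous psh hybrid metrics, and then invoke \cite[thm. 4.2]{Fav}. The only slip is that the restriction of a pure Fubini--Study metric to a complex fiber $X_t$ is the maximum $m^{-1}\max_{\alpha}\log\lvert s_{\alpha,t}\rvert$ and not an $L^2$ Bergman sum (only the reference metric $\phi_{\rf}$ is of Bergman type on the complex fibers), but this does not affect the identification of the two classes.
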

Recall that $\DFS(X^{\hyb})$ is dense in $\mathcal{C}^0(X^{\hyb})$, while $\FS^{\tau}(L^{\hyb})$ is dense in $\CPSH(L^{\hyb})$ for the topology of uniform convergence. As a result, by lemma \ref{lem CLN}, it is enough to prove the above convergence with $f \in \DFS(X^{\hyb})$, $\phi \in \FS^{\tau}(L^{\hyb})$.
\\As in \cite{Fav}, we fix an snc model $\X$ of $X$ such that $L$ has an ample model $\Ld$ on $\X$, so that we get a relative embedding $\iota : \X \hookrightarrow \C \CP^N \times \D$ by sections of $m\Ld$ for $m \ge 1$, and write $\phi_{\rf} = m^{-1} \iota^* \phi_{FS}$. By example \ref{ex homo} and pullback, we have $\phi_{\rf} \in \CPSH(L^{\hyb})$.
\\Then a regular admissible datum $\mathcal{F} = \{ \X', d, D,  s_1,..., s_l \}$ consists of the following: $p: \X' \fl \X$ is an snc model dominating $\X$, $D \in \Div_0(\X')$ is a vertical divisor on $\X'$, and $(s_1,...,s_l)$ is a tuple of sections of $p^*(\Ld^d)( D)$ without common zeroes.
An regular admissible datum defines a model function $\phi_{\mathcal{F}}: X^{\hyb} \fl \R$ as follows:
$$\phi_{\mathcal{F}} = \max_{i=1,...,l} \log \lVert s_i \rVert_{\phi_{\rf}} = \max_{i=1,...,l} (\phi_i - \phi_{\rf})$$
with $\phi_i = d^{-1} \log \lvert s_i \rvert$.
\\Thus, we naturally define:
$$\psi_{\mathcal{F}}=\phi_{\rf} + \phi_{\mathcal{F}} = \max_{i \le l} \phi_i,$$
which is a Fubini-Study metric on $L^{\hyb}$, since the $s_i$'s have no common zeroes.
\\Conversely, any pure Fubini-Study metric on $L^{\hyb}$, i.e. a metric of the form:
$$\phi = d^{-1} \max_{i \le l} \log \lvert s_i \rvert$$
for $d \ge 1$ and $s_1,...,s_l \in H^0(X, dL)$ without common zeroes, defines a regular admissible datum. Indeed, we may extend $s_1,...,s_l$ to meromorphic sections of $d\Ld$ on $\X$, and set:
$$\mathscr{I} = < s_1,..., s_l >,$$
which is a vertical fractional ideal sheaf on $\X$. It follows from \cite[prop. 2.2]{Fav} that if $p: \X' \fl \X$ is a log-resolution of $\X$, this yields an admissible datum:
$$\mathcal{F} = \{ \X', d , D, p^*s_1,...,p^*s_l \},$$
where $D \in \Div_0(\X')$ is such that $p^*(\Ld^d \otimes \mathscr{I}) = (p^*\Ld)^d \otimes \gO_{\X'}(D)$, and it follows from the previous computation that the associated model function $\phi_{\mathcal{F}}$ satisfies:
$$\phi_{\mathcal{F}} = \max_{i=1,...,l} \log \lVert s_i \rVert_{\phi_{\rf}} = \phi -\phi_{\rf},$$
with $\phi = d^{-1} \max_{i \le l} \log \lvert s_i \rvert$. As a result, model functions on $X^{\hyb}$ in the sense of \cite{Fav} are precisely the continuous functions $\psi : X^{\hyb} \fl \R$ such that $(\phi_{\rf} + \psi) \in \FS^{\tau}(L^{\hyb})$ is a pure Fubini-Study metric, so that by remark \ref{rem pure}, uniform functions are those such that $(\phi_{\rf} + \psi) \in \CPSH(L^{\hyb})$. This proves that the statement of theorem \ref{theo cont MA} is equivalent to the statement \cite[thm. 4.2]{Fav}.
\section{Degenerations of canonically polarized manifolds} \label{sec KE neg}
\subsection{The set-up} \label{sec setup neg}
In this section, $X \xrightarrow{\pi} \D^*$ will denote a meromorphic degeneration of canonically polarized manifolds, and we will write $L = K_{X/\D^*}$ the polarization. It follows from the seminal work of Aubin and Yau (\cite{Au}, \cite{Y}) that every fiber $X_t$ admits a unique negatively curved Kähler-Einstein metric $\omega_t \in -c_1(X_t)$, satisfying the equation $\Ric(\omega_t) = - \omega_t$. The Kähler form $\omega_t$ can be written as the curvature form of a smooth Hermitian metric $\phi_t$ on $L_t = K_{X_t}$, i.e.:
$$\omega_t = dd^c \phi_t.$$
The metric $\phi_t$ is unique up to addition of a constant. In this situation, the family of Hermitian metrics $(\phi_t)_{t \in \D^*}$ turns out to also have plurisubharmonic variation in the horizontal direction by the work of Schumacher \cite{Schu}, and has logarithmic growth at $t=0$ by \cite[thm. 3]{Schu}. Thus, the family of metrics $(\phi_t)_{t \in \D^*}$ induces an element $\phi \in \CPSH(X, L)$, and it is a natural question to try and determine the non-archimedean limit $\phi^{\NA}$ of this family, as it provides a non-archimedean analog of the Kähler-Einstein metric.
\\As we will explain more thoroughly in section \ref{sec CV alg neg}, after a finite base change $t \mapsto t^d$ on the punctured disk, that we omit from notation, the family $X/ \D^*$ admits a canonical model $\X_c / \D$ such that the canonical bundle $K_{\X_c / \D}$ is relatively ample; the model $\X_c / \D$ is furthermore unique for this property (although it is more singular than an snc model). This in turn yields a canonical model metric $\phi_{K_{\X_c /R}} \in \CPSH(X^{\an}, K_X^{\an})$, which we will prove to be the non-archimedean Kähler-Einstein metric - the following statement is Theorem B from the introduction:
\begin{theo} \label{THEO KE NEG}
Let $X \xrightarrow{\pi} \D^*$ be a degeneration of canonically polarized manifolds, $L = K_X$, and let $\phi_{\KE} \in \CPSH(X, L)$ be the family of Kähler-Einstein metrics. We assume that the family $X$ has semi-stable reduction over $\D$. Then the metric on $L^{\hyb}$ defined by:
$$\phi_{| X} = \phi_{KE},$$
$$\phi_0 = \phi_{K_{\X_c/R}}$$
is continuous and semipositive, i.e. $\phi \in \CPSH(X^{\hyb}, L^{\hyb})$.
\end{theo}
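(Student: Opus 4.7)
\medskip

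\noindent\textbf{Plan of proof.} The overall strategy is to reduce the statement to two separate assertions: first, that $\phi_{\KE}^{\NA}$ coincides with the model metric $\phi_{K_{\X_c/R}}$; and second, that the resulting hybrid metric is continuous across $t=0$. Plurisubharmonicity will then be automatic from Theorem \ref{theo psh hyb} combined with the first assertion. More precisely, Schumacher's results ensure that $\phi_{\KE} \in \PSH(X,L)$ has logarithmic growth at $t=0$, so Theorem \ref{theo psh hyb} produces a canonical psh extension $\tilde{\phi} \in \PSH(X^{\hyb}, L^{\hyb})$ with $\tilde{\phi}_0 = \phi_{\KE}^{\NA}$. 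Hence if we can identify $\phi_{\KE}^{\NA} = \phi_{K_{\X_c/R}}$, the metric in the theorem statement coincides with $\tilde{\phi}$ and plurisubharmonicity follows.

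\medskip

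\noindent\textbf{Step 1: identification of the non-archimedean limit.} The goal here is to compute $\phi_{\KE}^{\NA}$ on divisorial valuations and check that it agrees with $\phi_{K_{\X_c/R}}$. Let $p: \X \to \X_c$ be an snc model of $X$ dominating the canonical model (which exists by semi-stable reduction combined with log-resolution), and let $v_E$ be a divisorial valuation associated to an irreducible component $E \subset \X_0$. On the one hand, the canonical model satisfies $K_{\X_c/\D}$ relatively ample and $\X_c$ has canonical singularities, so by the Song--Sturm--Wang convergence result, the Kähler--Einstein metrics $\phi_{\KE,t}$ extend as a psh metric on $p^* K_{\X_c/\D}$ whose singularities are strictly milder than any log pole; equivalently, the generic Lelong number of $\phi_{\KE}$ relative to the model metric $\phi_{K_{\X_c/R}}$ vanishes along every prime vertical divisor. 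Applying Example \ref{ex bdd model} (or rather its extension to the case where the potential is unbounded but has zero generic Lelong numbers, which is the natural framework here) with $(\X_c, K_{\X_c/\D})$ as the reference model then yields $\phi_{\KE}^{\NA} = \phi_{K_{\X_c/R}}$ on the dense subset $X^{\xdiv}$, and hence on all of $X^{\an}$ by lower semi-continuity.

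\medskip

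\noindent\textbf{Step 2: continuity of the hybrid metric.} Both the Kähler--Einstein family on $X$ and the model metric $\phi_{K_{\X_c/R}}$ on $X^{\an}$ are continuous on their respective sides, so continuity of $\phi$ reduces to verifying continuity at points of the non-archimedean fiber. By the lemma following Proposition \ref{prop psh cont}, it suffices to show that for every divisorial valuation $v_E$ and every sequence $z_j \to v_E$ in $X^{\hyb}$, one has
\[
\frac{(\phi_{\KE} - \phi_{\rf})(z_j)}{\log_r \lvert t(z_j) \rvert} \longrightarrow \phi_{K_{\X_c/R}}(v_E) - \phi_{\rf}(v_E)
\]
for a fixed smooth reference metric $\phi_{\rf}$. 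The upper bound $\limsup \le$ is automatic from upper semi-continuity of the hybrid metric constructed in Theorem \ref{theo psh hyb}. The lower bound is the content that I expect to require the most care: it should follow by combining the uniform estimates of Schumacher with the asymptotic behaviour of $\phi_{\KE}$ near $\X_{c,0}$ in the canonical model, which is controlled by Song--Sturm--Wang. Concretely, working on a chart of $\X_c$ around a very general point of the component of $\X_{c,0}$ corresponding to $v_E$, one writes $\phi_{\KE,t} = \phi_{K_{\X_c/R}, t} + \psi_t$ with $\psi_t$ bounded above uniformly in $t$ and with Lelong numbers going to zero; passing to the quotient by $\log_r \lvert t \rvert$ gives the desired lower bound.

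\medskip

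\noindent\textbf{Main obstacle.} The conceptual input of Step 1 is standard once one grants the Song--Sturm--Wang extension theorem. The genuine difficulty lies in Step 2, where one needs a \emph{uniform} control of the oscillation of $\phi_{\KE,t} - \phi_{K_{\X_c/R}, t}$ as $t \to 0$, independent of the base point on $\X_{c,0}$. This uniformity is not formal: it is precisely the point where the specific geometry of the canonical model (ample $K_{\X_c/\D}$, canonical singularities) enters crucially, and where the previous work \cite{PS} on the convergence of Monge--Ampère measures needs to be strengthened from weak convergence of measures to uniform convergence of potentials, after renormalization by $\log_r \lvert t \rvert$.
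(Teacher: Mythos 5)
Your overall strategy is close to the paper's, but organized less economically and with one genuine gap. The paper does not need your Step 1 as a separate step: by Proposition \ref{prop psh cont}, once a metric in $\PSH(X,L)$ is shown to extend \emph{continuously} to $L^{\hyb}$, the extension is automatically semi-positive and its restriction to $X^{\an}$ automatically equals $\phi^{\NA}$. So the whole theorem reduces to continuity, and the identification $\phi_{\KE}^{\NA}=\phi_{K_{\X_c/R}}$ falls out as a consequence rather than being an input. (Your Step 1 is correct in substance --- the vanishing of the generic Lelong numbers of $\psi_t$ along every vertical prime divisor follows from the estimate $\psi_t\ge\eps\log\lvert t\rvert-C_\eps$ for all $\eps>0$ --- but note that Example \ref{ex bdd model} as stated requires a locally bounded extension, which fails here; you would need to redo the computation of $\psi^{\NA}$ on divisorial valuations directly, which you only sketch.)

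The gap is in Step 2. You reduce continuity to convergence along sequences $z_j\to v_E$ for divisorial valuations $v_E$, citing the lemma following Proposition \ref{prop psh cont}. That lemma only asserts the \emph{existence} of one such sequence realizing $\psi^{\NA}(v_E)$, and is used in the paper for the reverse implication (continuity forces $\phi_0(v_E)=\phi^{\NA}(v_E)$); pointwise sequential convergence at the dense subset $X^{\xdiv}\subset X^{\an}$ does not establish continuity of the metric on $X^{\hyb}$. What is actually required, via Proposition \ref{prop metric hyb}, is the \emph{uniform} statement $\sup_{X_t}\bigl\lvert \psi_t/\log\lvert t\rvert\bigr\rvert\to 0$, where $\psi_t=\phi_{\KE,t}-\phi_{\FS,t}$ and $\phi_{\FS}$ is the continuous hybrid Bergman metric obtained from a relative embedding by sections of $mK_{\X_c/\D}$ (whose non-archimedean fiber is exactly $\phi_{K_{\X_c/R}}$). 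The ``main obstacle'' you defer at the end is therefore the entire content of the proof --- but it is not open: it is precisely Proposition \ref{prop Song} (Song's cor.\ 4.1 and lem.\ 4.2), namely the uniform upper bound $\sup_{X_t}\psi_t\le C$ together with $\psi_t\ge\eps\bigl(\sum_{i\ne i_0}\phi_i+\sum_{j\in J'}\psi_j\bigr)-C_\eps\ge\eps\log\lvert t\rvert-C'_\eps$ for every $\eps>0$. Dividing by $\log\lvert t\rvert$ gives the required uniform convergence at once. Replacing your sequential reduction by this two-sided uniform estimate closes the argument.
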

If the family of Kähler-Einstein metrics $\phi$ were to extend as a bounded metric $\phi \in \PSH(\X_c, K_{\X_c / \D})$, then it would follow from example \ref{ex bdd model} that $\phi^{\NA} = \phi_{K_{\X_c/R}}$. We will see that this is however not the case, but the singularities of $\phi_{\KE}$ along the special fiber of the canonical model are mild enough for the result to still hold - they are milder than any log poles.
\\Using theorem \ref{theo cont MA}, we have the following immediate consequence:
\begin{cor}
Let $X \fl \D^*$ be a meromorphic degeneration of $n$-dimensional canonically polarized manifolds, and let $\omega_t \in -c_1(X_t)$ be the unique Kähler-Einstein metric with negative curvature on $X_t$. 
\\The Kähler-Einstein measures $\mu_t = \omega_t^n$ converge weakly on $X^{\hyb}$ to $\mu_0 = \MA( \phi_{K_{\X_c}})$, the non-archimedean Monge-Ampère measure of the canonical model metric. 
\\More explicitly, writing $\X_{c, 0} = \sum_{i \in I} b_i D_i$ as the sum of its irreducible components, the measure $\mu_0$  is a sum of Dirac masses supported at the divisorial valuations $v_{D_i}$, and we have:
$$ \mu_0 = \sum_{i \in I}  b_i\big(( K_{\X_c})^n \cdot D_i \big) \delta_{v_i}.$$
\end{cor}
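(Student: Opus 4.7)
The plan is to deduce the corollary as a direct consequence of the preceding theorem combined with the weak continuity of families of Monge--Ampère measures (Theorem \ref{theo cont MA}).

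First, Theorem \ref{THEO KE NEG} provides a continuous psh hybrid metric $\phi \in \CPSH(X^{\hyb}, L^{\hyb})$ on the polarization $L = K_{X/\D^*}$ whose restriction to each complex fiber is $\phi_{\KE, t}$ and whose non-archimedean fiber is the model metric $\phi_{K_{\X_c/R}}$. In particular, for $t \neq 0$ the fiberwise Monge--Ampère measure is the Kähler--Einstein volume form $\omega_t^n = (dd^c \phi_{\KE, t})^n = \MA(\phi_t)$, and at the special fiber it is $\MA(\phi_{K_{\X_c/R}})$. Applying Theorem \ref{theo cont MA} to this continuous psh hybrid metric immediately yields that for every $f \in \mathcal{C}^0(X^{\hyb})$,
\[
\int_{X_t} f \, \omega_t^n \xrightarrow[t \to 0]{} \int_{X^{\an}} f \, \MA(\phi_{K_{\X_c/R}}),
\]
which is the first claim of the corollary.

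It remains to compute $\MA(\phi_{K_{\X_c/R}})$ explicitly. Since $X$ has semi-stable reduction, one may choose an snc model $\X' \to \D$ dominating $\X_c$ via a proper birational morphism $p: \X' \to \X_c$, and set $\Ld' = p^* K_{\X_c/\D}$, which is a nef model of $L$ on $\X'$ whose associated model metric coincides with $\phi_{K_{\X_c/R}}$ by the compatibility of model metrics under pullback. Writing $\X'_0 = \sum_{j \in J} b'_j E_j$ as a Weil divisor, the formula of Example \ref{ex MA model} yields
\[
\MA(\phi_{K_{\X_c/R}}) = \sum_{j \in J} b'_j \, \bigl((\Ld')^n \cdot E_j\bigr) \, \delta_{v_{E_j}}.
\]
The components $E_j$ that are $p$-exceptional over $\X_c$ contribute zero to this sum: by the projection formula $((\Ld')^n \cdot E_j) = ((K_{\X_c/\D})^n \cdot p_* E_j)$, and $p_* E_j = 0$ whenever $E_j$ is exceptional. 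For the non-exceptional components, the strict transforms of the irreducible components $D_i$ of $\X_{c,0}$ satisfy $p_* \tilde{D}_i = D_i$ together with the matching of multiplicities $b_i$ arising from semi-stable reduction, and the divisorial valuation $v_{\tilde{D}_i} \in X^{\an}$ coincides with $v_{D_i}$. Reorganizing the sum thus gives
\[
\MA(\phi_{K_{\X_c/R}}) = \sum_{i \in I} b_i \bigl( (K_{\X_c/\D})^n \cdot D_i \bigr) \delta_{v_{D_i}},
\]
which is the stated explicit formula.

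The only nontrivial point in this plan is the reduction of the Monge--Ampère computation from the singular canonical model $\X_c$ to an snc model, so that Example \ref{ex MA model} applies directly; this is handled by passing to a log-resolution and using the projection formula together with the fact that exceptional components of the resolution do not contribute to the final sum. Everything else is a formal concatenation of Theorem \ref{THEO KE NEG} and Theorem \ref{theo cont MA}.
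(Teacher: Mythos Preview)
Your proposal is correct and follows essentially the same approach as the paper: the paper simply states the corollary as an immediate consequence of Theorem~\ref{THEO KE NEG} combined with Theorem~\ref{theo cont MA}, and records the explicit formula via Example~\ref{ex MA model}. Your reduction to an snc log-resolution of $\X_c$ and use of the projection formula to kill the exceptional contributions is a bit more detailed than what the paper spells out (it leaves this implicit), but it is the right way to justify applying the Chambert--Loir formula when the model $\X_c$ is only normal rather than snc.
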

This was proved in \cite[thm. A]{PS} in a direct way, without using thm. \ref{theo cont MA}.
\subsection{The canonical model} \label{sec CV alg neg}
Let $X \xrightarrow{\pi} \D^*$ be a degeneration of smooth, canonically polarized varieties. In this case, the Minimal Model Program provides us with a \emph{unique} canonical model $\X_c$ of $X$ over the disk, at the cost of going out of the class of simple normal crossing models, and allowing some slightly worse singularities. The appropriate class of varieties for the central fiber is a higher-dimensional analog of the stable curves, the correct notion being that of \emph{semi-log canonical models}.
\\If $\X$ is a normal model of $X$, saying that $\X_0$ is semi-log canonical (see for instance \cite{Kollar2013}) is a condition on the singularities of the normalisation of $\X_0$, which can be seen as a mild generalization of the simple normal crossing condition; in particular we require $\X_0$ to be reduced and simple normal crossing in codimension 1. More precisely, the normalization morphism $\nu : \X^{\nu}_0 \fl \X_0$ is required to yield a disjoint union $\X_0^{\nu} = \sqcup_{i \in I} (\tilde{D}_i , C_i)$ of log canonical pairs, $C_i$ being the restriction of the \emph{conductor} $C$ of $\nu$ to $\tilde{D}_i$. This is a Weil divisor on $\X_0^{\nu}$, whose support is precisely the locus where the normalization $\nu$ fails to be an isomorphism, and which is simply given here by the inverse image by $\nu$ of the codimension one nodes of $\X_0$. It furthermore satisfies the formula : $\nu^*K_{\X_0} = K_{\X^{\nu}_0} + C$ (note that the canonical divisor of a semi-log canonical variety is assumed to be $\Q$-Cartier).
\\A semi-log canonical model (or \emph{stable variety}) is now by definition a proper semi-log canonical variety, with ample canonical divisor. For instance, one-dimensional semi-log canonical models are nothing but Deligne-Mumford's stable curves. 
\\The compactness theorem for moduli of stable varieties of higher dimension is now as follows:
\begin{theo}{(\cite{BCHM}, \cite{KNX}).}
\\Let $X \fl \D^*$ be an algebraic degeneration of canonically polarized manifolds. 
\\There exists (possibly after a finite base change) a unique \emph{canonical} model $\X_c$ of $X$ over the disk, satisfying the following properties:\\
i) the total space $\X_c$ has at worst canonical singularities, while the central fiber $\X_{c,0}$ is reduced and has at worst semi-log canonical singularities;\\
ii) the relative canonical divisor $K_{\X_c/ \D}$ is relatively ample.
\end{theo}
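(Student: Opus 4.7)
The plan is to produce $\X_c$ by running the relative Minimal Model Program over the disk, starting from an auxiliary semi-stable model, and then to invoke adjunction-type results to transfer control of the singularities from the total space to the special fibre. First, by the meromorphy assumption we fix a projective model $\X/\D$ of $X$. After a finite base change $t \mapsto t^d$ (which we suppress from the notation), I would apply the semi-stable reduction theorem of Kempf--Knudsen--Mumford--Saint-Donat to replace $\X$ by a projective snc model whose special fibre is reduced. In particular $(\X, \X_0)$ is log smooth, hence log canonical, and since all discrepancies are non-negative $\X$ itself has at worst canonical singularities.

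Next, I would run the relative $K_{\X/\D}$-MMP with scaling over $\D$. The key input, supplied by \cite{BCHM}, is that since $K_{X/\D^*}$ is relatively ample on the generic fibre, the divisor $K_{\X/\D}$ is big over $\D$, so such an MMP terminates and produces a relative minimal model $\X^{\min}/\D$ on which $K_{\X^{\min}/\D}$ is relatively nef; the steps of the MMP preserve canonicity of the total space. The relative base-point-free theorem then yields that $K_{\X^{\min}/\D}$ is relatively semi-ample, and contracting the locus where some sufficiently divisible multiple is trivial produces a projective birational morphism $\X^{\min} \to \X_c$ over $\D$ with $K_{\X_c/\D}$ relatively ample. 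This would establish condition (ii) and show that $\X_c$ still has canonical singularities.

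The hard part will be showing that $\X_{c,0}$ is reduced and semi-log canonical, which does not follow formally from canonicity of $\X_c$: semi-log canonicity is a condition on the normalization of the possibly non-normal variety $\X_{c,0}$, so one must propagate the control of singularities of the total space to the fibre via adjunction, which is subtle in the non-klt setting. This is precisely the content of the theorem of Koll\'ar--Nicaise--Xu \cite{KNX}: starting from a semi-stable snc model and performing the relative MMP as above, the resulting canonical model has reduced, semi-log canonical special fibre—in other words the central fibre is a stable variety in Koll\'ar's sense—via inversion of adjunction in the log canonical regime. Invoking this result closes condition (i).

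Finally, uniqueness of $\X_c$ would follow from the standard argument for canonical models of log canonical pairs of general type: any two relatively canonical models with relatively ample $K_{\X_c/\D}$ are connected over $\D$ by a birational map which is an isomorphism in codimension one and pulls back the relatively ample class to itself; the negativity lemma then forces this map to be an isomorphism. Putting everything together yields existence and uniqueness of $\X_c$ with the stated properties.
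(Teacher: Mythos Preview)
Your argument is correct and follows the same broad outline as the paper—semi-stable reduction, then relative MMP—but the packaging differs in two places. The paper defines $\X_c$ directly as $\Proj_{\D} \bigoplus_{m \ge 0} H^0(\X, mK_{\X/\D})$ and reduces existence to finite generation of the relative canonical ring, whereas you run the MMP step by step (minimal model, then canonical model via base-point-freeness); these are equivalent, but the $\Proj$ description makes uniqueness immediate via birational invariance of the relative canonical ring, which is shorter than your negativity-lemma argument. One point of attribution to tighten: \cite{BCHM} is a statement about algebraic varieties, and the paper is explicit that the extension of finite generation (equivalently, termination of the MMP) to families over the \emph{analytic} disk is itself part of what \cite{KNX} provides. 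You invoke \cite{KNX} only for the slc property of the central fibre, but it is also carrying the existence step over the disk; as written, your appeal to \cite{BCHM} for running the MMP over $\D$ is slightly imprecise.
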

The canonical model is constructed as follows: by the semi-stable reduction theorem \cite{KK}, there exists a finite base change $t = (t')^d$ on the punctured disk such that the family $X' = X \times_{\D^*_t} \D^*_{t'}$ admits a semi-stable model over $\D^*$, i.e. an snc model with reduced special fiber. Omitting the base change from notation and starting from a semi-stable model $\X / \D$ of $X$, we set $\X_c = \Proj_{\D} \bigoplus_{m \ge 0} H^0 (\X, mK_{\X/ \D})$, the main difficulty being to prove finite-generatedness of the relative canonical algebra. This is established in \cite{BCHM} when $X / \D^*$ is defined over an algebraic curve, and extended to families over the disk in \cite{KNX}. The uniqueness of the canonical model is now a straightforward consequence of the birational invariance of the relative canonical ring $R=\bigoplus_{m \ge 0} H^0 (\X, mK_{\X/ \D})$ - as the notation suggests, $R$ does not depend on the choice of the model $\X$.
\begin{rem}
If $\X$ is any semi-stable model of $X$, then the natural rational map $h : \X \dashrightarrow \X_c$ is in fact a \emph{rational contraction} - this means that its inverse does not contract any divisors.
\end{rem}
\subsection{Metric convergence and proof of theorem \ref{THEO KE NEG}} \label{CV KE neg}
The complete understanding of the Gromov-Hausdorff convergence of the fibers $(X_t, g_t)$, is due to J. Song \cite{Song} (whose results were further improved recently in \cite{SSW}). The crucial first step, is to show that there exists on the central fiber $\X_{c,0} = \sum_{i \in I} D_i$ of the canonical model of $X$ a unique Kähler-Einstein current $\omega_{KE}$, and to derive some geometric estimates on the singularities of this current. The current $\omega_{KE}$ on the stable variety $\X_{c,0}$ was first constructed by Berman-Guenancia \cite{BG} using a variational method, while it is reconstructed in \cite{Song} using the techniques of \cite{EGZ}, \cite{Kolo}, in order to obtain some stronger control on its singularities:
\begin{theo}{(\cite[thm. 1.1]{Song}).}
\\Let $\X_c \fl \D$ be the canonical model of $X$, with semi-log canonical central fiber $\X_{c,0}$.
\\There exists a unique Kähler current $\omega_{KE} \in - c_1(\X_{c,0})$ on $\X_{c,0}$, satisfying the following properties:
\\i) $\omega_{KE}$ is smooth and satisfies the Kähler-Einstein equation on the regular locus of $\X_{c,0}$;
\\ii) $\omega_{KE}$ has locally bounded potentials on the locus where $\X_{c,0}$ is log terminal;
\\iii) $\omega_{KE}^n$ does not charge mass on the singularities of $\X_{c,0}$, and $\int_{\X_{c,0}} \omega_{KE}^n = [K_{\X_{c,0}}]^n$.
\end{theo}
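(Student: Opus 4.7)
The plan is to reduce the problem to a singular complex Monge-Ampère equation on each log canonical component of the normalization, solve it using the pluripotential methods of \cite{EGZ}, \cite{Kolo}, and then glue the resulting currents along the conductor. Write $\X_{c,0} = \sum_{i \in I} D_i$ and let $\nu : \X_{c,0}^{\nu} = \sqcup_i (\tilde{D}_i, C_i) \fl \X_{c,0}$ be the normalization, so that $(\tilde{D}_i, C_i)$ is a log canonical pair and the adjunction formula $\nu^* K_{\X_{c,0}} = K_{\X_{c,0}^{\nu}} + C$ holds. Fix a smooth reference metric $\phi_{\rf}$ on $K_{\X_{c,0}}$ (existing because $K_{\X_{c,0}}$ is $\Q$-Cartier and ample); pulling back via $\nu$, we obtain smooth metrics $\phi_{\rf, i}$ on $K_{\tilde{D}_i} + C_i$. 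An slc Kähler-Einstein current on $\X_{c,0}$ is then equivalent to a tuple $(\phi_i)_{i \in I}$ of $\phi_{\rf,i}$-psh metrics on $K_{\tilde{D}_i} + C_i$ satisfying the singular Monge-Ampère equation
\[
(dd^c \phi_i)^n = e^{\phi_i} \, \mu_i \quad \text{on } \tilde{D}_i,
\]
where $\mu_i$ is the adapted measure associated to the log canonical pair $(\tilde D_i, C_i)$ (a measure which is smooth and strictly positive on the snc locus and has log-type singularities along $C_i$ and the other lc centers), together with gluing conditions along $\nu^{-1}(\Sing(\X_{c,0}))$ ensuring that the family descends to a single current in $-c_1(\X_{c,0})$.

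The first serious step is existence on each component. Since $(\tilde D_i, C_i)$ is log canonical, the measure $\mu_i$ has $L^1$-density with respect to any smooth volume form; by the log canonical threshold characterization, its density is $L^p$ for $p$ slightly greater than $1$ precisely on the klt part (i.e.\ where the coefficients of $C_i$ are strictly less than $1$). One then applies the variational approach of Berman-Guenancia \cite{BG}: the Ding-Mabuchi functional is proper on the relevant class of finite-energy metrics because $K_{\X_{c,0}}$ is ample, and its minimizer provides a finite-energy solution $\phi_i$ of the above equation. Uniqueness of the minimizer (hence of $\omega_{KE} = dd^c \phi_{KE}$) follows from strict convexity of the Ding functional along the Darvas geodesics in $\mathcal{E}^1$, which remains valid in the slc setting since the functional is built only from energy pairings and an integral against $\mu_i$.

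To upgrade regularity, follow \cite{Song}: on the open locus where $\X_{c,0}$ is log terminal (i.e.\ away from the strata where some coefficient of $C_i$ equals $1$), the density $e^{-\phi_{\rf,i}} \mu_i$ is in $L^p$ for some $p>1$, so Kolodziej's $L^\infty$-estimate \cite{Kolo}, extended to the singular setting in \cite{EGZ}, yields local boundedness of $\phi_i$; on the actual regular locus of $\X_{c,0}$ the density is smooth and strictly positive, and the standard Evans-Krylov plus Schauder bootstrap gives smoothness and the pointwise Kähler-Einstein equation $\Ric(\omega_{KE}) = -\omega_{KE}$. For property (iii), one uses that the slc singularities of $\X_{c,0}$ form a proper analytic subset and that $\omega_{KE}^n$, as the Monge-Ampère of a finite-energy potential, puts no mass on pluripolar sets; the total mass identity $\int \omega_{KE}^n = (K_{\X_{c,0}})^n$ then follows from the definition of the non-pluripolar product and the fact that this product computes the intersection number on a class of finite energy.

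The main obstacle is the gluing step: a priori the solutions $\phi_i$ are defined independently on each $\tilde D_i$, and one must verify that $\nu_* ((dd^c \phi_i)^n) $ patches into a single closed positive current in $-c_1(\X_{c,0})$ across the conductor. This is handled by showing, using the adjunction $\nu^* K_{\X_{c,0}} = K_{\X_{c,0}^{\nu}} + C$ together with the log canonical matching of residues on the two sides of each double-normal-crossing stratum, that the local potentials agree up to pluriharmonic functions on a neighbourhood of the conductor; the variational characterization of the minimizer forces the correct matching, so the glued current $\omega_{KE}$ is globally well-defined on $\X_{c,0}$. Uniqueness of the global current then follows from the fibrewise uniqueness on each component.
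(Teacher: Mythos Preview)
The paper does not give a proof of this statement: it is quoted as \cite[thm.~1.1]{Song}, with the surrounding text merely recording that the current was first constructed variationally in \cite{BG} and then reconstructed in \cite{Song} via the techniques of \cite{EGZ}, \cite{Kolo} in order to obtain the sharper control in (ii). There is therefore no proof in the paper to compare your sketch against.

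That said, your outline tracks the combined \cite{BG}/\cite{Song} strategy the paper alludes to, with one caveat concerning the gluing step. In \cite{BG} one does not solve the Monge--Amp\`ere equation independently on each $\tilde D_i$ and then attempt to match potentials along the conductor a posteriori. Rather, the variational problem is posed from the outset for metrics on the globally defined $\Q$-line bundle $K_{\X_{c,0}}$, using an adapted measure defined directly on the slc variety; pulling back by $\nu$ then automatically produces compatible data on the components, and the ``descent'' in the paper's remark after the theorem is tautological rather than an extra analytic step. Your sentence ``the variational characterization of the minimizer forces the correct matching'' gestures at this, but the logic runs the other way: compatibility is built into the class of competitors, not recovered from the equation. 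Apart from this ordering issue, the regularity discussion (Ko\l odziej $L^\infty$ on the klt locus, Evans--Krylov bootstrap on the smooth locus, non-pluripolar product for the mass identity) is in line with what \cite{Song} does.
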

\begin{rem}
The fact that the above Kähler-Einstein current on $\X_{c,0}$ matches the one constructed in \cite{BG}, follows from the uniqueness statement in \cite[thm. A]{BG}. Moreover, the construction of \cite{BG} implies that $\int_{D_i} \omega_{KE}^n = (K_{\X_c}^n \cdot D_i)$. 
\\Indeed, if $\nu : \X^{\nu}_{c,0} \fl \X_{c,0}$ denotes the normalization morphism, where $\X_{c,0}^{\nu} = \sqcup_{i \in I} \tilde{D}_i$, then the Kähler-Einstein metric $\omega_{KE}$ is obtained by descending the (singular) Kähler-Einstein metrics $\omega_i \in c_1(K_{\tilde{D}_i} + C_i)$ on the log canonical pairs $(\tilde{D_i}, C_i)$, $C_i$ being the restriction of the conductor $C$ of $\nu$ to $\tilde{D}_i$.
\\Thus by construction, the mass $\int_{D_i} \omega_{KE}^n$ equals the intersection number $(K_{\tilde{D}_i} + C_i)^n = (\nu^* K_{\X_{c,0}})^n$, the last intersection number being computed on $\tilde{D}_i$. Applying the projection formula, this is equal to the intersection number $ K_{\X_{c,0}}^n \cdot D_i =K_{\X_c}^n \cdot D_i$, by adjunction and principality of $\X_{c,0}$.
\end{rem}
Let us now fix a $m>0$ such that $mK_{\X_c/\D}$ is relatively very ample, and a relative embedding $\iota : \X_c \hookrightarrow \CP^N \times \D$ of the canonical model inside projective space by sections of $m K_{\X_c/ \D}$. We let $\phi_{\FS}$ be the hybrid Bergman metric on $(\CP_K^{N})^{\hyb}$ from example \ref{ex homo}, and still write $\phi_{\FS}\in \CPSH(X^{\hyb}, L^{\hyb})$ its pullback to $X^{\hyb}$ via the embedding $\iota$. More explicitly, we have that
$$\phi_{\FS, t} =m^{-1} \iota_t^* \phi_{\FS},$$
where $\phi_{\FS}$ is the usual (Euclidean) Fubini-Study metric on $\C \CP^N$; while
$$\phi_{\FS, 0} = \phi_{K_{\X_c}/R},$$
since the model $(\X_c, K_{\X_c/R})$ is ample.
\\This allows us to write the Kähler-Einstein metric $\phi_{\KE, t} =\phi_{\FS, t} + \psi_t$, with $\psi_t \in \mathcal{C}^{\infty}(X_t)$. The potential $\psi_t$ is the unique solution of the Monge-Ampère equation:
$$(\omega_{\FS, t} + dd^c \psi_t)^n = e^{\psi_t} \omega_{\FS, t}^n,$$
with the normalization $\int_{X_t} e^{\psi_t} \omega_{\FS, t}^n = (K_{X_t})^n$. In order to derive uniform estimates for the family of potentials $(\psi_t)_{t \in \D^*}$, it is more convenient to work on a semi-stable model, as a result we perform an additional base change and consider a diagram of the form:
$$\begin{tikzpicture}
\matrix(m)[matrix of math nodes, row sep=3em,
    column sep=2.5em, text height=1.5ex, text depth=0.25ex]
{ \X & \X'_c& \X_c \\
      & \D & \D \\ };
	\path 
	(m-1-1) edge[->] node[auto] {$ p $} (m-1-2)
			edge[->] node[auto] {$ \pi' $} (m-2-2)
         (m-1-2) edge[->] node[auto] {} (m-1-3)
         (m-1-2) edge[->] node[auto] {} (m-2-2)
         (m-1-3) edge[->] node[auto] {} (m-2-3)
         (m-2-2) edge[->] node[auto] {$t \mapsto t^d$} (m-2-3);
\end{tikzpicture}$$
where $\X'_c$ is the base change of the canonical model $\X_c$ via $t \mapsto t^d$, and $\X$ is a semi-stable resolution of $\X'_c$. We write the special fiber $\X_0 = \sum_{i \in I} \tilde{D}_i + \sum_{j \in J} E_j$, where $\tilde{D}_i$ is the strict transform of $D_i \subset \X'_{c, 0} = \X_{c, 0}$ and the $E_j$'s are the exceptional divisors of $p$. For each $i \in I$, let $\phi_i$ be a psh metric on $\gO_{\X}(\tilde{D}_i)$ with divisorial singularities along $\tilde{D}_i$, i.e. $\phi_i = \log \lvert z_i \rvert +O(1)$ locally, where $z_i$ is a local equation for $\tilde{D}_i$. Similarly, we choose $\psi_j$ with divisorial singularities along $E_j$, so that
$$\sum_{i \in I} \phi_i + \sum_{j \in J} \psi_j = \log \lvert t \rvert + O(1).$$
In order to apply Cheeger-Colding theory to the Kähler-Einstein metrics on $X_t$, Song derives uniform estimates on volumes of small balls, which are obtained via comparison lemmas for volume forms. The estimate focuses on a strict transform $\tilde{D}_{i_0} \subset \X_0$, and shows that the potentials $\psi_t$ do not blow-up as we approach the interior of $\tilde{D}_{i_0}$:
\begin{prop}{(\cite[cor. 4.1, lem. 4.2]{Song})} \label{prop Song}
\\We have a uniform bound: $\sup_{X_t} \psi_t \le C$.
\\Moreover, letting $J' \subset J$  be the subset of exceptional divisors in $\X_0$ that meet $\tilde{D}_{i_0}$, the following holds: for any $\eps >0$, there exists a constant $C_{\eps} >0$ such that for all $t \neq 0$:
$$\psi_t \ge \eps \big( \sum_{\substack{i \in I \\ i \neq i_0}} \phi_i + \sum_{j \in J'} \psi_j \big) -C_{\eps}.$$
This implies smooth convergence of the Kähler-Einstein metrics $\omega_{t}$ to the current $\omega_{KE}$ on $\X_{c,0} \setminus \Sing(\X_{c,0})$ in the following sense:
\\for any point $p \in \X_{c,0} \setminus \Sing(\X_{c,0})$, and any choice of neighbourhood $\U$ of $p$ such that the $U_t = \U \cap X_t$ are all biholomorphic to $U_0$, and such that the $(\omega_{\FS,t})^n$'s are uniformly equivalent, the pulled-back $\psi_t$ converge in the $\mathcal{C}^{\infty}$-sense to $\psi_0$.
\end{prop}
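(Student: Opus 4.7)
The proposition is a packaging of Song's estimates \cite[cor. 4.1, lem. 4.2]{Song} in the notation of our semi-stable model $p : \X \to \X_c'$, so the plan is to indicate how each of the three assertions is obtained from the Monge-Ampère equation $(\omega_{\FS,t} + dd^c \psi_t)^n = e^{\psi_t} \omega_{\FS,t}^n$ together with the normalization $\int_{X_t} e^{\psi_t} \omega_{\FS,t}^n = (K_{X_t})^n$, which we recall is uniformly bounded in $t$. The three ingredients I would establish, in order, are: a uniform $L^\infty$ upper bound by maximum principle; a uniform lower bound by comparison with an explicit barrier built from the divisorial potentials $\phi_i,\psi_j$; and smooth convergence on the regular locus of $\X_{c,0}$ by standard elliptic bootstrap.

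\emph{Upper bound.} The function $\psi_t$ is smooth on the compact manifold $X_t$, so attains its max at some $x_t \in X_t$. At $x_t$ we have $dd^c \psi_t(x_t) \le 0$, hence $(\omega_{\FS,t}+dd^c\psi_t)^n(x_t) \le \omega_{\FS,t}^n(x_t)$, so the MA equation gives $e^{\psi_t(x_t)} \le 1$, i.e.\ $\sup_{X_t} \psi_t \le 0$. (Strictly speaking one needs $\omega_{\FS,t}^n(x_t) > 0$; since $\omega_{\FS,t}$ is the pullback of a smooth Kähler metric on $\CP^N$ by an embedding that is an immersion on $X_t$ for $t\neq 0$, this holds.)

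\emph{Lower bound.} This is the main point and is established by building an explicit sub-solution of the MA equation depending on $\eps > 0$. With $J' \subset J$ the set of exceptional divisors meeting $\tilde D_{i_0}$, set
\[
u_{\eps} := \eps\Bigl(\sum_{i \neq i_0} \phi_i + \sum_{j \in J'} \psi_j\Bigr),
\]
and consider $\psi_t - u_{\eps}$. The strategy, following Song, is to show that for $\eps$ small enough one has $(\omega_{\FS,t} + dd^c u_\eps)^n \ge e^{u_\eps} \omega_{\FS,t}^n \cdot e^{-C_\eps}$ away from a fixed neighbourhood of $\{u_\eps = -\infty\}$, using two crucial inputs: (i) after pulling back to the semi-stable model $\X$, the MA equation becomes $\omega_t^n = e^{\psi_t} \Omega_t$ where $\Omega_t$ extends (up to uniform bounds on the log terminal locus) to a form with integrable singularities only along the normal crossing divisors other than $\tilde{D}_{i_0}$; (ii) the log terminal property of the pair obtained from semi-log canonicity of $\X_{c,0}$ along the interior of $\tilde D_{i_0}$. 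One then applies the comparison principle to $\psi_t - u_\eps + C_\eps$, using that $u_\eps$ blows down to $-\infty$ on the boundary of a suitably chosen compact set, to conclude $\psi_t \ge u_\eps - C_\eps$.

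\emph{Smooth convergence on the regular locus.} Fix $p \in \X_{c,0}^{\mathrm{reg}}$ and a product neighborhood $\U$ trivializing the family. On $\U \cap X_t$ the bounds (1) and (2) give a uniform $L^\infty$ estimate on $\psi_t$ (since on $\U$ the barrier $u_\eps$ is bounded). The uniform equivalence of $(\omega_{\FS,t})^n$ then allows us to run Yau's $\mathcal{C}^2$ estimate (giving uniform Laplacian bounds), Evans-Krylov (giving $\mathcal{C}^{2,\alpha}$), and Schauder bootstrap on the MA equation (giving $\mathcal{C}^k$ for every $k$). Combined with uniqueness of the KE current $\omega_{KE}$ on the log terminal locus (which contains the regular locus), this upgrades the $L^1_{\mathrm{loc}}$ convergence coming from (1)-(2) to $\mathcal{C}^\infty$ convergence.

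The main obstacle is clearly step (2): constructing the barrier $u_\eps$ with the right growth requires an honest analysis of the singularities of $\X_c$ along the codimension-$\ge 2$ locus of $\X_{c,0}$ and a careful choice of resolution, and it is precisely here that the semi-log canonical structure of the canonical model is used in an essential way. Once (2) is in place, (1) and (3) are standard.
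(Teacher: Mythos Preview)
The paper does not give its own proof of this proposition: it is stated as a direct citation of Song's results \cite[cor. 4.1, lem. 4.2]{Song}, and the paper immediately moves on to use the estimates in the proof of Theorem~\ref{THEO KE NEG}. So there is no ``paper's proof'' to compare against.

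Your sketch is a reasonable outline of how Song's argument runs (maximum principle for the upper bound, barrier comparison for the lower bound, Yau--Evans--Krylov bootstrap for the smooth convergence), and it correctly identifies step (2) as the place where the semi-log canonical structure enters. As a self-contained proof it is incomplete---the barrier construction in (2) is only gestured at, and the comparison principle argument needs the precise volume-form asymptotics on the semi-stable model to be made explicit---but since the paper itself defers entirely to \cite{Song}, your write-up already goes further than what is required here. If anything, for the purposes of this paper you could simply cite the result and omit the sketch.
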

Note that we have made a small abuse of notation, since the object in the right-hand side of the inequality is a metric and not a function.
\begin{rem}
Even if we will not need it here, one can show that the previous theorem combined with a uniform non-collapsing condition implies pointed Gromov-Hausdorff convergence of $X_t$ to a complete metric space, whose regular part (in the Cheeger-Colding sense) is precisely $(\X_{c,0} \backslash \Sing(\X_{c,0}), \omega_{KE})$.
\\We also point out that this holds for degeneration of canonically polarized manifolds over a higher-dimensional base by the results of \cite{SSW}, building on the semi-stable reduction theorem from \cite{AK}, \cite{ALT}.
\\The behaviour of the metrics in the region where the metric collapses is also well-understood, under the technical assumption that the canonical model is semi-stable, see \cite{Zh}.
\end{rem}
We are now ready to prove theorem \ref{THEO KE NEG}. We let $\phi_0 = \phi_{K_{\X_c/R}} \in \CPSH(X^{\an}, L^{\an})$, and $\phi_{KE} \in \CPSH(X, L)$ the family of Kähler-Einstein metrics. In order to prove that the hybrid metric $\phi$ defined by the statement of theorem \ref{THEO KE NEG} is continuous and semi-positive, it is enough to prove that it defines a continuous metric on $L^{\hyb}$, by prop. \ref{prop psh cont}.
\\Substracting the reference metric hybrid $\phi_{\FS}$, whose restriction to $X^{\an}$ is the model metric $\phi_{K_{\X_c/R}}$, it is enough to prove that the potential $\psi_t$'s converge to zero as $t \rightarrow 0$ in the hybrid topology. In other words, we need to prove that:
$$\big\lvert \frac{\psi_t}{\log \lvert t \rvert} \big\rvert \xrightarrow[t \rightarrow 0]{} 0,$$
which is an easy consequence of the estimates from prop. \ref{prop Song}. This concludes the proof of theorem \ref{THEO KE NEG}.
\section*{Appendix} \label{appendix psh}
In this appendix, we state and prove a regularization result for psh metrics in the complex analytic setting, that will be used in the proof of theorem \ref{theo psh hyb}. We expect that the statement below (thm. \ref{theo reg psh}) is well-known to experts, as well as the techniques we use in the proof - which are largely due to Demailly \cite{Dem}. Nevertheless, since we could not find the precise statement required in the literature, we include a proof.
\\We let $(Y, \omega)$ be a Kähler manifold, and assume given a proper holomorphic map $\pi : Y \fl \Omega$, where $\Omega$ is a bounded open subset of $\C$. We furthermore assume that there exists a $\pi$-relatively ample line bundle $L$  on $Y$ such that $\omega \in c_1(L)$.
\\Given a semi-positive metric $\phi \in \PSH(Y,L)$, we want to write it as a decreasing limit of a sequence $(\phi_j)_{j \in \N}$ of psh metrics on $L$ with analytic singularities along the multiplier ideal sheaf $\mathscr{I}_j := \mathscr{I}(j \phi)$. We recall the basic definitions:
\begin{defn} Let $\phi \in \PSH(Y, L)$ be a semi-positive metric on $L$. The multiplier ideal sheaf $\mathscr{I}(\phi)$ is the ideal generated by the germs of holomorphic functions $f$ such that $\lvert f \rvert^2 e^{-2\phi}$ is locally integrable on $Y$.
\end{defn}
Here locally integrable means locally integrable in any coordinate chart, we also abusively view $\phi$ as a psh function this way. 
\begin{defn} \label{def ansing} Let $\mathscr{J} \subset \gO_Y$ be a coherent ideal sheaf on $Y$, and $\phi \in \PSH(Y, L)$. We say that $\phi$ has analytic singularities along $\mathscr{J}$ if $\phi$ can be written locally as:
$$\phi = \log \big( \lvert f_1 \rvert^2 +...+ \lvert f_r \rvert^2 \big) + \chi,$$
where $(f_1,...,f_r)$ is a family of local generators of $\mathscr{J}$ and $\chi$ is a smooth function.
\end{defn}
Given a coherent ideal sheaf $\mathscr{J}$, one can always produce quasi-psh functions with analytic singularities along $\mathscr{J}$, using a partition of unity argument. 
\\The rest of this appendix will be devoted to the proof of the following:
\begin{theo} \label{theo reg psh} Let $\Omega \subset \C$ be a bounded open subset, $Y$ a smooth Kähler manifold together with a proper holomorphic map $\pi : Y \fl \Omega$, and let $L$ a relatively ample line bundle on $Y$. We let $\psi$ be a smooth Hermitian metric on $L$ whose curvature form $\omega= dd^c \psi$ is a Kähler metric on $Y$.
\\Let $\phi \in \PSH(Y, L)$, and write $\mathscr{I}_m := \mathscr{I}(m\phi)$ the multiplier ideal of $m \phi$, for $m \in \N$. 
Then for any  relatively compact, open subset $Y' \Subset Y$, there exists a sequence $(\phi_j)_{j \in \N} \in \PSH(Y', L)$ such that:
\begin{itemize}
\item the $\phi_j$ decrease pointwise to $\phi$ on $Y'$,
\item for all $j \in \N$, the psh metric $2^j \phi_j$ on $2^j L$ has analytic singularities of the form $\mathscr{I}_{2^j}$.
\end{itemize}
\end{theo}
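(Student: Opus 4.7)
The plan is to apply Demailly's Bergman-kernel approximation, adapted to the relative projective setting of a proper map $\pi : Y \fl \Omega$. First, using the $\pi$-relative ampleness of $L$ together with a uniform version of Siu's effective base-point freeness (itself a consequence of Nadel vanishing), I would fix once and for all an integer $m_0 \ge 0$ such that, for every $m \ge 0$, the coherent sheaf $\gO_Y\bigl((m+m_0)L \otimes \mathscr{I}_m\bigr)$ is generated by its global sections on an auxiliary open neighbourhood $\pi^{-1}(\Omega') \supset \overline{Y'}$ with $\Omega' \Subset \Omega$. On this enlarged neighbourhood I introduce the Hilbert space
$$\mathscr{H}_m := \Bigl\{ s \in H^0\bigl(\pi^{-1}(\Omega'), (m+m_0)L\bigr) \;\Big|\; \int_{\pi^{-1}(\Omega')} \lvert s \rvert^2 \, e^{-2(m\phi + m_0 \psi)} \, \omega^{\dim Y} < \infty \Bigr\},$$
which by Nadel's definition of the multiplier ideal is a subspace of $H^0(\pi^{-1}(\Omega'), (m+m_0)L \otimes \mathscr{I}_m)$. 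Choosing a Hilbert basis $(s_{m, l})_l$, I set
$$\tilde\phi_m := \frac{1}{2(m+m_0)} \log \sum_l \lvert s_{m, l} \rvert^2,$$
viewed as a psh metric on $L$ over $\pi^{-1}(\Omega')$. Uniform global generation of the ideals $\mathscr{I}_m$ then ensures that $2(m+m_0)\tilde\phi_m$ has analytic singularities along $\mathscr{I}_m$ on $Y'$ in the sense of Definition \ref{def ansing}.

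The heart of the argument is the two-sided comparison
$$\phi - \frac{C}{m} \leq \tilde\phi_m \leq \phi + \frac{C \log m}{m}$$
on $Y'$, with $C$ independent of $m$. The upper bound follows from the sub-mean-value inequality applied in coordinate polydiscs of radius $r_m \sim 1/m$, combined with the extremal characterisation $\tilde\phi_m(x) = \sup\bigl\{ (m+m_0)^{-1} \log \lvert s(x) \rvert \, : \, s \in \mathscr{H}_m, \, \lVert s \rVert \leq 1 \bigr\}$. The lower bound is where the Ohsawa--Takegoshi $L^2$-extension theorem with singular weights enters: for every $x \in Y'$ and every $\xi \in L_x$, the jet $\xi^{\otimes (m+m_0)}$ extends to a global section $s \in \mathscr{H}_m$ with $\lVert s \rVert^2 \leq C \, \lvert \xi \rvert^{2(m+m_0)} e^{-2(m\phi(x) + m_0 \psi(x))}$, and evaluating the extremal formula at $s$ yields the inequality. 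In particular $\tilde\phi_m \to \phi$ pointwise on $Y'$ as $m \to \infty$.

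To manufacture a \emph{decreasing} sequence, I pass to the dyadic subsequence $m = 2^j$ and set $\phi_j := \tilde\phi_{2^j} + c_j$ for a suitable sequence of constants $c_j \downarrow 0$ of order $j \cdot 2^{-j}$, chosen large enough that the two-sided estimate forces both $\phi_j \ge \phi$ and $\phi_j \ge \phi_{j+1}$ pointwise on $Y'$ for all $j \ge j_0$; the finite initial defect is absorbed by reindexing. Since adding a constant does not alter the type of analytic singularities, $2^j \phi_j$ still has analytic singularities governed by $\mathscr{I}_{2^j}$, the rescaling factor $2^j/(2^j+m_0)$ being absorbed into the smooth part.

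The main obstacle is to establish, uniformly in $m$ and uniformly in the fibre parameter $t \in \Omega'$, both the global generation of Step 1 and the quantitative $L^2$-extension estimate of Step 2. Both rely crucially on properness of $\pi$ over the relatively compact base $\Omega'$ together with the relative ampleness of $L$, but transferring the classical compact-projective statements to this relative setting requires some care in tracking the dependence of the constants on the fibre; this is where the smoothness of the reference metric $\psi$ and the compactness of $\pi^{-1}(\overline{\Omega'})$ are used most heavily. The remainder of the argument is standard bookkeeping following Demailly's strategy.
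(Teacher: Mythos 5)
Your overall architecture (global generation of $\gO_Y((m+m_0)L\otimes\mathscr{I}_m)$ via Nadel vanishing, Bergman metrics attached to the weighted Hilbert spaces, Ohsawa--Takegoshi for the lower bound) matches the paper's proof, but the step that is supposed to produce a \emph{decreasing} sequence is wrong, and that is the only genuinely delicate point of the theorem. Your claimed upper bound $\tilde\phi_m \le \phi + C\,m^{-1}\log m$ on $Y'$ with $C$ uniform is false. The sub-mean-value inequality on a polydisc of radius $r_m\sim 1/m$ only gives
$$\tilde\phi_m(z)\;\le\;\sup_{B(z,r_m)}\phi\;+\;\frac{C\log m}{m},$$
and $\sup_{B(z,r)}\phi-\phi(z)$ is not uniformly small in $z$: it blows up as $z$ approaches the polar set of $\phi$. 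Already for $\phi=\log\lvert f\rvert$ one has $\tilde\phi_m(z)-\phi(z)\approx -\tfrac{1}{m}\log\lvert f(z)\rvert+O(1/m)$, which is unbounded on $Y'$ for each fixed $m$. Consequently there is no sequence of constants $c_j\downarrow 0$ making $\tilde\phi_{2^j}+c_j$ decrease: the differences $\tilde\phi_{2^{j+1}}-\tilde\phi_{2^j}$ are not bounded above by summable constants, so the "standard bookkeeping" at the end of your argument cannot be carried out.

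The missing ingredient is the almost-subadditivity of the Bergman metrics, which is how both Demailly and the paper obtain monotonicity: one applies Ohsawa--Takegoshi to extend a section $s\in\mathscr{H}_{m_1+m_2,m_0}$ from the diagonal $\Delta\subset Y_c\times Y_c$ to a section $S$ of $p_1^*((m_1+\tfrac{m_0}{2})L)\otimes p_2^*((m_2+\tfrac{m_0}{2})L)$ with controlled norm, expands $S$ in the Hilbert basis $(s_{m_1,l_1}\otimes s_{m_2,l_2})$, and restricts back to the diagonal to get
$$(m_1+m_2+m_0)\,\tilde\phi_{m_1+m_2}\;\le\;\bigl(m_1+\tfrac{m_0}{2}\bigr)\,\tilde\phi_{m_1}+\bigl(m_2+\tfrac{m_0}{2}\bigr)\,\tilde\phi_{m_2}+C,$$
with the auxiliary twist $m_0\mapsto m_0/2$ on each factor. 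Taking $m_1=m_2$ along the dyadic sequence, this inequality --- not a two-sided sup-norm comparison with $\phi$ --- is what shows that $\phi_j=\tilde\phi_{2^j-m_0}+2^{-j-2}C$ decreases, while your one-point Ohsawa--Takegoshi lower bound (which is correct) identifies the limit as $\phi$. A secondary gap: to see that $\tilde\phi_m$ has analytic singularities along $\mathscr{I}_m$ in the sense of Definition \ref{def ansing}, you must compare the infinite Bergman sum with a finite partial sum generating the ideal; global generation alone does not give this, and the paper handles it via the strong Noetherian property argument of Proposition \ref{prop som part}.
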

Let us fix a psh exhaustion function $\eta : \Omega \fl \R$, i.e. such that the sublevel sets $\Omega_c := \{ \eta < c \}$ are relatively compact subsets of $\Omega$; note that the $Y_c := \{ \eta \circ \pi < c \}$ are also relatively compact in $Y$ and weakly pseudoconvex. Since the subset $Y' \subset Y$ is relatively compact, we have $Y' \subset Y_c$ for $c \gg 1$.
\begin{prop} \label{prop glob gen} For any $c \in \R$ such that $Y' \Subset Y_c \Subset Y$, there exists a $m_0 \gg 1$ such that for all $m \ge 1$, the sheaf $\gO_Y((m+m_0)L\otimes \mathscr{I}_m)$ is generated by its global sections on $Y_c$.
\end{prop}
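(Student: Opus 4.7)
The plan is to follow Siu's effective global generation strategy in the adjoint setting, combining Nadel--Demailly vanishing on the weakly pseudoconvex Kähler manifold $Y_{c_1}$ (for $c_1$ slightly larger than $c$) with auxiliary singular weights having isolated logarithmic poles at each $y \in Y_c$. Since $Y_c = \{\eta \circ \pi < c\}$ is a sublevel set of the psh exhaustion $\eta \circ \pi$, it is weakly pseudoconvex, and the same holds for any intermediate $c_1 > c$ with $Y_c \Subset Y_{c_1} \Subset Y$; all estimates will be carried out on $Y_{c_1}$.

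Concretely, I would first fix a smooth Hermitian metric $h_K$ on $K_Y$; by compactness of $\overline{Y_{c_1}}$ its curvature satisfies $|dd^c h_K| \le C_1 \omega$ on $Y_{c_1}$ for some $C_1 = C_1(c_1) > 0$. Using compactness of $\overline{Y_c}$ I would then pick a uniform radius $r_0 > 0$ such that each $y \in Y_c$ admits holomorphic coordinates $z_y$ on a ball $U_y \subset Y_{c_1}$ of radius $r_0$, and set
\[
\lambda_y(x) := (n+1)\,\rho\bigl(|z_y(x)|/r_0\bigr)\,\log|z_y(x)|,
\]
for a fixed smooth cutoff $\rho$, extended by $0$ outside $U_y$. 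This yields a family of quasi-psh functions on $Y_{c_1}$ with isolated logarithmic pole of Lelong number $n+1$ at $y$, smooth elsewhere, satisfying $dd^c \lambda_y \ge -C_2 \omega$ for a constant $C_2 = C_2(c,c_1)$ independent of $y$. Choosing $m_0 > C_1 + C_2$, independent of $m$ and $y$, and equipping $M_m := (m+m_0)L - K_Y$ with the singular metric
\[
\Phi_{m,y} := m\phi + m_0\psi - h_K + \lambda_y,
\]
its curvature dominates $(m_0 - C_1 - C_2)\omega > 0$ on $Y_{c_1}$. A direct computation in coordinates gives $\mathscr{I}(\lambda_y)_y = \mathfrak{m}_y^2$, and by the subadditivity of multiplier ideals (Demailly--Ein--Lazarsfeld) together with the smoothness of $m_0\psi - h_K$,
\[
\mathscr{I}(\Phi_{m,y})_y \subset (\mathscr{I}_m)_y \cdot \mathscr{I}(\lambda_y)_y \subset \mathfrak{m}_y \cdot (\mathscr{I}_m)_y,
\]
while $\mathscr{I}(\Phi_{m,y}) = \mathscr{I}_m$ away from $y$.

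Applying Nadel--Demailly vanishing on the weakly pseudoconvex Kähler manifold $Y_{c_1}$ then gives $H^1\bigl(Y_{c_1},\,\gO_Y((m+m_0)L) \otimes \mathscr{I}(\Phi_{m,y})\bigr) = 0$, and chasing through the short exact sequence
\[
0 \to \gO_Y((m+m_0)L) \otimes \mathscr{I}(\Phi_{m,y}) \to \gO_Y((m+m_0)L) \otimes \mathscr{I}_m \to \mathcal{Q}_y \to 0
\]
(whose quotient $\mathcal{Q}_y$ is a skyscraper at $y$ surjecting onto the fiber of $\gO_Y((m+m_0)L)\otimes \mathscr{I}_m$ at $y$) yields surjectivity on global sections. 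Nakayama's lemma then gives generation of the stalk at $y$, and since $y \in Y_c$ was arbitrary, the sheaf $\gO_Y((m+m_0)L)\otimes \mathscr{I}_m$ is globally generated on $Y_c$ for this uniform choice of $m_0$.

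The hard part will be arranging the containment $\mathscr{I}(\Phi_{m,y})_y \subset \mathfrak{m}_y \cdot (\mathscr{I}_m)_y$, which drives the whole argument: it forces the Lelong number of $\lambda_y$ to be strictly greater than $n$, and requires invoking subadditivity of multiplier ideals. Uniformity of the constants $C_1, C_2$ and of the coordinate radius $r_0$ across $y \in Y_c$ is guaranteed by the compactness of $\overline{Y_c} \subset Y_{c_1}$ but should be tracked with care; once these inputs are in place, the rest is a standard Nadel vanishing plus Nakayama routine.
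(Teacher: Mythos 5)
Your argument is correct, but it is genuinely different from the one in the paper. You run the classical Siu--Nadel pointwise strategy: attach to each $y \in Y_c$ an auxiliary quasi-psh weight $\lambda_y$ with an isolated logarithmic pole, use subadditivity of multiplier ideals to force $\mathscr{I}(\Phi_{m,y})_y \subset \mathfrak{m}_y\cdot(\mathscr{I}_m)_y$, kill the obstruction by Nadel vanishing on the weakly pseudoconvex manifold $Y_{c_1}$, and conclude by Nakayama; the uniformity of $m_0$ in $m$ and $y$ comes from compactness of $\overline{Y_c}$. The paper instead invokes the relative Castelnuovo--Mumford regularity criterion of \cite{DEL}: after fixing a relatively very ample $H$ and choosing $m_0$ so that $A = m_0L - K_Y - nH$ is relatively ample on $Y_c$, the $\pi$-global generation of $\gO_Y((m+m_0)L\otimes\mathscr{I}_m)$ follows from the vanishing of $R^q\pi_*\bigl(((m+m_0)L - qH)\otimes\mathscr{I}_m\bigr)$, which is again Nadel vanishing; global generation over the Stein base then follows. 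Both proofs rest on the same vanishing theorem; the paper's route is shorter once the regularity criterion is granted, while yours is more self-contained and makes the role of the isolated pole explicit. Two small points to watch in your write-up: the identity $\mathscr{I}(\lambda_y)_y = \mathfrak{m}_y^2$ holds only if the Lelong number $n+1$ exceeds $\dim Y$ by one (if $\dim Y = n+1$ you get $\mathscr{I}(\lambda_y)_y = \mathfrak{m}_y$, which still suffices for the containment you actually use); and you should record that the sections produced over $Y_{c_1}$ restrict to the sections over $Y_c$ used in the construction of the Hilbert spaces $\mathscr{H}_{m,m_0}$.
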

\begin{proof} We argue as in \cite[lem. 5.6]{BBJ}, and write $n = \dim Y$.
\\We let $H$ be a relatively very ample line bundle on $Y$, and we choose $m_0 \gg 1$ such that the line bundle $A = m_0 L -K_Y -nH$ is relatively ample on $Y_c$.
\\By the relative Castelnuevo-Mumford regularity criterion (see \cite[lem. 1.4]{DEL}), the sheaf $\gO_Y((m+m_0)L\otimes \mathscr{I}_m)$ is $\pi$-globally generated on $Y_c$ as soon as:
$$R^q \pi_* (\big((m+m_0)L -qH\big) \otimes \mathscr{I}_m) =0$$
for $1 \le q \le n-1$, which holds by Nadel vanishing \cite{Nad}, \cite[thm. B.8]{BFJ2}.
\end{proof}
We will now regularize $\phi$ by a sequence of psh metrics with analytic singularities of the form $\mathscr{I}(m \phi)$, up to some controlled error term. We mostly follow the argument from the proof of \cite[thm. 8.1]{GZ}. For $m_0$ large enough so that prop. \ref{prop glob  gen} holds, write:
$$\psi_{m, m_0} = \big( m \phi + m_0\psi \big) \in \PSH(Y, (m+m_0)L),$$
we have that $\mathscr{I}_m = \mathscr{I}(\psi_{m, m_0}) = \mathscr{I}(m\phi)$ is the multiplier ideal of the psh metric $\psi_{m, m_0}$ on $(Y, L)$.
\\We are naturally led to introduce the Bergman metrics associated to the multiplier ideal $\mathscr{I}_m$; for $Y_c$ as in prop. \ref{prop glob gen}, we set $V_{m, m_0} := H^0(Y_c, (m+m_0)L \otimes \mathscr{I}_{m})$ and define $ \mathscr{H}_{m, m_0} \subset V_{m, m_0}$ as the following Hilbert space:
$$\mathscr{H}_{m, m_0} = \{ s \in V_{m, m_0} / \lVert s \rVert^2 := \int_{Y_c} \lvert s \rvert^2_{\psi_{m, m_0}} \omega^{n} < \infty \}.$$
For every $m$, we may choose a Hilbert basis $\mathscr{B}_{m, m_0} = (s_{m, m_0, l})_{l \in \N}$ of $ \mathscr{H}_{m, m_0}$, and we now set:
$$\phi_{m, m_0} = \frac{1}{2(m+m_0)} \log ( \sum_{l \in \N} \lvert s_{m, m_0, l} \rvert^2),$$
we have $\phi_{m, m_0} \in \PSH(Y_c, L)$.
\begin{prop} \label{prop som part}
For $q \in \N$, set:
$$\phi_{m, m_0, q} = \frac{1}{2(m+m_0)} \log ( \sum_{l \le q} \lvert s_{m, m_0, l} \rvert^2 ).$$
Then the $\phi_{m, m_0, q}$ converge uniformly to $\phi_{m, m_0}$ over $Y'$. Moreover, $\phi_{m, m_0}$ has analytic singularities of the form $\mathscr{I}_m$ over $Y'$.
\end{prop}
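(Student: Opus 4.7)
The plan is to derive both claims from Bergman kernel theory for the $L^2$-space $\mathscr{H}_{m,m_0}$. Writing $B_q(z) = \sum_{l \le q} |s_{m,m_0,l}(z)|^2$ and $B(z) = \sum_l |s_{m,m_0,l}(z)|^2$ in a local trivialisation of $(m+m_0)L$, so that $\phi_{m,m_0,q} = \tfrac{1}{2(m+m_0)} \log B_q$ and $\phi_{m,m_0} = \tfrac{1}{2(m+m_0)} \log B$, the partial sums satisfy $B_q \nearrow B$ pointwise.

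First I establish a uniform sub-mean-value estimate $|s(z)|^2 \le C \|s\|^2$ valid for all $z \in Y'$ and $s \in \mathscr{H}_{m,m_0}$. Since $Y' \Subset Y_c$, one covers $\overline{Y'}$ by finitely many coordinate balls whose closures remain inside $Y_c$; on each such ball $|s|^2$ is plurisubharmonic and $\psi_{m,m_0}$ is bounded above (being psh on a neighbourhood of the closure), so the sub-mean value inequality for $|s|^2$, combined with $e^{-2\psi_{m,m_0}} \ge c > 0$, yields the bound with $C$ uniform on $Y'$. In particular $B \le C$ uniformly on $Y'$. A standard reproducing-kernel argument then identifies $B(z,w) = \sum_l s_l(z)\overline{s_l(w)}$ as the integral kernel of the evaluation functional on $\mathscr{H}_{m,m_0}$, forcing $B$ to be real-analytic, hence continuous, on $Y_c$; Dini's theorem applied to $B_q \nearrow B$ on the compact $\overline{Y'}$ then gives uniform convergence $B_q \to B$.

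The analytic-singularities claim is established in parallel and supplies the key input for upgrading to the logarithmic scale. The coherent ideals $\mathscr{J}_q := \mathscr{I}((s_{m,m_0,l})_{l \le q}) \subset \mathscr{I}_m$ form an increasing sequence, stabilising on $Y'$ by the strong Noetherian property. By Proposition \ref{prop glob gen} and a standard $L^2$-argument (possibly after slightly shrinking $Y_c$ to ensure integrability), globally generating sections of $(m+m_0)L \otimes \mathscr{I}_m$ can be taken in $\mathscr{H}_{m,m_0}$, hence expressed as finite linear combinations of the Hilbert basis; thus $\mathscr{J}_q = \mathscr{I}_m$ on $Y'$ for $q \ge q_0$ large. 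On each patch in a finite cover of $\overline{Y'}$, expressing local generators $f_1,\dots,f_r$ of $\mathscr{I}_m$ as holomorphic combinations of the $s_l$ (and conversely) yields a two-sided comparison
\[
c_1 \sum_i |f_i|^2 \;\le\; B_q \;\le\; B \;\le\; c_2 \sum_i |f_i|^2
\]
with continuous $c_1, c_2 > 0$. This establishes analytic singularities of $\phi_{m,m_0}$ along $\mathscr{I}_m$ in the sense of Definition \ref{def ansing}, and simultaneously controls $\log(B/B_q)$ as a bounded function across the vanishing locus of $\mathscr{I}_m$. A final Dini argument applied to the non-negative, decreasing sequence $\log(B/B_q) \searrow 0$ on $\overline{Y'}$ then gives the sought uniform convergence $\phi_{m,m_0,q} \to \phi_{m,m_0}$.

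The main obstacle is precisely this last step: uniform convergence $B_q \to B$ does not automatically transfer to their logarithms near the common vanishing locus of the Hilbert basis, where both sides collapse to $-\infty$. Overcoming this requires the analytic-singularities structure itself, which rests on the strong Noetherian property combined with $L^2$-global generation of $(m+m_0)L \otimes \mathscr{I}_m$ over $Y_c$; the delicate technical point is to guarantee that scheme-theoretic generating sections coming from Proposition \ref{prop glob gen} actually belong to $\mathscr{H}_{m,m_0}$, achieved via a standard $L^2$-integrability argument using the upper bound on $\psi_{m,m_0}$ near $\overline{Y'}$.
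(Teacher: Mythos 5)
Your overall architecture matches the paper's: stabilize the ideals $\mathscr{J}_q$ by the strong Noetherian property, identify the stable ideal with $\mathscr{I}_m$ via global generation, compare $B$ and $B_q$ with $\sum_i\lvert f_i\rvert^2$ for local generators $f_i$ of $\mathscr{I}_m$, and finish with Dini. The sub-mean-value bound, the continuity of $B$, and the lower bound $c_1\sum_i\lvert f_i\rvert^2\le B_q$ (each $f_i$ is a \emph{finite} holomorphic combination of $s_1,\dots,s_{q_0}$, so Cauchy--Schwarz applies) are all fine.

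The gap is the upper bound $B\le c_2\sum_i\lvert f_i\rvert^2$, which you dismiss as the ``conversely'' half of the comparison. Writing each $s_l=\sum_i g_{l,i}f_i$ gives $\lvert s_l\rvert^2\le C_l\sum_i\lvert f_i\rvert^2$ for each fixed $l$, but the division coefficients $g_{l,i}$ are not canonical and carry no bound in terms of $\lVert s_l\rVert=1$; summing over the \emph{infinitely many} $l$ requires $\sum_l C_l<\infty$, which does not follow from this term-by-term argument. This inequality (equivalently, $F\le C_{q}F_{q}$ for some finite $q$, which is what the statement about uniform convergence of the logarithms really amounts to) is the heart of the proposition, and it is exactly the step for which the paper deploys Demailly's trick from \cite[thm. 2.2.1, step 2]{DPS}: one applies the strong Noetherian property on the product $Y_{c}\times Y_{c}$ to the ideals generated by the finitely many $(z,w)\mapsto s_l(z)\overline{s_l(\bar w)}$, observes that the full kernel $\sum_{l}s_l(z)\overline{s_l(\bar w)}$ converges locally uniformly (by your Cauchy--Schwarz bound) and hence, by closedness of the space of sections of a coherent sheaf under local uniform limits, belongs to the stationary ideal $\mathscr{K}_q$; membership in a finitely generated ideal then yields $F\le C_qF_q$ on restricting to the conjugate diagonal. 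Without this (or a substitute such as the extremal characterization $B(z)=\sup_{\lVert s\rVert\le1}\lvert s(z)\rvert^2$ combined with a \emph{uniform} division bound $\sup\lvert g_i\rvert\le C\sup\lvert s\rvert$ from the open mapping theorem for the surjection $\gO^{\oplus r}\to\mathscr{I}_m$, or Skoda division), your two-sided comparison is unjustified, and so is the continuity across the zero locus of $\mathscr{I}_m$ of the ratio $B/B_q$ that your final Dini argument needs.
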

\begin{proof} 
We drop the $(m, m_0)$ subscript to alleviate notation. We choose $c' < c$ such that let $Y' \Subset Y_{c'} \Subset Y_c$, with the freedom to slightly decrease $c'$ throughout the steps of the proof. 
\\We set $\mathscr{J}_q = \mathscr{I}\big((s_l)_{l \le q}\big)$, and $\mathscr{J} = \cup_{q \ge 0} \mathscr{J}_q = \mathscr{I}\big((s_l)_{l \in \N}\big)$. Then we have $\mathscr{J}=(m+m_0)L\otimes \mathscr{I}(m\phi)$ over $Y'''$ by global generation, which is a coherent ideal sheaf on $Y_{c'}$ by Nadel's theorem \cite{Nad}. By the strong Noetherian property for coherent sheaves, the ascending chain $(\mathscr{J}_q)_{q \ge 0}$ of ideals is locally stationary, so that we have $\mathscr{J}_q \equiv \mathscr{J}$ for $q \gg1$ on $Y_{c'}$ after shrinking.
\\We now set:
$$F = \sum_{l \in \N} \lvert s_l \rvert^2,$$
and:
$$F_q =\sum_{l \le q} \lvert s_l \rvert^2.$$
We will prove that on $Y'$, there exists $C_q \ge 1$ such that:
$$F_q \le F \le C_q F_q,$$
and that $C_q \xrightarrow[q \fl \infty]{} 1$. We mostly mimic the argument from step 2 of the proof of \cite[thm. 2.2.1]{DPS}. Up to slightly decreasing $c'$, we may work locally, so that we assume that the $s_l$ are holomorphic functions. By the strong Noetherian property of coherent ideal sheaves, the sequence of ideal sheaves $\mathscr{K}_q$ on $Y_c \times Y_c$ generated by the $(s_l(z)\overline{s_l(\bar{w})})_{l \le q}$ is locally stationary, so that it is stationary at $\mathscr{K} = \cup_{q \ge 0} \mathscr{K}_q$ on $Y_{c'} \times Y_{c'}$ for $q \gg1$. From the bound:
$$\sum_{l \le q} \lvert s_l(z)\overline{s_l(\bar{w})} \rvert \le \bigg( \big( \sum_{l \in \N} \lvert s_l(z) \rvert^2\big) \cdot \big(\sum_{l \in \N} \lvert s_l(\bar{w}) \rvert^2\big) \bigg)^{1/2},$$
we infer that the series $\sum_{l \le q} s_l(z)\overline{s_l(\bar{w})}$ converges locally uniformly on $Y_{c'} \times Y_{c'}$, and thus by closedness of the space of sections of a coherent ideal sheaf, we get that the holomorphic function on $Y_{c'} \times Y_{c'}$:
$$\sum_{l \in \N} s_l(z)\overline{s_l(\bar{w})} \in \mathscr{K}.$$
Since $\mathscr{K} = \mathscr{K}_q$ for $q$ large enough, we get that:
$$\sum_{l \in \N} s_l(z)\overline{s_l(\bar{w})} \le C_q \sum_{l \le q} s_l(z)\overline{s_l(\bar{w})}$$
on $Y_{c'} \times Y_{c'}$ for some $C_q >0$, and thus:
$$F \le C_q F_q$$
over $Y_{c'}$ for $q$ large enough by restricting to the complex diagonal $z= \bar{w}$. 
\\Finally, let us set $\chi_q = \phi - \phi_q$: then we proved that for any $q \gg 1$, there exists $b_q >0$ such that $0 \le \chi_q \le b_q$ on $Y_{c'}$. As a result $\phi_q$ and $\phi$ have the same singularities, which are analytic singularities along $\mathscr{J}_q \equiv \mathscr{J}$ for $q \gg 1$. 
\\Moreover, since the sum converges locally uniformly, the $\chi_q$ are continuous and decrease to zero pointwise, so that from Dini's lemma the $\chi_q$'s decrease uniformly to zero on $Y'$, and $\phi_q$ converges uniformly to $\phi$ over $Y'$.
\end{proof}
We now want to prove that over $Y'$, the $\phi_{m, m_0}$ decrease with respect to $m$ (up to an error term) to our initial metric $\phi$. 
\\We start by proving that the $\phi_{m, m_0}$ converge pointwise to $\phi$ over $Y_c$. Writing $B_{m, m_0} \subset \mathscr{H}_{m, m_0}$ the unit ball with respect to the $L^2$-norm, we have:
$$\phi_{m, m_0} = \frac{1}{2(m+m_0)} \log \big(\sum_{l \ge 0} \lvert s_{m, m_0, l} \rvert^2 \big)= \frac{1}{2(m+m_0)} \sup_{s \in B_{m, m_0}} \log \lvert s \rvert^2,$$
since for $z \in Y_c$, the quantity $\sum_{l \ge 0} \lvert s_{m, m_0, l}(z) \rvert^2$ is the operator norm of the evaluation map $\ev_z : \mathscr{H}_{m, m_0} \fl L^{\otimes m+m_0}_z$.
\\Covering $Y_c$ by coordinates charts $(U_i)_{i \in I}$, we let $z \in U_i$ and $\rho>0$ such that $B(z, \rho) \subset U_i$. For $s \in \mathscr{H}_{m, m_0}$, since $\lvert s \rvert^2$ is subharmonic in $U_i$, we have:
$$\lvert s(z) \rvert^2 \le \frac{C}{\rho^{2(n+1)}} \int_{B(z,\rho)} \lvert s \rvert^2 \omega^{n+1} \le C' e^{2\sup_{B(z, \rho)} \psi_{m, m_0}} \int_{Y_c} \lvert s\rvert_{\psi_{m, m_0}}^2 \omega^{n+1}, $$
so that if $s \in B_{m, m_0}$, the bound:
$$\log \lvert s(z) \rvert^2 \le \sup_{B(z, \rho)} \psi_{m, m_0} +C$$
holds, hence $\phi_{m, m_0}(z) \le \sup_{B(z, \rho)} \phi + (m+m_0)^{-1}C$.
\\The converse inequality follows from the Ohsawa-Takegoshi theorem \cite{DemOT}: there exists $m_0 \gg1$ and a universal constant $C>0$ such that for all $m \in \N$ large enough and $z \in Y_c$, there exists $s \in \mathscr{H}_{m, m_0}$ such that:
$$\int_{Y_c} \lvert s \rvert^2 e^{-2\psi_{m, m_0}} \omega^{n+1} \le C \lvert s(z) \rvert^2 e^{-2\psi_{m, m_0}(z)},$$
so that if we choose the right-hand side to be equal to one, we get $s \in B_{m, m_0}$ such that:
$$\log \lvert s(z) \rvert \ge \psi_{m, m_0}(z)-C,$$
hence $\phi_{m, m_0}(z) \ge \phi(z) - (m+m_0)^{-1}C$, which proves pointwise convergence on $Y''$.
\\We now prove that the $\phi_{m, m_0}$ are almost subadditive. We let $s \in B_{m_1+m_2, m_0} \subset \mathscr{H}_{m_1+m_2, m_0}$, and set:
$$\mathscr{H}_{m_1, m_2, m_0} = \{ S \in H^0 \big(Y_c \times Y_c, p_1^*((m_1+\frac{m_0}{2})\Ld)(\mathscr{I}_{m_1}) \otimes p_2^*((m_2+\frac{m_0}{2})\Ld)(\mathscr{I}_{m_2}) \big) ; $$
$$\int_{Y_c \times Y_c} \lvert S(z_1, z_2) \rvert^2 e^{-2\psi_{m_1 + \frac{m_0}{2}}(z_1)-2\psi_{m_2 + \frac{m_0}{2}}(z_2)}(\omega_1 \otimes \omega_2)^{n} < \infty \},$$
where we have written $\omega_i = p_i^*\omega$.
\\By the Ohsawa-Takegoshi theorem, there exists $S \in \mathscr{H}_{m_1, m_2, m_0}$ with $L^2$-norm $\lVert S \rVert \le C$ for a universal constant $C$, such that $S_{| \de_{Y_c}} =s$, where $\de_{Y_c} \subset Y_c \times Y_c$ is the diagonal. To be more precise, we let: 
$$\xi : Y \times Y \fl \R \cup \{ - \infty \}$$
be a quasi-psh function with analytic singularities along $\mathscr{I}(\de_Y)$, and we may assume that $\xi \le 0$ on $Y_c$. Then in the notation of \cite[thm. 1.4]{DemOT} the measure $dV_{\de_Y, \omega}[\xi]$ is uniformly equivalent to $\omega^n$ on $\de_{Y_c}$, as $\de_Y$ is a local complete intersection, so that our estimate follows from the aforementioned theorem with $\delta =2$.
\\Since $\mathscr{H}_{m_1, m_2, m_0} = \mathscr{H}_{m_1, m_0/2} \hat{\otimes} \mathscr{H}_{m_2, m_0/2}$, the family $(s_{m_1, m_0/2,  l_1} \otimes s_{m_2, m_0/2, l_2})_{(l_1,l_2) \in \N^2}$ form a Hilbert basis of $\mathscr{H}_{m_1, m_2, m_0}$. We may write:
$$S(z_1, z_2) = \sum_{(l_1, l_2) \in \N^2} c_{l_1,l_2} s_{m_1, m_0/2,  l_1}(z_1) \otimes s_{m_2, m_0/2, l_2}(z_2),$$
with $\sum_{l_1, l_2} \lvert c_{l_1,l_2} \rvert^2 \le C$. Thus:
$$\lvert s(z) \rvert^2=\lvert S(z, z)\rvert^2 \le C \big(\sum_{l_1} \lvert s_{m_1, m_0/2,  l_1}(z) \rvert^2 \big) \times \big( \sum_{l_2} \lvert s_{m_2, m_0/2, l_2}(z) \rvert^2 \big),$$
so that:
$$\phi_{m_1+m_2, m_0} \le \frac{C}{m_1+m_2+m_0} + \frac{(m_1+m_0/2) \phi_{m_1, m_0/2}}{ m_1+m_2+ m_0} + \frac{(m_2+m_0/2) \phi_{m_2, m_0/2}}{ m_1+m_2+ m_0}.$$
Since $\phi - \psi$ is bounded from above over $Y_c$, we may assume without loss of generality that $\phi - \psi \le 0$, so that $(m+ \frac{m_0}{2})^{-1} \psi_{m, m_0/2} \le (m+m_0)^{-1} \psi_{m, m_0}$ and thus $\phi_{m_1, m_0/2} \le \phi_{m, m_0}$. This now implies that the sequence:
$$\phi_j = \phi_{2^j -m_0, m_0} + 2^{-j-2} C$$
is decreasing to $\phi$ over $Y_c$, and has the required singularities over $Y'$ by prop. \ref{prop som part}. This concludes the proof of theorem \ref{theo reg psh}.
\bibliographystyle{alpha}
\bibliography{biblio.bib}
\end{document}